\setlist[enumerate,1]{label={\upshape(\roman*)}}
\theoremstyle{plain}
\newtheorem{theorem}{Theorem}[section]
\newtheorem{lemma}[theorem]{Lemma}
\newtheorem{corollary}[theorem]{Corollary}
\newtheorem{prop}[theorem]{Proposition}
\newtheorem{conj}[theorem]{Conjecture}
\theoremstyle{remark}
\newtheorem{remark}[theorem]{Remark}
\newtheorem*{note*}{Note}
\newtheorem*{remark*}{Remark}
\newtheorem*{example*}{Example}
\theoremstyle{definition}
\newtheorem*{definition*}{Definition}
\newtheorem*{hypothesis*}{Hypothesis}
\newtheorem*{assumption*}{Assumption}
\newtheorem{definition}[theorem]{Definition}
\newcommand{\Z}{\mathbb{Z}}
\newcommand{\R}{\mathbb{R}}
\newcommand{\Q}{\mathbb{Q}}
\newcommand{\C}{\mathbb{C}}
\newcommand{\Ann}{\mathrm{Ann}}
\newcommand{\Aut}{\mathrm{Aut}}
\newcommand{\Gal}{\mathrm{Gal}}
\newcommand{\GL}{\mathrm{GL}}
\newcommand{\cl}{\mathrm{cl}}
\newcommand{\im}{\mathrm{im}}
\newcommand{\nr}{\mathrm{nr}}
\newcommand{\Hom}{\mathrm{Hom}}
\newcommand{\res}{\mathrm{res}}
\newcommand{\quot}{\mathrm{quot}}
\newcommand{\tors}{\mathrm{tors}}
\newcommand{\Irr}{\mathrm{Irr}}
\newcommand{\ind}{\mathrm{ind}}
\newcommand{\PMod}{\mathrm{PMod}}
\newcommand{\Spec}{\mathrm{Spec}}
\newcommand{\ram}{\mathrm{ram}}
\newcommand{\ab}{\mathrm{ab}}
\newcommand{\Br}{\mathrm{Br}}
\newcommand{\half}{{\textstyle \frac{1}{2}}}
\numberwithin{equation}{section}
\newcommand{\onto}{\twoheadrightarrow}
\newcommand{\Fitt}{\mathrm{Fitt}}
\newcommand{\mal}{^{\times}}
\newcommand{\et}{\mathrm{\acute{e}t}}
\newcommand{\perf}{\mathrm{perf}}
\newcommand{\tor}{_{\mathrm{tor}}}
\newcommand{\Det}{\mathrm{Det}}
\newcommand{\aug}{\mathrm{aug}}
\newcommand{\infl}{\mathrm{infl}}
\newcommand{\cone}{\mathrm{cone}}
\newcommand{\Ext}{\mathrm{Ext}}
\newcommand{\cok}{\mathrm{cok}}
\newcommand{\Hyp}{\mathrm{Hyp}}
\title[An unconditional proof of the abelian EIMC and applications]{An unconditional proof of the abelian equivariant\\ Iwasawa main conjecture and applications}
\author{Henri Johnston}
\address{
Department of Mathematics and Statistics\\
University of Exeter\\
Exeter\\
EX4 4QE\\
United Kingdom
}
\email{H.Johnston@exeter.ac.uk}
\urladdr{https://mathematics.exeter.ac.uk/people/profile/index.php?username=hj241}
\author{Andreas Nickel}
\address{Institut f\"ur Theoretische Informatik, Mathematik und Operations Research\\ 
Universit\"at der Bundeswehr M\"unchen\\ 
Werner-Heisenberg-Weg~39\\
85579 Neubiberg\\
Germany} 
\email{andreas.nickel@unibw.de}
\urladdr{https://www.unibw.de/timor/mitarbeiter/univ-prof-dr-andreas-nickel}
\subjclass[2010]{11R23, 11R42, 19B28, 19F27, 11R70, 11R80}
\keywords{Iwasawa main conjecture, Coates--Sinnott conjecture, equivariant Tamagawa number conjecture, $L$-functions, $p$-adic Lie groups, $K$-theory, Tate motives}
\date{6th December 2024}
\begin{document}

\begin{abstract}
Let $p$ be an odd prime.
We give an unconditional proof of the equivariant Iwasawa main conjecture for totally real fields
for every admissible one-dimensional $p$-adic Lie extension whose Galois group has 
an abelian Sylow $p$-subgroup. 
Crucially, this result does not depend on the vanishing of any $\mu$-invariant.
As applications, we deduce the Coates--Sinnott conjecture 
away from its $2$-primary part and new cases of the
equivariant Tamagawa number conjecture for Tate motives.
\end{abstract}

\maketitle

\section{Introduction}

Let $p$ be an odd prime and let $K$ be a totally real number field. 
An admissible $p$\nobreakdash-adic Lie extension $\mathcal{L}$ of $K$ is a Galois extension
$\mathcal{L}$ of $K$ such that (i) $\mathcal{L}/K$ is unramified outside a finite set of primes of $K$,
(ii) $\mathcal{L}$ is totally real, (iii) $\mathcal{G} := \Gal(\mathcal{L}/K)$ is a compact 
$p$\nobreakdash-adic Lie group, and (iv) $\mathcal{L}$ contains the cyclotomic $\Z_{p}$-extension of $K$.
The equivariant Iwasawa main conjecture (EIMC) for such an extension $\mathcal{L}/K$
can be seen as a refinement and generalisation of the classical Iwasawa main conjecture for totally real fields proven by Wiles \cite{MR1053488}.
Roughly speaking, it relates a certain Iwasawa module attached to $\mathcal{L}/K$ to 
special values of Artin $L$-functions via $p$-adic $L$-functions. 
This relationship can be expressed as the existence of a certain element in an algebraic $K$-group; it is also conjectured that this element is unique. 

Let $S$ be a finite set of places of $K$ containing 
all infinite
 places and all places that ramify in $\mathcal{L}$
(thus $S$ necessarily contains all primes above $p$).
Let $M_{S}^{\ab}(p)$ be the maximal abelian pro-$p$-extension of 
$\mathcal{L}$ unramified outside $S$ and set 
$X_{S} = \Gal(M_{S}^{\ab}(p)/\mathcal{L})$. 
The canonical short exact sequence of profinite groups
\begin{equation}\label{eqn:group-extension}
1 \longrightarrow X_{S} \longrightarrow \Gal(M_{S}^{\ab}(p)/K) \longrightarrow
\mathcal{G} \longrightarrow 1
\end{equation}
defines an action of $\mathcal{G}$ on $X_S$
in the usual way so that $X_S$ becomes a module over the Iwasawa algebra
$\Lambda(\mathcal{G}) := \Z_{p}\llbracket\mathcal{G}\rrbracket$.
If $\mathcal{G}$ contains no elements of order $p$, then
$X_{S}$ is of finite projective dimension over $\Lambda(\mathcal{G})$,
and so the EIMC can be stated in terms of $X_{S}$.
In general, however, $X_{S}$ is not of finite projective dimension and so one has to replace
$X_{S}$ by a certain canonical complex $C_{S}^{\bullet}$ 
of $\Lambda(\mathcal{G})$-modules which is perfect
and whose only non-vanishing cohomology modules are isomorphic to $X_{S}$ and
$\Z_{p}$, respectively.

There are several versions of the EIMC. 
The first is due to Ritter and Weiss and deals with the case of one-dimensional extensions  \cite{MR2114937},
and was proven under the hypothesis 
that the $\mu$-invariant of $X_{S}$ vanishes
in a series of articles culminating in \cite{MR2813337}.
In their approach, the complex $C_{S}^{\bullet}$ is obtained from the
canonical group extension \eqref{eqn:group-extension} by applying a certain
`translation functor' \cite[\S 4A]{MR1894887} which essentially transforms
\eqref{eqn:group-extension} into a homomorphism of $\Lambda(\mathcal{G})$-modules
\[
Y_{S} \longrightarrow \Lambda(\mathcal{G})
\]
with kernel $X_S$ and cokernel 
$\Z_p$. It can be shown that this map defines a complex with the
required properties.
The second version follows the framework of Coates, Fukaya, Kato, Sujatha and Venjakob \cite{MR2217048} and 
was proven by Kakde \cite{MR3091976}, again assuming $\mu=0$.
This version is for arbitrary admissible extensions and Kakde's proof uses a strategy of Burns and Kato to reduce to the one-dimensional case
(see Burns \cite{MR3294653}). Here, the choice of complex
appears to be different, but in the one-dimensional case both complexes are isomorphic in the derived category
of $\Lambda(\mathcal{G})$-modules by a result of the second author
\cite[Theorem 2.4]{MR3072281}
(see also Venjakob \cite{MR3068897} for a thorough discussion of the
relation of the work of Ritter and Weiss to that of Kakde.)
As a consequence, it does not matter which of the two complexes we use.
Finally, Greither and Popescu \cite{MR3383600} formulated and proved 
another version of the EIMC,
but they restricted their formulation to abelian one-dimensional extensions and the formulation itself
requires a $\mu=0$ hypothesis.
In \cite{MR3072281}, the second author generalised this formulation (again assuming $\mu=0$) to the non-abelian one-dimensional case. Moreover, he showed that the three formulations are in fact all equivalent in the situation that they make sense, that is, when
the extension is one-dimensional and $\mu=0$. 
In fact, the proof of this result implicitly shows 
that the choice of complex $C_{S}^{\bullet}$ does not matter
when $\mu=0$, as long as it is perfect and has the prescribed cohomology.

From a result of Ferrero and Washington \cite{MR528968}, one can deduce that the $\mu=0$ hypothesis 
holds whenever $\mathcal{L}/K$ is an admissible extension such that $\mathcal{L}$ is a pro-$p$ extension of a finite abelian extension of $\Q$, but unfortunately little is known beyond this case. 
In previous work \cite{MR3749195}, the present authors proved the EIMC unconditionally
for an infinite class of one-dimensional admissible extensions for
which the $\mu=0$ hypothesis is not known to be true. 
However, such extensions must satisfy certain rather restrictive hypotheses.

In the present article, we prove the EIMC (with uniqueness) in important cases without assuming any
$\mu=0$ hypothesis.
The proof relies on the classical (non-equivariant) Iwasawa main conjecture
proven by Wiles \cite{MR1053488} and the recent groundbreaking work of
Dasgupta and Kakde \cite{MR4513146} on the strong Brumer--Stark conjecture.

In an earlier version of this article, the proof also used a formulation of the EIMC
given in the present authors' article \cite{MR3980291}, where explicit calculations
involving the class of $C_{S}^{\bullet}$ in the derived category played a crucial role.
In the present version, we give a simplified proof that dispenses with the results of loc.\ cit.\ 
and instead uses a purely algebraic result that implies that the precise choice of complex used
in the abelian EIMC does not matter, provided that it is perfect and has the prescribed cohomology.
In Appendix \ref{app:independence-of-choice-of-complex}, we give a generalisation of this last result
in the non-abelian case. Our main result is as follows.

\begin{theorem}[Theorem \ref{thm:EIMC-abelian-exts-proof}]\label{thm:EIMC-abelian-exts}
Let $p$ be an odd prime and let $K$ be a totally real number field. 
Let $\mathcal{L}/K$ be an abelian admissible one-dimensional $p$-adic Lie extension.
Then the EIMC with uniqueness holds for $\mathcal{L}/K$.
\end{theorem}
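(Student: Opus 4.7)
My plan is to combine three main ingredients: the algebraic independence-of-complex principle (proved in full generality in the appendix for non-abelian groups, but needed here only in the abelian case), which permits us to work with any convenient perfect complex of $\Lambda(\mathcal{G})$-modules whose cohomology is $X_S$ in degree $-1$ and $\Z_p$ in degree $0$; the classical Iwasawa main conjecture of Wiles, applied character-by-character; and Dasgupta--Kakde's recent proof of the strong Brumer--Stark conjecture, which supplies the integrality and congruence properties of $p$-adic $L$-functions needed to descend from the generic fibre to the integral level.

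Concretely, since $\mathcal{G}$ is abelian and one-dimensional, I would fix a topological splitting $\mathcal{G} \cong H \times \Gamma$ with $H$ finite abelian and $\Gamma \cong \Z_p$. This gives a decomposition
\[
\Lambda(\mathcal{G}) \otimes_{\Z_p} \Q_p \;\cong\; \prod_{\chi} L_\chi\llbracket \Gamma \rrbracket,
\]
indexed by $\Gal(\overline{\Q}_p/\Q_p)$-orbits of characters $\chi$ of $H$, where $L_\chi$ is a finite extension of $\Q_p$. For each such $\chi$ the Deligne--Ribet / Cassou-Nogu\`es $p$-adic $L$-function $L_{p,S}(\chi,s)$ furnishes a candidate element $\mathcal{L}_p$ in $K_1$ of the total ring of fractions of $\Lambda(\mathcal{G})$, using that $K_1$ of a commutative ring is essentially its unit group. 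Wiles's main conjecture identifies the image of $\mathcal{L}_p$ in each $\chi$-component with a generator of the characteristic ideal of the $\chi$-isotypic piece of $X_S$, so the boundary equation $\partial(\mathcal{L}_p) = -[C_S^\bullet]$ holds in the relative $K_0$ after inverting all non-zero-divisors.

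The principal obstacle is integrality: one must lift $\mathcal{L}_p$ to $K_1$ of the Ore localisation of $\Lambda(\mathcal{G})$ at the canonical multiplicative set of non-zero-divisors whose image in $\Z_p\llbracket\Gamma\rrbracket$ is non-zero. In the abelian setting this amounts to a precise system of congruences relating the $L_{p,S}(\chi,-)$ as $\chi$ varies, and I expect these to be extracted from Dasgupta--Kakde's strong Brumer--Stark theorem by applying it at each finite layer of $\mathcal{L}/K$ and passing to the inverse limit along the cyclotomic tower; in effect, Brumer--Stark at each layer translates into the Fitting-ideal compatibility one needs integrally, which together with the generic-fibre equality from Wiles upgrades to the full $K$-theoretic statement. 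Uniqueness of the lift should follow from the well-known absence of nontrivial pseudo-null submodules of $X_S$ combined with the independence-of-complex input, which together rule out any nontrivial ambiguity.
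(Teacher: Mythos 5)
Your outline has the right ingredients (independence-of-complex, Wiles over the generic fibre, Dasgupta--Kakde at finite layers with an inverse-limit argument via Greither--Kurihara), and your description of how these divide the labour between the generic fibre and the localisation at $(p)$ broadly matches the paper. However, there is a substantial gap in the middle of the argument, and the uniqueness claim is wrong.

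The gap: you propose to ``apply [strong Brumer--Stark] at each finite layer of $\mathcal{L}/K$,'' but $\mathcal{L}/K$ is a \emph{totally real} extension, whereas Brumer--Stark concerns ray class groups of \emph{CM} extensions. Bridging these worlds is the main arithmetic content of the proof and is entirely absent from your plan. Concretely, one must first enlarge $\mathcal{L}$ to a CM field $L$ with $\zeta_p \in L$ and reduce (via the functoriality of the EIMC under quotients) to proving the conjecture for $L_\infty^+/K$; then translate the localisation-at-$(p)$ criterion $\Phi_S \in \Fitt_{\Lambda_{(p)}(\mathcal{G}^+)}\bigl((X_S)_{(p)}\bigr)$ through the cyclotomic twist isomorphism $\Lambda(\mathcal{G}^+)(-1)\cong\Lambda(\mathcal{G})_-$; and then apply Kummer duality to identify $X_{S_p}(-1)$ with $\Hom(A_{L_\infty},\Q_p/\Z_p)$ as $\Lambda(\mathcal{G})_-$-modules. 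Only after this chain of identifications does the Fitting-ideal statement supplied by Dasgupta--Kakde (at finite layers, packaged by Greither--Kurihara) land on the module that actually appears in the EIMC. Without this bridge, the Brumer--Stark input simply does not touch $X_S$, and the ``Fitting-ideal compatibility'' you invoke does not exist. One also needs to treat the $T$-modification and the Euler factors $\xi_v$ carefully, checking that the relevant error terms vanish after localising at $(p)$ --- another point your proposal elides.

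On uniqueness: in the abelian case this has nothing to do with pseudo-null submodules of $X_S$. It is purely $K$-theoretic: since $\mathcal{G}$ is abelian, $\nr$ coincides with the ordinary determinant, which is an isomorphism $K_1(\mathcal{Q}(\mathcal{G}))\xrightarrow{\sim}\mathcal{Q}(\mathcal{G})^\times$, so $SK_1(\mathcal{Q}(\mathcal{G}))=0$ and the element $\zeta_S$ with prescribed reduced norm is automatically unique. Your proposed route to uniqueness would not work.
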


It is natural to ask whether one can deduce the EIMC for all admissible one-dimensional $p$-adic 
Lie extensions from Theorem \ref{thm:EIMC-abelian-exts} by generalising the approaches of 
Ritter and Weiss and of Kakde.
The first step is to reduce to admissible subextensions with $p$\nobreakdash-elementary Galois groups. 
In the aforementioned approaches, this step relied on the $\mu=0$ hypothesis. 
By showing that certain products of maps over subquotients of $\mathcal{G}$
are injective and
exploiting the functorial properties of the EIMC, we obtain a similar result without any such hypothesis. 
We hence deduce the following generalisation of Theorem~\ref{thm:EIMC-abelian-exts}.

\begin{corollary}[Corollary \ref{cor:EIMC-abelian-Sylow-p-proof}]\label{cor:EIMC-abelian-Sylow-p}
Let $p$ be an odd prime and let $K$ be a totally real number field. 
Let $\mathcal{L}/K$ be an admissible one-dimensional $p$-adic Lie extension such that 
$\Gal(\mathcal{L}/K)$ has an abelian Sylow $p$-subgroup.
Then the EIMC with uniqueness holds for~$\mathcal{L}/K$.
\end{corollary}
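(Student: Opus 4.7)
The plan is to apply the Ritter--Weiss / Kakde inductive reduction of the EIMC to subextensions of $\mathcal{L}$ with $p$-elementary Galois groups (that is, of the form $P' \times C$ with $P'$ pro-$p$ and $C$ cyclic of order prime to $p$). The crucial observation is that if $\mathcal{G} := \Gal(\mathcal{L}/K)$ has an abelian Sylow pro-$p$ subgroup, then for every pair $(U,V)$ of closed subgroups of $\mathcal{G}$ with $V \trianglelefteq U$ and $U/V$ $p$-elementary, the pro-$p$ part of $U/V$ is a subquotient of an abelian pro-$p$ group, hence abelian; so $U/V$ itself is abelian, and Theorem~\ref{thm:EIMC-abelian-exts} delivers the EIMC with uniqueness for the corresponding subextension $\mathcal{L}^V / \mathcal{L}^U$.

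To assemble these into the EIMC for $\mathcal{L}/K$, I would phrase the EIMC as the existence (and uniqueness) of a preimage in $K_1(\Lambda(\mathcal{G})_S)$ of the canonical element built from the complex $C_S^{\bullet}$ and the equivariant $p$-adic $L$-function, and exploit the standard functoriality under restriction and deflation. The core step is to establish that the product of restriction/deflation maps
\[
K_1(\Lambda(\mathcal{G})_S) \longrightarrow \prod_{(U,V)} K_1(\Lambda(U/V)_S),
\]
ranging over pairs $(U,V)$ with $U/V$ $p$-elementary, is injective without assuming $\mu = 0$. In the classical Ritter--Weiss / Kakde framework this injectivity was obtained via the integral logarithm together with an explicit image description, both of which rely on $\mu = 0$; here one must argue directly on the rational (total ring of fractions) level, as sketched in the introduction.

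Granting this injectivity, the EIMC with uniqueness for $\mathcal{L}/K$ follows: by Theorem~\ref{thm:EIMC-abelian-exts}, each factor on the right-hand side contains the (unique) EIMC element for the now-abelian subextension $\mathcal{L}^V/\mathcal{L}^U$; these elements are mutually compatible under the maps between them, and they match the factorised $L$-function data by the usual functoriality of Artin $L$-series. Injectivity of the product map then glues them to a unique preimage in $K_1(\Lambda(\mathcal{G})_S)$, which is the desired EIMC element for $\mathcal{L}/K$.

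The main obstacle is the unconditional injectivity of the product restriction/deflation map. Without $\mu=0$ one loses Kakde's integral logarithm image description, so one has to work instead with the total ring of fractions $Q(\Lambda(\mathcal{G}))$ and embed its $K_1$ into a product indexed by the irreducible $\Q_p^c$-characters of $\mathcal{G}$ (via reduced norms and Wedderburn decomposition). The abelian Sylow $p$-hypothesis should force every such character to factor through an abelian $p$-elementary subquotient, and hence to be detected by the corresponding restriction--deflation component, yielding the required injectivity.
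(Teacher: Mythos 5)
Your overall strategy matches the paper's: reduce the EIMC for $\mathcal{L}/K$ to intermediate admissible extensions with $p$-elementary Galois groups, observe that the abelian Sylow $p$-subgroup hypothesis forces those subquotients to be abelian, invoke Theorem~\ref{thm:EIMC-abelian-exts} on each factor, and glue via an unconditional injectivity of the restriction/deflation product. The paper does exactly this via Theorem~\ref{thm:EIMC-reduction} and Theorem~\ref{thm:reduction-to-p-elementary-subquotients}, with uniqueness supplied by Corollary~\ref{cor:SK1-vanishes-when-Sylow-p-subgroup-is-abelian}.

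However, your proposed route to the crucial injectivity has a genuine gap. You assert that ``the abelian Sylow $p$-hypothesis should force every such character to factor through an abelian $p$-elementary subquotient, and hence to be detected by the corresponding restriction--deflation component.'' This is false as stated. Take $\mathcal{G} = \Gamma_0 \times S_3$ with $p=3$: the Sylow $3$-subgroup of $S_3$ is cyclic, hence abelian, but the unique $2$-dimensional irreducible $\Q_p^c$-character of $S_3$ has trivial kernel and does not factor through any $3$-elementary (hence abelian) subquotient. What is true, and what the paper actually exploits, is considerably weaker: by the Witt--Berman induction theorem the trivial character can be written as a $\Z$-linear combination of inductions of characters from $F$-elementary subgroups (for a suitable tame $F/\Q_p$). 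Converting this into injectivity of the product map on the quotient $\zeta(\mathcal{Q}(\mathcal{G}))^{\times}/\nr(K_1(\Lambda(\mathcal{G})))$ (rather than on $K_1(\mathcal{Q}(\mathcal{G}))$ itself, which is the level you work at) requires the whole Frobenius-module formalism of \S\ref{sec:proof-of-main-corollary}: one first reduces to $F$-elementary subgroups via Proposition~\ref{prop:F-elementary-reduction} and the tame base-change result Lemma~\ref{lem:extension-of-scalars-injective-for-tame-extensions}, then reduces $q$-elementary ($q\neq p$) to $p$-elementary via Proposition~\ref{prop:injectivity-K-groups-direct-prod-delta}, using Brauer induction for the finite group $\Delta$ there. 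A character-by-character ``each $\chi$ factors through an abelian piece'' argument does not do this, and the induction-theoretic statement one actually needs is not a factorisation but an expression of $\mathds{1}_{\mathcal{G}}$ as a virtual sum of inductions, which is then fed into the Frobenius-module multiplication. Moreover, note that Theorem~\ref{thm:reduction-to-p-elementary-subquotients} holds unconditionally for \emph{all} admissible one-dimensional $\mathcal{G}$ (the injectivity on relative $K_0$ requiring only $SK_1(\mathcal{Q}(\mathcal{G}))=0$); the abelian-Sylow hypothesis is not used in proving the injectivity at all, but only to ensure the $p$-elementary subquotients are abelian and to get $SK_1=0$. Your sketch conflates these roles, and the step ``yielding the required injectivity'' is therefore unjustified.
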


The further reduction steps of previous approaches do not generalise easily as they rely on the $\mu=0$
hypothesis in a crucial way and hence presently there is no apparent way to deduce the EIMC 
for all admissible one-dimensional extensions without this hypothesis.
Moreover, a serious obstacle to the case of admissible extensions of dimension greater than 
one is that in general a certain `$\mathfrak{M}_{H}(G)$-conjecture' is required to even formulate
the EIMC in this situation, and that this is presently only known to hold under the
$\mu=0$ hypothesis
(see \cite[p.\ 5]{zbMATH06148870} and \cite{MR2905532}).

We remark that if Leopoldt's conjecture holds for $K$ at $p$ then every abelian admissible extension of $K$ must be one-dimensional.
Similarly, if Leopoldt's conjecture holds for $F$ at $p$ for all finite totally real extensions $F/K$ with
$[F:K]$ coprime to $p$ then every admissible extension of $K$ whose Galois group has an abelian Sylow $p$-subgroup must be one-dimensional.
Hence the hypothesis that the extensions considered in Theorem~\ref{thm:EIMC-abelian-exts}
and Corollary~\ref{cor:EIMC-abelian-Sylow-p} are one-dimensional is arguably not so restrictive. 
Moreover, the one-dimensional case of the EIMC often suffices for applications,
some of which we will now discuss.

\medskip

The equivariant Tamagawa number conjecture (ETNC) has been
formulated by Burns and Flach \cite{MR1884523} in vast generality.
In the case of Tate motives, it simply asserts that an
associated canonical element in a relative algebraic $K$-group vanishes.
Roughly speaking, this element relates leading terms of Artin $L$-functions to
certain arithmetic invariants.

Let $L/K$ be a finite Galois CM extension of number fields and let $G=\Gal(L/K)$.
Hence $K$ is totally real, $L$ is totally complex and complex conjugation
induces a unique central automorphism in $G$.
In the case that the $\mu=0$ hypothesis holds
(for the cyclotomic $\Z_p$-extension of the maximal totally real
subfield of $L(\zeta_p)$, where $\zeta_p$ denotes a primitive
$p$th root of unity), it is known by independent work of Burns \cite{MR3294653}
and of the second author \cite{MR3072281} that the EIMC implies
the plus (resp.\ minus) $p$-part of the ETNC
for the pair $(h^{0}(\Spec(L))(r), \Z[G])$ if $r$ is a negative odd (resp.\ even) integer.
In both approaches, the main reason for the $\mu=0$ assumption is to ensure the validity of the EIMC.
Thus at first sight, Theorem \ref{thm:ETNC-at-negative-integers-intro} below appears to be a direct consequence of our results on the EIMC above.
However, Burns' descent argument relies on the formalism developed by Burns and Venjakob in \cite{MR2749572}.
For this, the cohomology of a certain complex at infinite level needs to be `$S$-torsion'
in the terminology of \cite{MR2749572} 
if $p$ divides $|G|$.
(Note this is not related to the set $S$ used in the present article.)
In the context of the EIMC, this is in fact equivalent to $\mu=0$.
Moreover, the approach of the second author in \cite{MR3072281}
relies on the aforementioned version of the abelian EIMC of Greither and Popescu \cite{MR3383600} whose
very formulation depends on the $\mu=0$ hypothesis.
Therefore the fact that Corollary \ref{cor:EIMC-abelian-Sylow-p} implies 
the following result requires a new proof.

\begin{theorem}[Theorem \ref{thm:ETNC-at-negative-integers}]\label{thm:ETNC-at-negative-integers-intro}
Let $p$ be an odd prime.
Let $L/K$ be a finite Galois CM extension of number fields 
and let $G=\Gal(L/K)$.
Suppose that $G$ has an abelian Sylow $p$-subgroup.
Then for each negative odd (resp.\ even) integer $r$ the plus (resp.\ minus) $p$-part of the ETNC 
for the pair $(h^{0}(\Spec(L))(r), \Z[G])$ holds.
\end{theorem}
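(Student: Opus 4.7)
The plan is to derive Theorem \ref{thm:ETNC-at-negative-integers-intro} from Corollary \ref{cor:EIMC-abelian-Sylow-p} by a descent argument. Let $L^{+}$ denote the maximal totally real subfield of $L$ and let $K_{\infty}/K$ be the cyclotomic $\Z_{p}$-extension of $K$; set $\mathcal{L} := L^{+}K_{\infty}$, which is an admissible one-dimensional $p$-adic Lie extension. Its Galois group embeds into $\Gal(L^{+}/K) \times \Z_{p}$, and since a Sylow $p$-subgroup of $\Gal(L^{+}/K)$ is a quotient of a Sylow $p$-subgroup of $G$, it is abelian by hypothesis. Hence Corollary \ref{cor:EIMC-abelian-Sylow-p} yields the EIMC with uniqueness for $\mathcal{L}/K$.

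The classical route from EIMC to the plus (resp.\ minus) $p$-part of the ETNC for $(h^{0}(\Spec(L))(r), \Z[G])$ with $r<0$ is due to Burns \cite{MR3294653} and the second named author \cite{MR3072281}. As the introduction explains, Burns' approach uses the Burns--Venjakob descent formalism \cite{MR2749572}, which requires an $S$-torsion hypothesis equivalent to $\mu=0$; the approach of \cite{MR3072281} relies on the Greither--Popescu version of the EIMC, whose formulation itself requires $\mu=0$. Neither is available here without a new argument.

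Two additional inputs unlock the descent without $\mu=0$: the independence-of-complex result of Appendix \ref{app:independence-of-choice-of-complex} and the strong Brumer--Stark theorem of Dasgupta--Kakde \cite{dasgupta-kakde}. The former lets one work throughout with the canonical complex $C_{S}^{\bullet}$, bypassing the $\mu=0$-dependent complex of Greither--Popescu. I would then use functoriality of the ETNC to reduce to the case in which $G$ is abelian, and decompose the resulting class in the relative $K$-group along characters of the prime-to-$p$ part of $G$, arriving at the case in which $G$ is an abelian $p$-group. Here the EIMC from Corollary \ref{cor:EIMC-abelian-Sylow-p} identifies the class of $C_{S}^{\bullet}$ with the $p$-adic $L$-function of Deligne--Ribet and Cassou-Nogu\`es; descent along the cyclotomic tower and the Tate twist by the cyclotomic character (via Serre and Soul\'e) translates this into an identity of leading terms at $r<0$. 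The Dasgupta--Kakde theorem then supplies the integrality needed to match the descended element to the Burns--Flach prediction, both on the plus side at odd $r$ and on the minus side at even $r$.

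The main obstacle is the descent itself in the absence of $\mu=0$. The $\mu$-part of $X_{S}$ is a pseudo-null subquotient that is invisible to the Burns--Venjakob formalism and must be tracked directly on $C_{S}^{\bullet}$. The appendix result makes this tracking intrinsic and the abelian Sylow hypothesis keeps the analysis tractable through characters, but verifying that the descended class matches the Burns--Flach prediction for each negative $r$, uniformly in the $\mu$-contribution, is where I expect the bulk of the technical work to reside.
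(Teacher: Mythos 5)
Your overall strategy is the right one, and you correctly identify the two main ingredients: (a) Corollary \ref{cor:EIMC-abelian-Sylow-p} supplies the EIMC for the relevant admissible extension, and (b) a Brauer--Witt style induction argument (the paper uses \cite[Proposition 9]{MR1687551} combined with Theorem \ref{thm:known-results-ETNC}(i)) reduces the assertion to abelian CM extensions. But the core of the proof is the descent step, and there your proposal has a genuine gap.

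You diagnose correctly that the Burns--Venjakob formalism breaks down without $\mu=0$, but your proposed remedy --- ``track the $\mu$-contribution directly on $C_{S}^{\bullet}$'' --- is not what is needed and it is not clear it could work. The paper does not track the $\mu$-invariant at all. Instead (Lemma \ref{lem:strong-Coates-Sinnott}) it introduces the auxiliary $T$-modified complex $R\Gamma_{T}(\mathcal{O}_{K,S}, e_{r}\Lambda(\mathcal{G})^{\#}(1-r))$, which is acyclic outside degree $2$ with second cohomology of projective dimension at most one \emph{both at the Iwasawa level and at finite level}. One then computes its $\Lambda(\mathcal{G})$-Fitting ideal by combining Artin--Verdier duality (identifying $R\Gamma(\mathcal{O}_{K,S}, e_{1}\Lambda(\mathcal{G})^{\#})$ with $(C_{S}^{\bullet}(L_{\infty}/K))^{\ast}[-3]$), the duality for Fitting ideals in Lemma \ref{lem:Fitt-of-E^1(M)}, the Euler-factor calculation \eqref{eqn:Fitt-of-local-complex}, and the EIMC itself (Theorem \ref{thm:EIMC-abelian-exts}). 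Descent is then immediate: the Iwasawa-level second cohomology is of finite projective dimension in one degree, base change of the complex amounts to taking $\Gamma_{L}$-coinvariants of this module, and Fitting ideals commute with base change (Lemma \ref{lemma:base-change-of-Fitt}). No $\mu=0$ hypothesis enters at any point; that is precisely what the Fitting-ideal formulation buys you.

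Two smaller but real inaccuracies. First, Dasgupta--Kakde is consumed upstream, in the proof of the abelian EIMC itself (via Proposition \ref{prop:psiST-in-Fitt}); it plays no further role in the descent, contrary to your final step where you invoke it to ``supply the integrality.'' Second, the minus side is what is used, via Kummer duality and the Tate twist: the paper works with $L_{\infty}$ for $L \ni \zeta_{p}$ and with $e_{r}\Z_{p}[G]$, not with your $\mathcal{L}=L^{+}K_{\infty}$ directly. Finally, the bridge back to the complex Artin $L$-values $\Theta_{S,T}(L_{n}/K,r)$, which is the truly nontrivial analytic input in the descent, is Proposition \ref{prop:PsiST-is-inverse-limit}; your sketch mentions the Tate twist but not where the interpolation of $p$-adic $L$-functions actually enters.
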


Now assume that $L/K$ is a finite abelian extension of number fields. 
Let $S$ be a finite set of places of $K$ that contains all 
infinite
 places and
all places that ramify in $L$. 
We write $\mathcal{O}_{L,S}$ for the ring of $S(L)$-integers in $L$,
where $S(L)$ denotes the set of places of $L$ 
that lie above a place in $S$. For an integer $n \geq 0$
we let $K_n(\mathcal{O}_{L,S})$ denote the Quillen $K$-group
of $\mathcal{O}_{L,S}$.
Using $L$-values at negative integers $r$ one can define Stickelberger elements $\theta_S(r)$ in the rational group ring $\Q[G]$.
If we write $K_{1-2r}(\mathcal{O}_{L})_{\tors}$ for the torsion subgroup
of $K_{1-2r}(\mathcal{O}_{L})$,
then by independent work of
Deligne and Ribet \cite{MR579702} and of Pi.\ Cassou-Nogu{\`e}s \cite{MR524276} 
we have
\[
	\Ann_{\Z[G]}(K_{1-2r}(\mathcal{O}_{L})_{\tors}) \theta_{S}(r) \subseteq \Z[G].
\]
Coates and Sinnott \cite{MR0369322} 
formulated the following analogue of Brumer's conjecture for higher $K$-groups.

\begin{conj}[Coates--Sinnott] \label{conj:Coates--Sinnott}
Let $L/K$ be a finite abelian extension of number fields and let $G=\Gal(L/K)$.
Let $r$ be a negative integer and let $S$ be a finite
set of places of $K$ that contains all infinite
 places and all
places that ramify in $L$. Then we have
\[
\Ann_{\Z[G]}(K_{1-2r}(\mathcal{O}_L)_{\tors}) \theta_{S}(r) \subseteq 
\Ann_{\Z[G]}(K_{-2r}(\mathcal{O}_{L,S})).
\]
\end{conj}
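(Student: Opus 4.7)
The plan is to establish the conjectured containment prime-locally: it suffices to prove, for each rational prime $q$, that after extending scalars to $\Z_q$ the asserted containment of $\Z_q[G]$-modules holds. I would handle the odd-prime part via Corollary~\ref{cor:EIMC-abelian-Sylow-p} and then treat $q=2$ separately using classical $2$-adic results.

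For odd $q$, I would apply Corollary~\ref{cor:EIMC-abelian-Sylow-p} to the cyclotomic $\Z_q$-extension $L_\infty$ of $L$. Since $L/K$ is abelian, so is $\Gal(L_\infty/K)$, and the EIMC with uniqueness holds for $L_\infty/K$. This identifies the class of the canonical complex $C_S^\bullet$ with the Deligne--Ribet $q$-adic $L$-function in an appropriate relative $K_1$-group. Twisting by $r$ amounts, on the cohomology side, to passing to the relevant Tate twist $\Z_q(1-r)$ of $C_S^\bullet$, and, on the $L$-function side, to evaluating at the corresponding character twist. A standard descent argument along the cyclotomic tower then relates the Iwasawa cohomology of $\Z_q(1-r)$ to the étale cohomology groups $H^i_\et(\Spec \mathcal{O}_{L,S'}, \Z_q(1-r))$ for $i=1,2$ (with $S'$ an enlargement of $S$ to include primes above $q$), which, by the Quillen--Lichtenbaum conjecture (now a theorem via Rost--Voevodsky), are identified with the $q$-primary parts of $K_{1-2r}(\mathcal{O}_L)$ and $K_{-2r}(\mathcal{O}_{L,S})$. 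Combining these identifications with the Deligne--Ribet/Cassou--Nogu\`es integrality statement produces the desired annihilator relation over $\Z_q[G]$. This is essentially the strategy developed by Burns--Greither, Nguyen Quang Do, Kolster and others, which was previously available only under the $\mu=0$ hypothesis; Corollary~\ref{cor:EIMC-abelian-Sylow-p} removes this restriction.

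For $q=2$, Corollary~\ref{cor:EIMC-abelian-Sylow-p} does not apply, so the $2$-primary case must be handled separately. For $L/K$ abelian one has access to Wiles's $2$-adic main conjecture, and the $2$-adic Quillen--Lichtenbaum theorem is available through Voevodsky. A parallel descent argument can in principle be attempted, but several new difficulties emerge: the $\pm$-decomposition under complex conjugation does not split integrally over $\Z_2$; the interpolation properties of the $2$-adic $L$-function involve subtler correction factors at primes above $2$; and the relevant $\mathfrak{M}_H(G)$-type hypotheses at $2$ are harder to verify.

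The main obstacle is the $2$-primary part. The odd-$q$ case is essentially a matter of feeding the new unconditional EIMC of Corollary~\ref{cor:EIMC-abelian-Sylow-p} into a well-developed descent machinery and should go through cleanly. The $2$-primary part, by contrast, does not follow from the paper's results (which are stated for odd $p$) and requires independent deep input---most notably an unconditional $2$-adic main conjecture in a form amenable to descent, together with careful treatment of exceptional $L$-value vanishing at the finite-order characters in play. A complete unconditional treatment of the $2$-primary part in this generality appears to be beyond current techniques, and indeed the paper's abstract establishes the Coates--Sinnott conjecture only away from its $2$-primary part.
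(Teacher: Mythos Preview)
You correctly observe that the paper does not prove Conjecture~\ref{conj:Coates--Sinnott} in full: only the away-from-$2$ part is established (Corollary~\ref{cor:Coates--Sinnott}), and the $2$-primary part remains open. So there is no ``paper's own proof'' of the full statement to compare against.

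For the odd-prime part, however, your proposal contains a genuine gap. You write that the strategy of Burns--Greither et al.\ ``was previously available only under the $\mu=0$ hypothesis; Corollary~\ref{cor:EIMC-abelian-Sylow-p} removes this restriction.'' This is precisely what the paper warns against. The introduction explains that the standard descent from the EIMC to the ETNC (and hence to Coates--Sinnott) via the Burns--Venjakob formalism requires the relevant cohomology at infinite level to be ``$S$-torsion,'' which in this context is \emph{equivalent} to $\mu=0$; and the alternative approach of \cite{MR3072281} relies on the Greither--Popescu formulation of the EIMC, whose very statement presupposes $\mu=0$. Thus having the EIMC unconditionally does \emph{not} let you simply feed it into the existing descent machinery: that machinery still needs $\mu=0$ independently.

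What the paper does instead is a direct Fitting-ideal computation that bypasses the Burns--Venjakob descent. One reduces to CM extensions (via \cite[\S 6]{MR3383600}), works with the complex $R\Gamma_{T}(\mathcal{O}_{K,S}, e_{n}\Lambda(\mathcal{G})^{\#}(1-n))$ at the infinite level, and shows that it is acyclic outside degree $2$ with $H^{2}$ of projective dimension at most $1$. The abelian EIMC then determines its Fitting ideal explicitly (Lemma~\ref{lem:strong-Coates-Sinnott}), and descent to finite level is achieved by the elementary base-change property of Fitting ideals (Lemma~\ref{lemma:base-change-of-Fitt}) rather than by derived-category descent formalism. A final four-term exact sequence argument (Lemma~\ref{lem:Fitting-four-term}) converts this into the annihilator statement. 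This is the new ingredient you are missing.
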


Let $p$ be an odd prime and suppose in addition that $S$ contains all
$p$-adic places of $K$.
For any negative integer $r$ and $i=0,1$ Soul\'e \cite{MR553999}
has constructed canonical $G$-equivariant $p$-adic Chern class maps
\begin{equation} \label{eqn:Chern-class-maps}
\Z_{p} \otimes_{\Z} K_{i-2r}(\mathcal{O}_{L,S}) \longrightarrow 
H^{2-i}_{\et} (\Spec(\mathcal{O}_{L,S}), \Z_{p}(1-r)).
\end{equation}
Soul\'e  proved surjectivity
and by the norm residue isomorphism theorem \cite{MR2529300} (formerly
known as the Quillen--Lichtenbaum Conjecture) these maps are
actually isomorphisms. 

This allows us to work with an \'etale cohomological version of Conjecture \ref{conj:Coates--Sinnott}. 
For a variant of this version it has been shown in \cite[\S 6]{MR3383600} that it suffices
to consider abelian CM extensions.
We therefore obtain the following consequence of Theorem
\ref{thm:ETNC-at-negative-integers-intro}.

\begin{theorem}[Corollary \ref{cor:Coates--Sinnott}]\label{thm:Coates--Sinnott}
The Coates--Sinnott conjecture holds away from its $2$\nobreakdash-primary part.
\end{theorem}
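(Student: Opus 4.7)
The plan is to deduce Theorem \ref{thm:Coates--Sinnott} from Theorem \ref{thm:ETNC-at-negative-integers-intro} via the chain of reductions sketched in the discussion preceding the statement. First, fix an odd prime $p$ and an arbitrary finite abelian extension $L/K$ with $G = \Gal(L/K)$; it suffices to prove the $p$-part of Conjecture \ref{conj:Coates--Sinnott}. The Soul\'e Chern class maps \eqref{eqn:Chern-class-maps} are $G$-equivariant isomorphisms after tensoring with $\Z_p$ by the norm residue isomorphism theorem, so the $p$-part of the conjecture becomes the analogous annihilator inclusion
\[
\Ann_{\Z_p[G]}\!\bigl(H^{1}_{\et}(\Spec(\mathcal{O}_{L,S}),\Z_p(1-r))_{\tors}\bigr)\,\theta_S(r) \subseteq \Ann_{\Z_p[G]}\!\bigl(H^{2}_{\et}(\Spec(\mathcal{O}_{L,S}),\Z_p(1-r))\bigr).
\]
I would then invoke the reduction of \cite[\S 6]{MR3383600} to reduce this \'etale cohomological variant of Coates--Sinnott to the case where $L$ is itself a CM field, by passing to a suitable CM overfield of $L$ (e.g.\ the composite with a cyclotomic field) and exploiting functoriality of \'etale cohomology and of Stickelberger elements under the induced homomorphisms of integral group rings.

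Second, with $L/K$ now a finite abelian CM extension, $G$ trivially has an abelian Sylow $p$-subgroup, so Theorem \ref{thm:ETNC-at-negative-integers-intro} applies and gives the plus (respectively minus) $p$-part of the ETNC for the pair $(h^0(\Spec(L))(r), \Z[G])$ depending on the parity of the negative integer $r$. Decomposing both sides by the central idempotents attached to complex conjugation, the \'etale cohomology in weight $1-r$ occurring in the previous paragraph lives in precisely the eigenspace controlled by the part of the ETNC supplied by Theorem \ref{thm:ETNC-at-negative-integers-intro}.

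Third, I would translate the resulting ETNC equality into the desired annihilation statement. The ETNC asserts the vanishing of a canonical element in a relative algebraic $K$-group that relates the leading term of the equivariant Artin $L$-function at $r$---essentially $\theta_S(r)$ up to Euler factors at primes in $S$---to the determinant of the perfect complex $R\Gamma_{\et}(\Spec(\mathcal{O}_{L,S}),\Z_p(1-r))$, whose only non-trivial cohomology consists of $H^{1}$ (with finite torsion subgroup) and a finite $H^{2}$. A standard first-Fitting-ideal manipulation then extracts the desired inclusion from this determinantal identity. The main obstacle is in this final step: one must carefully perform the Fitting-ideal bookkeeping, keep track of the plus/minus decomposition under complex conjugation, and verify that the Euler-factor discrepancy between the leading term and $\theta_S(r)$ can be absorbed into the annihilator of $H^1_{\et}(\ldots)_{\tors}$, so that the output of the ETNC is a clean pointwise annihilator statement rather than only a determinantal equality.
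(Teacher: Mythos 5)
Your high-level strategy matches the paper's: reduce to the \'etale cohomological version via the norm residue isomorphism theorem, reduce to the case where $K$ is totally real and $L$ is CM via the lemmas of \cite[\S 6]{MR3383600} (you omit the reduction to $K$ totally real and the preliminary step of adjoining $S_p$ to $S$ using \cite[Lemma 6.13]{MR3383600}, but these are minor), and then deduce the annihilation statement from the validity of the $e_r$-part of the ETNC for the abelian CM extension. So far this is what the paper does, with the mild variation that the paper's formal chain passes through Lemma~\ref{lem:strong-Coates-Sinnott} (the ``strong Coates--Sinnott'' Fitting ideal equality $\Fitt_{e_r\Z_p[G]}(H^2_T(\mathcal{O}_{K,S}, e_r\Z_p[G]^{\#}(1-r)))=\Theta_{S,T}(r)e_r\Z_p[G]$) rather than through Theorem~\ref{thm:ETNC-at-negative-integers-intro} directly; these are equivalent in the abelian case by \cite[Theorem 5.10]{MR3072281}.

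The genuine gap is in your third step, which you flag yourself as the ``main obstacle'' but describe as a ``standard first-Fitting-ideal manipulation''; it is not standard, and you do not identify the devices that make it work. First, the paper's formulation of the ETNC input (Lemma~\ref{lem:strong-Coates-Sinnott}) uses the $T$-modified complex $R\Gamma_T(\mathcal{O}_{K,S}, e_r\Z_p[G]^{\#}(1-r))$, which is concentrated in degree two with cohomologically trivial cohomology; this $T$-modification is precisely what replaces $\theta_S(r)$ by the integral element $\Theta_{S,T}(r)$ and sidesteps the non-integrality and the possible non-finiteness of $H^1$ that plague the unmodified complex $R\Gamma_{\et}(\Spec(\mathcal{O}_{L,S}),\Z_p(1-r))$ you work with. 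Second, to get from the Fitting ideal of $H^2_T$ back to the annihilator statement about $H^2$ and $H^1_{\tors}$, the paper uses the four-term exact sequence of finite $e_r\Z_p[G]$-modules
\[
0 \rightarrow H^{1}(\mathcal{O}_{K,S}, e_{r} \Z_{p}[G]^{\#}(1-r))
\rightarrow \bigoplus_{v \in T} H^1(K(v), e_{r} \Z_{p}[G]^{\#}(1-r))
\rightarrow H^{2}_{T}(\ldots)
\rightarrow H^{2}(\ldots)
\rightarrow 0
\]
together with Lemma~\ref{lem:Fitting-four-term} (a consequence of \cite[Lemma 5]{MR2046598}), the computation of the Fitting ideal of the local term as $\delta_T(r)$, the identification of $\Fitt$ with $\Ann$ for the finite \emph{cyclic} module $H^1$, and Shapiro's lemma to relate $H^1(\mathcal{O}_{K,S},\ldots)$ to $H^1(\mathcal{O}_{L,S},\Z_p(1-r))_{\tors}$. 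Your phrase about ``absorbing the Euler-factor discrepancy into the annihilator of $H^1_{\tors}$'' points in the right direction, but without the $T$-modification and Lemma~\ref{lem:Fitting-four-term} this step is not a routine manipulation, and your write-up does not supply a substitute. Note also that the paper in fact proves the stronger equality \eqref{eqn:refined-CS}, of which the stated Coates--Sinnott inclusion is a corollary; your proposal aims only at the inclusion, which is permissible but worth noting.
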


\subsection*{Acknowledgements}
The authors wish to thank Samit Dasgupta, Mahesh Kakde 
and Otmar Venjakob
for helpful correspondence; and Mahesh Kakde and Masato Kurihara 
for comments on an early draft of this article.
The authors are also grateful to an anonymous referee for several comments and corrections.
The second author acknowledges financial support provided by the 
Deutsche Forschungsgemeinschaft (DFG) 
within the Heisenberg programme (project no.\ 437113953).

\subsection*{Notation and conventions}
All rings are assumed to have an identity element and all modules are assumed
to be left modules unless otherwise  stated. 
We shall sometimes abuse notation by using the symbol $\oplus$ to 
denote the direct product of rings or orders.
We fix the following notation:

\medskip

\begin{tabular}{ll}
$R^{\times}$ & the group of units of a ring $R$\\
$\zeta(R)$ & the centre of a ring $R$\\
$\Ann_{R}(M)$ & the annihilator of the $R$-module $M$\\
$M_{n} (R)$ & the set of all $n \times n$ matrices with entries in a ring $R$\\
$Quot(R)$ & the field of fractions of the integral domain $R$\\
$\zeta_{n}$ & a primitive $n$th root of unity\\
$K_{\infty}$ & the cyclotomic $\Z_{p}$-extension of the number field $K$\\
$\cl_{K}$ & the class group of a number field $K$ \\
$K^{\mathrm{c}}$ & an algebraic closure of a field $K$ \\
$K^{+}$ & the maximal totally real subfield of a field $K$ embeddable into $\C$\\
$\Irr_{F}(G)$ & the set of $F$-irreducible characters of the (pro)-finite group $G$\\
& (with open kernel) where $F$ is a field of characteristic $0$\\
$\check{\chi}$ & the character contragredient to $\chi$\\
$\mathrm{Re}(s)$ & the real part of the complex number $s$
\end{tabular}

\section{The Brumer--Stark conjecture}

\subsection{Fitting ideals and Pontryagin duals}
If $M$ is a finitely presented module over a commutative ring $R$,
we denote the (initial) Fitting ideal of $M$ over $R$ by $\Fitt_R(M)$.
For basic properties of Fitting ideals, 
we refer the reader to \cite{MR0460383} or \cite[\S 20.2]{MR1322960}.

For an abstract abelian group $A$ we write $A^{\vee}$ for the Pontryagin dual 
$\Hom_{\Z}(A, \Q / \Z)$.
This induces an equivalence between the categories of abelian profinite groups and discrete abelian torsion groups
(see \cite[Theorem 1.1.11]{MR2392026} and the discussion thereafter). 
For a prime $p$, a finite group $G$, and a finitely generated $\Z_{p}[G]$-module $M$, 
we have
\[
M^{\vee}=\Hom_{\Z_{p}}(M, \Q_{p}/\Z_{p}),
\]
and this is endowed with the contragredient $G$-action $(gf)(m) = f (g^{-1} m)$ for $f \in M^{\vee}$, $g \in G$ and $m \in M$. 

\subsection{Equivariant Artin $L$-functions and values}\label{subsec:equiv-Artin-L-functions}
Let $L/K$ be a finite Galois extension of number fields and let $G=\Gal(L/K)$.
For each place $v$ of $K$ we fix a place $w$ of $L$ above $v$ and write $G_{w}$ and $I_{w}$
for the decomposition group and the inertia subgroup of $G$ at $w$, respectively.  
When $w$ is a finite place, we choose a lift $\sigma_{w} \in G_{w}$ of the Frobenius automorphism at $w$ and write $\mathfrak{P}_{w}$ for the associated prime ideal in $L$.
For a finite place $v$ of $K$ we denote the cardinality of its residue field by $\mathrm{N}v$.

Let $S$ be a finite set of places of $K$ containing the infinite places $S_{\infty}=S_{\infty}(K)$.
Let $\Irr_{\C}(G)$ denote the set of complex irreducible characters of $G$. 
For $\chi \in \Irr_{\C}(G)$ let $V_{\chi}$ be a $\C[G]$-module with character $\chi$.
The $S$-truncated Artin $L$-function $L_{S}(s,\chi)$
is defined as the meromorphic extension to the whole complex plane 
of the holomorphic function given by the Euler product
\[
L_{S}(s,\chi) = \prod_{v \notin S} \det (1 - (\mathrm{N}v)^{-s}\sigma_{w} \mid V_{\chi}^{I_{w}})^{-1}, \quad \mathrm{Re}(s)>1.
\]
The primitive central idempotents of $\C[G]$ attached to elements of $\Irr_{\C}(G)$ form a $\C$-basis of
its centre $\zeta(\C[G])$ and thus there is a canonical isomorphism $\zeta(\C[G]) \cong \prod_{\chi \in \Irr_{\C}(G)} \C$.
The equivariant $S$-truncated Artin $L$-function is defined to be the meromorphic $\zeta(\C[G])$-valued function
\[
L_{S}(s) := (L_{S}(s,\chi))_{\chi \in \Irr_{\C}(G)}.
\]

Now suppose that $T$ is a second finite set of places of $K$ such that $S$ and $T$ are disjoint.
Then we define 
\begin{equation}\label{eq:def-delta-T}
\delta_{T}(s,\chi) := \prod_{v \in T} \det(1 - (\mathrm{N}v)^{1-s} \sigma_{w}^{-1} \mid V_{\chi}^{I_{w}}) \quad  \textrm{ and }  \quad
\delta_{T}(s) := (\delta_{T}(s,\chi))_{\chi \in \Irr_{\C}(G)}. 
\end{equation}
Let $x \mapsto x^{\#}$ denote the anti-involution on $\C[G]$ induced by $g \mapsto g^{-1}$ for $g \in G$.
The $(S,T)$-modified $G$-equivariant Artin $L$-function is defined to be
\[
\Theta_{S,T}(s) := \delta_{T}(s) \cdot L_{S}(s)^{\#}.
\]
Note that $L_{S}(s)^{\#} = (L_{S}(s,\check{\chi}))_{\chi \in \Irr_{\C} (G)}$, 
where $\check \chi$ denotes the character contragredient to $\chi$.
Evaluating $\Theta_{S,T}(s)$ at $s=0$ gives an $(S,T)$-modified Stickelberger element
\[
  \theta_{S}^{T} := \Theta_{S,T}(0) \in \zeta(\Q[G]).
\]
Note that a priori we only have $\theta_{S}^{T} \in \zeta(\C[G])$, but by a result of Siegel \cite{MR0285488} we know that
$\theta_{S}^{T}$ in fact belongs to $\zeta(\Q[G])$. 
If $T$ is empty, we abbreviate $\theta^{T}_{S}$ to $\theta_{S}$.
If the extension $L/K$ is not clear from context, we will also write
$\theta_{S}^{T}(L/K)$, $L_S(L/K,s)$, $\delta_{T}(L/K,s)$, etc.

\subsection{Ray class groups}\label{subsec:ray-class-groups}
Let $T$ be a finite set of finite places of $K$ and let $T(L)$ denote 
the set of places of $L$ above those in $T$. 
Let $I_{T}(L)$ denote the group of fractional ideals of $L$ relatively prime to $\mathfrak{M}_{L}^{T} := \prod_{w \in T(L)} \mathfrak{P}_{w}$.
Let $P_{T}(L)$ denote the subgroup of $I_{T}(L)$ generated by 
principal ideals $(\alpha)$ where $\alpha \in \mathcal{O}_{L}$ satisfies $\alpha \equiv 1 \bmod \mathfrak{M}_{L}^{T}$. Then
\[
\cl_{L}^{T} := I_{T}(L)/P_{T}(L)
\]
is the ray class group of $L$ associated to the modulus $\mathfrak{M}_{L}^{T}$.
We denote the group $\mathcal{O}_L^{\times}$ of units in $L$
by $E_{L}$ and define
$E_{L}^{T} := \left\{x \in E_{L}: x \equiv 1 \bmod \mathfrak{M}_{L}^{T} \right\}$.
If $T$ is empty we abbreviate $\cl_{L}^{T}$ to $\cl_{L}$.
All these modules are equipped with a natural $G$-action
and we have the following exact sequence of finitely generated $\Z[G]$-modules
\begin{equation}\label{eqn:ray-class-sequence}
0 \longrightarrow E_{L}^{T} \longrightarrow E_{L} \longrightarrow (\mathcal{O}_{L} / \mathfrak{M}_{L}^{T})\mal
\stackrel{\nu}{\longrightarrow} \cl_{L}^{T} \longrightarrow \cl_{L} \longrightarrow 0,
\end{equation}
where the map $\nu$ lifts an element $\overline x \in (\mathcal{O}_{L} / \mathfrak{M}_{L}^{T})^{\times}$ to
$x \in \mathcal{O}_{L}$ and
sends it to the ideal class $[(x)] \in \cl_{L}^{T}$ of the principal ideal $(x)$.

\subsection{The Brumer and Brumer--Stark conjectures for abelian extensions}\label{subsec:B-and-BS-for-abelian-extns}
We now specialise to the case in which $L/K$ is an abelian CM extension of number fields.
In other words,
$K$ is totally real and $L$ is a finite abelian extension of $K$ that is a CM field. 
Let $\mu_{L}$ and $\cl_{L}$ denote the roots of unity and the class group of $L$, respectively.

Let $S_{\ram}(L/K)$ be the set of all places of $K$ that ramify in $L/K$.
It was shown independently by Pi.\ Cassou-Nogu\`es \cite{MR524276} and by Deligne and Ribet 
\cite{MR579702} that 
\begin{equation}\label{eq:integrality-of-stickelbeger-element}
\Ann_{\Z[G]}(\mu_{L})\theta_{S} \subseteq \Z[G]. 
\end{equation}
Brumer's conjecture simply asserts that $\Ann_{\Z[G]}(\mu_{L})\theta_{S}$ annihilates $\cl_{L}$ and 
in the case $K=\Q$ this is essentially Stickelberger's theorem \cite{MR1510649}.

\begin{hypothesis*}
Let $S$ and $T$ be finite sets of places of $K$. 
We say that $\Hyp(S,T)=\Hyp(L/K,S,T)$ is satisfied if
(i)
$S_{\ram}(L/K) \cup S_{\infty} \subseteq S$,
(ii)
$S \cap T = \emptyset$, and
(iii)
$E_{L}^T$ is torsionfree.
\end{hypothesis*}

\begin{remark}\label{rmk:conditions-on-T}
Condition (iii) means that there are no non-trivial roots of unity of $L$ congruent to $1$ modulo all primes in $T(L)$. 
In particular, this forces $T$ to be non-empty and will be satisfied if $T$ contains primes of at least two different residue characteristics or at least one prime of sufficiently large norm. 
\end{remark}

If $S$ and $T$ are finite sets of places of $K$ satisfying $\Hyp(S,T)$ then \eqref{eq:integrality-of-stickelbeger-element}
implies that $\theta_{S}^{T} \in \Z[G]$. Moreover, given a finite set $S$ of places of $K$ such that
$S_{\ram}(L/K) \cup S_{\infty} \subseteq S$, Brumer's conjecture for $S$ holds if and only if  $\theta_{S}^{T} \in \Ann_{\Z[G]}(\cl_{L})$
for every finite set of places $T$ of $K$ such that $\Hyp(S,T)$ is satisfied (see \cite[Corollary 2.9]{nickel-conjectures}). 
The following strengthening of Brumer's conjecture was stated by Tate and
is known as the Brumer--Stark conjecture.

\begin{conj}\label{conj:Brumer--Stark}
For every pair $S,T$ of finite sets of places of $K$ satisfying $\Hyp(S,T)$ we have
$\theta_{S}^{T} \in \Ann_{\Z[G]}(\cl_{L}^{T})$.
\end{conj}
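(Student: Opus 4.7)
The plan is to reduce the conjecture prime by prime and attack each odd prime via the Iwasawa-theoretic machinery developed earlier in the paper, leaving the $2$-part aside as genuinely separate. Since $\cl_L^T$ is a finite abelian group, it decomposes as $\bigoplus_p (\cl_L^T)_p$, so it suffices to show that $\theta_S^T$ annihilates each $p$-primary part. For $p$ odd, one further reduces by multiplying by a suitable idempotent to pass to minus parts, where complex conjugation acts as $-1$; this is the side on which Stickelberger elements have genuine content and where the special values $L_S(0,\chi)$ for odd characters are nonzero.

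Next, I would pass to an Iwasawa tower. Let $\mathcal{K}$ denote the cyclotomic $\Z_p$-extension of $K$ and set $\mathcal{L} := L^+ \cdot \mathcal{K}$, so that $\mathcal{L}/K$ is an abelian admissible one-dimensional $p$-adic Lie extension in the sense of the introduction. Theorem \ref{thm:EIMC-abelian-exts} provides the EIMC (with uniqueness) for $\mathcal{L}/K$, which identifies an appropriate $p$-adic $L$-function $\xi_S \in K_1$ of a suitable localisation of the Iwasawa algebra with a canonical element coming from the complex $C_S^\bullet$. In particular, at infinite level one obtains a description of the initial Fitting ideal of the minus part of the relevant Iwasawa module in terms of a Stickelberger-type element whose image under the natural augmentation to $\Z_p[G]$ is precisely (the $p$-part of) $\theta_S^T$, after incorporating the $T$-modification through the factor $\delta_T(s)$ introduced in \eqref{eq:def-delta-T}.

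The final step is the descent from infinite level to finite level. Using the EIMC and the perfectness of $C_S^\bullet$, one obtains a four-term exact sequence relating $\cl_L^T \otimes \Z_p$ (in its minus part) to a quotient of $Y_S$ by an appropriate image, with the action of $\theta_S^T$ controlled by the characteristic ideal computed in the previous paragraph. Combining this with the exact sequence \eqref{eqn:ray-class-sequence} and Hypothesis $\Hyp(S,T)$ (which guarantees that $E_L^T$ is torsionfree and hence that the $T$-modification kills the unit contribution), one concludes that $\theta_S^T$ annihilates the $p$-part of $\cl_L^T$ for every odd $p$.

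The hard part will be the descent step, which is where passing from Fitting-ideal control at infinite level to annihilator-ideal control at finite level is delicate: Fitting ideals are not well-behaved under quotients for non-cyclic modules, so one must either argue via dualities (Pontryagin/Iwasawa adjoint) or directly via the perfect complex $C_S^\bullet$, taking care of the distinction between the kernel and the cokernel of the translated presentation $Y_S \to \Lambda(\mathcal{G})$. The $p=2$ case is not accessible by this approach because the plus/minus decomposition is invisible $2$-adically and complex conjugation cannot be separated from the rest of $G$; it would require the independent input of Dasgupta--Kakde \cite{dasgupta-kakde}, which is why the applications in this paper (such as Theorem \ref{thm:Coates--Sinnott}) are stated away from the $2$-primary part.
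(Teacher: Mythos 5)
Conjecture~\ref{conj:Brumer--Stark} is a \emph{conjecture}, not a theorem in this paper: the paper offers no proof of it, and in its stated form (with coefficient ring $\Z[G]$, hence including the $2$-primary part) it remains open. So there is no ``paper's own proof'' against which to compare, and a proposal that purports to establish the full statement cannot be correct. What the paper does record, in the discussion immediately following Theorem~\ref{thm:strong-brumer-stark}, is that the odd $p$-parts of Conjecture~\ref{conj:Brumer--Stark} follow from the Dasgupta--Kakde theorem by three one-line observations: the Fitting ideal is contained in the annihilator, $\Ann_{\Z_p[G]_-}(M)=\Ann_{\Z_p[G]_-}(M^\vee)^{\#}$ for any finite $\Z_p[G]_-$-module $M$, and $j$ acts as $-1$ on $\theta_S^T$ so that annihilating $\cl_L^T\otimes\Z_p$ is equivalent to annihilating its minus part. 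That is the entire argument for odd $p$; nothing Iwasawa-theoretic is needed.

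Your proposed route is therefore both overcomplicated and, within the logic of this paper, circular. You invoke Theorem~\ref{thm:EIMC-abelian-exts} (the abelian EIMC) and then try to descend to obtain annihilation of $\cl_L^T$. But the paper's proof of Theorem~\ref{thm:EIMC-abelian-exts} \emph{depends} on the strong Brumer--Stark theorem: it enters through Proposition~\ref{prop:psiST-in-Fitt}, which is the key input in \S\ref{sec:proof-of-EIMC-abelian-exts}. You would be using a theorem whose proof rests on (a strengthening of) the very statement you are trying to prove, so the argument has no independent content. Moreover, the descent step you flag as ``the hard part'' is exactly where known counterexamples live: Greither and Kurihara (Remark~\ref{rmk:GK-counter-example-to-dual-stong-BS}) show that $\theta_S^{\#}\in\Fitt_{\Z_p[G]_-}(A_L^\vee)$ can fail, so passing from an Iwasawa-level Fitting-ideal statement to a finite-level annihilation statement by naive specialisation is not merely delicate but can be false without the precise $T$-modified, dualised formulation that Dasgupta--Kakde prove directly. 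Finally, your closing remark that the $2$-part ``would require the independent input of Dasgupta--Kakde'' is mistaken: their result is also restricted to odd $p$ (see Theorem~\ref{thm:strong-brumer-stark}), and the $2$-part of Conjecture~\ref{conj:Brumer--Stark} remains open.
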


In fact, as explained in \cite[\S 1]{MR4513146}, Conjecture \ref{conj:Brumer--Stark} is slightly different from the actual statement proposed by Tate  \cite[Conjecture IV.6.2]{MR782485}, but it is the former that will be the most convenient for our purposes. We also note that Conjecture \ref{conj:Brumer--Stark} decomposes into local conjectures at each prime $p$ after replacing
$\cl_{L}^{T}$ by $\Z_{p} \otimes_{\Z} \cl_{L}^{T}$.

For generalisations of the Brumer--Stark conjecture
to not necessarily abelian extensions, we refer the interested reader
to the survey article \cite{nickel-conjectures}.

\subsection{The strong Brumer--Stark conjecture for abelian extensions}\label{subsec:strong-BS-for-abelian-extns}
Let $j$ denote the unique complex conjugation in $G$. 
For a $G$-module $M$ we write $M^{+}$ and $M^{-}$ for the submodules of $M$ upon which $j$ acts as $1$ and $-1$, respectively.
In particular, we shall be interested in $(\Z_{p} \otimes_{\Z} \cl_{L}^{T})^-$ for odd primes $p$; 
we will abbreviate this module to $A_{L}^{T}$ when $p$ is clear from context;
if $T$ is empty we further abbreviate this to $A_{L}$.
Note that $A_{L}^{T}$ and $(A_{L}^{T})^{\vee}$ are modules of finite cardinality over the ring $\Z_{p}[G]_{-} := \Z_{p}[G]/(1+j)$.
The following result was conjectured independently by Greither and by Kurihara \cite{MR4402386}
and is known as the strong Brumer--Stark conjecture;
it was recently proven in groundbreaking work of Dasgupta and Kakde \cite[Corollary 3.8]{MR4513146}
(in fact, they also prove an even stronger conjecture of Kurihara \cite[Conjecture 3.2]{MR4402386}).

\begin{theorem}\label{thm:strong-brumer-stark}
Let $p$ be an odd prime and let $S,T$ be finite sets of places of $K$ such that $\Hyp(S,T)$ is satisfied.
Then $(\theta_{S}^{T})^{\#} \in \Fitt_{\Z_{p}[G]_{-}}((A_{L}^{T})^{\vee})$. 
\end{theorem}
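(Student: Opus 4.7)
The plan is to reduce the statement to an Iwasawa-theoretic analogue and then descend. Let $L_{\infty}$ denote the cyclotomic $\Z_{p}$-extension of $L$ and set $\mathcal{G} := \Gal(L_{\infty}/K)$, so that $L_{\infty}/K$ is an abelian admissible one-dimensional $p$-adic Lie extension. Write $X^{T,-}_{\infty}$ for the inverse limit, along norm maps, of the $\Z_{p}[\Gal(L_{n}/K)]_{-}$-modules $(A^{T}_{L_{n}})^{\vee}$ attached to the finite layers $L_{n}$ of the tower. The target is to show that, over the Iwasawa algebra $\Lambda(\mathcal{G})_{-}$, the Fitting ideal $\Fitt_{\Lambda(\mathcal{G})_{-}}(X^{T,-}_{\infty})$ is generated by a distinguished element encoding the equivariant $p$-adic $L$-function produced by Deligne--Ribet and Cassou-Nogu\`es. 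Once this is in hand, the theorem follows by projecting from $\Lambda(\mathcal{G})_{-}$ to $\Z_{p}[G]_{-}$ and using compatibility of Fitting ideals under base change; the hypothesis $\Hyp(S,T)$ ensures that $E_{L}^{T}$ is torsion-free, which is what forces the relevant finite-level module to have projective dimension one and makes the descent of Fitting ideals clean.

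The engine of the infinite-level statement is a group-ring-valued refinement of Ribet's method. One constructs cuspidal Hilbert modular forms over $K$ that are congruent, modulo arbitrarily high powers of $p$, to Eisenstein series whose constant terms along the cusps encode $(\theta_{S}^{T})^{\#}$ as an element of the equivariant group ring. The attached $p$-adic Galois representations are residually reducible with two constituents differing by a Tate twist; a careful choice of $\mathcal{G}$-stable lattice then produces non-split extensions whose associated cocycle classes are unramified outside $S$ and take values in a module that surjects onto $X^{T,-}_{\infty}$. Repeating the construction with enough independent deformation parameters yields an $R=T$ type identification of a universal deformation ring with a Hecke algebra, and comparing characteristic ideals on both sides yields the required Fitting ideal divisibility. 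The opposite divisibility follows from the classical Iwasawa main conjecture of Wiles together with an analytic class number formula style input that pins down sizes.

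The principal obstacle, which is exactly what makes the statement \emph{strong} rather than a mere annihilation result, is extracting enough cohomology classes to recover the whole Fitting ideal and not just some ideal of annihilators. Classical Ribet--Wiles arguments naturally produce annihilators via individual characters; here one instead needs to work systematically at the level of the equivariant group ring, which demands group-ring-valued modular forms and a sharp understanding of the Hecke algebra's structure as a $\Lambda(\mathcal{G})_{-}$-module. This is the heavy lifting. Everything subsequent, the descent from infinite to finite level, the passage from $\Lambda(\mathcal{G})_{-}$ to $\Z_{p}[G]_{-}$, and the bookkeeping of the smoothing set $T$, is essentially formal once the Iwasawa-theoretic Fitting ideal identification is secured.
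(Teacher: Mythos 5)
The paper does not prove this theorem at all: it is quoted verbatim from the recent work of Dasgupta and Kakde \cite[Corollary~3.8]{dasgupta-kakde} and treated as a black box. Your proposal sketches a from-scratch proof of a deep result that the paper only cites, so there is no ``paper's own proof'' to compare against beyond that citation.

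As a sketch of the Dasgupta--Kakde strategy, your proposal gets the broad flavour (Eisenstein congruences, group-ring-valued Hilbert modular forms, a refinement of Ribet's method that produces the full Fitting ideal rather than mere annihilators) but has the architecture backwards at a critical point. You propose to first establish an Iwasawa-theoretic identity $\Fitt_{\Lambda(\mathcal{G})_{-}}(X^{T,-}_{\infty}) = (\Psi)$ and then descend to finite level, asserting the descent is ``essentially formal.'' It is not. Descending Fitting-ideal identities from the Iwasawa algebra to finite group rings requires controlling the coinvariants $X^{T,-}_{\infty}$ modulo $\omega_n$ and identifying them with $(A^T_{L_n})^{\vee}$, and this is exactly the kind of step that fails naively; the Greither--Kurihara counterexamples to the ``dual'' version of this very statement (recalled in Remark~\ref{rmk:GK-counter-example-to-dual-stong-BS}) illustrate how fragile such passages are. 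In fact, the paper uses the theorem in the opposite direction: Theorem~\ref{thm:strong-brumer-stark} at each finite layer $L_n$ is fed into Theorem~\ref{thm:GK-inverse-limit-Fitting-ideals} to obtain the Iwasawa-level containment in Proposition~\ref{prop:psiST-in-Fitt}, i.e.~one goes from finite to infinite level, not the reverse. Dasgupta--Kakde likewise prove the finite-level Fitting-ideal statement directly (via a careful analysis of a Ritter--Weiss-type module and a group-ring-valued Ribet argument), not by proving an infinite-level statement and descending, and they do not run an $R=T$ modularity-lifting argument as you describe. Finally, the role of $\Hyp(S,T)$ is primarily to force $\theta^T_S$ to be integral and to control torsion in ray class groups (cf.~Remark~\ref{rmk:conditions-on-T}); it does not by itself give $(A^T_L)^{\vee}$ finite projective dimension over $\Z_p[G]_{-}$, and indeed computing its Fitting ideal without such structural hypotheses is precisely the difficulty.
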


Theorem \ref{thm:strong-brumer-stark} can be seen as a refinement of the `$p$-part' of 
Conjecture \ref{conj:Brumer--Stark} (with $p$ odd), once we observe that: 
(i) the (initial) Fitting ideal of a module is contained in its annihilator; 
(ii) 
$
\Ann_{\Z_{p}[G]_{-}}(M) = \Ann_{\Z_{p}[G]_{-}}(M^{\vee})^{\#}
$
for every $\Z_{p}[G]_{-}$-module $M$ of finite cardinality; and (iii) $j$ acts as $-1$ on $\theta_{S}^{T}$, 
so the element $\theta_{S}^{T}$ annihilates a $\Z_{p}[G]$-module $M$ if and only if it annihilates $M^{-}$.

\begin{remark}\label{rmk:GK-counter-example-to-dual-stong-BS}
Greither and Kurihara \cite{MR2443336} have given counterexamples to 
the `dual' version of Theorem \ref{thm:strong-brumer-stark}, which asserts that 
$\theta_{S}^{T} \in \Fitt_{\Z_{p}[G]_{-}}(A_{L}^{T})$ under the same hypotheses. 
They have also given counterexamples to the assertion
$\theta_{S}^{\#} \in \Fitt_{\Z_{p}[G]_{-}}(A_{L}^{\vee})$ \cite[\S 0.1]{MR3589224}
(see also \cite{MR2918914}).
\end{remark}

\begin{remark}\label{rmk:only-weaker-version-of-SBS-needed}
For the proof of Theorem \ref{thm:EIMC-abelian-exts},
we shall only require a weaker version of
Theorem \ref{thm:strong-brumer-stark} with the additional hypothesis that $S$ contains
all the places of $K$ above $p$.
\end{remark}

\section{Algebraic $K$-theory and complexes}\label{sec:K-theory-complexes}

\subsection{Preliminaries on algebraic $K$-theory}\label{subsec:K-theory}
Let $\Lambda$ be a ring. We denote the category of finitely generated projective (left) $\Lambda$-modules by 
$\PMod(\Lambda)$.
We write $K_{0}(\Lambda)$ for the Grothendieck group of $\PMod(\Lambda)$ (see \cite[\S 38]{MR892316})
and $K_{1}(\Lambda)$ for the Whitehead group (see \cite[\S 40]{MR892316}).
Let $K_{0}(\Lambda, \Lambda')$ denote the relative algebraic $K$-group associated to a ring homomorphism
$\Lambda \rightarrow \Lambda'$.
We recall that $K_{0}(\Lambda, \Lambda')$ is an abelian group with generators $[X,g,Y]$ where
$X$ and $Y$ are objects of $\PMod(\Lambda)$
and $g:\Lambda' \otimes_{\Lambda} X \rightarrow \Lambda' \otimes_{\Lambda} Y$ is an isomorphism of $\Lambda'$-modules;
for a full description in terms of generators and relations, we refer the reader to \cite[p.\ 215]{MR0245634}.
Moreover, there is a long exact sequence of relative $K$-theory (see \cite[Chapter 15]{MR0245634})
\begin{equation}\label{eqn:long-exact-seq}
K_{1}(\Lambda) \longrightarrow K_{1}(\Lambda') \stackrel{\partial}{\longrightarrow} K_{0}(\Lambda, \Lambda')
\longrightarrow K_{0}(\Lambda) \longrightarrow K_{0}(\Lambda').
\end{equation}

\subsection{Complexes of modules over orders in separable algebras}\label{subsec:orders-sep-al-nr}
Let $R$ be a noetherian integral domain.
Let $A$ be a finite-dimensional separable
$Quot(R)$-algebra and let $\mathfrak{A}$ be an $R$-order in $A$.
Note that $\mathfrak{A}$ is both left and right noetherian, since $\Lambda$ is finitely generated over $R$.
The reduced norm map $\nr = \nr_{A}: A \rightarrow \zeta(A)$ is defined componentwise on the Wedderburn decomposition of $A$
and extends to matrix rings over $A$ (see \cite[\S 7D]{MR632548}); thus
it induces a map $K_{1}(A) \rightarrow \zeta(A)^{\times}$, which we also denote by $\nr$.

Let $\mathcal{C}^{b} (\PMod (\mathfrak{A}))$ be the category of 
bounded (cochain) complexes of finitely generated projective $\mathfrak{A}$-modules,
and let $\mathcal{C}^{b}\tor (\PMod (\mathfrak{A}))$ be the full subcategory of complexes
whose cohomology modules are $R$-torsion.
The relative algebraic $K$-group $K_{0}(\mathfrak{A}, A)$ identifies with the Grothendieck group whose generators are $[C^{\bullet}]$, where $C^{\bullet}$
is an object of $\mathcal{C}^{b}\tor(\PMod(\mathfrak{A}))$, and whose relations are as follows: $[C^{\bullet}] = 0$ if $C^{\bullet}$ is acyclic, and
$[C_{2}^{\bullet}] = [C_{1}^{\bullet}] + [C_{3}^{\bullet}]$ for every short exact sequence
\begin{equation}\label{eq:SES-of-complexes}
0 \longrightarrow C_{1}^{\bullet} \longrightarrow C_{2}^{\bullet} \longrightarrow C_{3}^{\bullet} \longrightarrow 0
\end{equation}
in $\mathcal C^{b}\tor(\PMod(\mathfrak{A}))$ (see \cite[Chapter 2]{MR3076731} or \cite[\S 2]{MR3068893}, for example). 

Let $\mathcal{D} (\mathfrak{A})$ be the derived category of $\mathfrak{A}$-modules.
A complex of $\mathfrak{A}$-modules is said to be perfect if it is
isomorphic in $\mathcal{D} (\mathfrak{A})$ to an element of $\mathcal{C}^{b}(\PMod (\mathfrak{A}))$.
We denote the full triangulated subcategory of
$\mathcal{D} (\mathfrak{A})$ comprising perfect complexes by $\mathcal{D}^{\perf} (\mathfrak{A})$,
and the full triangulated subcategory
comprising perfect complexes whose cohomology modules are $R$-torsion by $\mathcal{D}^{\perf}\tor (\mathfrak{A})$.
Then any object of $\mathcal{D}^{\perf}\tor (\mathfrak{A})$ defines an element in $K_{0}(\mathfrak{A}, A)$.
In particular, a finitely generated $R$-torsion $\mathfrak{A}$-module $M$ of finite projective dimension 
considered as a complex concentrated in degree $0$ defines an element $[M]$ in $K_{0}(\mathfrak{A}, A)$.

For any integer $n$ and any cochain complex $C^{\bullet}$ of $\mathfrak{A}$-modules we write
$C[n]^{\bullet}$ for the $n$-shifted complex 
given by $C[n]^{i}=C^{n+i}$ with differential $d_{C[n]}^{i}=(-1)^{n}d_{C}^{n+i}$.
Note that if $C^{\bullet} \in \mathcal{D}^{\perf}\tor (\mathfrak{A})$
then $[C[n]^{\bullet}] = (-1)^n [C^{\bullet}]$ in $K_{0}(\mathfrak{A}, A)$.

The following result is certainly well known to
experts, but there does not appear to be a precise reference in the literature
(it essentially follows from \cite[Proposition 3.1]{MR2200204}).

\begin{prop}\label{prop:perfect-cohomology-fpd}
Suppose that every finitely generated $\mathfrak{A}$-module is of finite projective dimension.
Then for each object $C^{\bullet}$ of
$\mathcal{D}^{\perf}\tor (\mathfrak{A})$ we have
$[C^{\bullet}]=\sum_{i \in \Z} (-1)^{i}[H^{i}(C^{\bullet})]$ in
$K_{0}(\mathfrak{A},A)$.
\end{prop}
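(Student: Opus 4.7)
The plan is to proceed by induction on the number of non-vanishing cohomology modules of $C^\bullet$. Since $C^\bullet$ is perfect and its cohomology is $R$-torsion, I may first replace it (without changing its class in $K_0(\mathfrak{A},A)$) by a quasi-isomorphic object of $\mathcal{C}^b\tor(\PMod(\mathfrak{A}))$; then $H^i(C^\bullet)$ is nonzero for only finitely many $i$, and each nonzero $H^i(C^\bullet)$ is a finitely generated $R$-torsion $\mathfrak{A}$-module. By the hypothesis, each such $H^i(C^\bullet)$ has finite projective dimension; since $\mathfrak{A}$ is noetherian, it admits a bounded resolution by finitely generated projective $\mathfrak{A}$-modules. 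Thus, when regarded as a complex concentrated in degree $0$, each $H^i(C^\bullet)$ defines a class $[H^i(C^\bullet)] \in K_0(\mathfrak{A},A)$, so the right-hand side of the claimed identity makes sense.

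For the base case, suppose $H^i(C^\bullet) = 0$ for all $i \neq i_0$ and set $M := H^{i_0}(C^\bullet)$. Then $C^\bullet$ and $M[-i_0]$ are both quasi-isomorphic to the good truncation $\tau_{\leq i_0} C^\bullet$, so they are isomorphic in $\mathcal{D}^{\perf}\tor(\mathfrak{A})$; the shift identity recorded just before the proposition now yields $[C^\bullet] = [M[-i_0]] = (-1)^{i_0}[M]$. For the inductive step, let $i_0$ be the smallest index with $H^{i_0}(C^\bullet) \neq 0$ and consider the truncation triangle
\[
\tau_{\leq i_0} C^\bullet \longrightarrow C^\bullet \longrightarrow \tau_{> i_0} C^\bullet \longrightarrow
\]
in which $\tau_{\leq i_0} C^\bullet \simeq H^{i_0}(C^\bullet)[-i_0]$ and $\tau_{> i_0} C^\bullet$ has strictly fewer nonzero cohomology modules than $C^\bullet$. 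Using the hypothesis to produce bounded projective representatives, this triangle can be realised as an actual short exact sequence of the form \eqref{eq:SES-of-complexes} in $\mathcal{C}^b\tor(\PMod(\mathfrak{A}))$, and the additivity relation in $K_0(\mathfrak{A},A)$ then gives
\[
[C^\bullet] = (-1)^{i_0}[H^{i_0}(C^\bullet)] + [\tau_{> i_0} C^\bullet].
\]
Applying the induction hypothesis to $\tau_{> i_0} C^\bullet$ completes the argument.

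The main technical point is lifting the truncation distinguished triangle to an honest short exact sequence in $\mathcal{C}^b\tor(\PMod(\mathfrak{A}))$, since the defining relations of $K_0(\mathfrak{A},A)$ are formulated in those terms rather than in terms of distinguished triangles. Concretely, one chooses a bounded resolution of $H^{i_0}(C^\bullet)$ by finitely generated projective $\mathfrak{A}$-modules — whose existence is precisely the content of the hypothesis — and splices it with a projective representative of $C^\bullet$ to produce the required short exact sequence. Without the finite-projective-dimension assumption this splicing would not terminate, which is exactly why the hypothesis is imposed in the statement.
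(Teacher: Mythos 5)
Your proof is correct, but it follows a genuinely different route from the paper's. The paper first proves, by a downward induction in which torsion terms are successively introduced, that $C^{\bullet}$ is quasi-isomorphic to a bounded complex $D^{\bullet}$ of finitely generated $R$-torsion $\mathfrak{A}$-modules; it then reads off the alternating sum directly from the two short exact sequences of complexes $0 \to BD^{\bullet} \to ZD^{\bullet} \to HD^{\bullet} \to 0$ and $0 \to ZD^{\bullet} \to D^{\bullet} \to BD^{\bullet}[1] \to 0$, using that the hypothesis makes every bounded complex of finitely generated modules perfect. You instead perform an induction on the number of non-vanishing cohomology groups using the good-truncation filtration, with the one-degree case (essentially Proposition~\ref{prop:perfect-cohomology-acyclic-outside-one-degree}, simplified by the finite-projective-dimension hypothesis) as the base. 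Both work. The paper's argument avoids any explicit use of distinguished triangles, staying entirely within the short-exact-sequence relations defining $K_{0}(\mathfrak{A},A)$ once the torsion-replacement claim is in hand; your argument avoids that preliminary replacement step entirely, at the cost of the slightly hand-waved passage from the truncation triangle to an honest short exact sequence in $\mathcal{C}^{b}_{\mathrm{tor}}(\PMod(\mathfrak{A}))$. That passage is standard but deserves to be spelled out: choose a bounded complex $P^{\bullet}$ of finitely generated projectives quasi-isomorphic to $H^{i_{0}}(C^{\bullet})[-i_{0}]$ (existence is the hypothesis), lift the derived-category map $\tau_{\le i_{0}}C^{\bullet} \to C^{\bullet}$ to a genuine chain map $P^{\bullet} \to C^{\bullet}$ (possible because $P^{\bullet}$ is a bounded-above complex of projectives), and take the mapping cone; the resulting degree-wise split short exact sequence $0 \to C^{\bullet} \to \mathrm{cone} \to P^{\bullet}[1] \to 0$ together with the quasi-isomorphism $\mathrm{cone} \simeq \tau_{>i_{0}}C^{\bullet}$ yields exactly the additivity relation you invoke.
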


\begin{proof}
Note that \cite[Chapter XX, \S 1, Proposition 1.1]{MR1878556} and the hypothesis on $\mathfrak{A}$
imply that every bounded (cochain) complex of finitely generated
$\mathfrak{A}$-modules is perfect.
We claim that the complex $C^{\bullet}$ is isomorphic in $\mathcal{D}(\mathfrak{A})$
to a bounded complex $D^{\bullet}$ of finitely generated $R$-torsion modules.
Then $[C^{\bullet}] = [D^{\bullet}]$ in $K_{0}(\mathfrak{A},A)$
and thus we can and do assume that $C^{\bullet} = D^{\bullet}$.
We write $ZC^{\bullet}$, $BC^{\bullet}$ and $HC^{\bullet}$ for the complexes
of cocycles, coboundaries and cohomologies of $C^{\bullet}$, respectively,
each with zero differentials.
Note that all these complexes are in  $\mathcal{D}^{\perf}\tor (\mathfrak{A})$ 
and hence define elements in $K_{0}(\mathfrak{A},A)$.
Now the short exact sequences of complexes
\[
0 \longrightarrow BC^{\bullet} \longrightarrow ZC^{\bullet} \longrightarrow 
HC^{\bullet} \longrightarrow 0, \qquad 0 \longrightarrow
ZC^{\bullet} \longrightarrow C^{\bullet} \longrightarrow BC^{\bullet}[1] \longrightarrow 0
\]
imply that we have
\[
[C^{\bullet}] = [ZC^{\bullet}] - [BC^{\bullet}] = [HC^{\bullet}] = \sum_{i \in \Z} (-1)^{i}[H^{i}(C^{\bullet})]
\]
in $K_{0}(\mathfrak{A},A)$, as desired. 

It remains to show the claim.
We can and do assume that $C^{\bullet}$ is of the form
\[
\cdots \rightarrow 0 \rightarrow P^0 
\rightarrow \dots \rightarrow P^{j-2} \xrightarrow{d^{j-2}} P^{j-1} 
\xrightarrow{d^{j-1}} P^j \xrightarrow{d^j} T^{j+1} \rightarrow
T^{j+2} \rightarrow \dots \rightarrow T^n \rightarrow 0 \rightarrow \cdots
\]
for some integers $j \leq n$, where each $P^{i}$ and each $T^{i}$ is a finitely generated 
$\mathfrak{A}$-module placed in degree $i$, the $P^{i}$ are projective
and the $T^{i}$ are $R$-torsion (one may always take $j=n$ so that all modules
in the complex are in fact projective). 
We now do downward induction on $j$ to show the claim.
Since both $T^{j+1}$ and $H^j(C^{\bullet})$ are $R$-torsion, there is a
non-zero $r \in R$ such that $rP^j \subseteq \im(d^{j-1})$. As $P^j$ is projective,
there is a necessarily injective map $\varphi: P^j \rightarrow P^{j-1}$
such that $d^{j-1} \circ \varphi$ is multiplication by $r$.  
We set $T^j := P^j/rP^j$. If $j>1$ we define a complex
\[
(\tilde C)^{\bullet}: \cdots \rightarrow P^{j-2} \oplus P^{j} \xrightarrow{\tilde d^{j-2}}
P^{j-1} \xrightarrow{\tilde d^{j-1}} T^{j} \stackrel{\tilde d^j}{\longrightarrow} T^{j+1}
\rightarrow \cdots
\]
where $\tilde d^{j-1}$ and $\tilde d^j$ are induced from $d^{j-1}$ and $d^j$,
respectively, and $\tilde d^{j-2} = (d^{j-2}, \varphi)$. 
Now a straightforward, but rather lengthy diagram chase shows that
the canonical map of complexes $C^{\bullet} \rightarrow (\tilde C)^{\bullet}$ is a quasi-isomorphism.
For this observe that $\varphi(P^j) \cap \ker(d^{j-1}) = 0$ by construction
and hence $\ker(\tilde d^{j-2}) \cong \ker(d^{j-2})$.
For $j=1$ the argument is slightly different.
We set $T^{0} := P^{0} / \varphi(P^{1})$ and let $\tilde d^{0}: T^{0} \rightarrow T^{1}$ be induced from $d^{0}$. 
Then the canonical map of complexes is again a quasi-isomorphism. 
Moreover, since the $\mathfrak{A}$-modules $T^{1}$ and
$\ker(\tilde d^{0}) \simeq H^{0}(C^{\bullet})$ are $R$-torsion, so is $T^{0}$.
\end{proof}

Again, the following result is surely well known to experts, but there does not appear to be a precise reference in the literature.

\begin{prop}\label{prop:perfect-cohomology-acyclic-outside-one-degree}
Let $k \in \Z$ and suppose that $C^{\bullet}$ is an object of
$\mathcal{D}^{\perf}\tor (\mathfrak{A})$ such that $H^{i}(C^{\bullet}) = 0$ for all $i \in \Z - \{ k \}$.
Then $H^{k}(C^{\bullet})$ is of finite projective dimension over $\mathfrak{A}$
and $[C^{\bullet}]=(-1)^{k}[H^{k}(C^{\bullet})]$ in $K_{0}(\mathfrak{A},A)$. 
\end{prop}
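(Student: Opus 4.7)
The plan is to produce, from any representative $P^{\bullet}$ of $C^{\bullet}$ in $\mathcal{C}^{b}(\PMod(\mathfrak{A}))$ supported in degrees $[a,b]$ with $a \le k \le b$, an explicit finite projective resolution of $H^{k}(C^{\bullet})$, and to deduce both assertions from this.

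First I would exploit the vanishing $H^{i}(P^{\bullet}) = 0$ for $i > k$ to conclude that
\[
0 \longrightarrow \ker(d^{k}) \longrightarrow P^{k} \xrightarrow{d^{k}} P^{k+1} \longrightarrow \cdots \longrightarrow P^{b} \longrightarrow 0
\]
is exact. This gives a finite projective resolution of $\ker(d^{k})$, so in particular $\ker(d^{k})$ has finite projective dimension over $\mathfrak{A}$. The vanishing $H^{i}(P^{\bullet}) = 0$ for $i < k$ similarly yields exactness of
\[
0 \longrightarrow P^{a} \longrightarrow \cdots \longrightarrow P^{k-1} \longrightarrow \ker(d^{k}) \longrightarrow H^{k}(C^{\bullet}) \longrightarrow 0.
\]
Combining this sequence with a finite projective resolution of $\ker(d^{k})$ via a horseshoe-lemma argument yields a finite projective resolution $Q^{\bullet} \to H^{k}(C^{\bullet})$ with $Q^{\bullet}$ supported (after appropriate indexing) in degrees $\le k$. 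This establishes that $H^{k}(C^{\bullet})$ has finite projective dimension over $\mathfrak{A}$. Moreover, by construction $Q^{\bullet}$ viewed in cohomological degrees $\le k$ is an object of $\mathcal{C}^{b}(\PMod(\mathfrak{A}))$ quasi-isomorphic to $H^{k}(C^{\bullet})[-k]$ and hence to $C^{\bullet}$.

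For the class identity, the fact that quasi-isomorphic complexes in $\mathcal{C}^{b}\tor(\PMod(\mathfrak{A}))$ define the same element of $K_{0}(\mathfrak{A},A)$ combined with the shift relation $[D[n]^{\bullet}] = (-1)^{n}[D^{\bullet}]$ recalled in Section \ref{subsec:orders-sep-al-nr} gives
\[
[C^{\bullet}] = [Q^{\bullet}] = [H^{k}(C^{\bullet})[-k]] = (-1)^{-k}[H^{k}(C^{\bullet})] = (-1)^{k}[H^{k}(C^{\bullet})],
\]
where the final expression denotes the class of $H^{k}(C^{\bullet})$ viewed as a complex concentrated in degree $0$, as in Section \ref{subsec:orders-sep-al-nr}.

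The main obstacle is the horseshoe-style splicing that produces $Q^{\bullet}$ from the two exact sequences above: one has to choose projective lifts carefully so that the concatenated complex is both exact as an augmented resolution of $H^{k}(C^{\bullet})$ and bounded. Everything else is formal: the finite projective dimension of $H^{k}(C^{\bullet})$ is immediate from the existence of $Q^{\bullet}$, and the $K$-theoretic identity follows from the shift formula applied to the derived-category isomorphism $C^{\bullet} \simeq H^{k}(C^{\bullet})[-k]$.
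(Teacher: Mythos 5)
Your overall strategy is the same as the paper's: split the complex at degree $k$, use acyclicity to the right of $k$ to simplify the tail, use acyclicity to the left of $k$ to recognize a projective resolution of $H^{k}(C^{\bullet})$, and finish with the shift formula in $K_{0}(\mathfrak{A},A)$. However, there is a genuine error in the middle step. The exact sequence
\[
0 \longrightarrow \ker(d^{k}) \longrightarrow P^{k} \longrightarrow \cdots \longrightarrow P^{b} \longrightarrow 0
\]
is a \emph{coresolution} of $\ker(d^{k})$ by projectives, not a projective resolution (the projectives sit on the wrong side), so it does not ``give a finite projective resolution of $\ker(d^{k})$'' as you claim, and the subsequent appeal to a horseshoe-lemma splicing of the two exact sequences does not apply to this configuration. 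What the coresolution \emph{does} give you, by peeling off the rightmost term repeatedly --- $P^{b}$ projective forces $0 \to \ker(d^{b-1}) \to P^{b-1} \to P^{b} \to 0$ to split, so $\ker(d^{b-1})$ is projective, and then $\ker(d^{b-2})$, and so on down --- is that $\ker(d^{k})$ is itself a finitely generated \emph{projective} $\mathfrak{A}$-module, not merely of finite projective dimension. Once you record that, your second exact sequence
\[
0 \longrightarrow P^{a} \longrightarrow \cdots \longrightarrow P^{k-1} \longrightarrow \ker(d^{k}) \longrightarrow H^{k}(C^{\bullet}) \longrightarrow 0
\]
is already a finite projective resolution of $H^{k}(C^{\bullet})$; no horseshoe lemma is needed, and the rest of your argument (the quasi-isomorphism to $H^{k}(C^{\bullet})[-k]$ and the shift identity $[C^{\bullet}]=(-1)^{k}[H^{k}(C^{\bullet})]$) goes through as written. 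This is precisely how the paper argues: the downward induction on the top degree $n$ is the iterated splitting that establishes projectivity of $\ker(d^{k})$, after which the truncated complex is an augmented projective resolution of $H^{k}(C^{\bullet})$ placed in degree $k$.
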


\begin{proof}
We can and do assume without loss of generality that $k>0$ and that $C^{\bullet}$ is of the form
\[
\cdots \rightarrow 0 \rightarrow P^{0} 
\rightarrow \dots \rightarrow P^{k-1} 
\xrightarrow{d^{k-1}} P^{k} \stackrel{d^k}{\longrightarrow} P^{k+1} \rightarrow
\dots \rightarrow P^{n-1} \xrightarrow{d^{n-1}} P^{n} \rightarrow 0 \rightarrow \cdots,
\]
where $k \leq n$ and each $P^{i}$ is a finitely generated projective $\mathfrak{A}$-module
placed in degree $i$.
If $k < n$ then $C^{\bullet}$ is exact in degree $n$, so we have the short exact sequence 
\[
0 \longrightarrow \ker(d^{n-1}) \longrightarrow P^{n-1} \xrightarrow{d^{n-1}} P^{n} \longrightarrow 0,
\] 
and since $P^{n}$ is projective we conclude that $\ker(d^{n-1})$ is also projective.
Thus without loss of generality we can and do replace $P^{n}$ by $0$ and $P^{n-1}$ by $\ker(d^{n-1})$.
Therefore by downward induction on $n$ we can and do assume that in fact $k=n$.
This immediately implies that $H^{k}(C^{\bullet})$ is of finite projective dimension over $\mathfrak{A}$.
Moreover, it is straightforward to check that the canonical morphism of complexes from $C^{\bullet}$
to the complex consisting of $H^{k}(C^{\bullet})$ concentrated in degree $k$ is a quasi-isomorphism.
\end{proof}

\section{Algebraic $K$-theory for Iwasawa algebras}\label{sec:K-theory-Iwasawa-algebras}

\subsection{Iwasawa algebras of admissible $p$-adic Lie groups of dimension one}\label{subsec:Iwasawa-algebras}
Let $p$ be a prime and let $\mathcal{G}$ be a one-dimensional $p$-adic Lie group. 
Suppose that $\mathcal{G}$ is admissible, which we define to mean that 
$\mathcal{G}$ contains a finite normal subgroup $H$ 
such that $\overline{\Gamma} := \mathcal{G}/H$ is a pro-$p$-group isomorphic to $\Z_{p}$.
Then $\mathcal{G}$ is compact and
the argument given in \cite[\S 1]{MR2114937} shows that the short exact sequence
\[
1 \longrightarrow H \longrightarrow \mathcal{G} \longrightarrow \overline{\Gamma} \longrightarrow 1
\]
splits. Thus we obtain a semidirect product $\mathcal{G} = H \rtimes \Gamma$ where $\Gamma \leq \mathcal{G}$ and $\Gamma \simeq \overline{\Gamma} \simeq \Z_{p}$.
Note that the image under the canonical projection map
$\mathcal{G} \twoheadrightarrow \overline{\Gamma}$ 
of any element of $\mathcal{G}$ of finite order is also of finite order and hence must be trivial.
Thus $H$ is equal to the subset of $\mathcal{G}$ of elements of finite order.
Therefore $H$ and $\overline{\Gamma}$ are uniquely determined by $\mathcal{G}$, though the choice of 
$\Gamma$ need not be. 
We fix a topological generator $\gamma$ of $\Gamma$. 
Let $\overline{\gamma} := \gamma \bmod H$ and note that this a topological generator of 
$\overline{\Gamma}$.
Since any homomorphism $\Gamma \rightarrow \Aut(H)$ must have open kernel, we may choose a 
non-negative integer $n$ such that $\gamma^{p^n}$ is central in $\mathcal{G}$. 
We fix such an $n$ and put $\Gamma_{0} := \Gamma^{p^n} \simeq \Z_{p}$.

The Iwasawa algebra of $\mathcal{G}$ is
$
\Lambda(\mathcal{G}) := \Z_{p}\llbracket\mathcal{G}\rrbracket = \varprojlim \Z_{p}[\mathcal{G}/\mathcal{N}]
$,
where the inverse limit is taken over all open normal subgroups $\mathcal{N}$ of $\mathcal{G}$.
Let $F$ be a finite field extension of $\Q_{p}$  with ring of integers $\mathcal{O}=\mathcal{O}_{F}$,
and put 
\[
\Lambda^{\mathcal{O}}(\mathcal{G}) := \mathcal{O} \otimes_{\Z_{p}} \Lambda(\mathcal{G}) = \mathcal{O}\llbracket\mathcal{G}\rrbracket.
\]
Since $\Gamma_{0} \simeq \Z_{p}$, there is a ring isomorphism
$R:=\mathcal{O}\llbracket\Gamma_{0}\rrbracket \simeq 
\mathcal{O}\llbracket T \rrbracket$ induced by $\gamma^{p^n} \mapsto 1+T$
where $\mathcal{O}\llbracket T \rrbracket$ denotes the power series ring in one variable over $\mathcal{O}$.
If we view $\Lambda^{\mathcal{O}}(\mathcal{G})$ as an $R$-module (or indeed as an $R[H]$-module), there is a decomposition
\begin{equation}\label{eq:Lambda-R-decomp}
\Lambda^{\mathcal{O}}(\mathcal{G}) = \bigoplus_{i=0}^{p^{n}-1} R[H] \gamma^{i}.
\end{equation}
Hence $\Lambda^{\mathcal{O}}(\mathcal{G})$ is free of finite rank as an $R$-module and is an $R$-order in the separable $Quot(R)$-algebra
$\mathcal{Q}^{F} (\mathcal{G})$, the total ring of fractions of $\Lambda^{\mathcal{O}}(\mathcal{G})$, obtained
from $\Lambda^{\mathcal{O}}(\mathcal{G})$ by adjoining inverses of all central regular elements.
Note that $\mathcal{Q}^{F} (\mathcal{G}) =  Quot(R) \otimes_{R} \Lambda^{\mathcal{O}}(\mathcal{G})$ and that by
\cite[Lemma 1]{MR2114937} we have $\mathcal{Q}^{F} (\mathcal{G}) = F \otimes_{\Q_{p}} \mathcal{Q}(\mathcal{G})$,
where $\mathcal{Q}(\mathcal{G}) := \mathcal{Q}^{\Q_{p}}(\mathcal{G})$.

\subsection{Algebraic $K$-theory for Iwasawa algebras}\label{subsec:K-theory-Iwasawa-algebras}
We now specialise \S \ref{subsec:K-theory} to the situation of \S \ref{subsec:Iwasawa-algebras}.
Let $p$ be a prime and let $\mathcal{G}$ be an admissible 
one-dimensional $p$-adic Lie group.
Let $\Gamma_{0}$ be an open subgroup of $\Gamma$ that is central in $\mathcal{G}$ and
let $F$ be a finite field extension of $\Q_{p}$ with ring of integers $\mathcal{O}=\mathcal{O}_{F}$.
Let $A = \mathcal{Q}^{F}(\mathcal{G})$, let $\mathfrak{A} = \Lambda^{\mathcal{O}}(\mathcal{G})=\mathcal{O}\llbracket\mathcal{G}\rrbracket$ and
let $R=\mathcal{O}\llbracket\Gamma_{0}\rrbracket$.
In this situation, \cite[Corollary 3.8]{MR3034286}
shows that the map $\partial$ in \eqref{eqn:long-exact-seq}
is surjective;
thus we obtain an exact sequence
\begin{equation}\label{eqn:Iwasawa-K-sequence}
K_{1}(\Lambda^{\mathcal{O}}(\mathcal{G})) \longrightarrow K_{1}(\mathcal{Q}^{F}(\mathcal{G})) \stackrel{\partial}{\longrightarrow}
K_{0}(\Lambda^{\mathcal{O}}(\mathcal{G}),\mathcal{Q}^{F}(\mathcal{G})) \longrightarrow 0.
\end{equation}

For a finite normal subgroup $N$ of $\mathcal{G}$, 
there are canonical maps $\quot^{\mathcal{G}}_{\mathcal{G}/N}$ that 
fit into the following commutative diagram
\begin{equation*}\label{eqn:quotient-on-K-groups} 
\xymatrix{
K_{1}(\Lambda^{\mathcal{O}}(\mathcal{G})) \ar[r] \ar[d]_{\quot^{\mathcal{G}}_{\mathcal{G}/N}}  & K_{1}(\mathcal{Q}^{F}(\mathcal{G})) \ar[r]^{\partial \qquad} \ar[d]_{\quot^{\mathcal{G}}_{\mathcal{G}/N}} &
K_{0}(\Lambda^{\mathcal{O}}(\mathcal{G}),\mathcal{Q}^{F}(\mathcal{G})) \ar[d]_{\quot^{\mathcal{G}}_{\mathcal{G}/N}} \\
K_{1}(\Lambda^{\mathcal{O}}(\mathcal{G}/N)) \ar[r]  & K_{1}(\mathcal{Q}^{F}(\mathcal{G}/N)) \ar[r]^{\partial \qquad} &
K_{0}(\Lambda^{\mathcal{O}}(\mathcal{G}/N),\mathcal{Q}^{F}(\mathcal{G}/N)).
} 
\end{equation*}

Similarly, for an open subgroup $\mathcal{H}$ of $\mathcal{G}$,
there are canonical maps $\res^{\mathcal{G}}_{\mathcal{H}}$
that fit into the following commutative diagram
\begin{equation*}\label{eqn:restriction-on-K-groups} 
\xymatrix{
K_{1}(\Lambda^{\mathcal{O}}(\mathcal{G})) \ar[r] \ar[d]_{\res^{\mathcal{G}}_{\mathcal{H}}} & K_{1}(\mathcal{Q}^{F}(\mathcal{G})) \ar[r]^{\partial \qquad} \ar[d]_{\res^{\mathcal{G}}_{\mathcal{H}}} &
K_{0}(\Lambda^{\mathcal{O}}(\mathcal{G}),\mathcal{Q}^{F}(\mathcal{G})) \ar[d]_{\res^{\mathcal{G}}_{\mathcal{H}}} \\
K_{1}(\Lambda^{\mathcal{O}}(\mathcal{H})) \ar[r]  & K_{1}(\mathcal{Q}^{F}(\mathcal{H})) \ar[r]^{\partial \qquad} &
K_{0}(\Lambda^{\mathcal{O}}(\mathcal{H}),\mathcal{Q}^{F}(\mathcal{H})).
}
\end{equation*}

\subsection{Characters and central primitive idempotents}\label{subsec:idempotents}
Fix a character $\chi \in \Irr_{\Q_{p}^{c}}(\mathcal{G})$ 
(i.e.\ an irreducible $\Q_{p}^{c}$-valued character of $\mathcal{G}$ with open kernel)
and let $\eta$ be an irreducible constituent of
$\res^{\mathcal{G}}_{H} \chi$.
Then $\mathcal{G}$ acts on $\eta$ as $\eta^{g}(h) = \eta(g^{-1}hg)$
for $g \in \mathcal{G}$, $h \in H$, and following \cite[\S 2]{MR2114937} we set
\[
St(\eta) := \{g \in \mathcal{G}: \eta^g = \eta \}, \quad e(\eta) := \frac{\eta(1)}{|H|} \sum_{h \in H} \eta(h^{-1}) h,
\quad e_{\chi} := \sum_{\eta \mid \res^{\mathcal{G}}_{H} \chi} e(\eta).
\]
By \cite[Corollary to Proposition 6]{MR2114937} $e_{\chi}$ is a primitive central idempotent of
$\mathcal{Q}^{c}(\mathcal{G}) := \Q_{p}^{c} \otimes_{\Q_{p}} \mathcal{Q}(\mathcal{G})$.
In fact, every primitive central idempotent of $\mathcal{Q}^{c}(\mathcal{G})$ is of this form
and $e_{\chi} = e_{\chi'}$ if and only if $\chi = \chi' \otimes \rho$ for some character $\rho$ of $\mathcal{G}$ of type $W$
(i.e.~$\res^{\mathcal{G}}_{H} \rho = 1$).
Let $w_{\chi} = [\mathcal{G} : St(\eta)]$ and note that this is a power of $p$ since $H$ is a subgroup of $St(\eta)$.

Let $E$ be a finite field extension of $\Q_{p}$ over which both characters $\chi$ and $\eta$ have realisations
and let $V_{\chi}$ denote a realisation of $\chi$ over $E$.	
By \cite[Propositions 5 and 6]{MR2114937} and \cite[Lemma 3.1]{MR3749195}, there exists a unique element $\gamma_{\chi} \in \zeta(\mathcal{Q}^{E}(\mathcal{G})e_{\chi})$ 
such that $\gamma_{\chi}$ acts trivially on $V_{\chi}$ and $\gamma_{\chi} = g_{\chi}c_{\chi}$ where $g_{\chi} \in \mathcal{G}$ with $(g_{\chi} \bmod H) = \overline{\gamma}^{w_{\chi}}$
and with $c_{\chi} \in (E[H]e_{\chi})^{\times}$.
Moreover, $\gamma_{\chi}$ generates a pro-cyclic $p$-subgroup $\Gamma_{\chi}$ of $\mathcal{Q}^{E}(\mathcal{G})e_{\chi}$ and induces an isomorphism
\[
\mathcal{Q}^{E}(\Gamma_{\chi}) \cong \zeta(\mathcal{Q}^{E} (\mathcal{G})e_{\chi}).
\]

\subsection{Determinants and reduced norms}\label{subsec:dets-and-nr}
Following \cite[Proposition 6]{MR2114937}, we define
\[
j_{\chi}: \zeta(\mathcal{Q}^{E} (\mathcal{G})) \twoheadrightarrow \zeta(\mathcal{Q}^{E} (\mathcal{G})e_{\chi}) \cong \mathcal{Q}^{E}(\Gamma_{\chi}) \longrightarrow  \mathcal{Q}^{E}(\overline{\Gamma}),
\]
where the last arrow is induced by mapping $\gamma_{\chi}$ to $\overline{\gamma}^{w_{\chi}}$.
It follows from loc.\ cit.\ that $j_{\chi}$ is independent of the choice of $\overline{\gamma}$.

Let $F$ be any finite field extension of $\Q_{p}$ and let $G_{F}=\Gal(F^{c}/F)$.
By enlarging $E$ if necessary, we can and do assume that $F$ is a subfield of $E$.
Define
\begin{eqnarray*}
\Det(~)(\chi): K_{1}(\mathcal{Q}^{F}(\mathcal{G})) & \longrightarrow & \mathcal{Q}^{E}(\overline{\Gamma})^{\times} \\
 {[P,\alpha]}& \longmapsto & \mathrm{det}_{\mathcal{Q}^{E}(\overline{\Gamma})} (\alpha \mid \Hom_{E[H]}(V_{\chi},  E \otimes_{F} P)),
\end{eqnarray*}
where $P$ is a finitely generated projective $\mathcal{Q}^{F}(\mathcal{G})$-module and $\alpha$ is a 
$\mathcal{Q}^{F}(\mathcal{G})$-automorphism of $P$.
Here $\alpha$ acts on $f \in \Hom_{E[H]}(V_{\chi}, E \otimes_{F} P)$ 
via its action on $E \otimes_{F} P$, and $\overline{\gamma}$ acts via 
$(\overline{\gamma} f)(v) = \gamma \cdot f(\gamma^{-1} v)$ for all $v \in V_{\chi}$,
which is easily seen to be independent of the choice of $\gamma$.
Let  $\nr : K_{1}(\mathcal{Q}^{F}(\mathcal{G})) \rightarrow \zeta(\mathcal{Q}^{E} (\mathcal{G}))^{\times}$
be the map induced by the reduced norm.
Then $\Det(~)(\chi)$ is just $j_{\chi} \circ \nr$ (see \cite[\S 3, p.\ 558]{MR2114937} for more details).

If $\rho$ is a character of $\mathcal{G}$ of type $W$,
then we denote by
$\rho^{\#}$ the automorphism of the field $\mathcal{Q}^{c}(\overline{\Gamma})$ induced by
$\rho^{\#}(\overline{\gamma}) = \rho(\overline{\gamma}) \overline{\gamma}$. 
We denote the additive group generated by all $\Q_{p}^{c}$-valued
characters of $\mathcal{G}$ with open kernel by $R_{p}(\mathcal{G})$ and
equip this with a Galois action defined by ${}^{\sigma}\chi(g) = \sigma(\chi(g))$
for $g \in \mathcal{G}$ and $\sigma \in G_{\Q_{p}}$. 
Following \cite[Theorem 8]{MR2114937}, we define
$\Hom^{\ast}_{G_F}(R_{p}( \mathcal{G}), \mathcal{Q}^{c}(\overline{\Gamma})^{\times})$
to be the group of all homomorphisms 
$f: R_{p}(\mathcal{G}) \rightarrow \mathcal{Q}^{c}(\overline{\Gamma})\mal$ satisfying
\[
\begin{array}{ll}
f(\chi \otimes \rho) = \rho^{\#}(f(\chi)) & \mbox{ for all characters } \rho \mbox{ of type } W, \mbox{ and}\\
f({}^{\sigma}\chi) = \sigma(f(\chi)) & \mbox{ for all Galois automorphisms } \sigma \in G_{F}.
\end{array}
\]
By applying \cite[Theorem 7]{MR2114937} and taking $G_{F}$-invariants as in the proof of \cite[Theorem 8]{MR2114937}, we obtain an isomorphism
\begin{eqnarray*}
\zeta(\mathcal{Q}^{F}(\mathcal{G}))\mal & \cong & 
\Hom^{\ast}_{G_{F}}(R_{p}(\mathcal{G}), \mathcal{Q}^{c}(\overline{\Gamma})^{\times})\\
x & \mapsto & [\chi \mapsto j_{\chi}(x)].
\end{eqnarray*}
As $\Det(-)(\chi)$ is just the composite map $j_{\chi} \circ \nr$, the map
which sends $\Theta \in K_{1}(\mathcal{Q}^F(\mathcal{G}))$ to 
$[\chi \mapsto \Det(\Theta)(\chi)]$
defines a homomorphism
\[
\Det: K_{1}(\mathcal{Q}^F(\mathcal{G})) \longrightarrow \Hom^{\ast}_{G_{F}}(R_{p}(\mathcal{G}), \mathcal{Q}^{c}(\overline{\Gamma})\mal)
\]
such that we obtain a commutative triangle
\begin{equation}\label{eqn:Det_triangle}
\begin{gathered}
\xymatrix{
& K_{1}(\mathcal{Q}^{F}(\mathcal{G})) \ar[dl]_{\nr} \ar[dr]^{\Det} &\\
{\zeta(\mathcal{Q}^{F}(\mathcal{G}))^{\times}} \ar[rr]^-{\cong} & & {\Hom^{\ast}_{G_{F}}(R_{p}( \mathcal{G}), \mathcal{Q}^{c}(\overline{\Gamma})^{\times})}.} 
\end{gathered}
\end{equation}

Let $N$ be a finite normal subgroup of $\mathcal{G}$. 
Following \cite[\S 3]{MR2114937}, we define a map
\[
\quot^{\mathcal{G}}_{\mathcal{G}/N}:  \Hom^{\ast}_{G_{F}}(R_{p}(\mathcal{G}), \mathcal{Q}^{c}(\overline{\Gamma})\mal) \longrightarrow
\Hom^{\ast}_{G_{F}}(R_{p}(\mathcal{G}/N), \mathcal{Q}^{c}(\overline{\Gamma})\mal),
\]
by $(\quot^{\mathcal{G}}_{\mathcal{G}/N} f)(\chi) := f(\infl^{\mathcal{G}}_{\mathcal{G}/N} \chi)$
for $f \in \Hom^{\ast}_{G_F}(R_{p}(\mathcal{G}), \mathcal{Q}^{c}(\overline{\Gamma})\mal)$
and $\chi \in R_{p}(\mathcal{G}/N)$.

Let $\mathcal{H}$ be an open subgroup of $\mathcal{G}$. 
As explained in \S \ref{subsec:Iwasawa-algebras}, the subset $H'$ of $\mathcal{H}$
of elements of finite order is in fact a finite normal subgroup 
such that
$\overline{\Gamma}_{\mathcal{H}} := \mathcal{H}/H'$ is a pro-$p$-group isomorphic to $\Z_{p}$.
Moreover, there is a canonical embedding
$\iota_{\mathcal{H}} : \overline{\Gamma}_{\mathcal{H}} \hookrightarrow \overline{\Gamma}$
defined as follows:
given any element $x \in \overline{\Gamma}_{\mathcal{H}}$, let $y \in \mathcal{H}$ be any lift and define
$\iota_{\mathcal{H}}(x)$ to be the image of $y$ under the composition of canonical maps
$\mathcal{H} \hookrightarrow \mathcal{G} \twoheadrightarrow \overline{\Gamma}$.
It is straightforward to check that this map is well defined. 
Again following \cite[\S 3]{MR2114937}, we define a map
\[
\res^{\mathcal{G}}_{\mathcal{H}}: \Hom^{\ast}_{G_{F}}(R_{p}(\mathcal{G}), \mathcal{Q}^{c}(\overline{\Gamma})\mal) \longrightarrow
\Hom^{\ast}_{G_{F}}(R_{p}(\mathcal{H}), \mathcal{Q}^{c}(\overline{\Gamma}_{\mathcal{H}})\mal),
\]
by $(\res^{\mathcal{G}}_{\mathcal{H}} f)(\chi') = f(\ind^{\mathcal{G}}_{\mathcal{H}} \chi')$  for $f \in \Hom^{\ast}_{G_F}(R_{p}(\mathcal{G}), \mathcal{Q}^{c}(\overline{\Gamma})\mal)$
and $\chi' \in R_{p}(\mathcal{H})$. Here we view $\mathcal{Q}^{c}(\overline{\Gamma}_{\mathcal{H}})$
as a subfield of $\mathcal{Q}^{c}(\overline{\Gamma})$ via 
the embedding
$\iota_{\mathcal{H}} : \overline{\Gamma}_{\mathcal{H}} \hookrightarrow \overline{\Gamma}$. 

Via diagram \eqref{eqn:Det_triangle} the maps just defined induce canonical 
group homomorphisms
\begin{eqnarray*}\label{eq:quot-res-maps-on-centres}
\quot^{\mathcal{G}}_{\mathcal{G}/N}: & \zeta(\mathcal{Q}^{F}(\mathcal{G}))\mal \longrightarrow &
\zeta(\mathcal{Q}^{F}(\mathcal{G}/N))\mal,\\
\res^{\mathcal{G}}_{\mathcal{H}}: & \zeta(\mathcal{Q}^{F}(\mathcal{G}))^{\times} \longrightarrow &
\zeta(\mathcal{Q}^{F} (\mathcal{H}))\mal.
\end{eqnarray*}
The first map is easily seen to be induced by the canonical projection
$\mathcal{Q}^{F}(\mathcal{G}) \rightarrow  \mathcal{Q}^{F}(\mathcal{G}/N)$.
Moreover, by (an obvious generalisation of) 
\cite[Lemma 9]{MR2114937} we have commutative diagrams
\begin{equation*}\label{eqn:quot-res-maps-nr-diagram} 
\xymatrix{
K_{1}(\mathcal{Q}^{F}(\mathcal{G}))  \ar[r]^{\nr} \ar[d]_{\quot^{\mathcal{G}}_{\mathcal{G}/N}}  &
\zeta(\mathcal{Q}^{F}(\mathcal{G}))^{\times} \ar[d]_{\quot^{\mathcal{G}}_{\mathcal{G}/N}} & &
K_{1}(\mathcal{Q}^{F}(\mathcal{G}))  \ar[r]^{\nr} \ar[d]_{\res^{\mathcal{G}}_{\mathcal{H}}} &
\zeta(\mathcal{Q}^{F}(\mathcal{G}))^{\times} \ar[d]_{\res^{\mathcal{G}}_{\mathcal{H}}} \\
K_{1}(\mathcal{Q}^{F}(\mathcal{G}/N)) \ar[r]^{\nr}  & \zeta(\mathcal{Q}^{F}(\mathcal{G}/N))^{\times} & &
K_{1}(\mathcal{Q}^{F}(\mathcal{H}))  \ar[r]^{\nr} & \zeta(\mathcal{Q}^{F}(\mathcal{H}))^{\times}.
} 
\end{equation*}

\section{The equivariant Iwasawa main conjecture}

\subsection{Admissible one dimensional $p$-adic Lie extensions}\label{subsec:admissible-one-dim-extns}
Let $p$ be an odd prime and let $K$ be a totally real number field.
We henceforth assume that $\mathcal{L}/K$ is an admissible one-dimensional $p$-adic Lie extension.
In other words, $\mathcal{L}$ is a Galois extension of $K$
such that (i) $\mathcal{L}$ is totally real,
(ii) $\mathcal{L}$ contains the cyclotomic $\Z_{p}$-extension $K_{\infty}$ of $K$, and
(iii) $[\mathcal{L} : K_{\infty}]$ is finite.
Let $\mathcal{G}=\Gal(\mathcal{L}/K)$, let $H=\Gal(\mathcal{L}/K_{\infty})$ and let $\Gamma_{K}=\Gal(K_{\infty}/K)$.
Let $\gamma_{K}$ be a topological generator of $\Gamma_{K}$.
As in \S \ref{subsec:Iwasawa-algebras}, we obtain a semidirect product 
$\mathcal{G} = H \rtimes \Gamma$ where $\Gamma \leq \mathcal{G}$ and $\Gamma \simeq \Gamma_{K} \simeq \Z_{p}$, 
and we choose an open subgroup $\Gamma_{0} \leq \Gamma$ that is central in $\mathcal{G}$. 
Let $R=\Z_{p}\llbracket \Gamma_{0} \rrbracket$ and let
$\Lambda(\mathcal{G}) = \Z_{p}\llbracket \mathcal{G} \rrbracket$.

\subsection{An Iwasawa module and the $\mu=0$ hypothesis}\label{subsec:Iwasawa-module}
Let $S_{\infty}$ be the set of infinite
 places of $K$ and let $S_{p}$ be the set of places of $K$ above $p$.
Let $S$ be a finite set of places of $K$ containing $S_{p} \cup S_{\infty}$.
Let $M_{S}^{\ab}(p)$ be the maximal abelian pro-$p$-extension of 
$\mathcal{L}$ unramified outside $S$ and let
\[
X_{S}=X_{S}(\mathcal{L}/K)=\Gal(M_{S}^{\ab}(p)/\mathcal{L}).
\] 
As usual $\mathcal{G}$ acts on $X_{S}$ by $g \cdot x = \tilde{g}x\tilde{g}^{-1}$, 
where $g \in \mathcal{G}$, and $\tilde{g}$ is any lift of $g$ to $\Gal(M_{S}^{\ab}(p)/K)$. 
This action extends to a left action of $\Lambda(\mathcal{G})$ on $X_{S}$.
Since $\mathcal{L}$ is totally real, \cite[Theorems 10.3.25 and 11.3.2]{MR2392026} show that,
as an $R$-module, $X_{S}$ is finitely generated, torsion and of projective dimension at most one.
If $\mathcal{G}$ contains no element of order $p$ then 
$X_{S}$ is also of projective dimension at most one over $\Lambda(\mathcal{G})$.
In general, however, $X_{S}$ is not of finite projective dimension as a $\Lambda(\mathcal{G})$-module.

\begin{definition}\label{def:mu=0-hypothesis}
We say that $\mathcal{L}/K$ satisfies the $\mu=0$ hypothesis if $X_{S}$
is finitely generated as a $\Z_{p}$-module.
\end{definition}

The $\mu=0$ hypothesis is independent of the choice of $S$ and is conjecturally always true. 
Moreover, it is known to hold when $\mathcal{L}/\Q$ is abelian
as follows from work of Ferrero and Washington \cite{MR528968}.
For the relation to the classical Iwasawa $\mu = 0$ conjecture see \cite[Remark 4.3]{MR3749195}, for instance.
In the sequel, we shall \emph{not} assume the $\mu=0$ hypothesis for $\mathcal{L}/K$, except where explicitly stated.

\subsection{The $p$-adic cyclotomic character and its projections}\label{subsec:cyclotomic-char}
Let $\chi_{\mathrm{cyc}}$ be the $p$-adic cyclotomic character
\[
\chi_{\mathrm{cyc}}: \Gal(\mathcal{L}(\zeta_{p})/K) \longrightarrow \Z_{p}^{\times},
\]
defined by $\sigma(\zeta) = \zeta^{\chi_{\mathrm{cyc}}(\sigma)}$ for any $\sigma \in \Gal(\mathcal{L}(\zeta_{p})/K)$ and any $p$-power root of unity $\zeta$.
Let $\omega$ and $\kappa$ denote the composition of $\chi_{\mathrm{cyc}}$ with the projections onto the first and second factors of the canonical decomposition $\Z_{p}^{\times} = \mu_{p-1} \times (1+p\Z_{p})$, respectively;
thus $\omega$ is the Teichm\"{u}ller character.
We note that $\kappa$ factors through $\Gamma_{K}$ 
(and thus also through $\mathcal{G}$) and by abuse of notation we also 
use $\kappa$ to denote the associated maps with these domains.
For a non-negative integer $r$ divisible by $p-1$ 
(or more generally divisible by the degree $[\mathcal{L}(\zeta_{p}) : \mathcal{L}]$), 
up to the canonical inclusion map of codomains, 
we have $\chi_{\mathrm{cyc}}^{r}=\kappa^{r}$. 

\subsection{A canonical complex}\label{subsec:canonical-complex}
Let $S$ be a finite set of places of $K$ containing $S_{p} \cup S_{\infty}$.
Let $\mathcal{O}_{\mathcal{L},S}$ denote the ring of integers $\mathcal{O}_{\mathcal{L}}$ in $\mathcal{L}$ localised at all primes above those in $S$.
There is a canonical complex 
\[
C_{S}^{\bullet}(\mathcal{L}/K) := R\Hom_{\Z_{p}}(R\Gamma_{\et}(\Spec(\mathcal{O}_{\mathcal{L},S}), \Q_{p} / \Z_{p}), \Q_{p} / \Z_{p}),
\]
where $\Q_{p} / \Z_{p}$ denotes the constant sheaf of the abelian group $\Q_{p} / \Z_{p}$ on the \'{e}tale site
of $\Spec(\mathcal{O}_{\mathcal{L},S})$.
Since $\Q_{p}/\Z_{p}$ is a direct limit of finite abelian groups of $p$-power order, we have an isomorphism with Galois cohomology 
\[
R\Gamma_{\et}(\Spec(\mathcal{O}_{\mathcal{L},S}), \Q_{p} / \Z_{p}) \simeq R\Gamma(\Gal(M_{S}(p)/\mathcal{L}),\Q_{p}/\Z_{p}),
\] 
where $M_{S}(p)$ is the maximal pro-$p$-extension of $\mathcal{L}$ unramified outside $S$.
(Apply \cite[Chapter II, Proposition 2.9]{MR2261462}, \cite[Chapter III, Lemma 1.16]{MR559531}
and \cite[Proposition 1.5.1]{MR2392026}.)
The cohomology modules of $C_{S}^{\bullet}(\mathcal{L}/K)$ are
\begin{equation}\label{eq:cohomology-of-FK-complex}
H^{i}(C_{S}^{\bullet}(\mathcal{L}/K)) \simeq \left\{
\begin{array}{lll}
X_{S} & \mbox{ if } & i=-1\\
\Z_{p} & \mbox{ if } & i=0\\
0 & \mbox{ if } & i \neq -1,0.\\
\end{array}
\right.
\end{equation}
Note that $C_{S}^{\bullet}(\mathcal{L}/K)$ and the complex used by Ritter and Weiss (as constructed in \cite{MR2114937}) become isomorphic in $\mathcal{D}(\Lambda(\mathcal{G}))$ by
\cite[Theorem 2.4]{MR3072281} (see also \cite{MR3068897} for more on this topic).
Hence it makes no real difference which of these two complexes we use.

Let $S_{\ram}(\mathcal{L}/K)$ be the (finite) set of places of $K$ that ramify in $\mathcal{L}/K$.
Note that since $\mathcal{L}$ contains the cyclotomic $\Z_{p}$-extension $K_{\infty}$ we must have $S_{p} \subseteq S_{\ram}(\mathcal{L}/K)$. The following result is well known to experts, but we include a proof for the convenience of the reader.

\begin{prop}\label{prop:complex-res-quot}
Suppose that $S$ contains $S_{\ram}(\mathcal{L}/K) \cup S_{\infty}$.

\begin{enumerate}
\item The complex $C_{S}^{\bullet}(\mathcal{L}/K)$ belongs to $\mathcal{D}^{\perf}\tor(\Lambda(\mathcal{G}))$.
\item The complex $C_{S}^{\bullet}(\mathcal{L}/K)$ defines a class 
$[C_{S}^{\bullet}(\mathcal{L}/K)]$ in $K_{0}(\Lambda(\mathcal{G}), \mathcal{Q}(\mathcal{G}))$.
\item Let $N$ be a finite normal subgroup of $\mathcal{G}$ and put $\mathcal{L}' := \mathcal{L}^{N}$.
Then 
\[
\quot^{\mathcal{G}}_{\mathcal{G}/N}([C_{S}^{\bullet}(\mathcal{L}/K)]) = [C_{S}^{\bullet}(\mathcal{L}'/K)].
\]
\item  Let $\mathcal{H}$ be an open subgroup of $\mathcal{G}$ and put $K' := \mathcal{L}^{\mathcal{H}}$.
Then 
\[
\res^{\mathcal{G}}_{\mathcal{H}}([C_{S}^{\bullet}(\mathcal{L}/K)]) = [C_{S'}^{\bullet}(\mathcal{L}/K')],
\]
where $S'$ is the set of places of $K'$ lying above those in $S$.
\end{enumerate}
\end{prop}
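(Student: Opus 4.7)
For part (i), the cohomology of $C_{S}^{\bullet}(\mathcal{L}/K)$ is $R$-torsion: by \eqref{eq:cohomology-of-FK-complex} it consists of $X_{S}$ in degree $-1$ (torsion over $R$ by \S \ref{subsec:Iwasawa-module}) and $\Z_{p}$ in degree $0$ (annihilated by $\gamma_{0} - 1 \in R$, since $\mathcal{G}$ acts trivially). For perfectness over $\Lambda(\mathcal{G})$, I would appeal to the quasi-isomorphism in $\mathcal{D}(\Lambda(\mathcal{G}))$ between $C_{S}^{\bullet}(\mathcal{L}/K)$ and the Ritter--Weiss complex $[Y_{S} \to \Lambda(\mathcal{G})]$ established in \cite[Theorem 2.4]{MR3072281}, which is by construction a two-term complex of finitely generated projective $\Lambda(\mathcal{G})$-modules. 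Part (ii) is then immediate from (i) in view of \S \ref{subsec:orders-sep-al-nr}.

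For (iii), the map $\quot^{\mathcal{G}}_{\mathcal{G}/N}$ on relative $K_{0}$ is induced by the derived base-change functor $\Lambda(\mathcal{G}/N) \otimes^{L}_{\Lambda(\mathcal{G})} -$, which agrees with $\Z_{p} \otimes^{L}_{\Z_{p}[N]} -$ (derived $N$-coinvariants) because $\Lambda(\mathcal{G})$ is free of finite rank over $\Z_{p}[N]$. It therefore suffices to produce a quasi-isomorphism
\[
\Lambda(\mathcal{G}/N) \otimes^{L}_{\Lambda(\mathcal{G})} C_{S}^{\bullet}(\mathcal{L}/K) \simeq C_{S}^{\bullet}(\mathcal{L}'/K)
\]
in $\mathcal{D}(\Lambda(\mathcal{G}/N))$. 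Since $S \supseteq S_{\ram}$, the morphism $\pi : \Spec(\mathcal{O}_{\mathcal{L},S}) \to \Spec(\mathcal{O}_{\mathcal{L}',S})$ is a finite \'etale $N$-torsor, so the Hochschild--Serre spectral sequence yields
\[
R\Gamma_{\et}(\Spec(\mathcal{O}_{\mathcal{L}',S}), \Q_{p}/\Z_{p}) \simeq R\Gamma(N, R\Gamma_{\et}(\Spec(\mathcal{O}_{\mathcal{L},S}), \Q_{p}/\Z_{p})).
\]
Applying $R\Hom_{\Z_{p}}(-, \Q_{p}/\Z_{p})$ to both sides and invoking the Pontryagin duality identity $(M^{hN})^{\vee} \simeq (M^{\vee})_{hN}$ (a consequence of the duality $H^{i}(N,M)^{\vee} \simeq H_{i}(N, M^{\vee})$ for finite groups) produces the required quasi-isomorphism.

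For (iv), the key observation is that $\Spec(\mathcal{O}_{\mathcal{L},S}) = \Spec(\mathcal{O}_{\mathcal{L},S'})$, since both rings are obtained by inverting the same primes of $\mathcal{L}$. Thus $C_{S}^{\bullet}(\mathcal{L}/K)$ and $C_{S'}^{\bullet}(\mathcal{L}/K')$ coincide as complexes of abelian groups, differing only in their module structure: the former is $\Lambda(\mathcal{G})$-equivariant, the latter $\Lambda(\mathcal{H})$-equivariant via restriction of scalars. Since $\Lambda(\mathcal{G})$ is free of finite rank over $\Lambda(\mathcal{H})$, restriction preserves finite generation and projectivity and induces the map $\res^{\mathcal{G}}_{\mathcal{H}}$ on $K_{0}$, from which the equality follows. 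The principal technical hurdle is the duality step in (iii), which demands a careful passage between derived $N$-invariants and $N$-coinvariants --- a point which is nontrivial precisely when $p \mid |N|$ so that $\Z_{p}[N]$ fails to have finite global dimension; the remaining parts are essentially formal manipulations with the definitions of the $K$-theoretic maps.
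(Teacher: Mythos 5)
Your proof is correct, but takes a genuinely different route for parts (i) and (iii) than the paper does. The paper establishes, via the global duality isomorphism recorded as \eqref{eq:AV-duality}, that $C_{S}^{\bullet}(\mathcal{L}/K) \simeq R\Gamma_{c}(\mathcal{O}_{K,S}, \Lambda(\mathcal{G})^{\#}(1))[3]$ in $\mathcal{D}(\Lambda(\mathcal{G}))$, and then reads off (i) from a general perfectness result for compactly supported cohomology with $\Lambda(\mathcal{G})$-coefficients \cite[Proposition 1.6.5]{MR2276851}, and (iii) from the built-in base-change compatibility of $R\Gamma_{c}$ under $\Lambda(\mathcal{G}/N) \otimes^{\mathbb L}_{\Lambda(\mathcal{G})} -$. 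You instead argue (i) by quoting the comparison \cite[Theorem 2.4]{MR3072281} with the two-term Ritter--Weiss complex $[Y_{S} \to \Lambda(\mathcal{G})]$, and (iii) by running Hochschild--Serre for the finite \'etale $N$-torsor $\Spec(\mathcal{O}_{\mathcal{L},S}) \to \Spec(\mathcal{O}_{\mathcal{L}',S})$ directly on the definition and then passing through Pontryagin duality $(R\Gamma(N,-))^{\vee} \simeq (-^{\vee})_{hN}$. The paper's route packages (i) and (iii) in one stroke once the $R\Gamma_{c}$ identification is available; yours is more hands-on and keeps the two parts logically independent, at the cost of importing the Ritter--Weiss construction (one must know that $Y_{S}$ is indeed finitely generated projective, which is part of their construction, not merely a quasi-isomorphism statement) and of carrying out the invariants-to-coinvariants passage explicitly. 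That passage is sound: since $\Lambda(\mathcal{G})$ is free of finite rank over $\Z_{p}[N]$, the functor $\Lambda(\mathcal{G}/N) \otimes^{\mathbb L}_{\Lambda(\mathcal{G})} -$ does agree with derived $N$-coinvariants, and for a bounded complex of $\Z_{p}[N]$-modules one verifies $(R\Gamma(N,M^{\bullet}))^{\vee} \simeq ((M^{\bullet})^{\vee})_{hN}$ termwise on a finite free resolution of $\Z_{p}$ over $\Z_{p}[N]$ --- just be aware this is a statement in the derived category, not merely an isomorphism of cohomology groups, though the two are equivalent here because everything is bounded. Parts (ii) and (iv) are handled as the paper does (the paper dismisses (iv) as ``clear''; your identification $\mathcal{O}_{\mathcal{L},S} = \mathcal{O}_{\mathcal{L},S'}$ and restriction of scalars is exactly the right unpacking).
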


\begin{proof}
Let $G_{K,S}=\Gal(K_{S}/K)$ where $K_{S}$ is the maximal algebraic extension of
$K$ that is unramified outside the primes in $S$. 
Note that $\mathcal{G}$ is a quotient of  $G_{K,S}$ since $S$ contains $S_{\ram}(\mathcal{L}/K)$.
Let $\Lambda(\mathcal{G})^{\#}(1)$ be the free 
$\Lambda(\mathcal{G})$-module of rank one upon which $\sigma \in G_{K,S}$ acts on the right via
multiplication by the element $\chi_{\mathrm{cyc}}(\sigma)\overline \sigma^{-1}$,
where $\overline \sigma$ denotes the image of $\sigma$ in $\mathcal{G}$.
Observe that by the middle row of \cite[Theorem on p.\ 2638]{MR2954997}, the isomorphism
$(\Lambda(\mathcal{G})^{\#}(1))^{\vee}(1) \cong (\Lambda(\mathcal{G})^{\#})^{\vee}$
and a Shapiro lemma argument, we have
\begin{equation}\label{eq:AV-duality}
R\Gamma_{c}(G_{K,S}, \Lambda(\mathcal{G})^{\#}(1))[3]
\simeq
R\Hom_{\Z_{p}}(R\Gamma(G_{K,S},(\Lambda(\mathcal{G})^{\#})^{\vee}),\Q_{p}/\Z_{p})
\simeq
C_{S}^{\bullet}(\mathcal{L}/K)
\end{equation}
in $\mathcal{D}(\Lambda(\mathcal{G}))$, where the left-hand side denotes the 
compactly supported cohomology complex with coefficients in $\Lambda(\mathcal{G})^{\#}(1)$.
Thus $C_{S}^{\bullet}(\mathcal{L}/K)$ is perfect by \cite[Proposition 1.6.5]{MR2276851}.
Moreover, the cohomology groups of $C_{S}^{\bullet}(\mathcal{L}/K)$ are torsion as $R$-modules. 
Therefore (i) holds. Part (ii) follows from (i) and the discussion in \S \ref{subsec:orders-sep-al-nr}.
Furthermore, \eqref{eq:AV-duality} and loc.\ cit.\ together imply that there is an isomorphism
\[
\Lambda(\mathcal{G}/N) \otimes^{\mathbb {L}}_{\Lambda(\mathcal{G})}
C_{S}^{\bullet}(\mathcal{L}/K) \simeq C_{S}^{\bullet}(\mathcal{L}'/K)
\]
in $\mathcal{D}(\Lambda(\mathcal{G}/N))$, which gives part (iii).
Part (iv) is clear.
\end{proof}

\subsection{Power series and $p$-adic Artin $L$-functions}\label{subsec:power-series-p-adic-L-functions}
Recall that $S$ is a finite set of places of $K$ containing $S_{p} \cup S_{\infty}$.
Fix a character $\chi \in \Irr_{\Q_{p}^{c}}(\mathcal{G})$. 
Each topological generator $\gamma_{K}$ of  $\Gamma_{K}$ permits the definition of a 
power series $G_{\chi,S}(T) \in \Q_{p}^{c} \otimes_{\Q_{p}} Quot(\Z_{p} \llbracket T \rrbracket )$ 
by starting out from the Deligne-Ribet power series for one-dimensional characters of open subgroups 
of $\mathcal{G}$ (see \cite{MR579702}; also see \cite{ MR525346, MR524276}) 
and then extending to the general case by using Brauer induction (see \cite{MR692344}).
We put $u := \kappa(\gamma_{K})$.
Then we have an equality
\begin{equation} \label{eqn:p-adic-L-series}
L_{p,S}(1-s,\chi) = \frac{G_{\chi,S}(u^s-1)}{H_{\chi}(u^s-1)},
\end{equation}
where $L_{p,S}(s,\chi)$ denotes the `$S$-truncated $p$-adic Artin $L$-function' attached to $\chi$ constructed by Greenberg \cite{MR692344},
and where, for irreducible $\chi$, we have
\[
H_{\chi}(T) = \left\{\begin{array}{ll} \chi(\gamma_{K})(1+T)-1 & \mbox{ if }  H \subseteq \ker \chi\\
1 & \mbox{ otherwise.}  \end{array}\right.
\]
Note that $L_{p,S}(s, \chi) : \Z_{p} \rightarrow \C_{p}$ 
is the unique $p$-adic meromorphic
function with the property that for each strictly negative 
integer $r$ and each field isomorphism $\iota : \C \simeq \C_{p}$, we have
\begin{equation}\label{eqn:interpolation-property}
L_{p,S}(r, \chi) = \iota \left( L_{S}(r, \iota^{-1} \circ (\chi \otimes \omega^{r-1})) \right).
\end{equation}
By a result of Siegel \cite{MR0285488}, the right-hand side of  \eqref{eqn:interpolation-property}
does not depend on the choice $\iota$.
If $\chi$ is linear, then \eqref{eqn:interpolation-property} is also valid when $r=0$.

Now \cite[Proposition 11]{MR2114937} implies that
\begin{equation} \label{eqn:L_K,S}
L_{K,S} : \left[\chi \mapsto \frac{G_{\chi,S}(\gamma_{K}-1)}{H_{\chi}(\gamma_{K}-1)}\right]
\in \Hom^{\ast}_{G_{\Q_p}}(R_{p}( \mathcal{G}), \mathcal{Q}^{c}(\Gamma_{K})^{\times})
\end{equation}
and is independent of the topological generator $\gamma_{K}$.
Diagram \eqref{eqn:Det_triangle} implies that there is a unique element 
$\Phi_{S} = \Phi_{S}(\mathcal{L}/K) \in \zeta(\mathcal{Q}(\mathcal{G}))^{\times}$
such that
\begin{equation}\label{eqn:Phi_S-definition}
j_{\chi}(\Phi_{S}) = L_{K,S}(\chi)
\end{equation}
for every $\chi \in \Irr_{\Q_{p}^{c}}(\mathcal{G})$.
The following result is a special case of \cite[Proposition 12]{MR2114937}.

\begin{prop}\label{prop:phi-S-res-quot}
\emph{(i)} Let $N$ be a finite normal subgroup of $\mathcal{G}$ and put $\mathcal{L}' := \mathcal{L}^{N}$.
Then 
\[
\quot^{\mathcal{G}}_{\mathcal{G}/N}(\Phi_{S}(\mathcal{L}/K)) = \Phi_{S}(\mathcal{L}'/K).
\]
\emph{(ii)} 
Let $\mathcal{H}$ be an open subgroup of $\mathcal{G}$ and put $K' := \mathcal{L}^{\mathcal{H}}$.
Then 
\[
\res^{\mathcal{G}}_{\mathcal{H}}(\Phi_{S}(\mathcal{L}/K)) = \Phi_{S'}(\mathcal{L}/K'),
\]
where $S'$ is the set of places of $K'$ lying above those in $S$.
\end{prop}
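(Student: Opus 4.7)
The plan is to reduce both assertions to the defining property \eqref{eqn:Phi_S-definition} of $\Phi_S$, namely that $\Phi_S$ is the unique preimage under the isomorphism $\zeta(\mathcal{Q}(\mathcal{G}))^\times \cong \Hom^{\ast}_{G_{\Q_p}}(R_p(\mathcal{G}), \mathcal{Q}^c(\overline{\Gamma})^\times)$ of the homomorphism $L_{K,S}$ given in \eqref{eqn:L_K,S}. Consequently, since an element of $\zeta(\mathcal{Q}(\mathcal{G}))^\times$ is determined by the values $j_\chi(\cdot)$ as $\chi$ runs over $\Irr_{\Q_p^c}(\mathcal{G})$, both equalities can be checked character by character.

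For part (i), fix $\chi' \in \Irr_{\Q_p^c}(\mathcal{G}/N)$ and put $\chi := \infl^{\mathcal{G}}_{\mathcal{G}/N}\chi'$. Using the definition of $\quot^{\mathcal{G}}_{\mathcal{G}/N}$ on $\Hom^{\ast}_{G_{\Q_p}}$ given in \S\ref{subsec:dets-and-nr} (precomposition with inflation), we have
\[
j_{\chi'}\bigl(\quot^{\mathcal{G}}_{\mathcal{G}/N}\Phi_S(\mathcal{L}/K)\bigr) \;=\; L_{K,S}(\mathcal{L}/K)(\chi) \;=\; \frac{G_{\chi,S}(\gamma_K-1)}{H_{\chi}(\gamma_K-1)}.
\]
The right-hand side equals $L_{K,S}(\mathcal{L}'/K)(\chi') = j_{\chi'}(\Phi_S(\mathcal{L}'/K))$, because the Deligne--Ribet power series is inflation-invariant (as one sees from the interpolation property \eqref{eqn:interpolation-property} together with the well-known invariance of classical Artin $L$-functions under inflation), and because the factor $H_\chi$ depends only on $\chi(\gamma_K)$, which is unchanged under inflation. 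Since this holds for every $\chi'$, the claim follows from the injectivity of the displayed isomorphism applied to $\mathcal{G}/N$.

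For part (ii), fix $\chi' \in \Irr_{\Q_p^c}(\mathcal{H})$ and put $\chi := \ind^{\mathcal{G}}_{\mathcal{H}}\chi'$. Here one must keep careful track of the identifications: $\overline{\Gamma}_{\mathcal{H}}$ sits inside $\overline{\Gamma}$ via $\iota_{\mathcal{H}}$, and correspondingly $\Gamma_{K'}$ is an open subgroup of $\Gamma_K$ of index equal to $[K'\cap K_\infty:K]$. By the definition of $\res^{\mathcal{G}}_{\mathcal{H}}$ on $\Hom^{\ast}_{G_{\Q_p}}$ (precomposition with induction), evaluating at $\chi'$ gives
\[
j_{\chi'}\bigl(\res^{\mathcal{G}}_{\mathcal{H}}\Phi_S(\mathcal{L}/K)\bigr) \;=\; L_{K,S}(\mathcal{L}/K)(\chi) \;=\; \frac{G_{\chi,S}(\gamma_K-1)}{H_{\chi}(\gamma_K-1)}.
\]
By the inductivity of Artin $L$-functions, $L_S(L/K,s,\chi) = L_{S'}(L/K',s,\chi')$, and this identity lifts to the $p$-adic interpolations via \eqref{eqn:interpolation-property}, yielding $G_{\chi,S}(\gamma_K-1) = G_{\chi',S'}(\gamma_{K'}-1)$ after identifying $\gamma_{K'}$ with an appropriate power of $\gamma_K$ under $\iota_{\mathcal{H}}$. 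A similar check for the $H$-factor (which reduces to the elementary observation that $\chi(\gamma_K)$ and $\chi'(\gamma_{K'})$ correspond under the embedding) identifies the expression with $L_{K',S'}(\mathcal{L}/K')(\chi') = j_{\chi'}(\Phi_{S'}(\mathcal{L}/K'))$, and injectivity of the isomorphism applied to $\mathcal{H}$ then gives part (ii).

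The main obstacle, and the reason this is not completely immediate, is the bookkeeping for the embedding $\iota_{\mathcal{H}}$ and the associated change of topological generator from $\gamma_K$ to $\gamma_{K'}$ in part (ii); this is exactly the point at which the result corresponds to the special case of \cite[Proposition 12]{MR2114937}, so one could alternatively just cite loc.\ cit.\ once the translation between the present notation and that of Ritter--Weiss is fixed.
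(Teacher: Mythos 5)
Your proposal is correct and, at bottom, takes the same route as the paper: the paper's entire ``proof'' is the single sentence ``The following result is a special case of \cite[Proposition 12]{MR2114937}'', and your final paragraph correctly identifies that the non-trivial bookkeeping in part (ii) (the embedding $\iota_{\mathcal{H}}$, the change of topological generator from $\gamma_K$ to $\gamma_{K'}$, and the matching of the $H$-factors) is exactly the content of that cited result. What you add beyond the paper is a character-by-character verification sketch, which is genuinely useful for understanding \emph{why} the citation applies: the compatibility $j_{\chi'}(\quot^{\mathcal{G}}_{\mathcal{G}/N} x) = j_{\infl\chi'}(x)$ (resp.\ its $\res$/$\ind$ analogue) coming from diagram \eqref{eqn:Det_triangle} reduces everything to inflation-invariance (resp.\ inductivity and $S$--$S'$ compatibility) of the $p$-adic $L$-functions via \eqref{eqn:interpolation-property}; this works because the power series $G_{\chi,S}$ is pinned down by its interpolation data once $\gamma_K$ is fixed. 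Two small points of care that you gloss over: (a) $L_{K,S}$ is evaluated on the virtual character $\ind^{\mathcal{G}}_{\mathcal{H}}\chi'$, which need not be irreducible, so the formula in \eqref{eqn:L_K,S} must be extended additively before it applies; and (b) the independence of $L_{K',S'}$ from the choice of $\gamma_{K'}$ (from \cite[Proposition 11]{MR2114937}) is what licenses your freedom to pick $\gamma_{K'}$ compatibly with $\gamma_K$ under $\iota_{\mathcal{H}}$ — both points are handled inside Ritter--Weiss, which is why the citation is the cleanest proof.
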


\subsection{Statement and known cases of the EIMC}\label{subsec:statement-EIMC}
Recall that $p$ is an odd prime and $\mathcal{L} / K$ is an admissible one-dimensional $p$-adic Lie extension.
Let $S$ be a finite set of places of $K$ containing $S_{\ram}(\mathcal{L}/K) \cup S_{\infty}$.
Let $\nr: K_{1}(\mathcal{Q}(\mathcal{G})) \rightarrow \zeta(\mathcal{Q}(\mathcal{G}))^{\times}$
be the map induced by the reduced norm.

\begin{conj}[EIMC]\label{conj:EIMC}
There exists $\zeta_{S} \in K_{1}(\mathcal{Q}(\mathcal{G}))$ such that $\partial(\zeta_{S}) = -[C_{S}^{\bullet}(\mathcal{L}/K)]$
and $\nr(\zeta_{S}) = \Phi_{S}(\mathcal{L}/K)$.
\end{conj}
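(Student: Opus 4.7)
The plan is to exploit the abelian structure of $\mathcal{G}$ to reduce uniqueness to triviality and the existence assertion to a boundary equality in a relative $K$-group, then to establish that equality by combining the strong Brumer--Stark conjecture (Theorem \ref{thm:strong-brumer-stark}) with Wiles' classical Iwasawa main conjecture. Since $\mathcal{G}$ is abelian, $\mathcal{Q}(\mathcal{G})$ is a finite product of fields (as $\mathcal{Q}(\mathcal{G})$ is a separable commutative algebra over $Quot(R)$), so the reduced norm induces an isomorphism $\nr\colon K_{1}(\mathcal{Q}(\mathcal{G})) \xrightarrow{\sim} \mathcal{Q}(\mathcal{G})^{\times}$. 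Hence $\zeta_S$ is uniquely determined by the condition $\nr(\zeta_S) = \Phi_S(\mathcal{L}/K)$, giving uniqueness for free; the content is the boundary equality $\partial(\zeta_S) = -[C_S^{\bullet}(\mathcal{L}/K)]$ in $K_0(\Lambda(\mathcal{G}), \mathcal{Q}(\mathcal{G}))$.

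My first move would be to pass to a CM setting by adjoining a primitive $p$-th root of unity. The $\omega$-twist identifies (up to minor bookkeeping) the $\omega^{-1}$-component of $X_S(\mathcal{L}/K)$ with the minus part of the inverse limit of ray class groups $A_{L_n}^T$ along the cyclotomic $\Z_p$-tower of $\mathcal{L}(\zeta_p)$. Applying Theorem \ref{thm:strong-brumer-stark} at each finite layer $L_n$ supplies the identity $(\theta_S^T)^{\#} \in \Fitt_{\Z_p[G_n]_-}((A_{L_n}^T)^{\vee})$; passing to the inverse limit and using Lemma \ref{lem:injectivity-ray-class-groups-cyc-Zp-exts} to guarantee injectivity of the transition maps yields the corresponding Fitting-ideal identity over the Iwasawa algebra. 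Crucially, this limit argument works without any $\mu = 0$ hypothesis precisely because the injectivity in the tower lets one patch the finite-level data into a coherent statement at infinite level. Wiles' classical IMC is then invoked to control the trivial-character contribution, which matches up the $\Z_p$ factor in $H^0(C_S^\bullet)$ with the $H_\chi$ denominators appearing in \eqref{eqn:p-adic-L-series}.

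To translate the Fitting-ideal identity into the desired equality in $K_0(\Lambda(\mathcal{G}), \mathcal{Q}(\mathcal{G}))$, I would work with a concrete perfect complex representing $C_S^{\bullet}(\mathcal{L}/K)$, for instance a two-term Ritter--Weiss style complex $[Y_S \to \Lambda(\mathcal{G})]$, and invoke the purely algebraic independence result described in the introduction (and in the appendix) to conclude that the outcome is independent of the specific presentation. Propositions \ref{prop:perfect-cohomology-fpd} and \ref{prop:perfect-cohomology-acyclic-outside-one-degree} are available for expressing the class of this complex in terms of its cohomology after suitable localisation. The functorial compatibility diagrams \eqref{eqn:quotient-on-K-groups}, \eqref{eqn:restriction-on-K-groups} and \eqref{eqn:quot-res-maps-nr-diagram}, together with Propositions \ref{prop:complex-res-quot} and \ref{prop:phi-S-res-quot}, can be used to reduce the computation to a single prime component at a time and then reassemble.

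The main obstacle, and the key innovation, is the passage from finite to infinite level without the $\mu = 0$ hypothesis. Previous approaches implicitly identified the relevant Iwasawa module with one finitely generated over $\Z_p$ exactly when $\mu = 0$, which was indispensable for the descent. In the present approach, Theorem \ref{thm:strong-brumer-stark} applied layer-by-layer in combination with Lemma \ref{lem:injectivity-ray-class-groups-cyc-Zp-exts} supplies the necessary coherence in the tower to take the limit unconditionally. The remaining delicate technical points are ensuring that the reduced norm, the boundary map, and the $\#$-involution all interact correctly with the inverse limit, and that the contribution from $H^0(C_S^\bullet) = \Z_p$ is accounted for via the classical IMC rather than the strong Brumer--Stark statement (which only sees minus parts).
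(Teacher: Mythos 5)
Your outline captures the overall strategy of the paper's proof accurately: uniqueness follows because $\nr\colon K_1(\mathcal{Q}(\mathcal{G}))\to\mathcal{Q}(\mathcal{G})^\times$ is an isomorphism in the abelian case; one passes to a CM setting by adjoining $\zeta_p$ (Lemma \ref{lem:enlarge-extension}); one applies the strong Brumer--Stark theorem (Theorem \ref{thm:strong-brumer-stark}) layer-by-layer and passes to the limit using the injectivity of Lemma \ref{lem:injectivity-ray-class-groups-cyc-Zp-exts} together with the Greither--Kurihara Theorem \ref{thm:GK-inverse-limit-Fitting-ideals}; one removes the $T$-modification by noting Euler factors become units after localising; and one descends via the functorial properties of the complex and of $\Phi_S$.

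The point where your description of the mechanism goes wrong, and where an attempt to execute it as written would get stuck, is the role of Wiles' classical main conjecture. You assign it the task of ``controlling the trivial-character contribution'' and ``matching up the $\Z_p$ factor in $H^0$ with the $H_\chi$ denominators,'' but that is not what is needed, nor does it connect up with the strong Brumer--Stark input. The correct deployment is the one in Propositions \ref{prop:version-of classical MC} and \ref{prop:abelian-EIMC-equiv-local-at-(p)}: the reduced-norm isomorphism identifies $K_0(\Lambda(\mathcal{G}),\mathcal{Q}(\mathcal{G}))$ with $\mathcal{Q}(\mathcal{G})^\times/\Lambda(\mathcal{G})^\times$, and by reflexivity this embeds into the product of $\mathcal{Q}(\mathcal{G})^\times/\Lambda_{\mathfrak p}(\mathcal{G})^\times$ over the height-one primes $\mathfrak p$ of $R$ (Proposition \ref{prop:injectivity-rel-K-localization-in-abelian-case}). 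Wiles' IMC, via the EIMC over the maximal order $\mathcal{M}(\mathcal{G})$, yields the vanishing of the defect element $\omega_S$ at all $\mathfrak p\neq(p)$ (Corollary \ref{cor:vanishing-omega-ht-1-not-p}), so the statement collapses to a claim about $\Lambda_{(p)}(\mathcal{G})$ alone, and Wiles is invoked a second time (through Lemma \ref{lemma:ideals-equal-over-integral-ext}) to upgrade the strong Brumer--Stark containment $\Phi_S\in\Fitt_{\Lambda_{(p)}(\mathcal{G})}((X_S)_{(p)})$ to the required equality of $K_0$-classes. The $\Z_p$ factor in $H^0(C_S^\bullet)$ and the $H_\chi$ pole factors play no role at this stage, since after localising at $(p)$ the $\Z_p$-term dies (it is $R/(p)$-torsion, hence killed once $p$ is inverted) and the complex becomes acyclic outside degree $-1$; this is handled by Propositions \ref{prop:perfect-cohomology-fpd} and \ref{prop:perfect-cohomology-acyclic-outside-one-degree}, not by Wiles. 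Similarly, the vague plan to ``reduce the computation to a single prime component at a time and then reassemble'' using the quot/res diagrams should instead be the height-one-prime localisation argument of \S\ref{sec:some-commutative-algebra}--\ref{sec:abelian-EIMC-localisation}; the quot/res functorialities are used only for the final descent from $L_\infty^+/K$ to $\mathcal{L}/K$ (Lemma \ref{lem:EIMC-implies-EIMC-for-subextensions}).
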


It can be shown that the truth of Conjecture \ref{conj:EIMC} is independent of the choice of $S$, provided that $S$ is finite and contains $S_{\ram}(\mathcal{L}/K) \cup S_{\infty}$.
Crucially, this version of the EIMC does not require the $\mu=0$ hypothesis for its formulation.
The following theorem has been shown independently by Ritter and Weiss \cite{MR2813337} and by Kakde \cite{MR3091976}.

\begin{theorem}\label{thm:EIMC-with-mu}
If $\mathcal{L}/K$ satisfies the $\mu=0$ hypothesis then the EIMC holds for $\mathcal{L}/K$.
\end{theorem}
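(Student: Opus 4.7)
The plan is to follow the Kakde/Ritter--Weiss strategy, which reduces the EIMC to a statement of $K_1$-theoretic congruences among $p$-adic $L$-values that can be verified from the Deligne--Ribet congruences. First I would reduce to the case that $\mathcal{G}$ is a one-dimensional pro-$p$ group. Using diagrams \eqref{eqn:quotient-on-K-groups} and \eqref{eqn:restriction-on-K-groups} together with the functoriality of $C_S^{\bullet}(\mathcal{L}/K)$ and $\Phi_S(\mathcal{L}/K)$ under quotients and restrictions (Propositions \ref{prop:complex-res-quot} and \ref{prop:phi-S-res-quot}), one can apply a Brauer-type induction on the side of $\Hom^{\ast}_{G_{\Q_p}}(R_p(\mathcal{G}),\mathcal{Q}^c(\overline{\Gamma})^{\times})$ via the diagram \eqref{eqn:Det_triangle}: the problem for arbitrary admissible $\mathcal{G}$ can be assembled from its pro-$p$ subgroups and their subquotients, provided a matching calculation of $K_1$ and its image under $\Det$ is available.

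Next, for $\mathcal{G}$ pro-$p$, the key tool is an \emph{integral logarithm} on $K_1$ of a suitable localisation of $\Lambda(\mathcal{G})$ (the Oliver--Taylor logarithm, adapted to the Iwasawa setting by Kato, Kakde and Ritter--Weiss). This logarithm provides an additive description of the image of $K_1(\mathcal{Q}(\mathcal{G}))$ inside $\prod_{U} \mathcal{Q}^c(\overline{\Gamma}_U)^{\times}$, where $U$ ranges over abelian subquotients of $\mathcal{G}$: an element $(f_U)_U$ lies in the image if and only if it satisfies an explicit list of trace relations, norm relations and $M_n$-congruences among its components. The reduction to the abelian case is via the one-dimensional abelian EIMC, which is itself a straightforward consequence of Wiles' theorem \cite{MR1053488} plus the interpolation property \eqref{eqn:interpolation-property}: thus the abelian components of the candidate element are the power series $L_{K,S}$ of \eqref{eqn:L_K,S} restricted to abelian subquotients. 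What must be checked is that these components satisfy the congruences characterising the image of $\Det$.

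This verification is the main technical content. The congruences are precisely (generalisations of) the classical Deligne--Ribet congruences, proved in \cite{MR579702}, extended to the non-abelian one-dimensional setting by Kakde \cite{MR3091976} and, in parallel, via the additive ``$\Theta$-congruences'' by Ritter--Weiss \cite{MR2813337}. Once the congruences are established, one obtains a unique $\zeta_S \in K_1(\mathcal{Q}(\mathcal{G}))$ with $\nr(\zeta_S) = \Phi_S(\mathcal{L}/K)$. It remains to identify $\partial(\zeta_S)$ with $-[C_S^{\bullet}(\mathcal{L}/K)]$ in $K_0(\Lambda(\mathcal{G}),\mathcal{Q}(\mathcal{G}))$. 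Here the $\mu=0$ hypothesis enters crucially: it guarantees that $X_S$ (and hence $C_S^{\bullet}(\mathcal{L}/K)$ by \eqref{eq:cohomology-of-FK-complex}) becomes a torsion module over a suitable Ore localisation of $\Lambda(\mathcal{G})$, so that the relevant $K_0$-class is already detected at finite level, where Wiles' theorem applies after twisting.

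The hard part will be the integral logarithm step together with the verification of the congruences: producing enough $K_1$-relations to pin down the image of $\Det$ requires a careful choice of an exhausting family of abelian subquotients, and matching these algebraic congruences against the arithmetic Deligne--Ribet congruences for the $L_{K,S}$ of \eqref{eqn:L_K,S} is the heart of both Kakde's and Ritter--Weiss's proofs. The final identification $\partial(\zeta_S) = -[C_S^{\bullet}(\mathcal{L}/K)]$ under $\mu=0$ is comparatively formal once one has the $K_1$-element in hand, by descent to finite layers and comparison with the classical main conjecture.
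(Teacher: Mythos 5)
The paper does not actually prove this theorem; it is stated with the attribution ``shown independently by Ritter and Weiss \cite{MR2813337} and by Kakde \cite{MR3091976}'' and no proof is given. Your sketch is a broadly accurate high-level summary of the strategy in those cited works — Brauer/Witt--Berman induction to reduce to elementary subquotients, the integral logarithm and the $K_1$-congruences, Deligne--Ribet congruences as the arithmetic input, and the role of $\mu=0$ in making $C_S^{\bullet}$ an $S$-torsion object in the sense of Coates--Fukaya--Kato--Sujatha--Venjakob so that the identification $\partial(\zeta_S) = -[C_S^{\bullet}]$ can be carried out — so your proposal is consistent with the route the paper implicitly endorses by citation. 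One small imprecision: the first reduction (Theorem~C of \cite{MR2205173}, reproved as the $\mu=0$ case of Theorem~\ref{thm:EIMC-reduction} of the paper) lands on $p$-elementary subquotients, i.e.\ groups of the form $\Gamma_0 \times C \times Q$ with $C$ cyclic of order prime to $p$ and $Q$ a finite $p$-group, not on pro-$p$ groups; one then splits off $C$ by extending scalars componentwise before the pro-$p$ (integral logarithm) machinery applies. Since the statement at issue is a citation in the paper, nothing in your sketch contradicts the source, and it would pass as an accurate summary if expanded with the references you already cite.
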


By considering the cases in which the $\mu=0$ hypothesis is known, we obtain the following corollary
(see \cite[Corollary 4.6]{MR3749195} for further details).

\begin{corollary}
Let $\mathcal{P}$ be a Sylow $p$-subgroup of $\mathcal{G}$.
If $\mathcal{L}^{\mathcal{P}}/\Q$ is abelian then the EIMC holds for $\mathcal{L}/K$.
\end{corollary}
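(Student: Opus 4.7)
The plan is to derive the corollary directly from Theorem \ref{thm:EIMC-with-mu} by verifying the $\mu=0$ hypothesis for $\mathcal{L}/K$, exploiting the fact that this hypothesis depends only on the top field $\mathcal{L}$ (through the abstract $\Z_{p}$-module structure of $X_{S}$) and not on the choice of base field.

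First I would set $K' := \mathcal{L}^{\mathcal{P}}$. Since $\Gamma \simeq \Z_{p}$ is a pro-$p$ subgroup of $\mathcal{G}$ and $\mathcal{P}$ is a Sylow pro-$p$ subgroup, after possibly replacing $\mathcal{P}$ by a conjugate we may assume $\Gamma \subseteq \mathcal{P}$. Then $[K':K] = [\mathcal{G}:\mathcal{P}]$ is finite and coprime to $p$, the cyclotomic $\Z_{p}$-extension of $K'$ is contained in $\mathcal{L}$, and $\mathcal{L}/K'$ is again an admissible one-dimensional $p$-adic Lie extension, this time with pro-$p$ Galois group $\mathcal{P}$. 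By the hypothesis on $\mathcal{L}^{\mathcal{P}}/\Q$, the field $K'$ is a finite abelian extension of $\Q$; hence $\mathcal{L}$ is a pro-$p$ extension of a finite abelian extension of $\Q$.

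Next I would invoke the consequence of the Ferrero--Washington theorem \cite{MR528968} recalled in \S\ref{subsec:Iwasawa-module} to conclude that $\mathcal{L}/K'$ satisfies the $\mu=0$ hypothesis. Observing that $X_{S} = \Gal(M_{S}^{\ab}(p)/\mathcal{L})$ depends only on the set of primes of $\mathcal{L}$ lying above those in $S$, and that this set coincides with the set of primes of $\mathcal{L}$ lying above the set $S'$ of places of $K'$ over $S$, the $\mu=0$ hypothesis for $\mathcal{L}/K'$ is literally the same assertion as the $\mu=0$ hypothesis for $\mathcal{L}/K$. Applying Theorem \ref{thm:EIMC-with-mu} to $\mathcal{L}/K$ then delivers the EIMC.

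There is essentially no obstacle to overcome: the entire analytic content is packaged into Theorem \ref{thm:EIMC-with-mu} and the Ferrero--Washington theorem. The only points to verify are the elementary group-theoretic facts that a Sylow pro-$p$ subgroup of $\mathcal{G}$ can be arranged to contain $\Gamma$ (so that $K'/K$ has degree coprime to $p$ and $\mathcal{L}/K'$ is admissible) and the field-theoretic remark that the $\mu=0$ hypothesis is intrinsic to $\mathcal{L}$.
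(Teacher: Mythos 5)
Your argument is correct and follows the same route the paper intends (the paper defers to \cite[Corollary 4.6]{MR3749195}, which is exactly this deduction from Ferrero--Washington plus the intrinsic nature of the $\mu=0$ hypothesis). One small imprecision: the version of the $\mu=0$ fact you need---that $\mu=0$ holds whenever $\mathcal{L}$ is a \emph{pro-$p$ extension of} a finite abelian extension of $\Q$, not merely when $\mathcal{L}/\Q$ itself is abelian---is stated in the introduction rather than in \S\ref{subsec:Iwasawa-module}, and it is also worth observing that the hypothesis ``$\mathcal{L}^{\mathcal{P}}/\Q$ abelian'' already forces $\mathcal{P}$ to be normal in $\mathcal{G}$ (so $\Gamma\subseteq\mathcal{P}$ holds outright and no conjugation is needed).
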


We shall also consider the EIMC with its uniqueness statement.

\begin{conj}[EIMC with uniqueness]\label{conj:EIMC-unique}
There exists a unique $\zeta_{S} \in K_{1}(\mathcal{Q}(\mathcal{G}))$
such that $\nr(\zeta_{S}) = \Phi_{S}(\mathcal{L}/K)$.
Moreover, $\partial(\zeta_{S}) = -[C_{S}^{\bullet}(\mathcal{L}/K)]$.
\end{conj}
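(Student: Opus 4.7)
Since $\mathcal{G}$ is abelian, $\mathcal{Q}(\mathcal{G})$ is a finite product of fields, so $K_{1}(\mathcal{Q}(\mathcal{G})) = \mathcal{Q}(\mathcal{G})^{\times}$ and the reduced norm is the identity. Hence $\zeta_{S} := \Phi_{S}(\mathcal{L}/K)$ is the unique candidate, so both existence and uniqueness of $\zeta_{S}$ in Conjecture \ref{conj:EIMC-unique} are automatic. The entire content of the statement reduces to verifying
\[
\partial(\Phi_{S}(\mathcal{L}/K)) = -[C_{S}^{\bullet}(\mathcal{L}/K)] \quad \text{in } K_{0}(\Lambda(\mathcal{G}), \mathcal{Q}(\mathcal{G})).
\]
Since this identity is independent of the choice of sufficiently large $S$, we may assume $S \supseteq S_{p}$, which allows the use of the (weaker) hypothesis-compliant form of strong Brumer--Stark (Remark \ref{rmk:only-weaker-version-of-SBS-needed}).

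\textbf{CM tower and strong Brumer--Stark.} Set $\tilde{\mathcal{L}} := \mathcal{L}(\zeta_{p})$. Because $p$ is odd and $\mathcal{L} \supseteq K_{\infty} \supseteq \Q(\zeta_{p})^{+}$, the field $\tilde{\mathcal{L}}$ is CM with $\tilde{\mathcal{L}}^{+} = \mathcal{L}$, and complex conjugation $j$ is a central element of order $2$ in $\tilde{\mathcal{G}} := \Gal(\tilde{\mathcal{L}}/K)$. As $p$ is odd, the idempotent $e_{-} = (1 - j)/2$ splits off a `minus' component of $\Lambda(\tilde{\mathcal{G}})$. For each finite layer $\tilde{L}_{n}/K$ of $\tilde{\mathcal{L}}/K$ with Galois group $G_{n}$, Theorem \ref{thm:strong-brumer-stark} yields
\[
(\theta_{S}^{T})^{\#} \in \Fitt_{\Z_{p}[G_{n}]_{-}}\bigl((A_{\tilde{L}_{n}}^{T})^{\vee}\bigr).
\]
Since the rings involved are commutative and the classical (non-equivariant) Iwasawa main conjecture of Wiles pins down the characteristic ideal of the corresponding Iwasawa limit module, a standard size/order argument promotes this containment to an equality of Fitting ideals in the cyclotomic Iwasawa limit. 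Lemma \ref{lem:injectivity-ray-class-groups-cyc-Zp-exts} makes the transition maps on $(A_{\tilde{L}_{n}}^{T})^{\vee}$ surjective, so we pass to the inverse limit along the $\Z_{p}$-tower and obtain a principal-generator description of the Fitting ideal of the resulting limit module over the minus component of $\Lambda(\tilde{\mathcal{G}})$, with generator recovering (up to the $\delta_{T}$-correction) the minus component $\Phi_{S}(\tilde{\mathcal{L}}/K)^{-}$.

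\textbf{From Fitting ideals to $K_{0}$-classes, and descent.} To convert this into the desired $K_{0}$-identity we invoke the Appendix \ref{app:independence-of-choice-of-complex} result: in the abelian setting, any two complexes in $\mathcal{D}^{\perf}\tor$ over the minus component of $\Lambda(\tilde{\mathcal{G}})$ with the same cohomology give the same class in the relative $K_{0}$. We construct an explicit two-term model complex, whose differential is multiplication by the limit Stickelberger element on a free module of appropriate rank, and whose cohomology matches the minus part of $C_{S}^{\bullet}(\tilde{\mathcal{L}}/K)$ via \eqref{eq:cohomology-of-FK-complex} together with the Fitting-ideal equality above (note the minus part of $C_{S}^{\bullet}(\tilde{\mathcal{L}}/K)$ has cohomology only in degree $-1$, so Proposition \ref{prop:perfect-cohomology-acyclic-outside-one-degree} applies). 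By construction, the model complex has class $-\partial(\Phi_{S}(\tilde{\mathcal{L}}/K)^{-})$; by the independence result, it equals the class of $C_{S}^{\bullet}(\tilde{\mathcal{L}}/K)^{-}$. Finally, we descend from $\tilde{\mathcal{L}}/K$ to $\mathcal{L}/K$: the interpolation formula \eqref{eqn:interpolation-property} identifies $\Phi_{S}(\mathcal{L}/K)$ with the image of $\Phi_{S}(\tilde{\mathcal{L}}/K)^{-}$ under the natural $\omega$-twist pairing characters of $\mathcal{G}$ with odd characters of $\tilde{\mathcal{G}}$, and Propositions \ref{prop:complex-res-quot} and \ref{prop:phi-S-res-quot} secure the required functorial compatibility of $\partial$ and of $\Phi_{S}$ under the relevant quotient maps.

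\textbf{Main obstacle.} The delicate step is the final descent: we must show that minus-side strong Brumer--Stark control over $\tilde{\mathcal{L}}/K$ (about odd characters of $\tilde{\mathcal{G}}$) lifts \emph{integrally} to an identity of $K_{0}$-classes over $\Lambda(\mathcal{G})$ and correctly transports via the $\omega$-twist, rather than just matching generically. The Appendix \ref{app:independence-of-choice-of-complex} result is precisely the algebraic input that makes this tractable without painful explicit calculations involving $C_{S}^{\bullet}$, since it reduces the $K_{0}$-matching to cohomology-matching already supplied by \eqref{eq:cohomology-of-FK-complex}. This is why earlier proofs invoked the more intricate machinery of \cite{MR3980291}, and why the Ritter--Weiss and Kakde strategies were forced to assume $\mu = 0$ at this step.
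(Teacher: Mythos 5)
Your high-level plan (reduce to $\partial(\Phi_S)=-[C_S^\bullet]$, introduce a CM cover $\tilde{\mathcal{L}}=\mathcal{L}(\zeta_p)$, feed in Dasgupta--Kakde on the minus side, use the classical main conjecture as the complementary input, descend) is the right outline and matches the paper's strategy, but several of the steps as stated do not close. First, a minor point: $\tilde{\mathcal{L}}^{+}=\mathcal{L}$ is false in general (it fails whenever $[\mathcal{L}(\zeta_p):\mathcal{L}]>2$, e.g.\ already when $K=\mathbb{Q}$ and $\mathcal{L}=K_{\infty}$ with $p>3$); the paper instead arranges $\mathcal{L}\subseteq L_{\infty}^{+}$ via Lemma \ref{lem:enlarge-extension}, proves the EIMC for the possibly larger field $L_{\infty}^{+}/K$, and descends by Lemma \ref{lem:EIMC-implies-EIMC-for-subextensions}. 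Relatedly, $\Phi_S(\tilde{\mathcal{L}}/K)$ is not defined, since $\Phi_S$ is attached only to totally real admissible extensions; what lives on the CM side is $\Psi_{S,T}=t_{\mathrm{cyc}}^{1}(\Phi_S)\prod_{v\in T}\xi_v$, and passage from one to the other goes through the Tate twist isomorphism $\Lambda(\mathcal{G}^{+})(-1)\cong\Lambda(\mathcal{G})_{-}$ and Kummer duality $X_{S_p}(-1)\cong\Hom(A_{L_\infty},\mathbb{Q}_p/\mathbb{Z}_p)$ (cf.\ \eqref{eqn:Kummer-duality}), not merely an ``$\omega$-twist pairing of characters''.

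The more serious gap is in your ``model complex'' step. You claim the explicit two-term complex with differential multiplication by the limit Stickelberger element ``has cohomology matching'' the minus part of $C_S^{\bullet}$ ``via the Fitting-ideal equality,'' and then invoke the Appendix \ref{app:independence-of-choice-of-complex} independence result. That inference is invalid: an equality of Fitting ideals does not imply an isomorphism of modules, so Corollary \ref{cor:complex-in-rel-K0-in-abelian-case} (or Theorem \ref{thm:no-dependence-on-choice-of-complex}), whose hypotheses require honest isomorphisms of cohomology modules, cannot be applied. The correct bridge in the abelian case is the much more elementary observation recorded in Remark \ref{rem:Fitt-complex-vs-module}: for a module of projective dimension $\le1$ concentrated in one degree, the class in relative $K_0$ is determined by the Fitting ideal, so a Fitting-ideal equality directly yields a $K_0$-identity without any model complex. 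Finally, the step you label ``a standard size/order argument'' hides the essential localization mechanism of the paper: Proposition \ref{prop:abelian-EIMC-equiv-local-at-(p)} (built on Proposition \ref{prop:injectivity-rel-K-localization-in-abelian-case}, Corollary \ref{cor:vanishing-omega-ht-1-not-p}, and Lemma \ref{lemma:ideals-equal-over-integral-ext}) reduces the EIMC to a single statement over $\Lambda_{(p)}(\mathcal{G})$, which is both where the $\mathbb{Z}_p$-finite cohomology ($H^0\cong\mathbb{Z}_p$ and the $T$-correction terms) dies, where the Euler factors $\xi_v$ become units, and where the containment from strong Brumer--Stark can be promoted to an equality using Wiles. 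Without this localization the argument simply does not close, and you should make it explicit rather than gesture at a generic ``size/order'' comparison.
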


\begin{remark}\label{rmk:SK1}
If $SK_{1}(\mathcal{Q}(\mathcal{G}))
:= \ker(\nr: K_{1}(\mathcal{Q}(\mathcal{G})) \rightarrow \zeta(\mathcal{Q}(\mathcal{G}))^{\times})$
vanishes then it is clear that the uniqueness statement of the EIMC follows from
its existence statement.
In particular, this is the case if $\mathcal{G}$ is abelian because then $\nr : K_{1}(\mathcal{Q}(\mathcal{G})) \rightarrow \mathcal{Q}(\mathcal{G})^{\times}$ is equal to the usual determinant map $\det$ and is an isomorphism by \cite[Proposition 45.12]{MR892316}.
\end{remark}

In \cite{MR3749195},
the present authors proved the EIMC unconditionally for an infinite class of one-dimensional admissible extensions for which the $\mu=0$ hypothesis is not known to be true.
We now recall the special case of these results given by {\cite[Theorem 4.12]{MR3749195}},
whose proof relies crucially on a result of Ritter and Weiss \cite[Theorem 16]{MR2114937}.

\begin{theorem}\label{thm:EIMC-p-does-not-divide-order-of-H}
If $p \nmid |H|$ then the EIMC with uniqueness holds for $\mathcal{L}/K$. 
\end{theorem}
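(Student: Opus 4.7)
My plan is to exploit the fact that when $p \nmid |H|$, the Iwasawa algebra $\Lambda(\mathcal{G})$ enjoys particularly good structural properties: by Maschke's theorem, $\Z_p[H]$ is semisimple and decomposes as a product of matrix rings over unramified extensions of $\Z_p$; consequently $\Lambda(\mathcal{G})$ has finite global dimension, every finitely generated module has finite projective dimension, and in particular the Iwasawa module $X_S$ is resolved by a two-term complex of finitely generated projective modules. This is precisely the setting in which the Ritter--Weiss strategy functions with no remainder term.

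The first step would be to invoke \cite[Theorem 16]{MR2114937}, which reduces the existence of a class $\zeta_S \in K_1(\mathcal{Q}(\mathcal{G}))$ with $\nr(\zeta_S) = \Phi_S(\mathcal{L}/K)$ to verifying that the homomorphism $L_{K,S}$ of \eqref{eqn:L_K,S} lies in the image of $\Det$. Via the commutative triangle \eqref{eqn:Det_triangle}, this amounts to a system of congruences among the values $L_{K,S}(\chi)$ as $\chi$ ranges over $\Irr_{\Q_p^c}(\mathcal{G})$. Under the hypothesis $p \nmid |H|$ these congruences become degenerate, because the localisations of $\Lambda(\mathcal{G})$ at the relevant height-one primes are already maximal orders; thus the image of $\Det$ is as large as the integrality statements of Deligne--Ribet and Cassou-Nogu\`es allow, and $L_{K,S}$ automatically lies in this image.

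The second step is to verify that the resulting $\zeta_S$ satisfies $\partial(\zeta_S) = -[C_S^{\bullet}(\mathcal{L}/K)]$. For this I would decompose character-by-character: each isotypic component of $X_S$ is resolved by a two-term complex over a commutative Iwasawa algebra, and Wiles's classical main conjecture \cite{MR1053488} identifies its characteristic ideal with that generated by the corresponding $p$-adic $L$-function via \eqref{eqn:p-adic-L-series} and \eqref{eqn:Phi_S-definition}. Summing these contributions over $\Irr_{\Q_p^c}(\mathcal{G})$ should yield the required equality in $K_0(\Lambda(\mathcal{G}), \mathcal{Q}(\mathcal{G}))$. Uniqueness then follows because in this setting $\mathcal{Q}(\mathcal{G})$ decomposes (over sufficiently large coefficients) into matrix algebras over commutative fields, and an argument in the spirit of Remark \ref{rmk:SK1} forces $SK_1(\mathcal{Q}(\mathcal{G}))$ to vanish.

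The main obstacle I anticipate is executing the first step cleanly: although \cite[Theorem 16]{MR2114937} reduces the problem to membership in the image of $\Det$, this requires combining the $G_{\Q_p}$-equivariance intrinsic to $L_{K,S}$ with the Deligne--Ribet--Cassou-Nogu\`es integrality in a way compatible with the $\Hom^{\ast}_{G_{\Q_p}}$-condition. Once one exploits the semisimplicity of $\Z_p[H]$ to unwind these congruences, however, and uses the functorial diagrams \eqref{eqn:quotient-on-K-groups} and \eqref{eqn:quot-res-maps-nr-diagram} to organise the characterwise data, the argument should go through without further complication.
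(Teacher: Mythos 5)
The overall picture you sketch is the right one: when $p \nmid |H|$, exploit the fact that $\Lambda(\mathcal{G})$ is a product of matrix rings over commutative local rings (Theorem~\ref{thm:p-nmid-comm-subgroup}, since $\mathcal{G}' \leq H$), invoke Ritter--Weiss \cite[Theorem~16]{MR2114937} as the analytic input, and deduce uniqueness from $SK_{1}(\mathcal{Q}(\mathcal{G}))=0$ via Corollary~\ref{cor:p-nmid-comm-subgroup}. These are exactly the ingredients the paper points to. However, the way you structure the two steps contains a genuine gap. Your ``first step'' is vacuous: via the commutative triangle~\eqref{eqn:Det_triangle}, the existence of $\zeta_{S}$ with $\nr(\zeta_{S}) = \Phi_{S}$ is \emph{tautologically} equivalent to $L_{K,S}$ lying in the image of $\Det$, and when $p \nmid |\mathcal{G}'|$ this holds for free since $\nr$ is then an isomorphism. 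Moreover this is not what \cite[Theorem~16]{MR2114937} says. That theorem asserts that for \emph{any} $x_{S} \in K_{1}(\mathcal{Q}(\mathcal{G}))$ satisfying the topological condition $\partial(x_{S}) = -[C_{S}^{\bullet}(\mathcal{L}/K)]$, the discrepancy $\Det(x_{S}) L_{K,S}^{-1}$ lands in $\Hom^{\ast}_{G_{\Q_p}}(R_{p}(\mathcal{G}), (\Z_p^c \otimes \Lambda(\Gamma_K))^{\times})$, i.e.\ in $\zeta(\mathcal{M}(\mathcal{G}))^{\times}$. The content of the EIMC is the \emph{compatibility} of the two conditions on $\zeta_{S}$, and that is precisely what the theorem controls up to a maximal-order unit.

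Consequently the genuine work is hidden in your ``second step,'' and there the characterwise reduction is not as innocuous as stated. For non-abelian $\mathcal{G}$ with $p \nmid |H|$ the irreducible characters may be higher-dimensional, so invoking Wiles directly ``isotypic component by isotypic component'' does not apply; one must pass to linear characters of open subgroups via Brauer induction (Greenberg's argument, or what Ritter--Weiss do internally in proving Theorem~16), and check that the $p$-adic $L$-functions, the complexes $C_{S}^{\bullet}$, and the reduced norm all behave compatibly under the induction/restriction maps of \eqref{eqn:restriction-on-K-groups} and \eqref{eqn:quot-res-maps-nr-diagram}. Doing this from scratch amounts to re-deriving \cite[Theorem~16]{MR2114937} rather than using it. The cleaner route, matching the cited proof, is: choose $x_{S}$ with $\partial(x_{S}) = -[C_{S}^{\bullet}(\mathcal{L}/K)]$ (possible by surjectivity of $\partial$ in \eqref{eqn:Iwasawa-K-sequence}); apply \cite[Theorem~16]{MR2114937} to obtain $\nr(x_{S})\Phi_{S}^{-1} \in \zeta(\mathcal{M}(\mathcal{G}))^{\times}$; observe that since $p \nmid |H|$ one has $\zeta(\mathcal{M}(\mathcal{G}))^{\times} = \zeta(\Lambda(\mathcal{G}))^{\times} = \nr(K_{1}(\Lambda(\mathcal{G})))$ by Corollary~\ref{cor:p-nmid-comm-subgroup}; and correct $x_{S}$ by the corresponding element of $K_{1}(\Lambda(\mathcal{G}))$ to produce $\zeta_{S}$ with both properties. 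Uniqueness then follows as in Remark~\ref{rmk:SK1}.
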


\subsection{The EIMC for $\mathcal{L}/K$ implies the EIMC for all admissible subextensions}

The following result is well known, but we include a proof for the convenience of the reader.

\begin{lemma}\label{lem:EIMC-implies-EIMC-for-subextensions}
Let $p$ be an odd prime and let $\mathcal{L}/K$ be an admissible
one-dimensional $p$-adic Lie extension of a totally real number field $K$.
If the EIMC holds for $\mathcal{L}/K$ then the EIMC holds for all admissible sub-extensions of $\mathcal{L}/K$.
\end{lemma}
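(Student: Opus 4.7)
The plan is to deduce the EIMC for a sub-extension $\mathcal{L}'/K$ from that of $\mathcal{L}/K$ by pushing forward the relevant $K_1$-element along the canonical projection $\quot^{\mathcal{G}}_{\mathcal{G}/N}$ and then invoking the functoriality of the complex $C_S^\bullet$ and the analytic element $\Phi_S$ already established in Propositions \ref{prop:complex-res-quot} and \ref{prop:phi-S-res-quot}. First I would observe that any admissible one-dimensional $p$-adic Lie sub-extension $\mathcal{L}'/K$ of $\mathcal{L}/K$ must correspond to a \emph{finite} normal subgroup $N$ of $\mathcal{G}$: writing $N = \Gal(\mathcal{L}/\mathcal{L}')$, the quotient $\mathcal{G}/N = \Gal(\mathcal{L}'/K)$ has to be one-dimensional because $\mathcal{L}'$ contains the cyclotomic $\Z_p$-extension of $K$, and since $\mathcal{G}$ itself is one-dimensional this forces $N$ to be finite.

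Next, fix a finite set $S$ of places of $K$ containing $S_{\ram}(\mathcal{L}/K) \cup S_{\infty}$; note that this automatically contains $S_{\ram}(\mathcal{L}'/K) \cup S_{\infty}$, since ramification in $\mathcal{L}'/K$ only occurs at primes ramifying in $\mathcal{L}/K$. By the assumed EIMC for $\mathcal{L}/K$, there exists $\zeta_S \in K_1(\mathcal{Q}(\mathcal{G}))$ with $\partial(\zeta_S) = -[C_S^\bullet(\mathcal{L}/K)]$ and $\nr(\zeta_S) = \Phi_S(\mathcal{L}/K)$.

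The crucial step is then to set $\zeta_S' := \quot^{\mathcal{G}}_{\mathcal{G}/N}(\zeta_S) \in K_1(\mathcal{Q}(\mathcal{G}/N))$ and verify that it witnesses the EIMC for $\mathcal{L}'/K$. Applying the commutative diagram \eqref{eqn:quotient-on-K-groups} combined with Proposition \ref{prop:complex-res-quot}(iii) gives
\[
\partial(\zeta_S') \;=\; \quot^{\mathcal{G}}_{\mathcal{G}/N}\bigl(\partial(\zeta_S)\bigr) \;=\; -\,\quot^{\mathcal{G}}_{\mathcal{G}/N}\bigl([C_S^\bullet(\mathcal{L}/K)]\bigr) \;=\; -[C_S^\bullet(\mathcal{L}'/K)],
\]
and applying the left-hand commutative square in \eqref{eqn:quot-res-maps-nr-diagram} combined with Proposition \ref{prop:phi-S-res-quot}(i) gives
\[
\nr(\zeta_S') \;=\; \quot^{\mathcal{G}}_{\mathcal{G}/N}\bigl(\nr(\zeta_S)\bigr) \;=\; \quot^{\mathcal{G}}_{\mathcal{G}/N}\bigl(\Phi_S(\mathcal{L}/K)\bigr) \;=\; \Phi_S(\mathcal{L}'/K).
\]
This establishes the EIMC for $\mathcal{L}'/K$ with the set $S$, and since the truth of the EIMC is independent of the choice of $S$ (as noted after Conjecture \ref{conj:EIMC}), we are done.

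There is essentially no obstacle here: the argument is a formal diagram chase, and the only non-trivial inputs are the already-cited functoriality statements for the complex and the analytic invariant. The mildest subtlety to watch is the identification of admissible sub-extensions with quotients by \emph{finite} normal subgroups, which is what makes the functoriality maps $\quot^{\mathcal{G}}_{\mathcal{G}/N}$ applicable in the form stated earlier.
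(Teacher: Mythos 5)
Your argument for the quotient case is correct and is exactly the paper's argument for that case: pass $\zeta_S$ through $\quot^{\mathcal{G}}_{\mathcal{G}/N}$, use the commutative diagram combining \eqref{eqn:quotient-on-K-groups} with the left square of \eqref{eqn:quot-res-maps-nr-diagram}, and invoke Proposition~\ref{prop:complex-res-quot}(iii) and Proposition~\ref{prop:phi-S-res-quot}(i). However, you have silently narrowed the lemma: you only treat sub-extensions of the form $\mathcal{L}'/K$, i.e.\ those with the \emph{same} base field $K$. An admissible sub-extension of $\mathcal{L}/K$ is a general $\mathcal{L}''/K''$ with $K \subseteq K'' \subseteq \mathcal{L}'' \subseteq \mathcal{L}$, and the ability to vary the base field is exactly what is needed later (e.g.\ in Theorem~\ref{thm:EIMC-reduction}, where one passes to subquotients $\mathcal{U}/N$ of $\mathcal{G}$ with $\mathcal{U}$ a proper open subgroup).

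The paper's proof first reduces to the two basic moves --- $\mathcal{L}'/K$ via $\quot^{\mathcal{G}}_{\mathcal{G}/N}$, and $\mathcal{L}/K'$ via $\res^{\mathcal{G}}_{\mathcal{H}}$ --- and proves the first, remarking that the second is analogous (using diagram~\eqref{eqn:restriction-on-K-groups}, the right square of~\eqref{eqn:quot-res-maps-nr-diagram}, and Propositions~\ref{prop:complex-res-quot}(iv) and~\ref{prop:phi-S-res-quot}(ii); one also needs to replace $S$ by the set $S'$ of places of $K'$ above $S$). Your proof never acknowledges the second move, so as written it does not establish the lemma. Filling the gap is routine but necessary: you should at least state the reduction to the two elementary cases and indicate that the restriction case proceeds identically with $\res$ in place of $\quot$.
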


\begin{proof}
It suffices to show the result for admissible sub-extensions of the form
$\mathcal{L}'/K$ and of the form $\mathcal{L}/K'$.
We shall only prove the former case as the proof of the latter case is entirely analogous.
Let $S$ be a finite set of places of $K$ containing $S_{\infty} \cup S_{\ram}(\mathcal{L}/K)$.
Since the EIMC holds for $\mathcal{L}/K$,
there exists $\zeta_{S} \in K_{1}(\mathcal{Q}(\mathcal{G}))$ such that 
 $\partial(\zeta_{S}) = -[C_{S}^{\bullet}(\mathcal{L}/K)]$
and $\nr(\zeta_{S}) = \Phi_{S}(\mathcal{L}/K)$.
Let $N=\Gal(\mathcal{L}/\mathcal{L}')$.
Specialising and combining the appropriate commutative diagrams from \S \ref{subsec:Iwasawa-algebras}
and the end of \S \ref{subsec:dets-and-nr}, we obtain a commutative diagram
\[
\xymatrix{
\zeta(\mathcal{Q}(\mathcal{G}))^{\times} \ar[d]_{\quot^{\mathcal{G}}_{\mathcal{G}/N}}  & \ar[l]_{\nr} K_{1}(\mathcal{Q}(\mathcal{G})) \ar[r]^{\partial \qquad} \ar[d]_{\quot^{\mathcal{G}}_{\mathcal{G}/N}} &
K_{0}(\Lambda(\mathcal{G}),\mathcal{Q}(\mathcal{G})) \ar[d]_{\quot^{\mathcal{G}}_{\mathcal{G}/N}} \\
\zeta(\mathcal{Q}(\mathcal{G}/N))^{\times}  & \ar[l]_{\nr}  K_{1}(\mathcal{Q}(\mathcal{G}/N)) \ar[r]^{\partial \qquad} &
K_{0}(\Lambda(\mathcal{G}/N),\mathcal{Q}(\mathcal{G}/N)).
}
\]
Moreover, Propositions \ref{prop:complex-res-quot} and \ref{prop:phi-S-res-quot} give
\[
\quot^{\mathcal{G}}_{\mathcal{G}/N}([C_{S}^{\bullet}(\mathcal{L}/K)]) = [C_{S}^{\bullet}(\mathcal{L}'/K)]
\quad \textrm{ and } \quad
\quot^{\mathcal{G}}_{\mathcal{G}/N}(\Phi_{S}(\mathcal{L}/K)) = \Phi_{S}(\mathcal{L}'/K).
\]
Therefore $\quot^{\mathcal{G}}_{\mathcal{G}/N}(\zeta_{S})$ has the desired properties and so the EIMC holds for 
$\mathcal{L}'/K$.
\end{proof}

\section{Fitting ideals, complexes and commutative Iwasawa algebras}\label{sec:Fitt-complexes-commutative-Iwasawa-algs}

\subsection{A lemma on integral extensions and principal ideals}

The following lemma is well known; see \cite[p.\ 526]{MR1750935}, for example.
It will be used in the proof of Proposition~\ref{prop:abelian-EIMC-equiv-local-at-(p)}
to compute the Fitting ideal of a certain Iwasawa module.

\begin{lemma}\label{lemma:ideals-equal-over-integral-ext}
Let $B$ be an integral extension of a commutative ring $A$. 
If $x,y \in A$ such that $y$ is a nonzerodivisor in $B$, 
$Ax \subseteq Ay$ and $Bx=By$, then in fact $Ax=Ay$. 
\end{lemma}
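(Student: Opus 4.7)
The plan is to extract the ratio $a/b$ (really, the element that ought to play the role of $a/b$ but lives in $S$), show it lies in $R$ by an integrality argument, and then conclude. Concretely:

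First, since $Ra \subseteq Rb$, write $a = rb$ for some $r \in R$. Since $Sa = Sb$, we also have $b \in Sa$, so there is some $s \in S$ with $b = sa$. Substituting gives $b = s(rb) = (sr)b$, hence $(sr - 1)b = 0$. Using that $b$ is a nonzerodivisor, we obtain $sr = 1$ in $S$; in particular $s \in S^\times$ and $s$ is a two-sided inverse of $r \in R$ inside $S$.

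The second step is the content of the proof: we upgrade "$r$ is invertible in $S$" to "$r$ is invertible in $R$", using integrality. Because $s$ is integral over $R$, it satisfies a monic relation
\[
s^n + c_{n-1} s^{n-1} + \cdots + c_{1} s + c_{0} = 0
\]
with $c_i \in R$. Multiplying through by $r^{n-1}$ and using $sr = 1$ converts this into
\[
s = -(c_{n-1} + c_{n-2} r + \cdots + c_{0} r^{n-1}) \in R,
\]
so $s \in R$. Then $b = sa \in Ra$, giving $Rb \subseteq Ra$, and combined with the hypothesis $Ra \subseteq Rb$ we conclude $Ra = Rb$.

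The only genuine subtlety I expect is bookkeeping around the hypothesis that $b$ is a nonzerodivisor — one needs this to hold in $S$ (not merely in $R$) in order to cancel $b$ from the equation $(sr-1)b = 0$. Assuming the lemma as stated intends this, the argument above is complete; otherwise one would note that integrality of $S$ over $R$ together with the nonzerodivisor hypothesis in $R$ still suffices because $s$ satisfies a monic polynomial relation forcing the image of $sr-1$ in $S$ to already lie in the $R$-span of $\{1, r, \dots, r^{n-1}\}$. Either way, the integrality step is the heart of the proof; everything else is elementary manipulation.
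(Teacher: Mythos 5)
Your proof is correct and follows essentially the same route as the paper: extract $r \in R$ with $a = rb$, find $s \in S$ with $b = sa$, cancel $b$ to get $sr = 1$, and then use integrality to descend invertibility from $S$ to $R$. The only difference is that the paper handles the last step by citing the standard fact that $R \cap S^{\times} = R^{\times}$ for an integral extension (an exercise in Atiyah--Macdonald), whereas you reprove that fact inline by multiplying the monic relation for $s$ by $r^{n-1}$. Both are fine; your version is self-contained.

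Your caveat about where $b$ must be a nonzerodivisor is a legitimate observation, and it applies equally to the paper's own proof, which also cancels $b$ from the equation $b(1-sr)=0$ holding in $S$. In the applications made in the paper this is harmless: there $R$ and $S$ are orders in a common algebra $\mathcal{Q}$, so a nonzerodivisor of $R$ is a unit of $\mathcal{Q}$ and hence a nonzerodivisor of $S$. Your ``otherwise'' remark gestures at a fix but doesn't pin it down; the clean way to eliminate the issue entirely is to multiply the monic relation $s^{n}+c_{n-1}s^{n-1}+\cdots+c_{0}=0$ by $r^{n}b$ rather than $r^{n-1}$, and use only $(sr)b=b$ (which needs no cancellation): one obtains $b = -(c_{n-1}+c_{n-2}r+\cdots+c_{0}r^{n-1})\,rb = -(c_{n-1}+\cdots+c_{0}r^{n-1})\,a \in Ra$ directly, giving $Rb\subseteq Ra$ without ever dividing by $b$. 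This shows the conclusion actually holds with no nonzerodivisor hypothesis at all, so the subtlety you flagged, while real, is not essential.
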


\begin{proof}
Since $Ax \subseteq Ay$ there exists $z \in A$ such that $x=yz$. 
Then $Byz=Bx=By$ so there exists $w \in B$ such that $y=yzw$.
As $y$ is a nonzerodivisor in $B$ we have $1=zw$. 
Thus $z \in A \cap B^{\times}$.
But $A \cap B^{\times}=A^{\times}$ 
by \cite[Chapter 5, Exercise 5 (i)]{MR0242802} (see also \cite[Lemma 9.7]{MR703486}).
Therefore $z \in A^{\times}$ and so $Ax=Ayz=Ay$, as desired. 
\end{proof}

\subsection{Fitting ideals of complexes}\label{subsec:Fitting-ideals-of-complexes}
Let $R$ be a local noetherian integral domain and let $\mathfrak{A}$ be an
$R$-order in a finite-dimensional separable commutative $Quot(R)$-algebra $A$.
In other words, we consider the situation of \S \ref{subsec:orders-sep-al-nr},
but assume in addition that $R$ is local and that $A$ is commutative.
Since $\mathfrak{A}$ and $A$ are both noetherian commutative semilocal rings,
the reduced norm on $A$ is equal to the usual determinant map, and 
by \cite[Proposition~45.12]{MR892316} this 
induces isomorphisms $K_{1}(\mathfrak{A}) \cong \mathfrak{A}^{\times}$ and
$\det : K_{1}(A) \cong A^{\times}$.

Using this fact, 
specialising \eqref{eqn:long-exact-seq} to the case at hand gives an exact sequence
\begin{equation}\label{eqn:K-theory-SES-comm-Iwasawa-algebra}
	0 \longrightarrow
	K_{1}(\mathfrak{A}) \longrightarrow K_{1}(A) \stackrel{\partial}{\longrightarrow}
	K_{0}(\mathfrak{A},A). 
\end{equation}
Now let $C^{\bullet} \in \mathcal{D}^{\perf}\tor(\mathfrak{A})$ and recall from \S \ref{subsec:K-theory}
that $C^{\bullet}$ defines an element $[C^{\bullet}]$ in $K_{0}(\mathfrak{A},A)$.
Assume that there is an $x \in K_{1}(A)$ such that $\partial(x) = [C^{\bullet}]$ and put
\begin{equation*}\label{eqn:fitt-of-complex}
	\Fitt_{\mathfrak{A}}(C^{\bullet}) := \det(x) \mathfrak{A} 
	\quad
	\text{ and }
	\quad
	\Fitt_{\mathfrak{A}}^{-1}(C^{\bullet}) := \det(x)^{-1} \mathfrak{A}. 
\end{equation*}
Note that these are well defined by the exactness of \eqref{eqn:K-theory-SES-comm-Iwasawa-algebra}.
If $C^{\bullet}_{i} \in \mathcal{D}^{\perf}\tor(\mathfrak{A})$ for $i=1,2,3$ such that 
$[C_{2}^{\bullet}] = [C_{1}^{\bullet}] + [C_{3}^{\bullet}]$ in $K_{0}(\mathfrak{A},A)$
(this is the case in the situation of  \eqref{eq:SES-of-complexes}, for example)
then it is straightforward to show that
\begin{equation}\label{eqn:fitt-of-sum-of-complexes-in-rel-K-zero}
	\Fitt_{\mathfrak{A}}(C_{2}^{\bullet}) = \Fitt_{\mathfrak{A}}(C_{1}^{\bullet}) \cdot \Fitt_{\mathfrak{A}}(C_{3}^{\bullet})
\end{equation}
whenever the Fitting ideals of the complexes are defined.

\begin{remark} \label{rem:Fitt-complex-vs-module}
Let $k \in \Z$ and suppose that $C^{\bullet}$ is an object of $\mathcal{D}^{\perf}\tor(\mathfrak{A})$ such that $H^{i}(C^{\bullet}) = 0$ 
for all $i \in \Z - \{ k \}$.
Then $H^{k}(C^{\bullet})$ is of finite projective dimension over $\mathfrak{A}$ and $[C^{\bullet}]=(-1)^{k}[H^{k}(C^{\bullet})]$ in $K_{0}(\mathfrak{A}, A)$
by Proposition \ref{prop:perfect-cohomology-acyclic-outside-one-degree}. 
If we assume in addition that $H^{k}(C^{\bullet})$ has projective dimension at most one, then it follows easily from the definitions that we have
$\Fitt_{\mathfrak{A}}(C^{\bullet}) = \Fitt_{\mathfrak{A}}^{(-1)^k}(H^{k}(C^{\bullet}))$ whenever the Fitting ideal of the complex is defined.
\end{remark}

\subsection{Fitting ideals of Iwasawa modules}\label{subsec:Fitting-ideals-Iwasawa-modules}
Let $p$ be a prime and let $\mathcal{G}$ be an abelian one-dimensional compact $p$-adic Lie group.
Then $\mathcal{G} = H \times \Gamma$ where $H$ is a finite abelian group and $\Gamma \simeq \Z_{p}$. In particular, $\mathcal{G}$ is admissible.
Let $R=\Z_{p}\llbracket\Gamma\rrbracket$.
Then $\Lambda(\mathcal{G})=R[H]$ is a commutative $R$-order in the separable $Quot(R)$-algebra $\mathcal{Q}(\mathcal{G})$.
Let $\mathcal{M}(\mathcal{G})$ denote the unique maximal $R$-order in $\mathcal{Q}(\mathcal{G})$
and note that $\mathcal{M}(\mathcal{G})$ is the integral closure of $\Lambda(\mathcal{G})$ in $\mathcal{Q}(\mathcal{G})$ (see \cite[Theorem 8.6]{MR1972204}).

Now let $e$ be any idempotent element of $\Lambda(\mathcal{G})$ and define
\begin{equation}\label{eqn:Lambda-e-def}
\Lambda := e\Lambda(\mathcal{G}), 
\qquad \mathcal{M} := e\mathcal{M}(\mathcal{G}),
\quad \textrm{ and }  \quad \mathcal{Q} := e\mathcal{Q}(\mathcal{G}). 
\end{equation}
Then $\Lambda$ and $\mathcal{M}$ are both $R$-orders in $\mathcal{Q}$ and $\mathcal{M}$ is maximal.

It easily follows from \eqref{eqn:Iwasawa-K-sequence} that we have an exact sequence
\begin{equation}\label{eqn:Iwasawa-K-sequence-with-e}
K_{1}(\Lambda) \longrightarrow K_{1}(\mathcal{Q}) \stackrel{\partial}{\longrightarrow}
K_{0}(\Lambda,\mathcal{Q}) \longrightarrow 0.
\end{equation}
Similarly, \cite[Corollary 2.14]{MR4098596} implies that for every height one prime ideal $\mathfrak{p}$
of $R$, we have an exact sequence
\begin{equation}\label{eqn:local-Iwasawa-K-sequence-with-e}
K_{1}(\Lambda_{\mathfrak{p}}) \longrightarrow K_{1}(\mathcal{Q}) \stackrel{\partial_{\mathfrak{p}}}{\longrightarrow}
K_{0}(\Lambda_{\mathfrak{p}},\mathcal{Q}) \longrightarrow 0,
\end{equation}
where $\Lambda_{\mathfrak{p}} = R_{\mathfrak{p}} \otimes_{R} \Lambda$ 
and $R_{\mathfrak{p}}$ is the localisation of $R$ at $\mathfrak{p}$.

Apart from its final claim, the following lemma is well known.

\begin{lemma}\label{lemma:fitting-ideal-is-principal}
Let $M$ be a finitely generated $\Lambda$-module that is of projective dimension at most one and that is also $R$-torsion.
Then $M$ has a quadratic presentation of the form 
\begin{equation}\label{eq:quad-pres}
0 \longrightarrow \Lambda^{n} \stackrel{h}{\longrightarrow} \Lambda^{n} \longrightarrow M \longrightarrow 0 
\end{equation}
for some $n \geq 1$. Moreover, $\Fitt_{\Lambda}(M)$ is a principal ideal generated by a nonzerodivisor. The same statement holds if we replace the pair
$(\Lambda,R)$ by a pair $(\Lambda_{\mathfrak{p}}, R_{\mathfrak{p}})$
for a height one prime ideal $\mathfrak{p}$ of $R$.
\end{lemma}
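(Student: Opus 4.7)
The plan is to first establish a structural decomposition of $\Lambda$ as a finite product of complete Noetherian local rings, then use this decomposition together with the $R$-torsion hypothesis to upgrade any projective resolution of $M$ to a quadratic free one, and finally read off the Fitting ideal.

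For the decomposition, write $H = H_{p} \times H'$ with $H_{p}$ the Sylow $p$-subgroup. Since $R = \Z_{p}\llbracket \Gamma \rrbracket$ is complete local with residue field of characteristic $p$ and $|H'|$ is coprime to $p$, Maschke's theorem makes $\mathbb{F}_{p}[H'] = \prod_{i} k_{i}$ a product of fields, and so $\mathbb{F}_{p}[H] = \prod_{i} k_{i}[H_{p}]$ is a finite product of local rings (each $k_{i}[H_{p}]$ being local for a field $k_{i}$ of characteristic $p$ and a $p$-group $H_{p}$). Lifting the orthogonal primitive idempotents through the $\mathfrak{m}_{R}$-adic completeness of $R[H]$ yields $\Lambda(\mathcal{G}) = R[H] = \prod_{j} A_{j}$, where each $A_{j}$ is a complete local $R$-algebra free of finite rank over $R$. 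Since the idempotent $e$ is a sum of some of these primitive idempotents, $\Lambda = \prod_{j \in J} A_{j}$ is again a finite product of complete local rings.

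Next I would choose any surjection $\pi : \Lambda^{n} \twoheadrightarrow M$ and set $K := \ker \pi$. Since $\mathrm{pd}_{\Lambda} M \leq 1$, $K$ is projective; writing $K = \prod_{j} K_{j}$, each $K_{j}$ is a finitely generated projective module over the local ring $A_{j}$, hence free of some rank $k_{j}$. Localizing the short exact sequence $0 \to K_{j} \to A_{j}^{n} \to M_{j} \to 0$ at the regular elements of $R$ (an exact functor) and using that $M_{j}$ is $R$-torsion yield $\mathcal{Q}_{j}^{k_{j}} \cong \mathcal{Q}_{j}^{n}$; invariant basis number for the commutative ring $\mathcal{Q}_{j}$ then forces $k_{j} = n$ for every $j$. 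Hence $K \cong \Lambda^{n}$ and we obtain the desired quadratic presentation, from which $\Fitt_{\Lambda}(M) = (\det h)\Lambda$ is visibly principal. To verify that $\det h$ is a nonzerodivisor, tensor with $\mathcal{Q}$: since $\mathcal{Q} \otimes_{\Lambda} M = 0$, the map $h \otimes 1$ is an automorphism, so $\det h \in \mathcal{Q}^{\times}$. Because $\mathcal{Q}$ is the localization of $\Lambda$ at its set $S$ of regular elements, a relation $(\det h) \cdot u = s \in S$ in $\Lambda$ immediately implies that $\det h$ is itself regular.

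For the local statement with $(\Lambda_{\mathfrak{p}}, R_{\mathfrak{p}})$ at a height one prime $\mathfrak{p}$ of $R$, the decomposition from the first step base-changes to $\Lambda_{\mathfrak{p}} = \prod_{j} (A_{j})_{\mathfrak{p}}$, where each factor is Noetherian semilocal (being module-finite over the DVR $R_{\mathfrak{p}}$). Over commutative Noetherian semilocal rings, finitely generated projective modules of constant rank are free, so the same argument goes through verbatim. The main obstacle I expect is justifying the structural decomposition of $\Lambda$ into local factors via idempotent lifting; once that is in place, everything reduces to an invariant basis number calculation over $\mathcal{Q}_{j}$ made possible by the $R$-torsion hypothesis, plus the elementary fact that an element of $\Lambda$ becoming a unit in the total ring of fractions is itself a nonzerodivisor.
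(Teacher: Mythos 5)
Your proof is correct in its main thrust, but it takes a genuinely different route from the paper. The paper's argument is $K$-theoretic: starting from a projective resolution $0 \to P \to \Lambda^n \to M \to 0$, it notes that $[P]-[\Lambda^n]$ dies in $K_0(\mathcal{Q})$ because $M$ is $R$-torsion, and then invokes the \emph{injectivity} of $K_0(\Lambda)\to K_0(\mathcal{Q})$ --- a consequence of the surjectivity of the boundary map $\partial$ in the sequence \eqref{eqn:Iwasawa-K-sequence-with-e} (ultimately resting on \cite[Corollary 3.8]{MR3034286}) --- to conclude $P$ is stably free, and then enlarges $n$. Your approach is more elementary and self-contained: decompose $\Lambda$ into a finite product of complete local rings via idempotent lifting, so that the kernel $K$ of a surjection $\Lambda^n\twoheadrightarrow M$ is a product of finitely generated projectives over local rings (hence free), and pin down each rank by tensoring with $Quot(R)$ and invoking invariant basis number over the Artinian factor rings $\mathcal{Q}_j$. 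This avoids $K$-theory entirely, at the cost of more commutative-algebraic bookkeeping, and directly produces $K\cong\Lambda^n$ (no stabilization needed). Your argument for regularity of $\det h$ (via its image in $\mathcal{Q}^\times$) is a legitimate alternative to the paper's appeal to injectivity of $h$.

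One point you pass over too quickly is the localized case. You write that ``the same argument goes through verbatim'' because each $(A_j)_\mathfrak{p}$ is semilocal and projectives of constant rank over semilocal rings are free --- but the constant-rank hypothesis is precisely what needs checking, since $(A_j)_\mathfrak{p}$ is generally not connected (unlike the local $A_j$), so a finitely generated projective over it need not have constant rank a priori. The gap is closable: since $\Lambda_\mathfrak{p}$ is free over the DVR $R_\mathfrak{p}$, going-down shows every minimal prime of $\Lambda_\mathfrak{p}$ contracts to $(0)$ in $R_\mathfrak{p}$, so the rank of $K'$ at every minimal prime can be read off from $K'\otimes Quot(R_\mathfrak{p})\cong\mathcal{Q}^n$ and equals $n$; since each connected component of $\operatorname{Spec}\Lambda_\mathfrak{p}$ contains a minimal prime, and rank is constant on components, the rank is $n$ everywhere, and only then does the semilocal freeness theorem apply. (The paper sidesteps this entirely by using \eqref{eqn:local-Iwasawa-K-sequence-with-e}, i.e.\ \cite[Corollary 2.14]{MR4098596}, in place of \eqref{eqn:Iwasawa-K-sequence-with-e}.) You should make this step explicit; as written it is a genuine, though minor, lacuna.
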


\begin{proof}
If $M=0$ then the claims are trivial, so we henceforth suppose that $M \neq 0$.
Let $0 \rightarrow P \rightarrow \Lambda^{n} \rightarrow M \rightarrow 0$ be a projective resolution of $M$.
Since $M$ is $R$-torsion, the class $[P] - [\Lambda^n] \in K_0(\Lambda)$
is mapped to zero in $K_0(\mathcal{Q})$. 
It follows from \eqref{eqn:Iwasawa-K-sequence-with-e} that the map 
$K_0(\Lambda) \rightarrow K_0(\mathcal{Q})$
is injective. Hence $P$ and $\Lambda^{n}$
are stably isomorphic.
By enlarging $n$ if necessary, we then can and do
assume that $P=\Lambda^{n}$ and so we have a presentation of the form \eqref{eq:quad-pres}.
Thus $\Fitt_{\Lambda}(M)$ is principal by definition of Fitting ideal and
any generator is a nonzerodivisor since $h$ is injective. 
The final claim is shown analogously, where \eqref{eqn:Iwasawa-K-sequence-with-e}
is replaced by \eqref{eqn:local-Iwasawa-K-sequence-with-e}.
\end{proof}

We recall the following result of Greither and Kurihara \cite[Theorem 2.1]{MR2443336}.
We caution that the notation here differs from that of loc.\ cit.\ (the roles of $R$ and $\Lambda$ are reversed).
Let $\gamma$ be a topological generator of $\Gamma$.
For $n \geq 1$ define $\omega_{n} = \gamma^{p^{n}}-1 \in R$
and $\Lambda_{n} = \Lambda/\omega_{n}\Lambda$. 
Then $(\Lambda_{n})_{n}$ is a projective system with limit $\Lambda$ and we make the canonical identification $\Lambda \cong \varprojlim_{n} \Lambda_{n}$
We shall consider projective systems $(A_{n})_{n}$ of modules $A_{n}$ over $\Lambda_{n}$ such that the transition maps
$A_{m} \rightarrow A_{n}$ ($m \geq n$) are $\Lambda_{m}$-linear in the obvious sense.
The limit $M := \varprojlim_{n} A_{n}$ will then be a $\Lambda$-module.  

\begin{theorem}[Greither and Kurihara]\label{thm:GK-inverse-limit-Fitting-ideals}
Suppose that the limit $M$ is a finitely generated $\Lambda$-module that is $R$-torsion
and that there exists $n_{0} \geq 1$ such that the transition maps $A_{m} \rightarrow A_{n}$ are surjective
for all $m \geq n \geq n_{0}$. Then $\Fitt_{\Lambda}(M) = \varprojlim_{n}(\Fitt_{\Lambda_{n}}(A_{n}))$.
\end{theorem}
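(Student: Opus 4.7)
The strategy is to prove both inclusions in the claimed equality by passing through the intermediate system $\bigl(\Fitt_{\Lambda_n}(M/\omega_n M)\bigr)_n$, using base change of Fitting ideals and $(\omega_n)$-adic completeness of $\Lambda$.

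First, I would establish that $\varprojlim_n K_n = 0$, where $K_n := \ker(\pi_n)$ and $\pi_n : M/\omega_n M \twoheadrightarrow A_n$ is the surjection induced by the projection $M \to A_n$. The latter is surjective for $n \geq n_0$ by the Mittag-Leffler condition, and since $A_n$ is annihilated by $\omega_n$, it factors through $M/\omega_n M$. Applying $\varprojlim$ to the short exact sequences $0 \to K_n \to M/\omega_n M \to A_n \to 0$ and using the identifications $\varprojlim_n M/\omega_n M = M$ (finite generation of $M$ together with $\Lambda = \varprojlim \Lambda_n$) and $\varprojlim_n A_n = M$ (hypothesis), the induced map $M \to M$ is the identity, and so $\varprojlim_n K_n = 0$.

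Next, Lemma \ref{lemma:base-change-of-Fitt} gives $\Fitt_{\Lambda_n}(M/\omega_n M) = \Fitt_\Lambda(M) \cdot \Lambda_n$, that is, the image of $\Fitt_\Lambda(M)$ in $\Lambda_n$. Since $\Fitt_\Lambda(M)$ is a finitely generated and hence closed ideal of the complete Noetherian ring $\Lambda$, this yields $\varprojlim_n \Fitt_{\Lambda_n}(M/\omega_n M) = \Fitt_\Lambda(M)$. The monotonicity $\Fitt_{\Lambda_n}(M/\omega_n M) \subseteq \Fitt_{\Lambda_n}(A_n)$ under the surjections $\pi_n$ now gives the easy inclusion $\Fitt_\Lambda(M) \subseteq \varprojlim_n \Fitt_{\Lambda_n}(A_n)$.

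The reverse inclusion is the main obstacle. Given a compatible family $(f_n) \in \varprojlim_n \Fitt_{\Lambda_n}(A_n)$, lift it to $f \in \Lambda$ so that $f \in \Fitt_\Lambda(A_n) + \omega_n \Lambda$ for every $n$. Since the ideals $\Fitt_\Lambda(A_n)$ form a decreasing chain (as $A_m$ surjects onto $A_n$ for $m \geq n$) and each is closed in $\Lambda$, a convergence argument in the $(\omega_n)$-adic topology places $f$ in $\bigcap_n \Fitt_\Lambda(A_n)$. To identify this intersection with $\Fitt_\Lambda(M)$, I would fix a finite presentation $\Lambda^b \to \Lambda^a \to M \to 0$, reduce it modulo $\omega_n$, and augment it by finitely many lifts of generators of $K_n$ to obtain a presentation of $A_n$. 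The Fitting ideal $\Fitt_{\Lambda_n}(A_n)$ is then generated by the $a \times a$ minors of this augmented matrix: the minors of the original reduced matrix generate $\Fitt_{\Lambda_n}(M/\omega_n M)$, while the remaining minors involve the augmenting columns. The core of the argument is then to use $\varprojlim_n K_n = 0$ to show that any coherent compatible system of such ``extra'' minors across the tower must collapse into $\omega_n\Lambda$ for arbitrarily large $n$, thereby forcing $f \in \Fitt_\Lambda(M) + \omega_n \Lambda$ for all $n$ and hence $f \in \Fitt_\Lambda(M)$ by Krull's intersection theorem. Executing this coherence argument — choosing the lifts of the $K_n$-generators compatibly along the tower so that the extra minors genuinely shrink — is the delicate step and the main technical obstacle.
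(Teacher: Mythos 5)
The paper does not prove this theorem from scratch: its ``proof'' is a citation to Greither and Kurihara \cite[Theorem 2.1]{MR2443336} for the case $\Lambda = \Lambda(\mathcal{G})$, together with the remark that the general $\Lambda = e\Lambda(\mathcal{G})$ case follows by multiplying through by the idempotent $e$. Your proposal attempts to reprove the cited result directly, which is a different and considerably more ambitious task. The preliminary steps are sound: the identification $\varprojlim_n K_n = 0$, the base-change formula $\Fitt_{\Lambda_n}(M/\omega_n M) = \Fitt_\Lambda(M)\Lambda_n$, the closedness argument giving $\varprojlim_n \Fitt_{\Lambda_n}(M/\omega_n M) = \Fitt_\Lambda(M)$, and the resulting easy inclusion $\Fitt_\Lambda(M) \subseteq \varprojlim_n \Fitt_{\Lambda_n}(A_n)$ are all correct, and your reduction of the hard inclusion to the statement $\bigcap_n \Fitt_\Lambda(A_n) = \Fitt_\Lambda(M)$ (via the decreasing chain of closed ideals) is also legitimate.

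The genuine gap is precisely the one you flag yourself at the end: the argument for $\bigcap_n \Fitt_\Lambda(A_n) \subseteq \Fitt_\Lambda(M)$ is a plan, not a proof. The key issue is that $\varprojlim_n K_n = 0$ is a statement about the inverse limit and says nothing about the size of the individual $K_n$'s. Each $K_n$ can be large, so for each fixed $n$ the ideal generated by the ``extra'' $a \times a$ minors (those involving columns coming from generators of $K_n$) can be correspondingly large. To conclude that a compatible family $(f_n)$ lands in $\Fitt_\Lambda(M)$, one must show that the contribution of these extra minors shrinks coherently along the tower, and nothing in your outline forces the lifted $K_n$-generators to interact well with the transition surjections $A_m \twoheadrightarrow A_n$ — indeed you concede that this coherence is ``the main technical obstacle'' without resolving it. That obstacle is precisely the substantive content of Greither and Kurihara's theorem, and as written your proposal stops short of it.
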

 
\begin{proof}
In \cite[Theorem 2.1]{MR2443336}, this is stated in the case $\Lambda=\Lambda(\mathcal{G})$.
It is clear that this implies the desired result for any choice of $\Lambda$ as defined in \eqref{eqn:Lambda-e-def}.
\end{proof}

\subsection{Algebraic $K$-theory for commutative Iwasawa algebras}
The main goal of this subsection is to prove a purely algebraic result that implies that the precise
choice of complex used in the abelian EIMC does not matter, provided that it is perfect and has the 
prescribed cohomology.
The following results are generalised in Appendix \ref{app:independence-of-choice-of-complex}.

\begin{prop}\label{prop:injectivity-rel-K-localization-in-abelian-case}
The canonical map
\[
K_{0}(\Lambda,\mathcal{Q})
\longrightarrow
\bigoplus_{\mathfrak{p}}
K_{0}(\Lambda_{\mathfrak{p}},\mathcal{Q})
\] 
is injective,
where the direct sum is taken over all height one prime ideals of $R$.
\end{prop}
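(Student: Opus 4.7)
The plan is to reduce the injectivity to a concrete intersection-of-units statement and then exploit the freeness of $\Lambda$ over $R$ that is special to the abelian case.

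First I would unwind everything using commutativity. Since $\Lambda$, $\Lambda_{\mathfrak{p}}$ and $\mathcal{Q}$ are commutative, $K_1$ coincides with the group of units, and the embeddings $\Lambda \hookrightarrow \mathcal{Q}$ and $\Lambda_{\mathfrak{p}} \hookrightarrow \mathcal{Q}$ (the latter by flatness of $R \to R_{\mathfrak{p}}$, noting $\mathcal{Q}_{\mathfrak{p}} = \mathcal{Q}$) show that the leftmost maps in \eqref{eqn:Iwasawa-K-sequence-with-e} and \eqref{eqn:local-Iwasawa-K-sequence-with-e} are injective. Hence both sequences extend to short exact sequences
\[
0 \longrightarrow \Lambda^{\times} \longrightarrow \mathcal{Q}^{\times} \stackrel{\partial}{\longrightarrow} K_{0}(\Lambda,\mathcal{Q}) \longrightarrow 0,
\]
and likewise for each $\mathfrak{p}$ with $\Lambda_{\mathfrak{p}}^{\times}$ in place of $\Lambda^{\times}$. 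Writing an arbitrary element of the kernel as $\partial(x)$ with $x \in \mathcal{Q}^{\times}$, its vanishing at every $\mathfrak{p}$ then amounts to $x \in \Lambda_{\mathfrak{p}}^{\times}$ for every height one prime $\mathfrak{p}$ of $R$. It therefore suffices to show
\[
\Lambda^{\times} \; = \; \mathcal{Q}^{\times} \cap \bigcap_{\mathfrak{p}} \Lambda_{\mathfrak{p}}^{\times}.
\]

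Next I would verify that $\Lambda$ is free of finite rank over $R$. Since $\mathcal{G} = H \times \Gamma$ is abelian, the Iwasawa algebra $\Lambda(\mathcal{G}) = R[H]$ is a free $R$-module with basis $H$, so the idempotent summand $\Lambda = e\Lambda(\mathcal{G})$ is a finitely generated projective $R$-module; as $R \cong \Z_{p}\llbracket T\rrbracket$ is local, $\Lambda$ is $R$-free.

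Finally I would conclude. Since $R$ is a two-dimensional regular local ring, hence a Krull domain, one has $R = \bigcap_{\mathfrak{p}} R_{\mathfrak{p}}$ inside $\mathrm{Quot}(R)$, where $\mathfrak{p}$ ranges over height one primes; because $\Lambda$ is $R$-free this intersection property passes component-wise to give $\Lambda = \bigcap_{\mathfrak{p}} \Lambda_{\mathfrak{p}}$ inside $\mathcal{Q}$. Consequently, if $x \in \mathcal{Q}^{\times}$ lies in every $\Lambda_{\mathfrak{p}}^{\times}$, then $x$ and $x^{-1}$ both lie in $\bigcap_{\mathfrak{p}} \Lambda_{\mathfrak{p}} = \Lambda$, so $x \in \Lambda^{\times}$ and $\partial(x) = 0$. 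There is no serious obstacle here; the whole argument hinges on the freeness of $\Lambda$ over $R$, which is exactly what commutativity of $\mathcal{G}$ buys, and the generalisation to the non-abelian case (deferred to the appendix) would have to replace this with a substitute such as a suitable maximal-order or reduction-modulo-Jacobson-radical argument.
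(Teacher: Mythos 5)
Your proposal is correct and follows essentially the same route as the paper: reduce via commutativity (determinant isomorphisms) to the statement $\mathcal{Q}^{\times}/\Lambda^{\times} \hookrightarrow \prod_{\mathfrak{p}}\mathcal{Q}^{\times}/\Lambda_{\mathfrak{p}}^{\times}$, establish $\Lambda = \bigcap_{\mathfrak{p}}\Lambda_{\mathfrak{p}}$ from freeness of $\Lambda$ over $R$ (the paper phrases this as reflexivity and cites a lemma from Neukirch--Schmidt--Wingberg, whereas you invoke the Krull domain property of $R$ directly and pass to components, but the mathematical content is identical), and conclude by noting that $x$ and $x^{-1}$ both landing in $\bigcap_{\mathfrak{p}}\Lambda_{\mathfrak{p}} = \Lambda$ forces $x \in \Lambda^{\times}$.
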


\begin{proof}
Recall from \S \ref{subsec:Fitting-ideals-of-complexes} that the determinant induces isomorphisms $K_{1}(\Lambda) \cong \Lambda^{\times}$
and $K_{1}(\mathcal{Q}) \cong \mathcal{Q}^{\times}$.
Since $\partial : K_{1}(\mathcal{Q}) \rightarrow K_{0}(\Lambda, \mathcal{Q})$ 
is surjective by \eqref{eqn:Iwasawa-K-sequence-with-e}, the determinant also induces
a canonical isomorphism $K_0(\Lambda, \mathcal{Q}) \cong \mathcal{Q}^{\times} / \Lambda^{\times}$. 
Similar reasoning using \eqref{eqn:local-Iwasawa-K-sequence-with-e} shows there is an isomorphism
$K_{0}(\Lambda_{\mathfrak{p}},\mathcal{Q}) \cong \mathcal{Q}^{\times} / \Lambda_{\mathfrak{p}}^{\times}$ for each height one prime ideal $\mathfrak{p}$ of $R$.
Therefore the claim is equivalent to the injectivity of canonical map
\[
	\mathcal{Q}^{\times} / \Lambda^{\times} \longrightarrow
	\prod_{\mathfrak{p}} \mathcal{Q}^{\times} / \Lambda_{\mathfrak{p}}^{\times}.
\]
Since $\Lambda$ is free as an $R$-module, it is reflexive 
and so by \cite[Lemma 5.1.2(iii)]{MR2392026} we have
\[
	\bigcap_{\mathfrak{p}} \Lambda_{\mathfrak{p}} = \Lambda.
\]
Now let $x \in \mathcal{Q}^{\times}$ and assume that
$x \in \bigcap_{\mathfrak{p}}\Lambda_{\mathfrak{p}}^{\times}$.
Then $x^{-1} \in \bigcap_{\mathfrak{p}}\Lambda_{\mathfrak{p}}^{\times} \subseteq \Lambda$
and therefore $x \in \Lambda^{\times}$.
\end{proof}

\begin{corollary}\label{cor:complex-in-rel-K0-in-abelian-case}
Let $k \in \Z$.
Let $C^{\bullet}, D^{\bullet} \in \mathcal{D}\tor^{\perf}(\Lambda)$  such that
\begin{enumerate}
\item 
$H^{i}(C^{\bullet}) \simeq H^{i}(D^{\bullet})$ as $\Lambda$-modules for all $i \in \Z$; and
\item 
$H^{i}(C^{\bullet})$ and $H^{i}(D^{\bullet})$ are finitely generated over $\Z_{p}$
for all $i \in \Z - \{k\}$.
\end{enumerate}
Then $[C^{\bullet}]=[D^{\bullet}]$ in $K_{0}(\Lambda,\mathcal{Q})$.
\end{corollary}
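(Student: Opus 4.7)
The plan is to combine Proposition \ref{prop:injectivity-rel-K-localization-in-abelian-case} with Propositions \ref{prop:perfect-cohomology-fpd} and \ref{prop:perfect-cohomology-acyclic-outside-one-degree} so as to reduce the problem to local statements at the height one primes of $R$ and then express each local class purely in terms of cohomology. First I would invoke Proposition \ref{prop:injectivity-rel-K-localization-in-abelian-case} to reduce the equality $[C^{\bullet}] = [D^{\bullet}]$ in $K_{0}(\Lambda, \mathcal{Q})$ to proving $[C^{\bullet}_{\mathfrak{p}}] = [D^{\bullet}_{\mathfrak{p}}]$ in $K_{0}(\Lambda_{\mathfrak{p}}, \mathcal{Q})$ for every height one prime $\mathfrak{p}$ of $R$. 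Perfectness and $R$-torsion cohomology are preserved by localization, so $C^{\bullet}_{\mathfrak{p}}, D^{\bullet}_{\mathfrak{p}} \in \mathcal{D}^{\perf}\tor(\Lambda_{\mathfrak{p}})$, and hypothesis (ii) gives $H^{i}(C^{\bullet})_{\mathfrak{p}} \simeq H^{i}(D^{\bullet})_{\mathfrak{p}}$ as $\Lambda_{\mathfrak{p}}$-modules for every $i$. Thus at each height one prime it will suffice to show that the class of the complex depends only on its cohomology modules.

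For a prime $\mathfrak{p} \neq (p)$, I would argue that $\Lambda_{\mathfrak{p}}$ is regular of Krull dimension one. The residue field $k(\mathfrak{p})$ of the DVR $R_{\mathfrak{p}}$ has characteristic zero, so $|H|$ is invertible in $k(\mathfrak{p})$, and Maschke's theorem forces $k(\mathfrak{p})[H]$ to be a product of fields. Since $\Lambda_{\mathfrak{p}}/\mathfrak{p}\Lambda_{\mathfrak{p}} = e \cdot k(\mathfrak{p})[H]$ is a direct factor, it is also a product of fields; combined with the flatness of $\Lambda_{\mathfrak{p}}$ over $R_{\mathfrak{p}}$, a standard local argument then shows that each localization of $\Lambda_{\mathfrak{p}}$ at a maximal ideal is a DVR. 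Consequently every finitely generated $\Lambda_{\mathfrak{p}}$-module has projective dimension at most one, and Proposition \ref{prop:perfect-cohomology-fpd} delivers $[C^{\bullet}_{\mathfrak{p}}] = \sum_{i \in \Z}(-1)^{i}[H^{i}(C^{\bullet})_{\mathfrak{p}}]$ together with the analogous formula for $D^{\bullet}_{\mathfrak{p}}$; equality follows from hypothesis (ii).

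For the prime $\mathfrak{p} = (p)$, my key step would be to show that $H^{i}(C^{\bullet})_{(p)} = 0$ for every $i \neq k$. Fixing such an $i$ and writing $M = H^{i}(C^{\bullet})$, hypothesis (i) guarantees that $M$ is finitely generated over $\Z_{p}$, hence so is $\End_{\Z_{p}}(M)$. The action of a topological generator $\gamma$ of $\Gamma$ on $M$ is therefore integral over $\Z_{p}$, meaning $f(\gamma)$ annihilates $M$ for some monic $f \in \Z_{p}[x]$. Under the isomorphism $R \cong \Z_{p}\llbracket T\rrbracket$ sending $\gamma$ to $1+T$, the image of $f(\gamma)$ reduces modulo $p$ to the nonzero monic polynomial $f(1+T) \in \F_{p}[T]$, so $f(\gamma) \notin (p)$ and $f(\gamma)$ becomes a unit in $R_{(p)}$; hence $M_{(p)} = 0$, and the same argument gives $H^{i}(D^{\bullet})_{(p)} = 0$. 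Therefore $C^{\bullet}_{(p)}$ and $D^{\bullet}_{(p)}$ have cohomology concentrated in degree $k$, and Proposition \ref{prop:perfect-cohomology-acyclic-outside-one-degree} yields $[C^{\bullet}_{(p)}] = (-1)^{k}[H^{k}(C^{\bullet})_{(p)}] = (-1)^{k}[H^{k}(D^{\bullet})_{(p)}] = [D^{\bullet}_{(p)}]$.

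The hardest step will be the vanishing argument at $(p)$: when $p$ divides $|H|$ the ring $\Lambda_{(p)}$ is not regular, so Proposition \ref{prop:perfect-cohomology-fpd} is unavailable at this prime, and one is forced to collapse the complex to a single cohomological degree before being able to apply Proposition \ref{prop:perfect-cohomology-acyclic-outside-one-degree}. The $\Z_{p}$-finiteness hypothesis on the side cohomology is precisely what makes this collapse possible, by forcing the annihilator to contain a monic polynomial in $\gamma$ whose image in $R$ is not divisible by $p$.
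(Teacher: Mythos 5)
Your proof is correct and follows essentially the same strategy as the paper: reduce via Proposition~\ref{prop:injectivity-rel-K-localization-in-abelian-case} to equality at each height one prime $\mathfrak{p}$, then apply Proposition~\ref{prop:perfect-cohomology-fpd} for $\mathfrak{p}\neq(p)$ and Proposition~\ref{prop:perfect-cohomology-acyclic-outside-one-degree} for $\mathfrak{p}=(p)$. The only differences are cosmetic: at $\mathfrak{p}\neq(p)$ the paper simply observes that $\Lambda_{\mathfrak{p}}$ is a maximal order over the DVR $R_{\mathfrak{p}}$ and is therefore hereditary (citing Reiner), whereas you derive regularity by hand via Maschke and a principal-ideal argument at each maximal ideal — both routes give that every finitely generated $\Lambda_{\mathfrak{p}}$-module has finite projective dimension. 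At $\mathfrak{p}=(p)$ the paper asserts without comment that hypothesis (i) forces $H^{i}(C^{\bullet})_{(p)}=0$ for $i\neq k$ (a standard fact about $\Lambda$-modules finitely generated over $\Z_{p}$, i.e.\ with vanishing $\mu$-invariant), while you spell out the integrality-of-$\gamma$ argument that proves it; that level of detail is fine, and the reasoning is sound.
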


\begin{proof}
We claim that $[C^{\bullet}]$ and $[D^{\bullet}]$ have the same image
in $K_0(\Lambda_{\mathfrak{p}}, \mathcal{Q})$ for each height one prime ideal
$\mathfrak{p}$ of $R$. The result then follows from
Proposition~\ref{prop:injectivity-rel-K-localization-in-abelian-case}.

We have $H^{i}(\Lambda_{\mathfrak{p}} \otimes_{\Lambda}^{\mathbb{L}} C^{\bullet})
\simeq \Lambda_{\mathfrak{p}} \otimes_{\Lambda} H^{i}(C^{\bullet})$ for all $i \in \Z$
since localisation at $\mathfrak{p}$ is exact.
The analogous statement also holds for $D^{\bullet}$. 
We first consider the case $\mathfrak{p} \not= (p)$. 
Then $\Lambda_{\mathfrak{p}}$ is a maximal order over the discrete valuation ring
$R_{\mathfrak{p}}$, and thus is hereditary by \cite[Theorem 18.1]{MR1972204}. 
Hence every finitely generated $\Lambda_{\mathfrak{p}}$-module
is of finite projective dimension and thus the claim follows from
(i) and Proposition \ref{prop:perfect-cohomology-fpd} in this case.
Since (ii) implies that 
$\Lambda_{(p)} \otimes_{\Lambda} H^{i}(C^{\bullet}) = \Lambda_{(p)} \otimes_{\Lambda} H^{i}(D^{\bullet})=0$ for $i \in \Z - \{ k \}$, 
the claim for $\mathfrak{p} = (p)$ follows from Proposition
\ref{prop:perfect-cohomology-acyclic-outside-one-degree} and (i).
\end{proof}

\section{The abelian EIMC and localisation at $(p)$}\label{sec:abelian-EIMC-localisation}

Let $p$ be an odd prime and let $K$ be a totally real number field. 
Let $\mathcal{L}/K$ be an abelian admissible one-dimensional $p$-adic Lie extension of $K$.
Let $\mathcal{G}=\Gal(\mathcal{L}/K)$ and write $\mathcal{G} = H \times \Gamma$
where $H$ is a finite abelian group and $\Gamma \simeq \Z_{p}$.
Let $R = \Z_{p} \llbracket \Gamma \rrbracket$.
Let $S$ be a finite set of places of $K$ containing $S_{\ram}(\mathcal{L}/K) \cup S_{\infty}$.

Since $\mathcal{G}$ is abelian, the reduced norm map $\nr : K_{1}(\mathcal{Q}(\mathcal{G})) \rightarrow \mathcal{Q}(\mathcal{G})^{\times}$ is equal to the usual determinant map $\det$
and is an isomorphism by \cite[Proposition 45.12]{MR892316}. 
Let $\zeta_{S} = \zeta_{S}(\mathcal{L}/K) := \det^{-1}(\Phi_{S}(\mathcal{L}/K))$
and let 
\[
\omega_{S} = \omega_{S}(\mathcal{L}/K):= 
\partial(\zeta_{S}(\mathcal{L}/K)) + [C_{S}^{\bullet}(\mathcal{L}/K)]
\in K_0(\Lambda(\mathcal{G}), \mathcal{Q}(\mathcal{G})).
\]
It follows easily from its statement (Conjecture \ref{conj:EIMC}) 
that the EIMC for $\mathcal{L}/K$ is equivalent to the assertion that 
$\partial(\zeta_{S}(\mathcal{L}/K))=-[C_{S}^{\bullet}(\mathcal{L}/K)]$.
Hence we obtain the following equivalent formulation of the abelian EIMC.

\begin{lemma}\label{lem:equiv-abelian-EIMC}
The EIMC for $\mathcal{L}/K$ holds if and only if $\omega_{S}$ vanishes.
\end{lemma}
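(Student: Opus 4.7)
The plan is to unwind the definitions, exploiting the fact that in the abelian setting the reduced norm is a bijection on $K_1$.

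First I would recall that because $\mathcal{G}$ is abelian, the map $\nr = \det \colon K_1(\mathcal{Q}(\mathcal{G})) \to \mathcal{Q}(\mathcal{G})^\times$ is an isomorphism (this was already invoked just above the lemma, citing \cite[Proposition 45.12]{MR892316}). In particular, the condition $\nr(\zeta_S') = \Phi_S(\mathcal{L}/K)$ on an element $\zeta_S' \in K_1(\mathcal{Q}(\mathcal{G}))$ has a unique solution, namely $\zeta_S' = \det^{-1}(\Phi_S(\mathcal{L}/K)) = \zeta_S$.

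Next I would compare this directly with the EIMC as stated in Conjecture \ref{conj:EIMC}: it asserts the existence of some $\zeta_S' \in K_1(\mathcal{Q}(\mathcal{G}))$ satisfying both $\nr(\zeta_S') = \Phi_S(\mathcal{L}/K)$ and $\partial(\zeta_S') = -[C_S^\bullet(\mathcal{L}/K)]$. By the previous paragraph, any such $\zeta_S'$ must equal $\zeta_S$, so the EIMC for $\mathcal{L}/K$ is equivalent to the single equation
\[
\partial(\zeta_S) = -[C_S^\bullet(\mathcal{L}/K)]
\]
in $K_0(\Lambda(\mathcal{G}), \mathcal{Q}(\mathcal{G}))$. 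Rearranging, this is the same as $\omega_S = \partial(\zeta_S) + [C_S^\bullet(\mathcal{L}/K)] = 0$, as required.

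Since the argument is purely a matter of unwinding the definitions and using that $\nr$ is bijective in the abelian case, there is no real obstacle here; the lemma is essentially a reformulation of Conjecture \ref{conj:EIMC} tailored to the abelian setting, where the uniqueness part of Conjecture \ref{conj:EIMC-unique} is automatic (compare Remark \ref{rmk:SK1}).
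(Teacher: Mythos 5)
Your proposal is correct and matches the paper's argument: both simply unwind the definition of $\omega_S$ and use the fact that $\nr = \det$ is an isomorphism on $K_1(\mathcal{Q}(\mathcal{G}))$ in the abelian case, so that the element $\zeta_S'$ in Conjecture~\ref{conj:EIMC} is forced to equal $\zeta_S$ and the EIMC reduces to the single equation $\partial(\zeta_S) = -[C_S^\bullet(\mathcal{L}/K)]$, i.e.\ $\omega_S = 0$.
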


Let $\mathcal{M}(\mathcal{G})$ denote the unique maximal $R$-order in $\mathcal{Q}(\mathcal{G})$
and note that $\mathcal{M}(\mathcal{G})$ is the integral closure of $\Lambda(\mathcal{G})$ in $\mathcal{Q}(\mathcal{G})$ (see \cite[Theorem 8.6]{MR1972204}).
The following result states that the EIMC for $\mathcal{L}/K$ holds `over the maximal order'.
Variants of this result for
arbitrary admissible one-dimensional $p$-adic Lie extensions are due to
Ritter and Weiss \cite[Theorem 16]{MR2114937}, the present authors \cite[Theorem 4.9]{MR3749195}, and 
(in terms of Selmer groups) Greenberg \cite[Proposition 9]{MR3586815}; these are all 
ultimately reformulations of 
the classical (non-equivariant) 
Iwasawa main conjecture and a result on comparison of $\mu$\nobreakdash-invariants,
both proven by Wiles \cite[Theorems 1.3 and 1.4]{MR1053488}.

\begin{prop}\label{prop:version-of classical MC}
The element $\omega_{S}$ maps to zero under the canonical map
\begin{equation} \label{eqn:extension-to-max-order}
K_{0}(\Lambda(\mathcal{G}), \mathcal{Q}(\mathcal{G})) \longrightarrow
K_{0}(\mathcal{M}(\mathcal{G}), \mathcal{Q}(\mathcal{G}))
\end{equation}
induced by extension of scalars.
\end{prop}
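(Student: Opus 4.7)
The plan is to reduce to the classical (non-equivariant) Iwasawa main conjecture for totally real fields proved by Wiles \cite{MR1053488}. The cleanest route is to invoke one of the existing versions of the EIMC over the maximal order: \cite[Theorem 16]{MR2114937} (Ritter--Weiss), or equivalently \cite[Theorem 4.9]{MR3749195}, establishes precisely that $\omega_S$ maps to zero in $K_{0}(\mathcal{M}(\mathcal{G}), \mathcal{Q}(\mathcal{G}))$ for \emph{all} admissible one-dimensional $p$-adic Lie extensions, and the abelian case here is a special case. I would state the proposition in this form: cite the relevant result, remark that it is formulated over the maximal order of a Ritter--Weiss-type complex but that this complex is isomorphic in the derived category to $C_{S}^{\bullet}(\mathcal{L}/K)$ by \cite[Theorem 2.4]{MR3072281}, and observe that $\Phi_{S}$ agrees on both sides since it is character-theoretically determined by \eqref{eqn:Phi_S-definition}.

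For a more self-contained account in the abelian setting, I would argue via character decomposition. Since $\mathcal{G} = H \times \Gamma$ is abelian, $\mathcal{Q}(\mathcal{G})$ is a finite product of fields, and $\mathcal{M}(\mathcal{G})$ is correspondingly a finite product of Iwasawa-type algebras $\mathcal{O}_{\chi}\llbracket \Gamma \rrbracket$ indexed by $G_{\Q_{p}}$-orbits of characters $\chi$ of $H$, with $\mathcal{O}_{\chi}$ the ring of integers of $\Q_{p}(\chi)$. Each such factor is a regular local ring, hence hereditary, so Proposition \ref{prop:perfect-cohomology-fpd} applies and the induced decomposition of $K_{0}(\mathcal{M}(\mathcal{G}), \mathcal{Q}(\mathcal{G}))$ lets me check vanishing component by component.

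For a fixed $\chi$, the $\chi$-component of $[C_{S}^{\bullet}(\mathcal{L}/K)]$ is, by \eqref{eq:cohomology-of-FK-complex} and Proposition \ref{prop:perfect-cohomology-fpd}, $-[X_{S}^{(\chi)}] + [\Z_{p}^{(\chi)}]$; the first summand is controlled by the characteristic power series of the $\chi$-isotypic component of $X_{S}$, while the second contributes nontrivially only for the trivial character of $H$ and exactly accounts for the factor $H_{\chi}$ appearing in \eqref{eqn:p-adic-L-series}. The $\chi$-component of $\det(\zeta_{S}) = \Phi_{S}$ is, via \eqref{eqn:Phi_S-definition} and \eqref{eqn:L_K,S}, precisely $G_{\chi,S}(\gamma_{K}-1)/H_{\chi}(\gamma_{K}-1)$. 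Hence vanishing in the $\chi$-component reduces to the equality of the characteristic power series of $X_{S}^{(\chi)}$ with the Deligne--Ribet series $G_{\chi,S}$, which is Wiles' theorem.

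The principal obstacle in the direct approach is the careful bookkeeping of the degree-zero cohomology $\Z_{p}$ of $C_{S}^{\bullet}$ against the pole term $H_{\chi}$ in $\Phi_{S}$, particularly for the trivial character of $H$, as well as ensuring the descent from $\Q_{p}^{c}$-valued characters to $G_{\Q_{p}}$-orbits is compatible with the chosen splitting of $\mathcal{M}(\mathcal{G})$. This bookkeeping is one of the reasons I would rather cite \cite[Theorem 16]{MR2114937} or \cite[Theorem 4.9]{MR3749195}, where it has already been carried out systematically (and in the greater generality of possibly non-abelian $\mathcal{G}$).
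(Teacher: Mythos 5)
Your primary route (first paragraph) is exactly what the paper does: its proof invokes \cite[Theorem 16]{MR2114937}, extracts from it that $\det(x_{S})\Phi_{S}^{-1}\in\mathcal{M}(\mathcal{G})^{\times}$ via the identification of $\Hom^{\ast}(R_{p}(\mathcal{G}),\Lambda^{c}(\Gamma_{K})^{\times})$ with $\mathcal{M}(\mathcal{G})^{\times}$ from \cite[Remark H]{MR2114937}, and then observes that $\det^{-1}(\mathcal{M}(\mathcal{G})^{\times})$ is precisely the kernel of the composite of $\partial$ with the map \eqref{eqn:extension-to-max-order}, so the two arguments coincide in substance. One small caveat about your alternative character-by-character sketch: the component algebras $\mathcal{O}_{\chi}\llbracket\Gamma\rrbracket$ of $\mathcal{M}(\mathcal{G})$ are two-dimensional regular local rings and hence \emph{not} hereditary (their global dimension is $2$, not $\leq 1$); what you actually need, and what holds, is that they have finite global dimension, which is exactly the hypothesis of Proposition \ref{prop:perfect-cohomology-fpd}.
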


\begin{proof}
Let $x_{S} \in K_{1}(\mathcal{Q}(\mathcal{G}))$ such that $\partial(x_{S}) = -[C_{S}^{\bullet}(\mathcal{L}/K)]$.
By \cite[Theorem 16]{MR2114937} (whose proof simplifies in the abelian case) we have that
\[
	\Det(x_{S})L_{K,S}^{-1} \in \Hom^{\ast}(R_{p}(\mathcal{G}), \Lambda^{c}(\Gamma_{K})^{\times}),
\]
where $\Lambda^{c}(\Gamma_{K}) := \Z_{p}^{c} \otimes_{\Z_{p}} \Lambda(\Gamma_{K})$ and $\Z_{p}^{c}$ denotes the integral closure
of $\Z_{p}$ in $\Q_{p}^{c}$.
Moreover, $\Hom^{\ast}(R_{p}(\mathcal{G}), \Lambda^{c}(\Gamma_{K})^{\times})$ identifies with
$\mathfrak{M}(\mathcal{G})^{\times}$
under the isomorphism in diagram \eqref{eqn:Det_triangle} as explained in \cite[Remark H]{MR2114937}
and $L_{K,S}$ corresponds to $\Phi_{S}(\mathcal{L}/K)$ by definition - see \eqref{eqn:Phi_S-definition}.
Thus $y_{S}:=\det(x_{S})\Phi_{S}^{-1} \in \mathfrak{M}(\mathcal{G})^{\times}$.
The desired result now follows once one observes that 
$\partial(\det^{-1}(y_{S})) = -\omega_{S}$
and that $\det^{-1}(\mathfrak{M}(\mathcal{G})^{\times})$ is equal to the kernel of the canonical map $K_{1}(\mathcal{Q}(\mathcal{G})) \rightarrow K_{0}(\mathcal{M}(\mathcal{G}), \mathcal{Q}(\mathcal{G}))$, which is the composition of $\partial$ and the map \eqref{eqn:extension-to-max-order}.
\end{proof}

For a height one prime ideal $\mathfrak{p}$ of $R$, let $R_{\mathfrak{p}}$ be denote localisation of $R$ at 
$\mathfrak{p}$, let $\Lambda_{\mathfrak{p}}(\mathcal{G}) = R_{\mathfrak{p}} \otimes_{R} \Lambda(\mathcal{G})$,
let $\mathcal{M}_{\mathfrak{p}}(\mathcal{G}) = R_{\mathfrak{p}} \otimes_{R} \mathcal{M}(\mathcal{G})$, and 
let
\[
\mathrm{loc}_{\mathfrak{p}}: K_{0}(\Lambda(\mathcal{G}), \mathcal{Q}(\mathcal{G}))
\longrightarrow K_{0}(\Lambda_{\mathfrak{p}}(\mathcal{G}), \mathcal{Q}(\mathcal{G}))
\]
be the canonical map induced by $\Lambda_{\mathfrak{p}}(\mathcal{G}) \otimes_{\Lambda(\mathcal{G})} -$.

\begin{corollary}\label{cor:vanishing-omega-ht-1-not-p}
Let $\mathfrak{p} \not= (p)$ be a height one prime ideal of $R$.
Then $\mathrm{loc}_{\mathfrak{p}}(\omega_{S})=0$.
\end{corollary}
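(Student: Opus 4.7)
The plan is to exploit Proposition \ref{prop:version-of classical MC} (the EIMC over the maximal order) and the fact that, away from the prime $(p)$, localisation of $\Lambda(\mathcal{G})$ already gives a maximal order; hence localisation at such $\mathfrak{p}$ factors through extension to $\mathcal{M}(\mathcal{G})$, where $\omega_S$ is already zero.

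More precisely, first I would observe that for a height one prime $\mathfrak{p}$ of $R=\Z_p\llbracket\Gamma\rrbracket$ with $\mathfrak{p}\neq(p)$, the residue field of $R_{\mathfrak{p}}$ has characteristic $0$ (it is the fraction field of the finite $\Z_p$-algebra $R/\mathfrak{p}$). Since $\Lambda(\mathcal{G})=R[H]$ with $H$ finite, $\Lambda_{\mathfrak{p}}(\mathcal{G})=R_{\mathfrak{p}}[H]$ is free of finite rank over $R_{\mathfrak{p}}$, and the order $|H|$ is a unit in $R_{\mathfrak{p}}$. A standard Maschke-type argument then shows that $\Lambda_{\mathfrak{p}}(\mathcal{G})$ is an $R_{\mathfrak{p}}$-maximal order in $\mathcal{Q}(\mathcal{G})$. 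By the uniqueness of the maximal order in a commutative separable algebra over the fraction field of a Dedekind domain, this gives the identification
\[
\Lambda_{\mathfrak{p}}(\mathcal{G}) \;=\; \mathcal{M}_{\mathfrak{p}}(\mathcal{G}) \;=\; R_{\mathfrak{p}}\otimes_{R}\mathcal{M}(\mathcal{G}).
\]

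Next, I would use functoriality of the relative $K$-group constructions to produce a commutative diagram
\[
\xymatrix{
K_{0}(\Lambda(\mathcal{G}),\mathcal{Q}(\mathcal{G})) \ar[r] \ar[d]_{\mathrm{loc}_{\mathfrak{p}}} & K_{0}(\mathcal{M}(\mathcal{G}),\mathcal{Q}(\mathcal{G})) \ar[d] \\
K_{0}(\Lambda_{\mathfrak{p}}(\mathcal{G}),\mathcal{Q}(\mathcal{G})) \ar@{=}[r] & K_{0}(\mathcal{M}_{\mathfrak{p}}(\mathcal{G}),\mathcal{Q}(\mathcal{G})),
}
\]
where the top horizontal arrow is \eqref{eqn:extension-to-max-order} and the right vertical arrow is localisation at $\mathfrak{p}$. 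By Proposition \ref{prop:version-of classical MC}, the image of $\omega_S$ along the top arrow is zero, so following the diagram around the other way yields $\mathrm{loc}_{\mathfrak{p}}(\omega_S)=0$.

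There is no real obstacle here: the content is entirely in Proposition \ref{prop:version-of classical MC} (which in turn rests on Wiles' classical main conjecture and the abelian case of \cite[Theorem~16]{MR2114937}), together with the elementary maximal-order identification at primes $\mathfrak{p}\neq(p)$. The only small point to verify carefully is the maximality of $R_{\mathfrak{p}}[H]$, which follows because $|H|\in R_{\mathfrak{p}}^{\times}$ forces $R_{\mathfrak{p}}[H]$ to be \'etale over $R_{\mathfrak{p}}$, hence a product of Dedekind (in fact DVR) rings that coincides with the integral closure in $\mathcal{Q}(\mathcal{G})$.
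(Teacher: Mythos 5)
Your proof is correct and takes essentially the same route as the paper: identify $\Lambda_{\mathfrak{p}}(\mathcal{G})$ with $\mathcal{M}_{\mathfrak{p}}(\mathcal{G})$ for $\mathfrak{p}\neq(p)$, observe that $\mathrm{loc}_{\mathfrak{p}}$ therefore factors through the extension-of-scalars map to the maximal order, and invoke Proposition~\ref{prop:version-of classical MC}. The only difference is that you spell out the maximality of $R_{\mathfrak{p}}[H]$ (via invertibility of $|H|$ in $R_{\mathfrak{p}}$), which the paper treats as standard and asserts without comment.
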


\begin{proof}
For each height one prime ideal $\mathfrak{p} \not= (p)$ we have
$\Lambda_{\mathfrak{p}}(\mathcal{G}) = \mathcal{M}_{\mathfrak{p}}(\mathcal{G})$.
Hence the map $\mathrm{loc}_{\mathfrak{p}}$ factors through the map
\eqref{eqn:extension-to-max-order} and so the result follows from
Proposition \ref{prop:version-of classical MC}.
\end{proof}

Let $\partial_{(p)} : K_{1}(\mathcal{Q}(\mathcal{G})) \longrightarrow K_{0}(\Lambda_{(p)}(\mathcal{G}), \mathcal{Q}(\mathcal{G}))$ be the canonical map associated to the height one prime ideal $(p)$ of $R$.
Note that $\partial_{(p)} = \mathrm{loc}_{(p)} \circ \partial$.
We henceforth abbreviate 
$\Lambda_{(p)}(\mathcal{G}) \otimes_{\Lambda(\mathcal{G})} X_{S}$ to $(X_{S})_{(p)}$.

\begin{prop}\label{prop:abelian-EIMC-equiv-local-at-(p)}
The following are equivalent.
\begin{enumerate}
\item The EIMC holds for $\mathcal{L}/K$.
\item $\omega_{S}=0$.
\item $\mathrm{loc}_{(p)}(\omega_{S})=0$.
\item $\partial_{(p)}(\zeta_{S})=[(X_S)_{(p)}]$.
\item $\Phi_{S}$ generates $\Fitt_{\Lambda_{(p)}(\mathcal{G})}((X_S)_{(p)})$.
\item $\Phi_{S} \in \Fitt_{\Lambda_{(p)}(\mathcal{G})}((X_S)_{(p)})$.
\end{enumerate}
\end{prop}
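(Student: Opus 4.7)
The plan is to establish the chain of equivalences (i) $\Leftrightarrow$ (ii) $\Leftrightarrow$ (iii) $\Leftrightarrow$ (iv) $\Leftrightarrow$ (v). The first of these is exactly Lemma~\ref{lem:equiv-abelian-EIMC}, so no work is required there. For (ii) $\Leftrightarrow$ (iii), the implication (ii) $\Rightarrow$ (iii) is immediate. For the converse, Corollary~\ref{cor:vanishing-omega-ht-1-not-p} already gives $\mathrm{loc}_{\mathfrak{p}}(\omega_{S}) = 0$ for every height one prime $\mathfrak{p} \neq (p)$ of $R$; combined with (iii), this shows that $\omega_{S}$ lies in the kernel of the canonical map $K_{0}(\Lambda(\mathcal{G}),\mathcal{Q}(\mathcal{G})) \to \bigoplus_{\mathfrak{p}} K_{0}(\Lambda_{\mathfrak{p}}(\mathcal{G}),\mathcal{Q}(\mathcal{G}))$. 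This map is injective by Proposition~\ref{prop:injectivity-rel-K-localization-in-abelian-case} (applied with $e=1$), so $\omega_{S}=0$.

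For (iii) $\Leftrightarrow$ (iv), the key observation is that the degree-$0$ cohomology of $C_{S}^{\bullet}(\mathcal{L}/K)$ disappears upon localization at $(p)$: if $\gamma$ is a topological generator of $\Gamma$ and $T = \gamma - 1$, then $\Z_{p} = R/(T)$ as an $R$-module and $T \notin pR$, so $T$ is a unit in $R_{(p)}$ and $(\Z_{p})_{(p)} = 0$. Hence $C_{S}^{\bullet}(\mathcal{L}/K)_{(p)}$ has cohomology concentrated in degree $-1$ where it equals $(X_{S})_{(p)}$, and Proposition~\ref{prop:perfect-cohomology-acyclic-outside-one-degree} yields that $(X_{S})_{(p)}$ is of finite projective dimension over $\Lambda_{(p)}(\mathcal{G})$ and $[C_{S}^{\bullet}(\mathcal{L}/K)_{(p)}] = -[(X_{S})_{(p)}]$ in $K_{0}(\Lambda_{(p)}(\mathcal{G}),\mathcal{Q}(\mathcal{G}))$. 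Therefore $\mathrm{loc}_{(p)}(\omega_{S}) = \partial_{(p)}(\zeta_{S}) - [(X_{S})_{(p)}]$ and (iii) $\Leftrightarrow$ (iv) follows.

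The remaining equivalence (iv) $\Leftrightarrow$ (v) will be obtained by translating to principal ideals inside $\mathcal{Q}(\mathcal{G})^{\times}$. The localized sequence \eqref{eqn:local-Iwasawa-K-sequence-with-e} (with $e = 1$), combined with the identification of $K_{1}$ with units via the determinant, yields an isomorphism $K_{0}(\Lambda_{(p)}(\mathcal{G}),\mathcal{Q}(\mathcal{G})) \cong \mathcal{Q}(\mathcal{G})^{\times}/\Lambda_{(p)}(\mathcal{G})^{\times}$, under which $\partial_{(p)}(\zeta_{S})$ corresponds to the class of $\Phi_{S}$. By the local variant of Lemma~\ref{lemma:fitting-ideal-is-principal}, $(X_{S})_{(p)}$ admits a quadratic presentation, so $\Fitt_{\Lambda_{(p)}(\mathcal{G})}((X_{S})_{(p)}) = f \Lambda_{(p)}(\mathcal{G})$ for some nonzerodivisor $f$, and $[(X_{S})_{(p)}]$ corresponds to the class of $f$. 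Thus (iv) is equivalent to the equality of principal ideals $\Phi_{S} \Lambda_{(p)}(\mathcal{G}) = f \Lambda_{(p)}(\mathcal{G})$, making (iv) $\Rightarrow$ (v) immediate.

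The main obstacle is the reverse implication (v) $\Rightarrow$ (iv), since (v) a priori supplies only the inclusion $\Phi_{S} \Lambda_{(p)}(\mathcal{G}) \subseteq f \Lambda_{(p)}(\mathcal{G})$. To upgrade this to an equality, I will invoke Proposition~\ref{prop:version-of classical MC}: since $\omega_{S}$ vanishes in $K_{0}(\mathcal{M}(\mathcal{G}),\mathcal{Q}(\mathcal{G}))$, it also vanishes after further localization at $(p)$, which translates to $\Phi_{S} \mathcal{M}_{(p)}(\mathcal{G}) = f \mathcal{M}_{(p)}(\mathcal{G})$. Since $\mathcal{M}_{(p)}(\mathcal{G})$ is integral over $\Lambda_{(p)}(\mathcal{G})$ and $f$ is a nonzerodivisor, Lemma~\ref{lemma:ideals-equal-over-integral-ext} applied with $a = \Phi_{S}$ and $b = f$ then promotes the inclusion to the desired equality, completing the chain of equivalences.
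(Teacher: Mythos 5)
Your argument follows the same overall route as the paper's proof: (i) $\Leftrightarrow$ (ii) via Lemma~\ref{lem:equiv-abelian-EIMC}, (ii) $\Leftrightarrow$ (iii) via Corollary~\ref{cor:vanishing-omega-ht-1-not-p} and the injectivity of the localisation map from Proposition~\ref{prop:injectivity-rel-K-localization-in-abelian-case}, (iii) $\Leftrightarrow$ (iv) by observing that $\Z_p$ dies upon localisation at $(p)$ and invoking Proposition~\ref{prop:perfect-cohomology-acyclic-outside-one-degree}, and (iv) $\Leftrightarrow$ (v) via a translation into principal ideals of $\Lambda_{(p)}(\mathcal{G})$ together with Proposition~\ref{prop:version-of classical MC} and Lemma~\ref{lemma:ideals-equal-over-integral-ext}. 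Your explicit reasoning that $T = \gamma - 1$ becomes a unit in $R_{(p)}$, so that $(\Z_p)_{(p)} = 0$, is a fine substitute for the paper's appeal to the vanishing $\mu$-invariant of $\Z_p$.

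However, there is one genuine gap in the step (iv) $\Leftrightarrow$ (v). You write that ``by the local variant of Lemma~\ref{lemma:fitting-ideal-is-principal}, $(X_S)_{(p)}$ admits a quadratic presentation'' and conclude that $\Fitt_{\Lambda_{(p)}(\mathcal{G})}((X_S)_{(p)})$ is principal generated by a nonzerodivisor $f$, with $[(X_S)_{(p)}]$ corresponding to the class of $f$. But Lemma~\ref{lemma:fitting-ideal-is-principal} requires the module to have projective dimension \emph{at most one} over $\Lambda_{(p)}(\mathcal{G})$, whereas Proposition~\ref{prop:perfect-cohomology-acyclic-outside-one-degree}, which is all you have established at that point, only gives \emph{finite} projective dimension. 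These are not the same: the residue field of $R_{(p)} = \Z_p\llbracket T\rrbracket_{(p)}$ has characteristic $p$, so if $H$ has nontrivial $p$-torsion then $\Lambda_{(p)}(\mathcal{G}) = R_{(p)}[H]$ has infinite global dimension, and a finitely generated torsion module with finite projective dimension need not have projective dimension at most one. Without this bound you cannot produce the quadratic presentation, and the resulting identification of $[(X_S)_{(p)}]$ with the class of a Fitting-ideal generator in $\mathcal{Q}(\mathcal{G})^\times/\Lambda_{(p)}(\mathcal{G})^\times$ — the pivot of your whole translation to principal ideals — is unjustified. The paper closes precisely this gap by citing \cite[Lemma 5.2]{MR4098596}, which shows that the projective dimension of $(X_S)_{(p)}$ is indeed at most one; you need to supply this (or an equivalent argument, e.g.\ that $C_S^{\bullet}(\mathcal{L}/K)$ is represented by a complex of projectives in amplitude $[-2,0]$, which upon localisation at $(p)$ forces $(X_S)_{(p)}$ to have projective dimension $\le 1$) before the rest of the argument is valid.
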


\begin{proof}
The equivalence of (i) and (ii) is just Lemma \ref{lem:equiv-abelian-EIMC}.
The equivalence of (ii) and (iii) follows from
Proposition \ref{prop:injectivity-rel-K-localization-in-abelian-case} and
Corollary \ref{cor:vanishing-omega-ht-1-not-p}.
We have
\[
H^{i}(\Lambda_{(p)}(\mathcal{G}) \otimes_{\Lambda(\mathcal{G})}^{\mathbb{L}} C_{S}^{\bullet}(\mathcal{L}/K))
\simeq 
\Lambda_{(p)}(\mathcal{G}) \otimes_{\Lambda(\mathcal{G})} H^{i}( C_{S}^{\bullet}(\mathcal{L}/K))
\simeq \left\{
\begin{array}{lll}
(X_{S})_{(p)} & \mbox{ if } & i=-1\\
0 & \mbox{ if } & i \neq -1,
\end{array}
\right.
\]
where the first isomorphism follows from the fact that localisation at $(p)$ is exact, and the second
isomorphism follows from \eqref{eq:cohomology-of-FK-complex} and the fact that
$\Lambda_{(p)}(\mathcal{G}) \otimes_{\Lambda(\mathcal{G})} \Z_{p} = 0$.
Moreover, Proposition 
\ref{prop:perfect-cohomology-acyclic-outside-one-degree} then implies that
$(X_S)_{(p)}$ is of finite projective
dimension over $\Lambda_{(p)}(\mathcal{G})$ and that we have an equality 
\[
[\Lambda_{(p)}(\mathcal{G}) \otimes^{\mathbb{L}}_{\Lambda(\mathcal{G})}
C_{S}^{\bullet}(\mathcal{L}/K)] = -[(X_S)_{(p)}] \in
K_{0}(\Lambda_{(p)}(\mathcal{G}), \mathcal{Q}(\mathcal{G})).
\]
It follows easily that (iii) and (iv) are equivalent.
In fact, the projective dimension of $(X_S)_{(p)}$ is at most one by \cite[Lemma 5.2]{MR4098596}.
Thus $\Fitt_{\Lambda_{(p)}(\mathcal{G})}((X_S)_{(p)})$ is a principal ideal generated by a nonzerodivisor
by Lemma~\ref{lemma:fitting-ideal-is-principal}.
By Remark \ref{rem:Fitt-complex-vs-module} and the fact that ${\Phi_{S} =\det(\zeta_{S})}$,
we see that (iv) is equivalent to (v).
Analogous reasoning and Proposition~\ref{prop:version-of classical MC} 
show that $\Phi_{S}$ generates
$\Fitt_{\mathcal{M}_{(p)}(\mathcal{G})}
(\mathcal{M}_{(p)}(\mathcal{G}) \otimes_{\Lambda_{(p)}(\mathcal{G})}(X_S)_{(p)})$.
Therefore the fact that Fitting ideals commute with base change 
(see for example \cite[Corollary 20.5]{MR1322960}) and
an application of Lemma \ref{lemma:ideals-equal-over-integral-ext}
now show that (v) and (vi) are equivalent.
\end{proof}

\begin{remark}
Suppose $\mathcal{L}/K$ satisfies the $\mu=0$ hypothesis.
Then $X_{S}$ is finitely generated as a $\Z_{p}$-module and so $(X_{S})_{(p)}$ vanishes.
Thus by Proposition \ref{prop:abelian-EIMC-equiv-local-at-(p)} the EIMC for $\mathcal{L}/K$
is equivalent to the assertion that
\[
\zeta_{S} \in \ker(\partial_{(p)}) = K_{1}(\Lambda_{(p)}(\mathcal{G})).
\]
Since $\Phi_{S} =\det(\zeta_{S})$ and $\det : K_{1}(\mathcal{Q}(\mathcal{G})) \rightarrow \mathcal{Q}(\mathcal{G})^{\times}$
is an isomorphism, this in turn is equivalent to the assertion that 
$\Phi_{S} \in \Lambda_{(p)}(\mathcal{G})^{\times}$.
A proof of this last assertion (again under the $\mu=0$ hypothesis) can be found in
\cite[\S 6]{MR1935024} or \cite[Lemma 1.14]{MR2819672} 
(we caution that the meaning of $S$ here differs from that in \cite{MR2819672}).
\end{remark}

\section{Inverse limits of Stickelberger elements}\label{sec:inverse-limits-stickelberger}

Let $p$ be an odd prime and  let $K$ be a totally real number field. 
Let $L/K$ be a finite abelian CM extension and let $G=\Gal(L/K)$.
Let $S$ be a finite set of places of $K$
containing  $S_{p} \cup S_{\ram}(L/K) \cup S_{\infty}$.
Assume that $\zeta_{p} \in L$.

Let $L_{\infty}$ denote the cyclotomic $\Z_{p}$-extension of $L$.
Let $\mathcal{G} = \Gal(L_{\infty}/K)$, which we write as $\mathcal{G} = H \times \Gamma$,
where $\Gamma \simeq \Z_{p}$
and $H := \Gal(L_{\infty} / K_{\infty})$ canonically identifies with 
a subgroup of $G$.
Let $R = \Z_{p} \llbracket \Gamma \rrbracket$.
Let $j \in \mathcal{G}$ denote complex conjugation (this an abuse of notation because its image in the quotient group $G$ is also denoted by $j$) and let
$\mathcal{G}^{+} := \mathcal{G} / \langle j \rangle = \Gal(L_{\infty}^{+}/K)$.
Then $j \in H$ and so again $\Lambda(\mathcal{G}^{+})$ is a free $R$-order in $\mathcal{Q}(\mathcal{G}^{+})$. 
Moreover, $\Lambda(\mathcal{G})_{-} := \Lambda(\mathcal{G}) / (1+j)$ is also a free $R$-order.
For any $\Lambda(\mathcal{G})$-module $M$ we write $M^{+}$ and $M^{-}$ for the submodules of $M$ upon 
which $j$ acts as $1$ and $-1$, respectively, and consider these as modules over $\Lambda(\mathcal{G}^{+})$ and
$\Lambda(\mathcal{G})_{-}$, respectively. 

Let $\chi_{\mathrm{cyc}}:\mathcal{G} \rightarrow \Z_{p}^{\times}$ denote the $p$-adic cyclotomic character.
Let $\mu_{p^{n}}=\mu_{p^{n}}(L_{\infty})$ denote the group of $p^{n}$th roots of unity in $L_{\infty}^{\times}$
and let $\mu_{p^{\infty}}$ be the nested union (or direct limit) of these groups.
Let $\Z_{p}(1):= \varprojlim_{n} \mu_{p^{n}}$ be endowed with the action of $\mathcal{G}$ given by 
$\chi_{\mathrm{cyc}}$.
For any $r \geq 0$ define $\Z_{p}(r) := \Z_{p}(1)^{\otimes r}$ and $\Z_{p}(-r) := \Hom_{\Z_{p}}(\Z_{p}(r),\Z_{p})$
endowed with the naturally associated actions. 
For any $\Lambda(\mathcal{G})$-module $M$, we define the $r$th Tate twist to be $M(r):= \Z_{p}(r) \otimes_{\Z_{p}} M$
with the natural $\mathcal{G}$-action; hence $M(r)$ is simply $M$ with the modified $\mathcal{G}$-action 
$g \cdot m = \chi_{\mathrm{cyc}}(g)^{r} g(m)$ for $g \in \mathcal{G}$ and $m \in M$.
In particular, we have $\Q_{p} / \Z_{p} (1) \simeq \mu_{p^{\infty}}$ and $\Lambda(\mathcal{G}^{+})(-1) \cong \Lambda(\mathcal{G})_{-}$. 

For every place $v$ of $K$ we denote the decomposition subgroup of $\mathcal{G}$ at a chosen prime $w_{\infty}$
above $v$ by $\mathcal{G}_{w_{\infty}}$ (everything will only depend on $v$ and not on $w_{\infty}$ in the following).
In a cyclotomic $\Z_{p}$-extension only finitely many primes lie above any rational prime,
and hence the index $[\mathcal{G}:\mathcal{G}_{w_{\infty}}]$ is finite when $v$ is a finite place of $K$.
Let $\sigma_{w_{\infty}}$ denote the Frobenius automorphism at $w_{\infty}$.

For $n \geq 0$ let $G_{n} = \Gal(L_{n}/K)$, where $L_{n}$ is the $n$th layer of $L_{\infty}$. 
Denote the canonical projection map
$\Lambda(\mathcal{G}) \rightarrow \Z_{p}[G_{n}]$ by $\aug_{n}$.
We let $\Upsilon$ be the multiplicatively closed subset of $\Lambda(\mathcal{G})$
comprising all $x \in \Lambda(\mathcal{G})$ such that
$\aug_{n}(x)$ is a nonzerodivisor in $\Z_{p}[G_{n}]$ for all $n \geq 0$. 
Then $\aug_{n}$ extends uniquely to a $\Q_{p}$-algebra morphism 
$\aug_{n}: \Upsilon^{-1} \Lambda(\mathcal{G}) \rightarrow \Q_{p}[G_{n}]$.
Similar observations hold for the projection map
$\aug_{\Gamma_{K}}: \Lambda(\Gamma_{K}) \rightarrow \Z_{p}$.
For an irreducible 
$\Q_{p}^{c}$-valued character $\chi$ of $G_{n}$,
let $e_{n}(\chi)$ denote the associated primitive idempotent of $\Q_{p}^{c}[G_{n}]$.
Then for each $n \geq 0$ and each $x \in \Upsilon^{-1} \Lambda(\mathcal{G})$
we have an equality
\begin{equation}\label{eqn:aug-maps}
	\aug_{n}(x) = \sum_{\chi} \aug_{\Gamma_{K}}(j_{\chi}(x)) e_{n}(\chi),
\end{equation}
where the sum runs over all irreducible 
$\Q_{p}^{c}$-valued characters $\chi$ of $G_{n}$.
This is essentially a consequence of the facts that 
$\aug_{\Gamma_{K}}(j_{\chi}(\gamma_{\chi})) = 1$ and
$\gamma_{\chi}$ acts trivially upon $V_{\chi}$. 
(See \cite[Theorem 6.4]{MR2609173} and its proof for an even more general result.)
Note that in particular $\Phi_{S} \in \Upsilon^{-1} \Lambda(\mathcal{G})$.
 
For any $r \in \Z$, let $t_{\mathrm{cyc}}^{r}$ denote the $\Q_{p}$-algebra automorphism of $\Upsilon^{-1} \Lambda(\mathcal{G})$
induced by
$g \mapsto \chi_{\mathrm{cyc}}^{r}(g) g$ for $g \in \mathcal{G}$.
This restricts to an $\Z_{p}$-algebra automorphism of $\Lambda(\mathcal{G})$ and for $r=1$ induces an isomorphism 
$\Lambda(\mathcal{G}^{+})(-1) \cong \Lambda(\mathcal{G})_{-}$.
Let $x \mapsto x^{\#}$ denote the anti-involution on 
$\Upsilon^{-1} \Lambda(\mathcal{G})$
induced by $g \mapsto g^{-1}$ for $g \in \mathcal{G}$.

Let $T$ be a finite set of places of $K$ such that $S \cap T = \emptyset$.
We define $\Psi_{S,T} \in \Upsilon^{-1} \Lambda(\mathcal{G})$ by
\begin{equation}\label{eqn:definition-of-Psi}
\Psi_{S,T} = \Psi_{S,T}(L_{\infty} / K) :=  t_{\mathrm{cyc}}^{1}(\Phi_{S}) \cdot \textstyle{\prod_{v \in T} \xi_{v}},
\end{equation}
where
$\xi_{v} := 1 - \chi_{\mathrm{cyc}}(\sigma_{w_{\infty}}) \sigma_{w_{\infty}}$.
The following result for $r=0$ is essential for the proof of Theorem \ref{thm:EIMC-abelian-exts}. 
The case $r<0$ will be needed in \S \ref{sec:ETNC-and-CS}.

\begin{prop}\label{prop:PsiST-is-inverse-limit}
Assume that $\zeta_{p} \in L$. 
For every integer $r \leq 0$
we have
\[
t_{\mathrm{cyc}}^{r}(\Psi_{S,T}^{\#}) = \varprojlim_{n} \Theta_{S,T}(L_{n}/K, r).
\]
\end{prop}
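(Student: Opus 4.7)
The plan is to verify the identity level by level using the augmentation maps $\aug_n\colon \Upsilon^{-1}\Lambda(\mathcal{G}) \to \Q_p[G_n]$. Since the compatible system $(\aug_n)_n$ determines the image in $\varprojlim_n \Q_p[G_n]$, it suffices to show that for every $n \geq 0$,
\[
\aug_n\bigl(t_{\mathrm{cyc}}^r(\Psi_{S,T}^{\#})\bigr) = \Theta_{S,T}(L_n/K, r).
\]
Both sides admit the decomposition \eqref{eqn:aug-maps}, so it is enough to match $\chi$-components for each irreducible $\Q_p^c$-valued character $\chi$ of $G_n$; explicitly, one must show
\[
\aug_{\Gamma_K}\bigl(j_\chi(t_{\mathrm{cyc}}^r(\Psi_{S,T}^{\#}))\bigr) = \delta_T(r,\chi) \cdot L_S(r,\check\chi),
\]
which is the $\chi$-component of $\delta_T(r) \cdot L_S(r)^{\#}$.

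For the left-hand side, the abelianness of $\mathcal{G}$ yields the simple formula $j_\chi(g) = \chi(g) \overline{g}$ for $g \in \mathcal{G}$, from which the commutation rules
\[
j_\chi \circ t_{\mathrm{cyc}}^r = j_{\chi \chi_{\mathrm{cyc}}^r} \qquad \text{and} \qquad j_\chi \circ \# = \# \circ j_{\check\chi}
\]
follow directly. Combined with the observation that $\aug_{\Gamma_K} \circ \# = \aug_{\Gamma_K}$ (since both $\overline{\gamma}$ and $\overline{\gamma}^{-1}$ augment to $1$), the problem reduces to computing $\aug_{\Gamma_K}\bigl(j_{\check\chi \chi_{\mathrm{cyc}}^{-r}}(\Psi_{S,T})\bigr)$. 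I would then split $\Psi_{S,T} = t_{\mathrm{cyc}}^1(\Phi_S) \cdot \prod_{v \in T} \xi_v$. A further application of the commutation rule together with \eqref{eqn:Phi_S-definition} shows the $\Phi_S$-contribution is $L_{K,S}(\check\chi \chi_{\mathrm{cyc}}^{1-r})$, whose augmentation at $\gamma_K = 1$ equals $L_{p,S}(1, \check\chi \chi_{\mathrm{cyc}}^{1-r})$ by \eqref{eqn:p-adic-L-series}. An elementary Euler-product manipulation yields $L_{p,S}(s, \psi \chi_{\mathrm{cyc}}^k) = L_{p,S}(s-k, \psi \omega^k)$, and combining this with the interpolation \eqref{eqn:interpolation-property} (which is valid at $r = 0$ because every character is linear in the abelian case) produces $L_S(r, \check\chi)$. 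Note that $\check\chi \chi_{\mathrm{cyc}}^{1-r}$ is nontrivial for $r \leq 0$ since $\chi$ has finite order while $\chi_{\mathrm{cyc}}^{1-r}$ does not, so the pole of $L_{p,S}(\cdot, \mathbf{1})$ at $s=1$ is avoided.

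For each $v \in T$, the formula $j_\psi(\sigma_{w_\infty}) = \psi(\sigma_{w_\infty}) \overline{\sigma_{w_\infty}}$ together with the identity $\chi_{\mathrm{cyc}}(\sigma_{w_\infty}) = \mathrm{N}v$ gives
\[
\aug_{\Gamma_K}\bigl(j_{\check\chi \chi_{\mathrm{cyc}}^{-r}}(\xi_v)\bigr) = 1 - (\mathrm{N}v)^{1-r}\check\chi(\sigma_{w_\infty}),
\]
which is precisely the $\chi$-component of the $v$-Euler factor of $\delta_T(r)$ (unramified since $T \cap S = \emptyset$). Multiplying over $v \in T$ produces $\delta_T(r, \chi)$ and completes the verification. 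The main technical obstacle is correctly tracking the cyclotomic twists as they pass through the operations $\#$, $t_{\mathrm{cyc}}^r$, $j_\chi$, and $\aug_{\Gamma_K}$, and matching the resulting $p$-adic $L$-value with a classical $L$-value at a negative integer via the interpolation formula; the passage to the inverse limit, once the level-$n$ identities hold for all $n$, is then formal.
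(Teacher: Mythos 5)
Your proposal is correct and follows essentially the same route as the paper: reduce to the level-$n$ augmentations, decompose into $\chi$-components via \eqref{eqn:aug-maps}, and then connect the $p$-adic and complex $L$-values through \eqref{eqn:p-adic-L-series} and \eqref{eqn:interpolation-property}; your explicit abelian formula $j_\chi(g)=\chi(g)\overline{g}$ plays the role that the citation of \cite[Lemma 6.1]{MR3980291} plays in the paper. The one place you should be more careful is the step you call an ``elementary Euler-product manipulation'': the $p$-adic $L$-function has no Euler product, and what you actually need is the observation that $j_{\psi\chi_{\mathrm{cyc}}^{k}}=\tau^{k}\circ j_{\psi\omega^{k}}$ where $\tau^{k}$ is the automorphism of $\mathcal{Q}^{c}(\Gamma_{K})$ sending $\gamma_{K}\mapsto u^{k}\gamma_{K}$, so that $\aug_{\Gamma_{K}}\circ\tau^{k}$ sends $\gamma_{K}-1$ to $u^{k}-1$ and \eqref{eqn:p-adic-L-series} then yields $L_{p,S}(1-k,\psi\omega^{k})$.
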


\begin{proof}
Variants of this result are certainly well known.
The case at hand is stated in \cite[Lemma 5.14 (2)]{MR3383600}, but
it relies on \cite[Proposition 4.1]{MR2571700} where the case $r<0$
is left to the reader. 
We include the short argument for convenience as it provides the crucial
link between complex and $p$-adic Artin $L$-functions.

We have to show that the image of $t_{\mathrm{cyc}}^{r}(\Psi_{S,T}^{\#})$
under the projection map $\aug_n$ is equal to $\Theta_{S,T}(L_{n}/K, r)$
for each $n \geq 0$.
In fact, it suffices to show this for sufficiently large $n$.

It is straightforward to check that $t_{\mathrm{cyc}}^{r}(\xi_v^{\#})
= 1-\chi_{\mathrm{cyc}}^{1-r}(\sigma_{w_{\infty}})\sigma_{w_{\infty}}^{-1}$.
Hence since $\sigma_{w_{\infty}}$ acts by $N(v)$ on $p$-power
roots of unity, for each $v \in T$ and $n \geq 0$ we have 
\begin{equation} \label{eqn:aug-of-xi}
\aug_{n}(t_{\mathrm{cyc}}^{r}(\xi_v^{\#}))
= 1 - N(v)^{1-r}\sigma_{w_{n}}^{-1} 
= \delta_{\left\{v\right\}}(L_{n}/K, r),
\end{equation}
where $w_{n}$ is the place of $L_n$
below $w_{\infty}$ and 
$\delta_{\left\{v\right\}}(L_{n}/K, r)$ is defined in \eqref{eq:def-delta-T}.

Since a sufficiently large $p$-power of
$\Gamma_{L} = \Gal(L_{\infty}/L) \leq \mathcal{G}$ is contained in $\Gamma$,
there exists $m \geq 0$ such that for all $n \geq m$,
we have a decomposition $G_{n} \simeq H \times \Gamma / \Gamma_{L_n}$.
Now fix $n \geq m$. 
Then the $\Q_{p}^{c}$-valued irreducible characters of $G_{n}$ are
precisely those of the form $\chi \otimes \rho$, 
where $\chi$ and $\rho$ are irreducible characters of $H$ and $\Gamma / \Gamma_{L_n}$,
respectively. 
We denote the associated primitive idempotent of $\Q_{p}^{c}[G_{n}]$
by $e_{n}(\chi \otimes \rho)$ and compute
\begin{eqnarray*}
\aug_n(t_{\mathrm{cyc}}^{r}(\Psi_{S,T}^{\#}))
& = & \delta_{T}(L_{n}/K,r) \cdot \aug_n(t_{\mathrm{cyc}}^{r-1}(\Phi_{S}^{\#})) \\
& = & \delta_{T}(L_{n}/K,r) \cdot \aug_n((t_{\mathrm{cyc}}^{1-r}(\Phi_{S}))^{\#}) \\
& = & \delta_{T}(L_{n}/K,r) \cdot  \sum_{\chi, \rho} \aug_{\Gamma_{K}} \left(
j_{\chi \otimes \rho}(t_{\mathrm{cyc}}^{1-r}(\Phi_{S})) \right)
e_{n}(\check \chi \otimes \check \rho) \\
& = & \delta_{T}(L_{n}/K,r) \cdot \sum_{\chi, \rho} 
\aug_{\Gamma_{K}} \left(
\frac{G_{\chi\omega^{1-r} \otimes \rho, S}(u^{1-r}\gamma_{K}-1)}{H_{\chi\omega^{1-r} \otimes \rho}(u^{1-r}\gamma_{K}-1)} \right) e_{n}(\check\chi \otimes \check\rho) \\
& = & \delta_{T}(L_{n}/K,r) \cdot \sum_{\chi, \rho} L_{p,S}(r,\chi\omega^{1-r} \otimes \rho) e_{n}(\check\chi \otimes \check\rho) \\
& = & \delta_{T}(L_{n}/K,r) \cdot \sum_{\chi, \rho} \iota(L_{S}(r,\iota^{-1}(\chi \otimes \rho))) e_{n}(\check\chi \otimes \check\rho) \\
& = & \Theta_{S,T}(L_{n}/K,r).
\end{eqnarray*}
Here the sums run over all irreducible $\Q_{p}^{c}$-valued characters
$\rho$ of $\Gamma/\Gamma_{L_n}$ and $\chi$ of $H$ with $\chi$ odd if
$r$ is even and $\chi$ even otherwise.
The first equality is a consequence of \eqref{eqn:definition-of-Psi} and \eqref{eqn:aug-of-xi}.
The second is clear and the third is \eqref{eqn:aug-maps}
with $x = (t_{\mathrm{cyc}}^{1-r}(\Phi_{S}))^{\#}$.
The fourth equality is implied by \cite[Lemma 6.1]{MR3980291}, \eqref{eqn:L_K,S}
and \eqref{eqn:Phi_S-definition}.
(Note that $u := \kappa(\gamma_K)
= \chi_{\mathrm{cyc}}(\gamma)$ if $\gamma$ maps to $\gamma_K$ under the
canonical isomorphism $\Gamma \cong \Gamma_K$.)
The fifth and sixth equalities follow from 
\eqref{eqn:p-adic-L-series} and \eqref{eqn:interpolation-property}, respectively. 
(Note that when the characters in question are linear, the
interpolation property \eqref{eqn:interpolation-property} also holds for $r=0$.)
The last equality follows from the definition of $\Theta_{S,T}(L_{n}/K,r)$.
\end{proof}

\section{The proof of Theorem \ref{thm:EIMC-abelian-exts}}\label{sec:proof-of-EIMC-abelian-exts}

\subsection{A reduction step}
The following lemma will allow us to perform an important reduction step.

\begin{lemma}\label{lem:enlarge-extension}
Let $p$ be an odd prime and let $K$ be a totally real number field. 
Let $\mathcal{L}/K$ be an abelian admissible one-dimensional $p$-adic Lie extension of $K$.
There exists a finite abelian CM extension $L/K$ such that 
{\upshape(i)} $\zeta_{p} \in L$, {\upshape(ii)} $L \cap K_{\infty} = K$ and {\upshape(iii)} 
$\mathcal{L} \subseteq L_{\infty}^{+}$,
where $L_{\infty}$ is the cyclotomic $\Z_{p}$-extension of $L$ 
and $L_{\infty}^{+}$ is its maximal totally real subfield.
\end{lemma}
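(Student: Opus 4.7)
The plan is to build $L$ by taking a totally real complement to the cyclotomic $\Z_{p}$-extension inside $\mathcal{L}$ and then adjoining $\zeta_{p}$. Since $\mathcal{L}/K$ is abelian and admissible, we have a splitting $\Gal(\mathcal{L}/K) = H \times \Gamma$ with $H$ finite abelian and $\Gamma \simeq \Z_{p}$. The first step is to set $\mathcal{L}_{1} := \mathcal{L}^{\Gamma}$; this is a finite totally real abelian extension of $K$ with $\Gal(\mathcal{L}_{1}/K) \cong H$, $\mathcal{L}_{1} \cdot K_{\infty} = \mathcal{L}$, and $\mathcal{L}_{1} \cap K_{\infty} = K$ (the last because $H$ and $\Gamma$ are complementary subgroups of $\Gal(\mathcal{L}/K)$).

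I would then define $L := \mathcal{L}_{1}(\zeta_{p})$. Since $p$ is odd and $K$ is totally real, $\zeta_{p} \notin K$, so $K(\zeta_{p})/K$ is a CM extension of even degree dividing $p-1$. Thus (i) is immediate, and the totally real field $L^{+} := \mathcal{L}_{1}(\zeta_{p}+\zeta_{p}^{-1})$ has index $2$ in the totally imaginary field $L$, so $L$ is CM.

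The main technical step is (ii). The key observation is that $L \cap K_{\infty}$ is both a pro-$p$ extension of $K$ (since it is contained in $K_{\infty}$) and totally real (since $K_{\infty}$ is totally real, being a pro-$p$ extension of a totally real field), so it is contained in $L^{+}$. Writing $L^{+} = \mathcal{L}_{1} \cdot K(\zeta_{p})^{+}$, one checks that $\Gal(L^{+}/\mathcal{L}_{1})$ embeds into $\Gal(K(\zeta_{p})^{+}/K)$, whose order divides $(p-1)/2$ and is therefore coprime to $p$. Hence the maximal pro-$p$ subextension $L^{+,p}$ of $L^{+}/K$ corresponds to a subgroup of $\Gal(L^{+}/K)$ containing $\Gal(L^{+}/\mathcal{L}_{1})$, which forces $L^{+,p} \subseteq \mathcal{L}_{1}$. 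Combining these inclusions yields
\[
L \cap K_{\infty} \;\subseteq\; L^{+} \cap K_{\infty} \;\subseteq\; L^{+,p} \cap K_{\infty} \;\subseteq\; \mathcal{L}_{1} \cap K_{\infty} \;=\; K.
\]
I expect this to be the most delicate step, since it requires simultaneously exploiting the totally real nature of $K_{\infty}$ and the fact that $[K(\zeta_{p}):K]$ is coprime to $p$; note in particular that $|H|$ is allowed to be divisible by $p$, so one cannot simply argue that $L/K$ has degree prime to $p$.

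Finally, for (iii): by (ii) the compositum $L \cdot K_{\infty}$ is a $\Z_{p}$-extension of $L$, and since it is cyclotomic (it contains $\Q_{\infty}$) we have $L_{\infty} = L \cdot K_{\infty}$. An index comparison using $[L:L^{+}]=2$ shows that $L^{+} \cdot K_{\infty}$ is a totally real index-$2$ subfield of $L_{\infty}$, so $L_{\infty}^{+} = L^{+} \cdot K_{\infty}$. Since $L^{+} \supseteq \mathcal{L}_{1}$, this gives $L_{\infty}^{+} \supseteq \mathcal{L}_{1} \cdot K_{\infty} = \mathcal{L}$, as desired.
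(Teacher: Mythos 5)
Your proposal is correct and follows essentially the same route as the paper: the paper also sets $F := \mathcal{L}^{\Gamma}$ (your $\mathcal{L}_{1}$) and defines $L := F(\zeta_{p})$, though the paper leaves the verification of (i)–(iii) to the reader while you fill in the details. (As a minor remark, the detour through $L^{+}$ in your proof of (ii) can be shortened: since $[L:\mathcal{L}_{1}]$ divides $p-1$, the maximal $p$-subextension of $L/K$ already lies in $\mathcal{L}_{1}$, and $L\cap K_{\infty}$ is a $p$-extension of $K$.)
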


\begin{proof}
Let $\mathcal{G}=\Gal(\mathcal{L}/K)$, let $H=\Gal(\mathcal{L}/K_{\infty})$ and let $\Gamma_{K}=\Gal(K_{\infty}/K)$. 
As in \S \ref{subsec:admissible-one-dim-extns}, we obtain a semidirect product 
$\mathcal{G} = H \rtimes \Gamma$ where $\Gamma \leq \mathcal{G}$ and
$\Gamma \simeq \Gamma_{K} \simeq \Z_{p}$. 
Since $\mathcal{G}$ is abelian, the semidirect product is in fact direct.
Let $F$ be the subfield of $\mathcal{L}$ fixed by $\Gamma$. 
Then $F_{\infty} = \mathcal{L}$ and $ F \cap K_{\infty}=K$.
Now let $L=F(\zeta_{p})$. Then $L/K$ is a finite abelian CM extension and $L$ satisfies properties (i), (ii) and (iii).
We note that the choice of $\Gamma$ and hence of $L$ is non-canonical.
\end{proof}

\subsection{Minus $p$-parts of ray class groups in cyclotomic $\Z_{p}$-extensions}\label{subsec:minus-p-parts-ray-class-groups}
Let $p$ be an odd prime and let $K$ be a totally real number field. 
Let $L/K$ be a finite abelian CM extension.
Let $T$ be a finite set of finite places of $K$. 
For $n \geq 0$, let $L_{n}$ denote the $n$th layer of the cyclotomic $\Z_{p}$-extension $L_{\infty}$ of $L$ and
let $A_{L_{n}}^{T} = (\Z_{p} \otimes_{\Z} \cl_{L_{n}}^{T})^{-}$ 
(see~\S \ref{subsec:strong-BS-for-abelian-extns}).
The following result is \cite[Lemma 2.9]{MR3383600};
we include the short proof for the convenience of the reader. 

\begin{lemma}\label{lem:injectivity-ray-class-groups-cyc-Zp-exts}
The canonical maps $A_{L_{n}}^{T} \rightarrow A_{L_{n+1}}^{T}$ are injective for all $n \geq 0$.
\end{lemma}

\begin{proof}
In the case $T=\emptyset$ this is \cite[Proposition 13.26]{MR1421575}. 
The general case follows from the case $T=\emptyset$ by a snake lemma argument applied to 
the exact sequence \eqref{eqn:ray-class-sequence} after taking `minus $p$-parts' (this is an exact functor since $p$ is odd).
\end{proof}

\subsection{An exact sequence of Iwasawa modules}\label{subsec:ex-seq-Iwasawa}
Let $A_{L_{\infty}}^{T} = \varinjlim_{n} A_{L_{n}}^{T}$ where the transition maps are the canonical ones.
If $T$ is empty we further abbreviate $A_{L_{\infty}}^{T}$ to $A_{L_{\infty}}$.
We henceforth assume the notation of \S \ref{sec:inverse-limits-stickelberger}.

\begin{lemma}\label{lem:exact-seq-R-torsion-modules}
Assume that $\zeta_{p} \in L$. 
Let $T$ be a finite set of finite places of $K$ containing primes of at least two different residue characteristics.
Then we have an exact sequence of $\Lambda(\mathcal{G})_{-}$-modules
\begin{equation}\label{eqn:four-term-exact-seq}
0 \rightarrow X_{S_p}(-1) \rightarrow
\Hom(A_{L_{\infty}}^{T}, \Q_{p} / \Z_{p}) \rightarrow \bigoplus_{v \in T} 
\left( \ind_{\mathcal{G}_{w_{\infty}}}^{\mathcal{G}} \Z_{p}(-1) \right)^{-} \rightarrow
\Z_{p}(-1) \rightarrow 0,
\end{equation}
where each term is finitely generated and torsion over $R$. 
Moreover, the last two non-trivial terms are finitely generated over $\Z_{p}$.
\end{lemma}

\begin{proof}
The hypothesis on $T$ ensures that $E_{L_{n}}^{T}$ is torsionfree for every $n \geq 0$
(see Remark~\ref{rmk:conditions-on-T}).
Thus taking $p$-minus part of sequence \eqref{eqn:ray-class-sequence}
for each layer $L_{n}$ yields exact sequences
\[
0 \longrightarrow \mu_{p^{\infty}}(L_{n}) \longrightarrow  
\left( \Z_{p} \otimes_{\Z} (\mathcal{O}_{L_{n}}/\mathfrak{M}_{L_{n}}^{T})^{\times} \right)^{-}
\longrightarrow A_{L_{n}}^{T} \longrightarrow A_{L_{n}} \longrightarrow 0,
\] 
where $\mu_{p^{\infty}}(L_{n})$ denotes the group of all $p$-power roots of unity in $L_{n}$. 
Since $\zeta_{p} \in L$ by assumption, we have $\zeta_{p^{n}} \in \mu_{p^{\infty}}(L_{n})$ for all $n \geq 0$.
Thus taking direct limits yields
\[
0 \longrightarrow \Q_p/\Z_p(1) \longrightarrow \bigoplus_{v \in T} 
\left( \ind_{\mathcal{G}_{w_{\infty}}}^{\mathcal{G}} \Q_p/\Z_p(1) \right)^{-}
\longrightarrow A_{L_{\infty}}^{T} \longrightarrow A_{L_{\infty}} \longrightarrow 0,
\]
and then taking Pontryagin duals gives a new exact sequence
\[
0 \rightarrow \Hom(A_{L_{\infty}}, \Q_{p} / \Z_{p}) \rightarrow
\Hom(A_{L_{\infty}}^{T}, \Q_{p} / \Z_{p}) \rightarrow \bigoplus_{v \in T} 
\left( \ind_{\mathcal{G}_{w_{\infty}}}^{\mathcal{G}} \Z_{p}(-1) \right)^{-} \rightarrow
\Z_{p}(-1) \rightarrow 0.
\] 
But by Kummer duality \cite[Theorem 11.4.3]{MR2392026} there is a canonical
isomorphism of $\Lambda(\mathcal{G})_{-}$-modules
$X_{S_p}(-1) \cong \Hom(A_{L_{\infty}}, \Q_{p} / \Z_{p})$, 
and thus we obtain \eqref{eqn:four-term-exact-seq}.
Since the decomposition groups $\mathcal{G}_{w_{\infty}}$ have finite index
in $\mathcal{G}$, each term 
$\ind_{\mathcal{G}_{w_{\infty}}}^{\mathcal{G}} \Z_{p}(-1)$
is finitely generated over $\Z_{p}$.
Therefore the last two non-trivial terms of \eqref{eqn:four-term-exact-seq} 
are finitely generated as $\Z_{p}$-modules and thus are torsion and finitely generated over $R$.
Moreover, $X_{S_p}(-1)$ is also torsion and finitely generated over $R$ (see \S \ref{subsec:Iwasawa-module}) and therefore the same must be true of $\Hom(A_{L_{\infty}}^{T}, \Q_{p} / \Z_{p})$.
\end{proof}

\subsection{A consequence of the strong Brumer--Stark conjecture}
The following result crucially depends on the strong Brumer--Stark conjecture.

\begin{prop}\label{prop:psiST-in-Fitt}
Assume that $\zeta_{p} \in L$.
Let $S$ and $T$ be finite sets of places of $K$ 
such that $S_{p} \cup S_{\ram}(L/K) \cup S_{\infty} \subseteq S$, $S \cap T = \emptyset$, 
and $T$ contains primes of at least two different residue characteristics.
Then
\[
\Psi_{S,T} \in \Fitt_{\Lambda(\mathcal{G})_{-}}(\Hom(A_{L_{\infty}}^{T}, \Q_{p} / \Z_{p})).
\] 
\end{prop}

\begin{proof}
By Lemma \ref{lem:injectivity-ray-class-groups-cyc-Zp-exts},
the transition maps in $A_{L_{\infty}}^{T} = \varinjlim_{n} A_{L_{n}}^{T}$ are injective
and so those in
$\Hom(A_{L_{\infty}}^{T}, \Q_{p} / \Z_{p}) = \varprojlim_{n} (A_{L_{n}}^{T})^{\vee}$ are surjective.
By Lemma \ref{lem:exact-seq-R-torsion-modules}, 
$\Hom(A_{L_{\infty}}^{T}, \Q_{p} / \Z_{p})$ is finitely generated and torsion over $R$. 
Therefore by Theorem \ref{thm:GK-inverse-limit-Fitting-ideals} we have
\begin{equation}\label{eqn:equality-hom-proj-lim-of-fitts}
\Fitt_{\Lambda(\mathcal{G})_{-}}(\Hom(A_{L_{\infty}}^{T}, \Q_{p} / \Z_{p}))
= 
\textstyle{\varprojlim_{n}} \Fitt_{\Z_{p}[G_{n}]_-}((A_{L_{n}}^{T})^{\vee}). 
\end{equation}
Note that, in particular, the assumptions ensure that $\Hyp(L_{n}/K,S,T)$ holds for every $n \geq 0$
(see Remark~\ref{rmk:conditions-on-T}).
Hence by the $p$-part of the strong Brumer--Stark conjecture (Theorem \ref{thm:strong-brumer-stark}) we have
\[
\theta_{S}^{T}(L_{n}/K)^{\#} \in \Fitt_{\Z_{p}[G_{n}]_{-}}((A_{L_{n}}^{T})^{\vee})
\] 
for every $n \geq 0$.
Thus the desired result now follows from Proposition
\ref{prop:PsiST-is-inverse-limit} and \eqref{eqn:equality-hom-proj-lim-of-fitts}. 
\end{proof}

\subsection{The proof of Theorem \ref{thm:EIMC-abelian-exts}}\label{subsec:proof-of-EIMC-abelian-exts}
We now have all the pieces needed to prove our main result.

\begin{theorem}[Theorem \ref{thm:EIMC-abelian-exts}]\label{thm:EIMC-abelian-exts-proof}
Let $p$ be an odd prime and let $K$ be a totally real number field. 
Let $\mathcal{L}/K$ be an abelian admissible one-dimensional $p$-adic Lie extension.
Then the EIMC with uniqueness holds for $\mathcal{L}/K$.
\end{theorem}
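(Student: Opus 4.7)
The plan is to combine the localisation-at-$(p)$ reformulation of the abelian EIMC given by Proposition~\ref{prop:abelian-EIMC-equiv-local-at-(p)} with the Fitting ideal consequence of the strong Brumer--Stark conjecture established in Proposition~\ref{prop:psiST-in-Fitt}. Since $\mathcal{G}$ is abelian, Remark~\ref{rmk:SK1} handles uniqueness for free, so only the existence statement of Conjecture~\ref{conj:EIMC} needs proof.

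First I would use Lemma~\ref{lem:enlarge-extension} to pick a finite abelian CM extension $L/K$ with $\zeta_{p} \in L$, $L \cap K_\infty = K$ and $\mathcal{L} \subseteq L_\infty^+$; by Lemma~\ref{lem:EIMC-implies-EIMC-for-subextensions} it suffices to prove the EIMC for $L_\infty^+/K$. Setting $\mathcal{G} = \Gal(L_\infty/K) = H \times \Gamma$ and $\mathcal{G}^+ = \mathcal{G}/\langle j\rangle$, and choosing a finite set $S \supseteq S_{\mathrm{ram}}(L_\infty/K) \cup S_\infty$ (which automatically contains $S_p$), Proposition~\ref{prop:abelian-EIMC-equiv-local-at-(p)}(v) reduces the task to showing
\[
\Phi_S(L_\infty^+/K) \in \Fitt_{\Lambda_{(p)}(\mathcal{G}^+)}\bigl((X_S(L_\infty^+/K))_{(p)}\bigr).
\]

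Next I would choose an auxiliary finite set $T$ of finite places of $K$ disjoint from $S$ and containing primes of two different residue characteristics, so that $\Hyp(L_n/K,S,T)$ holds at every layer. Proposition~\ref{prop:psiST-in-Fitt} then yields
\[
\Psi_{S,T}(L_\infty/K) \in \Fitt_{\Lambda(\mathcal{G})_-}\bigl(\Hom(A_{L_\infty}^T, \Q_p/\Z_p)\bigr).
\]
The key mechanism now is an Iwasawa-theoretic reflection/Kummer duality argument, using $\zeta_{p} \in L$ and the canonical isomorphism $\Lambda(\mathcal{G}^+)(-1) \cong \Lambda(\mathcal{G})_-$. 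Class field theory realises $\Hom(A_{L_\infty}^T, \Q_p/\Z_p)$ as (the minus part of) the Galois group of the maximal abelian pro-$p$ extension of $L_\infty$ unramified outside $p$ and split at $T$; Kummer theory over $L_\infty$ identifies this, up to a Tate twist by $1$, with an Iwasawa module over $L_\infty^+$ closely related to $X_S$. The Tate twist furnished by this duality is precisely what the factor $t_{\mathrm{cyc}}^{1}$ in the definition~\eqref{eqn:definition-of-Psi} of $\Psi_{S,T}$ supplies, while the Euler factors $\xi_v$ for $v \in T$ account for the $T$-ramification condition.

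Localising at $(p)$ collapses these identifications to a clean statement: after untwisting and absorbing the $T$-factors (using either that $(X_S)_{(p)}$ has projective dimension at most one and choosing $T$ whose $\xi_v$ become units in $\Lambda_{(p)}(\mathcal{G}^+)$, or passing to a limit in $T$), Proposition~\ref{prop:psiST-in-Fitt} translates into $\Phi_S \in \Fitt_{\Lambda_{(p)}(\mathcal{G}^+)}((X_S)_{(p)})$, which is the desired formulation of the EIMC. The main obstacle — and the step that must be executed carefully — is the Iwasawa-theoretic reflection identifying $\Hom(A_{L_\infty}^T, \Q_p/\Z_p)(-1)$ with (a suitable $T$-variant of) $(X_S(L_\infty^+/K))_{(p)}$ as $\Lambda_{(p)}(\mathcal{G}^+)$-modules with equal Fitting ideals; tracking the Tate twist, the $T$-modification, and the absence of $\mu=0$ (so that localisation at $(p)$ is genuinely needed rather than a triviality) are the delicate ingredients, whereas the classical main conjecture of Wiles enters implicitly through Proposition~\ref{prop:version-of classical MC} to pin down the situation over the maximal order.
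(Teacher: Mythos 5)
Your proposal follows essentially the same route as the paper's proof: reduce via Lemma~\ref{lem:enlarge-extension} and Lemma~\ref{lem:EIMC-implies-EIMC-for-subextensions} to $L^+_\infty/K$, reformulate at $(p)$ via Proposition~\ref{prop:abelian-EIMC-equiv-local-at-(p)}, apply Proposition~\ref{prop:psiST-in-Fitt}, use Kummer duality $X_{S_p}(-1)\cong\Hom(A_{L_\infty},\Q_p/\Z_p)$ together with the ray class sequence to compare the $T$-modified and unmodified modules after localising at $(p)$, absorb the $\xi_v$ factors (which are automatically units in $\Lambda_{(p)}$ because the induced modules $\ind_{\mathcal{G}_{w_\infty}}^{\mathcal{G}}\Z_p(-1)$ are finitely generated over $\Z_p$, rather than by a choice of $T$), and invoke Remark~\ref{rmk:SK1} for uniqueness. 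Your class-field-theoretic gloss on the target of Kummer duality is slightly imprecise in the details, but the logical skeleton and all key inputs coincide with the paper's argument.
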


\begin{proof}
Let $L/K$ be as in Lemma \ref{lem:enlarge-extension}.
We first prove the EIMC for $L^{+}_{\infty}/K$.
Let $\mathcal{G}=\Gal(L_{\infty}/K)$.
Then $\mathcal{G} = H \times \Gamma$ where $H=\Gal(L_{\infty}/K_{\infty})$
and $\Gamma=\Gal(L_{\infty}/L)$.
Moreover, $\mathcal{G}^{+}:= \Gal(L_{\infty}^{+}/K) = H^{+} \times \Gamma$ where $H^{+}=\Gal(L_{\infty}^{+}/K_{\infty})$
and $\Gamma \cong \Gal(L_{\infty}^{+}/L^{+})$ via restriction.
Let $R=\Z_{p}\llbracket\Gamma\rrbracket$.

Let $S$ be a finite set of places of $K$ containing
$S_{p} \cup S_{\ram}(L/K) \cup S_{\infty}$.
By Proposition~\ref{prop:abelian-EIMC-equiv-local-at-(p)} the 
EIMC for $L^{+}_{\infty}/K$ is equivalent to the assertion that
\begin{equation}\label{eqn:Phi_S-in-Fitt}
\Phi_{S} \in \Fitt_{\Lambda_{(p)}(\mathcal{G}^{+})}((X_{S})_{(p)}). 
\end{equation}
As the decomposition groups $\mathcal{G}_{w_{\infty}}$ have finite index
in $\mathcal{G}$, \cite[Corollary 11.3.6(i)]{MR2392026} shows that the canonical projection
$X_S \rightarrow X_{S_p}$ induces an isomorphism
${(X_S)_{(p)} \cong (X_{S_p})_{(p)}}$ of $\Lambda_{(p)}(\mathcal{G})$-modules.
Moreover, since $t_{\mathrm{cyc}}^{1}$ induces an isomorphism 
$\Lambda_{(p)}(\mathcal{G}^{+})(-1) \cong \Lambda_{(p)}(\mathcal{G})_{-}$, the containment \eqref{eqn:Phi_S-in-Fitt}
is equivalent to the assertion that
\begin{equation} \label{eqn:Psi_S-in-Fitting-ideal}
	\Psi_{S} := t_{\mathrm{cyc}}^{1}(\Phi_{S}) \in \Fitt_{\Lambda_{(p)}(\mathcal{G})_{-}}(X_{S_p}(-1)_{(p)}).
\end{equation}

Let $T$ be a second finite set of places of $K$ containing primes of at least two different residue characteristics and such that $S \cap T = \emptyset$.
Then by Proposition \ref{prop:psiST-in-Fitt} we have
\begin{equation}\label{eq:PsiST-contained-in-Fitt-of-Hom}
\Psi_{S,T} \in \Fitt_{\Lambda(\mathcal{G})_{-}}(\Hom(A_{L_{\infty}}^{T}, \Q_{p} / \Z_{p})). 
\end{equation}

Now consider the exact sequence \eqref{eqn:four-term-exact-seq}
of Lemma \ref{lem:exact-seq-R-torsion-modules}.
Since the last two non-trivial terms are finitely generated as $\Z_{p}$-modules,
they vanish after localisation at $(p)$. 
Hence there is an isomorphism
\begin{equation}\label{eq:XSp-HomATinfty-iso}
X_{S_p}(-1)_{(p)} \cong \Hom(A_{L_{\infty}}^{T}, \Q_{p} / \Z_{p})_{(p)} 
\end{equation}
of $\Lambda_{(p)}(\mathcal{G})_{-}$-modules.
Therefore \eqref{eq:PsiST-contained-in-Fitt-of-Hom} and \eqref{eq:XSp-HomATinfty-iso}
together imply that
\begin{equation}\label{eq:PsiST-contained-in-Fitt-of-XSp}
	\Psi_{S,T} \in \Fitt_{\Lambda_{(p)}(\mathcal{G})_{-}}(X_{S_p}(-1)_{(p)}). 
\end{equation}
Recall from \eqref{eqn:definition-of-Psi} that 
$\Psi_{S,T} :=  t_{\mathrm{cyc}}^{1}(\Phi_{S}) \cdot \textstyle{\prod_{v \in T} \xi_{v}}$.
Since the Euler factors $\xi_{v}$ become units in $\Lambda_{(p)}(\mathcal{G})$
for each $v \in T$ (in fact, $\xi_{v}$ generates the Fitting ideal of the
$\Lambda(\mathcal{G})$-module $\ind_{\mathcal{G}_{w_{\infty}}}^{\mathcal{G}} \Z_{p}(-1)$, 
which is finitely generated as a $\Z_{p}$-module), it follows from
\eqref{eq:PsiST-contained-in-Fitt-of-XSp} that \eqref{eqn:Psi_S-in-Fitting-ideal} holds.
Therefore the EIMC holds for $L_{\infty}^{+}/K$.
Thus the EIMC also holds for $\mathcal{L}/K$ by Lemma \ref{lem:EIMC-implies-EIMC-for-subextensions}.
Moreover, uniqueness holds by Remark \ref{rmk:SK1}.
\end{proof}

\section{Iwasawa algebras and commutator subgroups}

The following theorem is a restatement of a special case of \cite[Proposition 4.5]{MR3092262}. 
We include the proof here for the convenience of the reader and take the opportunity to correct some minor oversights in the proof of loc.\ cit.

\begin{theorem}\label{thm:p-nmid-comm-subgroup}
Let $p$ be a prime, let $\mathcal{G} = H \rtimes \Gamma$ 
be an admissible one-dimensional $p$-adic Lie group
and let $F/\Q_{p}$ be a finite extension with ring of integers $\mathcal{O}$.
Then the commutator subgroup $\mathcal{G}'$ of $\mathcal{G}$ is finite.
Moreover, 
$\Lambda^{\mathcal{O}}(\mathcal{G})$ is a direct product of matrix rings over (complete local)
commutative rings if and only if $p \nmid |\mathcal{G}'|$.
\end{theorem}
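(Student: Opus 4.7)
The first assertion is immediate: since $\mathcal{G}/H \cong \Gamma$ is topologically isomorphic to $\Z_{p}$ and therefore abelian, we have $\mathcal{G}' \subseteq H$, and $H$ is finite by hypothesis.

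For the main equivalence, my plan is to exploit the fact that $\Gamma_{0}$ is open and central in $\mathcal{G}$ (as recalled in \S\ref{subsec:Iwasawa-algebras}), so that the commutative complete regular local subring $R := \mathcal{O}\llbracket\Gamma_{0}\rrbracket$ lies in the centre of $\Lambda^{\mathcal{O}}(\mathcal{G})$ and $\Lambda^{\mathcal{O}}(\mathcal{G})$ becomes a crossed product of the finite group $G := \mathcal{G}/\Gamma_{0}$ over $R$, with a $2$-cocycle $\tau:G \times G \to \Gamma_{0} \subseteq R^{\times}$ coming from the central extension $1 \to \Gamma_{0} \to \mathcal{G} \to G \to 1$. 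Since $\mathcal{G}' \subseteq H$ and $H \cap \Gamma_{0} = 1$, the image of $\mathcal{G}'$ in $G$ is an isomorphic copy of $\mathcal{G}'$, so the numerical condition $p \nmid |\mathcal{G}'|$ is intrinsic to the pair $(G,\tau)$.

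For the implication $p \nmid |\mathcal{G}'| \Rightarrow$ direct-product decomposition, I would (possibly enlarging $\mathcal{O}$, and recovering the statement for the original $\mathcal{O}$ by Galois descent afterwards) first decompose $\mathcal{O}[\mathcal{G}']$ as a direct sum of absolutely simple matrix blocks $M_{n_{\chi}}(\mathcal{O})\epsilon_{\chi}$ indexed by absolutely irreducible characters $\chi$ of the finite normal subgroup $\mathcal{G}'$; this is possible by Maschke's theorem since $p \nmid |\mathcal{G}'|$. Conjugation by $\mathcal{G}$ permutes the $\epsilon_{\chi}$, and for each $\mathcal{G}$-orbit $\mathfrak{o}$ the sum $e_{\mathfrak{o}} := \sum_{\chi \in \mathfrak{o}} \epsilon_{\chi}$ is a central idempotent of $\Lambda^{\mathcal{O}}(\mathcal{G})$. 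A standard Clifford-theoretic argument then identifies the block $e_{\mathfrak{o}}\Lambda^{\mathcal{O}}(\mathcal{G})$ with a matrix ring over a cocycle twist of the Iwasawa algebra of $St_{\mathcal{G}}(\chi)/\mathcal{G}'$ over an unramified extension of $R$. Since $St_{\mathcal{G}}(\chi)/\mathcal{G}'$ is abelian (as a subgroup of the abelian group $\mathcal{G}/\mathcal{G}'$) and the only residual twisting arises from the $\Gamma_{0}$-valued cocycle $\tau$, this twisted algebra is itself commutative and complete semilocal, giving the required matrix-ring-over-commutative-local-ring form.

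For the converse I would argue by contradiction: if $p \mid |\mathcal{G}'|$ but $\Lambda^{\mathcal{O}}(\mathcal{G}) = \prod_{i} M_{n_{i}}(R_{i})$ with each $R_{i}$ commutative local, then taking the quotient by a sufficiently deep open normal subgroup $N \leq \Gamma_{0}$ yields a decomposition of the finite group algebra $\mathcal{O}[\mathcal{G}/N]$ of the same form; but for $N$ chosen so that the image of $\mathcal{G}'$ in $\mathcal{G}/N$ still has order divisible by $p$, the classical theory of $p$-adic finite group algebras rules out such a decomposition, because one can exhibit an absolutely irreducible character of $\mathcal{G}/N$ whose restriction to the commutator subgroup is not afforded by a single linear character and hence produces an integral block which is not a matrix ring over a commutative local ring. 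The main obstacle in the whole argument, and the place where the oversights of \cite{MR3092262} presumably need to be repaired, is the cocycle bookkeeping in the Clifford-style decomposition above: one must verify precisely that the $2$-cocycle arising on the abelian group $St_{\mathcal{G}}(\chi)/\mathcal{G}'$ is trivialised by the hypothesis $p \nmid |\mathcal{G}'|$, so that no hidden non-split division algebra can slip in.
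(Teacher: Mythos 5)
Your finiteness argument for $\mathcal{G}'$ is correct and agrees with the paper. However, the forward implication $p\nmid|\mathcal{G}'|\Rightarrow$ decomposition contains a genuine gap that you yourself flag but do not close: in the Clifford-theoretic decomposition, the block $e_{\mathfrak{o}}\Lambda^{\mathcal{O}}(\mathcal{G})$ is Morita equivalent to a twisted group algebra $\mathcal{O}'^{\alpha}[St_{\mathcal{G}}(\chi)/\mathcal{G}']$, where $\alpha$ records the obstruction to extending the simple $\mathcal{G}'$-module to its stabiliser, \emph{not} merely the $\Gamma_{0}$-valued cocycle $\tau$. Your assertion that ``this twisted algebra is itself commutative'' is false. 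Already for $\mathcal{G}=Q_{8}\times\Gamma$ with $p$ odd, one has $\mathcal{G}'=C_{2}$, $St_{\mathcal{G}}(\chi)/\mathcal{G}'\cong(C_{2})^{2}\times\Gamma$, and the cocycle is the nontrivial quaternion class, so the twisted algebra is (a commutative extension of) the quaternion algebra $(-1,-1/\Q_{p})$, which is noncommutative (though it happens to split). In general the hypothesis $p\nmid|\mathcal{G}'|$ does \emph{not} trivialise $\alpha$ as a cohomology class; what one would have to show is the weaker (but still nontrivial) statement that each twisted algebra is a matrix ring over a commutative local ring, which essentially restates the theorem for a smaller group with a twist and requires a new idea. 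Your Galois descent step is also asserted without justification: descending the property ``direct product of matrix rings over commutative local rings'' along an extension of $\mathcal{O}$ is precisely what the Azumaya characterisation makes routine, but you cannot invoke it before having established that characterisation. Your converse argument is likewise handwavy: exhibiting an irreducible character not afforded on $\mathcal{G}'$ by a single linear character does not by itself preclude the corresponding $\mathcal{O}$-block from being a matrix ring over a commutative local ring, since that is a statement about the integral block structure rather than the rational one.

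For comparison, the paper proceeds quite differently and more economically. It reduces modulo the maximal ideal $\mathfrak{P}$ of $R$ to obtain the finite group algebra $\overline{\Lambda}=k[\mathcal{G}/\Gamma_{0}]$ over the finite residue field $k$ of characteristic $p$, and then invokes the results of DeMeyer--Janusz \cite{MR704622} to characterise when such a group algebra is Azumaya (equivalently, a direct product of matrix rings over commutative rings): namely exactly when $p\nmid|\mathcal{G}'|$. It then transfers this information back to $\Lambda^{\mathcal{O}}(\mathcal{G})$ via the separability criterion \cite[Theorem 4.7]{MR0121392} over the local pieces of the centre and the injectivity of the map $\Br(R_{i})\rightarrow\Br(k_{i})$ from \cite[Corollary 6.2]{MR0121392}, showing $[\Lambda]$ is trivial because $[\overline{\Lambda}]$ is. This simultaneously settles both directions without any crossed-product bookkeeping and also shows that the conditions (i)--(iv) in the proof are genuinely equivalent, which is what makes the later Galois descent (used implicitly in Corollary \ref{cor:p-nmid-comm-subgroup}) unproblematic.
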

 
\begin{proof}
We adopt the setup and notation of \S \ref{subsec:Iwasawa-algebras}. 
We identify $R$ with $\mathcal{O}\llbracket T \rrbracket$ and abbreviate $\Lambda^{\mathcal{O}}(\mathcal{G})$ to $\Lambda$.
Let $\mathfrak{p}$ and $\mathfrak{P}$ denote the maximal ideals of $\mathcal{O}$ and $R$, respectively.
Then $\mathfrak{P}$ is generated by $\mathfrak{p}$ and $T$.
Let $k = R / \mathfrak{P} = \mathcal{O}/\mathfrak{p}$ be the residue field, which is finite and of characteristic $p$. Let $C_{p^n}$ denote the cyclic group of order $p^n$.
Since $\gamma^{p^n} = 1 + T \equiv 1 \bmod \mathfrak{P}$, we have
\begin{equation}\label{eq:overline-lambda}
\overline{\Lambda} := \Lambda / \mathfrak{P} \Lambda = \bigoplus_{i=0}^{p^{n}-1} k[H]\gamma^{i}
= k[H \rtimes C_{p^n}] \cong k \otimes_{R} \Lambda. 
\end{equation}
Since $\mathcal{G} / H \cong \Gamma$ is abelian, $\mathcal{G}'$ is actually a subgroup of $H$
and thus is finite. Moreover, $\mathcal{G}'$ identifies with the commutator subgroup of $H \rtimes C_{p^n}$.

We refer the reader to \cite{MR0121392} for background on separability and 
recall that a ring is said to be an Azumaya algebra if it is separable over its centre. 
We shall show that the following assertions are equivalent.
\begin{enumerate}
\item $\Lambda$ is a direct product of matrix rings over (complete local) commutative rings;
\item $\overline{\Lambda}$ is a direct product of matrix rings over commutative rings;
\item $\Lambda$ is an Azumaya algebra;
\item $\overline{\Lambda}$ is an Azumaya algebra;
\item $p \nmid |\mathcal{G}'|$.
\end{enumerate}

As any matrix ring over a commutative ring is an Azumaya algebra, (i) $\Rightarrow$ (iii)
and (ii) $\Rightarrow$ (iv).
In fact, as remarked after \cite[Corollary, p.\ 390]{MR704622} we have (ii) $\Leftrightarrow$ (iv).
By \cite[Corollary p.\ 389]{MR704622} we have (iv) $\Leftrightarrow$ (v). 

We now show  (iii) $\Leftrightarrow$ (iv).
By \cite[Example 23.3]{MR1838439} $\zeta(\Lambda)$ is semiperfect
and thus a product of local rings by \cite[Theorem 23.11]{MR1838439},
say
$
\zeta(\Lambda) = \bigoplus_{i=1}^{r} R_{i},
$
where each $R_{i}$ contains $R$.
By \cite[Proposition 6.5 (ii)]{MR632548} each $R_{i}$ is in fact a complete local ring.
Let $\mathfrak{P}_{i}$ be the maximal ideal of $R_{i}$ and
$k_{i} := R_{i} / \mathfrak{P}_{i}$ be the residue field.
Note that we have
\begin{equation}\label{eq:zeta-lambda-bar-decomp}
\zeta(\overline{\Lambda})
=\zeta(\Lambda) \otimes_{R} k
= \bigoplus_{i=1}^{r} R_{i} \otimes_{R} k
= \bigoplus_{i=1}^{r}R_{i}/\mathfrak{P}R_{i}. 
\end{equation}
In order to justify the first equality, 
we observe that it is a straightforward consequence of the decomposition \eqref{eq:Lambda-R-decomp}
that the centre $\zeta(\Lambda)$ is a free $R$-module
of rank $c(\mathcal{G}/\Gamma_{0})$, where $c(A)$ denotes the number of
conjugacy classes of a group $A$; a basis is given by the class sums.
Similarly, it follows from \eqref{eq:overline-lambda} that
$\zeta(\overline{\Lambda})$ is a $k$-vector space
of dimension $c(H \rtimes C_{p^n}) = c(\mathcal{G}/\Gamma_0)$. 
Hence the obvious inclusion $\zeta(\Lambda) \otimes_{R} k
\subseteq \zeta(\overline{\Lambda})$ must be an equality.

Moreover, we also have
\begin{equation}\label{eq:lambda-lambda-bar-tensored-with-residue-fields}
\Lambda \otimes_{\zeta(\Lambda)} k_{i} = \Lambda  \otimes_{R_{i}} k_{i} 
\cong (\Lambda \otimes_{R} k) \otimes_{(R_{i} \otimes_{R} k)} (k_{i} \otimes_{R} k) \cong \overline{\Lambda} 
\otimes_{\zeta(\overline{\Lambda})} k_{i}.
\end{equation}
By \cite[Theorem 4.7]{MR0121392} $\Lambda$ is Azumaya if and only if $\Lambda \otimes_{\zeta(\Lambda)} k_{i}$
is separable over $k_{i}$ for each $i$. 
Similarly, by \eqref{eq:zeta-lambda-bar-decomp} and loc.\ cit.\
$\overline{\Lambda}$ is Azumaya if and only if
$\overline{\Lambda} \otimes_{\zeta(\overline{\Lambda})} k_{i}$
is separable over $k_{i}$ for each $i$. Therefore the claim now follows from 
\eqref{eq:lambda-lambda-bar-tensored-with-residue-fields}.

In summary, we have shown that (ii) $\Leftrightarrow$ (iii) $\Leftrightarrow$ (iv) $\Leftrightarrow$ (v) and
(i) $\Rightarrow$ (iii). Thus it remains to show (iii) $\Rightarrow$ (i).
Suppose (iii) holds. Since $\mathfrak{P}R_{i} \subset \mathfrak{P}_{i}$,
the canonical projection $R_{i} \onto k_{i}$ factors through
$R_{i} \onto R_{i} / \mathfrak{P}R_{i} = R_{i} \otimes_{R} k$.
Hence we have the corresponding homomorphisms of Brauer groups
\[
\Br(R_{i}) \rightarrow \Br(R_{i} / \mathfrak{P}R_{i}) \rightarrow \Br(k_{i}).
\]

Now $\Br(R_{i}) \rightarrow \Br(k_{i})$
is injective by \cite[Corollary 6.2]{MR0121392}
and hence $\Br(R_{i}) \rightarrow \Br(R_{i} / \mathfrak{P}R_{i})$ must also be injective.
This yields an embedding
\[
\Br(\zeta(\Lambda)) = \bigoplus_{i=1}^{r} \Br(R_{i}) \hookrightarrow
\bigoplus_{i=1}^{r} \Br(R_{i} \otimes_{R} k) = \Br(\zeta(\overline{\Lambda})).
\]
Since $\Lambda$ is Azumaya, it defines a class $[\Lambda] \in \Br(\zeta(\Lambda))$ which is mapped to
$[\overline{\Lambda}]$ via this embedding.
In particular, (iv) holds and we have already seen that this implies (ii). 
Hence $[\overline{\Lambda}]$ is trivial and thus so is $[\Lambda]$.
Let $\Lambda_{i}$ be the component of $\Lambda$ corresponding to $R_{i}$.
Then $[\Lambda_{i}] \in \Br(R_{i})$ is trivial and so by \cite[Proposition 5.3]{MR0121392} $\Lambda_{i}$
is isomorphic to an $R_{i}$-algebra of the form $\Hom_{R_{i}}(P_{i},P_{i})$ where $P_{i}$ is a finitely generated projective faithful $R_{i}$-module.
Since $R_{i}$ is a local ring, $P_{i}$ must be free and so $\Lambda_{i}$ must be 
isomorphic to a matrix ring over its centre $R_{i}$. Thus (i) holds.
\end{proof}

\begin{corollary}\label{cor:p-nmid-comm-subgroup}
Let $p$ be a prime and let $\mathcal{G}$ be 
an admissible one-dimensional
$p$-adic Lie group such that $p \nmid |\mathcal{G}'|$. 
Let $F/\Q_{p}$ be a finite extension with ring of integers $\mathcal{O}$.
Then $\mathcal{Q}^{F}(\mathcal{G})$ is a direct product of matrix rings over fields
and there is a commutative diagram
\[
\xymatrix{
0 \ar[r]  & K_{1}(\Lambda^{\mathcal{O}}(\mathcal{G})) \ar[r]^{\iota} \ar[d]^{\cong} & K_{1}(\mathcal{Q}^{F}(\mathcal{G})) \ar[r]^{\partial \qquad} \ar[d]_{\nr}^{\cong}  &
K_{0}(\Lambda^{\mathcal{O}}(\mathcal{G}),\mathcal{Q}^{F}(\mathcal{G})) \ar[r] \ar[d]^{\cong} & 0\\
1 \ar[r]  &
\zeta(\Lambda^{\mathcal{O}}(\mathcal{G}))^{\times} \ar[r]  & \zeta(\mathcal{Q}^{F}(\mathcal{G}))^{\times} \ar[r] &
\zeta(\mathcal{Q}^{F}(\mathcal{G}))^{\times} /\zeta(\Lambda^{\mathcal{O}}(\mathcal{G}))^{\times} \ar[r] & 1,
}
\]
with exact rows. 
\end{corollary}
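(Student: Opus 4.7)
The plan is to deduce the corollary from Theorem \ref{thm:p-nmid-comm-subgroup} via componentwise $K_{1}$ computations. First I would invoke that theorem to write
\[
\Lambda^{\mathcal{O}}(\mathcal{G}) = \bigoplus_{i=1}^{r} M_{n_{i}}(R_{i})
\]
for certain commutative complete local $R$-algebras $R_{i}$. Extending scalars to $Quot(R)$ yields $\mathcal{Q}^{F}(\mathcal{G}) = \bigoplus_{i} M_{n_{i}}(Quot(R) \otimes_{R} R_{i})$, and since $\mathcal{Q}^{F}(\mathcal{G})$ is a separable, hence semisimple, $Quot(R)$-algebra, each commutative factor $Quot(R) \otimes_{R} R_{i}$ must split as a finite product of fields $\prod_{l} F_{i,l}$. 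Therefore $\mathcal{Q}^{F}(\mathcal{G}) \cong \bigoplus_{i,l} M_{n_{i}}(F_{i,l})$, establishing the first assertion of the corollary.

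Next I would compute the two $K_{1}$-groups appearing in the diagram. Morita invariance of $K_{1}$ reduces these to $\bigoplus_{i} K_{1}(R_{i})$ and $\bigoplus_{i,l} K_{1}(F_{i,l})$, respectively, and \cite[Proposition 45.12]{MR892316} then identifies them with $\prod_{i} R_{i}^{\times} = \zeta(\Lambda^{\mathcal{O}}(\mathcal{G}))^{\times}$ and $\prod_{i,l} F_{i,l}^{\times} = \zeta(\mathcal{Q}^{F}(\mathcal{G}))^{\times}$, respectively, via the usual determinant. I would then verify that these isomorphisms agree with the reduced norm; this comes down to the fact that the reduced norm on $M_{n}(F)$ for a field $F$ is just the determinant, combined with the componentwise definition of $\nr$ on the Wedderburn decomposition recalled in \S \ref{subsec:orders-sep-al-nr}. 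This provides the two left vertical isomorphisms of the diagram.

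Finally, I would assemble the diagram by splicing the exact sequence \eqref{eqn:Iwasawa-K-sequence} onto the bottom row $0 \to \zeta(\Lambda^{\mathcal{O}}(\mathcal{G}))^{\times} \to \zeta(\mathcal{Q}^{F}(\mathcal{G}))^{\times} \to \zeta(\mathcal{Q}^{F}(\mathcal{G}))^{\times}/\zeta(\Lambda^{\mathcal{O}}(\mathcal{G}))^{\times} \to 0$ via the vertical isomorphisms just established. Injectivity of $\iota$ follows from the componentwise inclusion $R_{i}^{\times} \hookrightarrow \prod_{l} F_{i,l}^{\times}$, which holds because $R_{i}$ embeds in the semisimple algebra $\prod_{l} F_{i,l}$. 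Commutativity of the diagram then forces the right vertical map to be an isomorphism, and exactness of the bottom row is automatic. I do not foresee any serious obstacle; the only step requiring a little care is confirming that Morita equivalence identifies the reduced norm with the componentwise determinant, which is essentially formal from the definitions in \S \ref{subsec:orders-sep-al-nr}.
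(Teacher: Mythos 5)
Your proposal is correct and takes essentially the same approach as the paper: invoke Theorem \ref{thm:p-nmid-comm-subgroup} to decompose $\Lambda^{\mathcal{O}}(\mathcal{G})$ into matrix rings over local commutative rings, use Morita equivalence together with \cite[Proposition 45.12]{MR892316} to identify the outer $K_{1}$-groups with the unit groups of the centres, and then assemble the diagram. The only cosmetic difference is that you establish injectivity of $\iota$ directly from the componentwise inclusion $R_{i}^{\times} \hookrightarrow \prod_{l}F_{i,l}^{\times}$, whereas the paper reads it off from the commutativity of the left square after the vertical isomorphisms are in place; these are the same argument.
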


\begin{proof}
Apart from the injectivity of $\iota$, the existence of the top row and its exactness is
\eqref{eqn:Iwasawa-K-sequence}.
The exactness of the bottom row is tautological.
Since $\Lambda^{\mathcal{O}}(\mathcal{G})$ is a direct product of matrix rings over commutative local rings,
\cite[Proposition 45.12]{MR892316} and a Morita equivalence argument show that the left vertical map is an isomorphism. 
Moreover, an extension of scalars argument shows that $\mathcal{Q}^{F}(\mathcal{G})$ is direct product of matrix rings over fields, and so the middle vertical map is also an isomorphism.
The left square commutes since the reduced norm / determinant map is compatible with extensions of scalars. 
The left and middle vertical isomorphisms induce the right vertical isomorphism, and so the right square commutes. 
Finally, commutativity of the diagram shows that $\iota$ is injective. 
\end{proof}

\section{Further algebraic results and the proof of Corollary \ref{cor:EIMC-abelian-Sylow-p}}\label{sec:proof-of-main-corollary}

In this section, we begin by proving
purely algebraic results on the vanishing of $SK_{1}(\mathcal{Q}(\mathcal{G}))$
and on the injectivity of certain products of maps over subquotients of $\mathcal{G}$.
By combining these results with the functorial properties of the EIMC, we then show that 
Theorem \ref{thm:EIMC-abelian-exts} implies Corollary \ref{cor:EIMC-abelian-Sylow-p}.
Some results in this section are stated for all primes $p$ and others are only stated for odd primes $p$;
those in the latter case ultimately
rely on \cite{MR2205173} where it is a standing hypothesis that $p$ is odd. 

\subsection{$F$-$q$-elementary groups}\label{subsec:Qp-q-elementarys-subgroups}
Let $q$ be a prime. A finite group is said to be $q$\nobreakdash-hyper\-elementary if it is of the form 
$C_{n} \rtimes Q$, with $Q$ a $q$-group and 
$C_{n}$ a cyclic group of order $n$ such that $q \nmid n$. 
Let $F$ be a field of characteristic $0$.
A $q$-hyperelementary group $C_{n} \rtimes Q$
is called $F$-$q$-elementary if
\[
\mathrm{Im}(Q \longrightarrow \Aut(C_{n}) \cong (\Z/n\Z)^{\times}) \subseteq \Gal(F(\zeta_{n})/F).
\]
An $F$-elementary group is one that is $F$-$q$-elementary for some prime $q$.
A finite group is said to be $q$-elementary if it is of the form $C_{n} \times Q$ with $q \nmid n$
and $Q$ a $q$-group.

Now let $F/\Q_{p}$ be a finite extension and let $\mathcal{G}$ 
be an admissible one-dimensional $p$-adic Lie group.
Let $\Gamma_{0} \simeq \Z_{p}$ be an open central subgroup of
$\mathcal{G}$.
Then $\mathcal{G}$ is said to be:
\begin{itemize}
\item $F$-$q$-elementary if there is a choice of $\Gamma_{0}$ such that
$\mathcal{G}/\Gamma_{0}$ is $F$-$q$-elementary;
\item $F$-elementary if it is $F$-$q$-elementary for some prime $q$;
\item $q$-elementary if there is a choice of $\Gamma_{0}$ such that $\mathcal{G}/\Gamma_{0}$ 
is $q$-elementary;
\item elementary if it is $q$-elementary for some prime $q$.
\end{itemize}

\begin{lemma}\label{lem:elementary-compatible-RW}
In the case $F=\Q_{p}$ the definition of $F$-$q$-elementary given above is equivalent to the corresponding definitions of \cite[\S 2]{MR2205173} ($p=q$) and \cite[\S 3]{MR2205173} ($p \neq q$).
\end{lemma}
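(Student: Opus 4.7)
The plan is to unwind both sets of definitions and verify their equivalence directly. Ritter and Weiss in \cite{MR2205173} define $\Q_p$-$q$-elementary admissible one-dimensional $p$-adic Lie groups via an explicit structural decomposition: in the case $q=p$ (their \S 2), $\mathcal{G}$ is required to be a direct product of a finite cyclic group of order prime to $p$ with a pro-$p$ group containing an open central $\Z_p$-subgroup; in the case $q \neq p$ (their \S 3), $\mathcal{G}$ is required to be a semidirect product $\mathcal{C} \rtimes Q$ where $\mathcal{C}$ is a normal procyclic subgroup of order prime to $q$ containing an open central $\Z_p$-subgroup, $Q$ is a finite $q$-group, and the induced action of $Q$ on the cyclic torsion part of $\mathcal{C}$ factors through the corresponding local Galois group $\Gal(\Q_p(\zeta_n)/\Q_p)$.

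First, I would show that if $\mathcal{G}$ is $\Q_p$-$q$-elementary in the paper's sense, meaning that for some choice of open central subgroup $\Gamma_0 \cong \Z_p$ the finite quotient $\mathcal{G}/\Gamma_0$ admits a presentation $C_n \rtimes Q$ satisfying the Galois condition of \S \ref{subsec:Qp-q-elementarys-subgroups}, then I can lift this decomposition to $\mathcal{G}$ itself. The preimage $\mathcal{C}$ of $C_n$ in $\mathcal{G}$ is normal and procyclic (being an extension of $C_n$ by $\Gamma_0 \cong \Z_p$ with $\gcd(n, |\Gamma_0|\text{-structure})$ under control), while a Schur--Zassenhaus-type splitting together with the semidirect product decomposition from \S \ref{subsec:Iwasawa-algebras} (or directly from \cite[\S 1]{MR2114937}) provides a section $Q \hookrightarrow \mathcal{G}$. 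The Galois condition on the action of $Q$ on $C_n$ lifts to the corresponding action on the torsion subgroup of $\mathcal{C}$ since centrality of $\Gamma_0$ forces $Q$ to act trivially on $\Gamma_0$. This recovers the RW decomposition.

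Conversely, given that $\mathcal{G}$ satisfies the RW definition, the open central $\Z_p$-subgroup $\Gamma_0 \subseteq \mathcal{C}$ supplied by that definition has quotient $\mathcal{G}/\Gamma_0 = (\mathcal{C}/\Gamma_0) \rtimes Q$, where $\mathcal{C}/\Gamma_0$ is finite cyclic of some order $n$; the Galois condition transfers directly from the action on $\mathcal{C}$ to the action on its quotient, yielding the paper's definition.

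The main obstacle is bookkeeping in the case $p=q$: here the pro-$p$ factor $\Pi$ of the RW decomposition must simultaneously contain both the central $\Z_p$-direction and the finite $p$-part that plays the role of $Q$ in the finite-group definition of $\Q_p$-$p$-elementary. One handles this using the observation recalled in \S \ref{subsec:Iwasawa-algebras} (from \cite[\S 1]{MR2114937}) that $\gamma^{p^n}$ is central for some $n \geq 0$, so that $\Gamma_0 := \Gamma^{p^n}$ is a central $\Z_p$-subgroup of $\Pi$ whose quotient in $\Pi$ is precisely the finite $p$-group $Q$, and then verifying that the resulting quotient $\mathcal{G}/\Gamma_0 = C_n \times Q$ is indeed $\Q_p$-$p$-elementary (the Galois condition being vacuous for a direct product). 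Once this case is in hand, the case $p \neq q$ is easier since $\Gamma$ is pro-$p$ and therefore automatically sits inside the procyclic factor $\mathcal{C}$, making $\Gamma_0 = \Gamma^{p^n}$ an obvious choice.
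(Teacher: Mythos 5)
Your proposal has a genuine gap in the substantive direction (the paper's definition $\Rightarrow$ Ritter--Weiss's definition) in the crucial case $p=q$.

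First, a smaller point: you describe Ritter--Weiss's definition of $\Q_p$-$p$-elementary (their \S 2) as a \emph{direct} product of a finite cyclic group with a pro-$p$ group. It is in fact a \emph{semidirect} product $\langle s\rangle \rtimes \mathcal{P}$, with $\langle s\rangle$ finite cyclic of order prime to $p$, $\mathcal{P}$ an open pro-$p$ subgroup, and the image of the action $\mathcal{P}\to\Aut(\langle s\rangle)$ required to land in the relevant Galois group. Accordingly, in your final paragraph the Galois condition is \emph{not} vacuous.

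Second, and more seriously: for $p=q$ you propose to lift the finite $p$-group $Q$ into $\mathcal{G}$ via a ``Schur--Zassenhaus-type splitting.'' This cannot work. The kernel $\mathcal{C}$ of $\mathcal{G}\twoheadrightarrow Q$ contains the pro-$p$ group $\Gamma_0$, and $Q$ is a finite $p$-group, so the coprimality hypothesis of Schur--Zassenhaus fails. Indeed, no such section exists in general: take $\mathcal{G}=\Z_p$, $\Gamma_0=p\Z_p$, $C_n$ trivial and $Q=\Z/p\Z$; then $\mathcal{G}$ is $\Q_p$-$p$-elementary in the paper's sense, but the sequence $0\to p\Z_p\to\Z_p\to\Z/p\Z\to 0$ does not split. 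Moreover, even if such a section did exist, it is not what the Ritter--Weiss decomposition requires when $p=q$: there the finite $q$-group is \emph{absorbed} into the pro-$p$ factor $\mathcal{P}$, so what must be lifted is the \emph{cyclic} factor $C_n$, not $Q$. The paper's proof does exactly this --- it cites Ritter--Weiss's own \cite[Lemma 4]{MR2205173} for $p\neq q$, and for $p=q$ it takes an arbitrary preimage $s$ of a generator of $C_n$, corrects $s$ by an $n$-th root inside the central pro-$p$ group $\Gamma_0$ (possible since $\gcd(n,p)=1$) so that $s$ has exact order $n$, and takes $\mathcal{P}$ to be the \emph{full preimage} of $Q$, which is pro-$p$. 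Then $\mathcal{G}\simeq C_n\rtimes\mathcal{P}$, and centrality of $\Gamma_0$ forces the $\mathcal{P}$-action on $C_n$ to factor through $Q$, giving the Galois condition. Your proposal would need to be reworked along these lines for $p=q$.

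The reverse direction (RW $\Rightarrow$ paper's definition), which your fourth paragraph addresses at length, is the easy one; the paper records it as ``clear.''
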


\begin{proof}
Let $\mathcal{G}$ be an admissible one-dimensional $p$-adic Lie group.
It is clear that if $\mathcal{G}$ satisfies the definitions of Ritter and Weiss given in \cite[\S 2, \S 3]{MR2205173}, then it satisfies the definition given above. 
The converse is given by \cite[Lemma 4]{MR2205173} in the case $p \neq q$ and by the following calculation when $p=q$.
Suppose more generally that $F/\Q_{p}$ is a finite extension and that
we have a short exact sequence
\[
1 \longrightarrow \Gamma_{0} \longrightarrow \mathcal{G}
\longrightarrow C_{n} \rtimes Q \longrightarrow 1,
\]
where $C_{n} \rtimes Q$ is $F$-$p$-elementary and $\Gamma_{0} \simeq \Z_{p}$ is an open central subgroup of $\mathcal{G}$.
Let $s \in \mathcal{G}$ be a pre-image of a generator of $C_{n}$.
Since $n$ and $p$ are coprime, we may multiply $s$ by a suitable element in $\Gamma_{0}$
to obtain an element of order $n$. Thus we can and do assume without loss of generality that $s$ itself has order $n$. Let $\mathcal{P} \subseteq \mathcal{G}$ be the pre-image of $Q$.
Then $\mathcal{P}$ is an open pro-$p$ subgroup of $\mathcal{G}$
and $\mathcal{G} \simeq C_{n} \rtimes \mathcal{P}$,
where $C_{n}$ is generated by $s$. Since $\Gamma_{0}$ is central
in $\mathcal{G}$, the action of $\mathcal{P}$ on $C_{n}$ factors through 
$\mathcal{P} \rightarrow Q \rightarrow \Aut(C_{n})$ and thus has image in
$\Gal(F(\zeta_{n})/F)$.
\end{proof}

\subsection{The kernel of the reduced norm map}\label{subsec:nr-map}
Let $p$ be a prime and let $\mathcal{G}$ 
be an admissible one-dimensional $p$-adic Lie group.
Let $F/\Q_{p}$ be a finite extension. 
Define
\begin{equation*}\label{eq:def-SK1}
SK_{1}(\mathcal{Q}^{F}(\mathcal{G})) = \ker(\nr: K_{1}(\mathcal{Q}^{F}(\mathcal{G})) \longrightarrow \zeta(\mathcal{Q}^{F}(\mathcal{G}))^{\times}).
\end{equation*}

\begin{prop} \label{prop:SK_1-reduction-step}
Let $p$ be an odd prime and let $\mathcal{G}$ 
be an admissible one-dimensional $p$-adic Lie group. 
If $SK_{1}(\mathcal{Q}(\mathcal{H}))=0$ for all open $\Q_{p}$-$p$-elementary subgroups
$\mathcal{H}$ of $\mathcal{G}$ then $SK_{1}(\mathcal{Q}(\mathcal{G}))=0$ 
\end{prop}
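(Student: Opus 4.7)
\emph{Proof plan.} The plan is to combine the functoriality of $SK_1$ under restriction with an Oliver-type induction theorem for $SK_1$ of Iwasawa algebras of admissible one-dimensional $p$-adic Lie groups, reducing to the hypothesized vanishing on $\Q_p$-$p$-elementary open subgroups.

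First, I would verify using the right-hand commutative square of \eqref{eqn:quot-res-maps-nr-diagram} that for every open subgroup $\mathcal{H} \leq \mathcal{G}$ the restriction $\res^{\mathcal{G}}_{\mathcal{H}}$ preserves $SK_1$: indeed $\nr(x) = 1$ forces $\nr(\res^{\mathcal{G}}_{\mathcal{H}}(x)) = \res^{\mathcal{G}}_{\mathcal{H}}(\nr(x)) = 1$. Under the hypothesis, every element of $SK_{1}(\mathcal{Q}(\mathcal{G}))$ is therefore annihilated by $\res^{\mathcal{G}}_{\mathcal{H}}$ whenever $\mathcal{H}$ is an open $\Q_p$-$p$-elementary subgroup of $\mathcal{G}$. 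Consequently it suffices to prove that the product
\[
\prod_{\mathcal{H}} \res^{\mathcal{G}}_{\mathcal{H}} : SK_{1}(\mathcal{Q}(\mathcal{G})) \longrightarrow \prod_{\mathcal{H}} SK_{1}(\mathcal{Q}(\mathcal{H}))
\]
over open $\Q_p$-$p$-elementary $\mathcal{H}$ is injective.

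To establish this injectivity I would follow a two-stage reduction modelled on the strategy of Ritter--Weiss in \cite{MR2205173}. Stage (a): by applying classical Brauer induction for $\Q_p$-rational characters at each finite quotient $\mathcal{G}/\Gamma_0^{p^n}$ and passing to the inverse limit through the character-theoretic description of $K_{1}(\mathcal{Q}(\mathcal{G}))$ in diagram \eqref{eqn:Det_triangle} (recalling that restriction on $K_1$ corresponds to Frobenius reciprocity on characters, see \S \ref{subsec:dets-and-nr}), show that restriction to open $\Q_p$-elementary subgroups of $\mathcal{G}$ is injective on $SK_{1}(\mathcal{Q}(\mathcal{G}))$. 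Stage (b): for every open $\Q_p$-$\ell$-elementary subgroup $\mathcal{H}$ with $\ell \neq p$, further reduce to open $\Q_p$-$p$-elementary subgroups of $\mathcal{H}$. The sub-case $\ell \nmid (p-1)$ is immediate: writing $\mathcal{H}/\Gamma_0 \simeq C_n \rtimes Q$ with $Q$ an $\ell$-group, the image of $Q$ in $\Gal(\Q_p(\zeta_n)/\Q_p)$ must act trivially on the Sylow-$p$ part of $C_n$ (since any such action would live in the order-$(p-1)$ prime-to-$p$ part of the Galois group), so $p \nmid |\mathcal{H}'|$ and Corollary \ref{cor:p-nmid-comm-subgroup} gives $SK_{1}(\mathcal{Q}(\mathcal{H})) = 0$ outright.

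The main obstacle is the remaining sub-case $\ell \mid (p-1)$ of Stage (b), in which the commutator subgroup $\mathcal{H}'$ may have nontrivial $p$-part and $SK_{1}(\mathcal{Q}(\mathcal{H}))$ need not vanish a priori. Here I would exploit the explicit semidirect-product structure $\mathcal{H}/\Gamma_{0} \simeq C_n \rtimes Q$ together with a Mackey-style double-coset decomposition to construct a finite family of open $\Q_p$-$p$-elementary subgroups inside $\mathcal{H}$ whose combined restriction is injective on $SK_{1}(\mathcal{Q}(\mathcal{H}))$; this mirrors the corresponding reduction step in \cite{MR2205173} and uses the oddness of $p$ crucially, both for the convergence of the $p$-adic logarithm that underlies the detection of $SK_1$ and for the structure of the $\ell$-Sylow action on $C_n$. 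Composing Stages (a) and (b) produces the desired injectivity of the product map over open $\Q_p$-$p$-elementary subgroups, and the proposition then follows from the first paragraph.
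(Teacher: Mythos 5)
Your Stage~(a) reduction to open $\Q_p$-elementary subgroups is exactly the induction theorem of Ritter and Weiss that the paper cites (the Corollary on p.~167 of \cite{MR2205173}); you sketch a Brauer-induction argument for it, but the substance is already in that reference, so this part is acceptable. The genuine gap is in Stage~(b), in the subcase $\ell \mid (p-1)$ with $\ell \neq p$. Here the commutator subgroup $\mathcal{H}'$ may well have nontrivial $p$-part, so Corollary~\ref{cor:p-nmid-comm-subgroup} does not apply, and it is not at all clear that $SK_1(\mathcal{Q}(\mathcal{H}))$ is detected by restriction to open $\Q_p$-$p$-elementary subgroups of $\mathcal{H}$: you offer only an unspecified `Mackey-style double-coset decomposition' and a pointer to the general strategy of \cite{MR2205173}, with no actual argument. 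This is precisely where the difficulty lies. The paper closes this entire branch at once by invoking a theorem of Lau \cite[Theorem~2]{MR2875338}, which asserts directly that $SK_1(\mathcal{Q}(\mathcal{H})) = 0$ for every open $\Q_p$-$q$-elementary subgroup $\mathcal{H}$ with $q \neq p$, without any case split on whether $q$ divides $p-1$. Your subcase $\ell \nmid (p-1)$ is handled correctly (there indeed $p \nmid |\mathcal{H}'|$ and Corollary~\ref{cor:p-nmid-comm-subgroup} applies), but without Lau's theorem --- or a fully worked-out replacement for it --- the case $\ell \mid (p-1)$ remains unresolved and the proof is incomplete.
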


\begin{proof}
By  \cite[Corollary on p.\ 167]{MR2205173} we have that $SK_{1}(\mathcal{Q}(\mathcal{G})) = 0$ if
$SK_{1}(\mathcal{Q}(\mathcal{H}))$ vanishes for all open $\Q_{p}$-elementary subgroups $\mathcal{H}$ of $\mathcal{G}$.
If $q$ is a prime distinct from $p$ and $\mathcal{H}$ is an open $\Q_{p}$-$q$-elementary subgroup of $\mathcal{G}$ then $SK_{1}(\mathcal{Q}(\mathcal{H}))=0$ by a result of Lau \cite[Theorem 2]{MR2875338}. The case $p=q$ holds by hypothesis.
\end{proof}

\begin{corollary}\label{cor:SK1-vanishes-when-Sylow-p-subgroup-is-abelian}
If $p$ is an odd prime and 
$\mathcal{G}$ is an admissible one-dimensional $p$-adic Lie group with an abelian Sylow $p$-subgroup
then $SK_{1}(\mathcal{Q}(\mathcal{G}))=0$.
\end{corollary}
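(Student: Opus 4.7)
The plan is to combine the reduction-to-elementary-subgroups result (Proposition \ref{prop:SK_1-reduction-step}) with the matrix-decomposition result (Theorem \ref{thm:p-nmid-comm-subgroup} and its Corollary \ref{cor:p-nmid-comm-subgroup}).

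First I would apply Proposition \ref{prop:SK_1-reduction-step}, which reduces the claim to showing that $SK_{1}(\mathcal{Q}(\mathcal{H})) = 0$ for every open $\Q_{p}$-$p$-elementary subgroup $\mathcal{H}$ of $\mathcal{G}$. Since any Sylow pro-$p$ subgroup of such an open subgroup $\mathcal{H}$ is (up to conjugation) contained in a Sylow pro-$p$ subgroup of $\mathcal{G}$, and since any closed subgroup of an abelian pro-$p$ group is abelian, the hypothesis on $\mathcal{G}$ transfers to every such $\mathcal{H}$. Thus it suffices to handle a fixed open $\Q_{p}$-$p$-elementary $\mathcal{H}$ with abelian Sylow pro-$p$ subgroup.

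Next I would unpack the structure of such an $\mathcal{H}$. By (the argument in the proof of) Lemma \ref{lem:elementary-compatible-RW}, any $\Q_{p}$-$p$-elementary admissible one-dimensional $p$-adic Lie group decomposes as a semidirect product $\mathcal{H} = C_{n} \rtimes \mathcal{P}_{\mathcal{H}}$, where $\mathcal{P}_{\mathcal{H}}$ is an open pro-$p$ subgroup of $\mathcal{H}$ and $C_{n}$ is a finite cyclic group with $p \nmid n$. In particular, $\mathcal{P}_{\mathcal{H}}$ is a Sylow pro-$p$ subgroup of $\mathcal{H}$ and is therefore abelian by the previous paragraph.

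With $\mathcal{P}_{\mathcal{H}}$ abelian and $C_{n}$ cyclic (hence abelian), every commutator in $\mathcal{H}$ is of the form $[c,g]$ with $c \in C_{n}$ and $g \in \mathcal{P}_{\mathcal{H}}$, and such a commutator lies in $C_{n}$ because $C_{n}$ is normal in $\mathcal{H}$. Thus $\mathcal{H}' \subseteq C_{n}$, whence $p \nmid |\mathcal{H}'|$. Now Theorem \ref{thm:p-nmid-comm-subgroup} applies and gives that $\Lambda(\mathcal{H})$ is a direct product of matrix rings over complete local commutative rings. Corollary \ref{cor:p-nmid-comm-subgroup} then shows that the reduced norm
\[
\nr : K_{1}(\mathcal{Q}(\mathcal{H})) \longrightarrow \zeta(\mathcal{Q}(\mathcal{H}))^{\times}
\]
is an isomorphism, so $SK_{1}(\mathcal{Q}(\mathcal{H})) = 0$ by the definition \eqref{eq:def-SK1}.

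I do not expect a serious obstacle: all the heavy lifting (the reduction to $\Q_{p}$-$p$-elementary subgroups via Oliver--Lau-type results, and the Azumaya/Brauer-group analysis behind Theorem \ref{thm:p-nmid-comm-subgroup}) has already been done. The only point requiring a little care is the transfer of the abelian-Sylow hypothesis from $\mathcal{G}$ to an open subgroup $\mathcal{H}$ and then to the specific factor $\mathcal{P}_{\mathcal{H}}$ in the $\Q_{p}$-$p$-elementary decomposition, but this is immediate once one notes that closed subgroups of abelian pro-$p$ groups are abelian and that $\mathcal{P}_{\mathcal{H}}$ is itself a Sylow pro-$p$ subgroup of $\mathcal{H}$.
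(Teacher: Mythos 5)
Your proof is correct and follows essentially the same approach as the paper: reduce via Proposition~\ref{prop:SK_1-reduction-step} to open $\Q_{p}$-$p$-elementary subgroups $\mathcal{H}$, use Lemma~\ref{lem:elementary-compatible-RW} to write $\mathcal{H}$ as a semidirect product $C_n\rtimes\mathcal{P}$ with $\mathcal{P}$ pro-$p$ and abelian, deduce $p\nmid|\mathcal{H}'|$, and conclude via Corollary~\ref{cor:p-nmid-comm-subgroup}. The only difference is that you spell out the Sylow-theoretic justification for $\mathcal{P}$ being abelian, which the paper states more tersely.
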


\begin{proof}
By Proposition \ref{prop:SK_1-reduction-step} it suffices to show that $SK_{1}(\mathcal{Q}(\mathcal{H}))=0$ for all open $\Q_{p}$\nobreakdash-$p$\nobreakdash-ele\-men\-tary subgroups $\mathcal{H}$ of $\mathcal{G}$.
Let $\mathcal{H}$ be such a subgroup.
Then by Lemma \ref{lem:elementary-compatible-RW} $\mathcal{H} = \langle s \rangle \rtimes \mathcal{U}$ where $\langle s \rangle$ is a finite cyclic subgroup of order prime to $p$ and $\mathcal{U}$ is an open pro-$p$ subgroup.
Moreover, $\mathcal{U}$ must be abelian by the hypothesis on $\mathcal{G}$ and so 
the commutator subgroup $\mathcal{H}'$ of $\mathcal{H}$ is necessarily a subgroup of $\langle s \rangle$.
Hence $p \nmid |\mathcal{H}'|$ and so the reduced norm map
$\nr: K_{1}(\mathcal{Q}(\mathcal{H})) \rightarrow \zeta(\mathcal{Q}(\mathcal{H}))^{\times}$
is an isomorphism by Corollary \ref{cor:p-nmid-comm-subgroup} (with $F=\Q_{p}$).
In particular, $SK_{1}(\mathcal{Q}(\mathcal{H}))=0$.
\end{proof}

\begin{remark}\label{rmk:Suslin-conj}
As noted in \cite[Remark E]{MR2114937} (see also \cite[Remark 3.5]{MR3294653}), a conjecture of Suslin implies that in fact $SK_{1}(\mathcal{Q}(\mathcal{G}))$ always vanishes. 
\end{remark}

\subsection{Products of maps over subquotients of $\mathcal{G}$}
For $F/\Q_{p}$ a finite extension with ring of integers $\mathcal{O}$, we abuse notation and let
\[
\nr: K_{1}(\Lambda^{\mathcal{O}}(-)) \longrightarrow \zeta(\mathcal{Q}^{F}(-))^{\times}
\]
denote the composition
of the canonical map
$K_{1}(\Lambda^{\mathcal{O}}(-)) \rightarrow K_{1}(\mathcal{Q}^{F}(-))$
and the reduced norm map $\nr: K_{1}(\mathcal{Q}^{F}(-)) \rightarrow 
\zeta(\mathcal{Q}^{F}(-))^{\times}$.
The purpose of this subsection is to prove the following result. 

\begin{theorem}\label{thm:reduction-to-p-elementary-subquotients}
Let $p$ be an odd prime and let $\mathcal{G}$ be an admissible one-dimensional $p$-adic Lie group
and let $\Gamma_{0} \simeq \Z_{p}$ be an open central subgroup of $\mathcal{G}$.
Let $\mathcal{E}_{p}$ denote the collection of all
$p$-elementary subquotients of $\mathcal{G}$ of the form $\mathcal{U}/N$,
where $\Gamma_{0} \leq \mathcal{U} \leq \mathcal{G}$ and $N$ is a finite normal subgroup
of $\mathcal{U}$.
Then the product of maps
\[
\zeta(\mathcal{Q}(\mathcal{G}))^{\times} / 
\nr(K_{1}(\Lambda(\mathcal{G})))
\xrightarrow{\prod \quot \circ \res}
\prod_{\mathcal{H} \in \mathcal{E}_{p}}
\zeta(\mathcal{Q}(\mathcal{H}))^{\times} / 
\nr(K_{1}(\Lambda(\mathcal{H})))
\]
is injective.
If we further assume that $SK_{1}(\mathcal{Q}(\mathcal{G}))=0$,
then the product of maps 
\[
K_{0}(\Lambda(\mathcal{G}),\mathcal{Q}(\mathcal{G})) \xrightarrow{\prod \quot \circ \res}
\prod_{\mathcal{H} \in \mathcal{E}_{p}}
K_{0}(\Lambda(\mathcal{H}),\mathcal{Q}(\mathcal{H})) 
\]
is also injective. 
\end{theorem}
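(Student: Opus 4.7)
The plan is to prove the first injectivity statement by invoking a Kakde/Ritter--Weiss characterisation of $\nr(K_1(\Lambda(\mathcal{G})))$ inside $\zeta(\mathcal{Q}(\mathcal{G}))^{\times}$ by conditions on $p$-elementary subquotients, and then to deduce the second statement from the first via a diagram chase using the hypothesis $SK_1(\mathcal{Q}(\mathcal{G})) = 0$.

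For the first claim, I would identify $\zeta(\mathcal{Q}(\mathcal{G}))^{\times}$ with $\Hom^{*}_{G_{\Q_p}}(R_{p}(\mathcal{G}), \mathcal{Q}^{c}(\overline{\Gamma})^{\times})$ via the isomorphism in \eqref{eqn:Det_triangle}. Under this identification, each composition $\quot^{\mathcal{U}}_{\mathcal{U}/N}\circ\res^{\mathcal{G}}_{\mathcal{U}}$ corresponds to pullback along the character operation $\chi' \mapsto \ind_{\mathcal{U}}^{\mathcal{G}}\infl_{\mathcal{U}/N}^{\mathcal{U}}\chi'$, by construction of the maps in \S\ref{subsec:dets-and-nr}. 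The key input is then the non-abelian Brauer--Witt-type theorem of Kakde and Ritter--Weiss (available for $p$ odd; see \cite{MR3091976} and \cite{MR2813337}), which asserts that an element $f \in \Hom^{*}_{G_{\Q_p}}(R_{p}(\mathcal{G}),\mathcal{Q}^{c}(\overline{\Gamma})^{\times})$ lies in $\nr(K_{1}(\Lambda(\mathcal{G})))$ if and only if its pullback to every $p$-elementary subquotient $\mathcal{U}/N \in \mathcal{E}_{p}$ lies in $\nr(K_{1}(\Lambda(\mathcal{U}/N)))$. Granting this characterisation, the first claim is immediate.

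To deduce the second claim from the first, I would use that under $SK_{1}(\mathcal{Q}(\mathcal{G})) = 0$ the reduced norm $\nr : K_{1}(\mathcal{Q}(\mathcal{G})) \hookrightarrow \zeta(\mathcal{Q}(\mathcal{G}))^{\times}$ is injective by the definition \eqref{eq:def-SK1}. Combined with the surjectivity of $\partial$ in \eqref{eqn:Iwasawa-K-sequence}, this yields a canonical injection
\[
K_{0}(\Lambda(\mathcal{G}),\mathcal{Q}(\mathcal{G})) \cong \nr(K_{1}(\mathcal{Q}(\mathcal{G})))/\nr(K_{1}(\Lambda(\mathcal{G}))) \hookrightarrow \zeta(\mathcal{Q}(\mathcal{G}))^{\times}/\nr(K_{1}(\Lambda(\mathcal{G}))).
\]
The commutativity of \eqref{eqn:quotient-on-K-groups}, \eqref{eqn:restriction-on-K-groups} and \eqref{eqn:quot-res-maps-nr-diagram} then produces a commutative square whose top map is the above injection and whose right-hand vertical map is injective by the first claim; composing the top and right-hand maps gives an injection, so the left-hand vertical map must also be injective, which is the second claim.

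The main obstacle is the Kakde/Ritter--Weiss characterisation of $\nr(K_{1}(\Lambda(\mathcal{G})))$ by integrality conditions indexed by $p$-elementary subquotients, which is a deep input based on the integral-logarithm machinery originating with Oliver and Taylor for finite groups and extended to admissible one-dimensional $p$-adic Lie groups by Kakde and Ritter--Weiss. The restriction to $p$ odd enters that machinery in an essential way, and the reduction to $p$-elementary (as opposed to $\Q_{p}$-$p$-elementary) subquotients requires a further non-trivial step that is precisely where the non-abelian content of the result lies.
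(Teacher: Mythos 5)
Your deduction of the second injectivity from the first is correct and matches the paper's strategy: under $SK_{1}(\mathcal{Q}(\mathcal{G}))=0$, the map $K_{0}(\Lambda(\mathcal{G}),\mathcal{Q}(\mathcal{G})) \to \zeta(\mathcal{Q}(\mathcal{G}))^{\times}/\nr(K_{1}(\Lambda(\mathcal{G})))$ is injective (this is Lemma~\ref{lem:hom-rel-K0-nr} with $F=\Q_p$), and the commutative square with the product over $\mathcal{E}_{p}$ then gives the result.

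The first injectivity, however, is where your proposal has a genuine gap. You invoke, as a citable theorem of Kakde and Ritter--Weiss, the characterisation that $f \in \Hom^{*}_{G_{\Q_p}}(R_{p}(\mathcal{G}),\mathcal{Q}^{c}(\overline{\Gamma})^{\times})$ lies in $\nr(K_{1}(\Lambda(\mathcal{G})))$ if and only if its pullback to every $p$-elementary subquotient in $\mathcal{E}_{p}$ does. But under the identification in \eqref{eqn:Det_triangle} this ``if'' direction \emph{is} the first injectivity claim of the theorem: it is not a reformulation that reduces it to something known. Neither \cite{MR3091976} nor \cite{MR2813337} states such a result; the Kakde and Ritter--Weiss machinery characterises the image of $K_{1}(\Lambda(\mathcal{G}))$ via families of maps to (Iwasawa algebras of) abelian subquotients together with explicit \emph{congruence conditions} linking those components, and the standard Witt--Berman/Brauer induction reductions in this theory run to $\Q_{p}$-elementary (or, over a larger coefficient field, $F$-elementary) \emph{subgroups}, not to $p$-elementary subquotients. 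You acknowledge at the end that the passage from $\Q_{p}$-$p$-elementary to $p$-elementary ``requires a further non-trivial step that is precisely where the non-abelian content of the result lies,'' but that step is then never supplied.

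The paper fills exactly this gap. It first uses a Frobenius-module argument with the Witt--Berman induction theorem (Proposition~\ref{prop:F-elementary-reduction}), together with a tame descent step (Lemma~\ref{lem:extension-of-scalars-injective-for-tame-extensions}, using \cite[Theorem 2.12]{MR2905563}), to reduce to elementary subgroups of $\mathcal{G}/\Gamma_{0}$ (Corollary~\ref{cor:reduction-to-elementary-subgroups}). The genuinely new ingredient is then Proposition~\ref{prop:injectivity-K-groups-direct-prod-delta}: for a $q$-elementary subgroup $\mathcal{H}_{1}\simeq\Gamma\times C\times Q$ with $q\neq p$, so that $p\nmid|\mathcal{H}_{1}'|$, the structure results of Theorem~\ref{thm:p-nmid-comm-subgroup} and Corollary~\ref{cor:p-nmid-comm-subgroup} make $\Lambda(\mathcal{H}_{1})$ a product of matrix rings over commutative local rings, and an explicit Brauer-induction computation over an unramified coefficient field $\Q_{p}(\zeta_{|Q|})$ reduces to cyclic subquotients of $Q$, which yield the desired $p$-elementary groups $\Gamma\times C\times (\text{cyclic})$. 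Without some version of this argument --- or an honest reference that proves the $p$-elementary-subquotient characterisation in the form you state --- the proposal does not establish the first injectivity.
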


\begin{remark}\label{rmk:prod-is-finite}
In Theorem \ref{thm:reduction-to-p-elementary-subquotients}, there are only finitely many choices
for $\mathcal{U}$ since $\Gamma_{0}$ is open in $\mathcal{G}$ and only finitely many choices 
for $N$ since $N \leq H$, where $H$ is the finite normal subgroup of $\mathcal{G}$
consisting of all elements of finite order (see \S \ref{subsec:Iwasawa-algebras}).
Therefore $\mathcal{E}_{p}$ is finite.
Note that in the special case $\mathcal{G} = \Gamma_{0} \times H$, the collection
$\mathcal{E}_{p}$ consists of all groups of the form $\Gamma_{0} \times E$ where $E$ ranges over all 
$p$-elementary subquotients of $H$.
\end{remark}

We shall first prove several auxiliary and intermediate results which may be of interest in their own right. 

\begin{lemma}\label{lem:hom-rel-K0-nr}
Let $p$ be a prime and let $\mathcal{G}$ be an admissible one-dimensional $p$-adic Lie group.  
Let $F/\Q_{p}$ be a finite extension with ring of integers $\mathcal{O}$.
Then there exists a commutative diagram 
\begin{equation*}\label{eq:comm-diag-mod-nr-K1}
\xymatrix{
K_{1}(\Lambda^{\mathcal{O}}(\mathcal{G})) \ar[r] \ar[rd]_{\nr} & K_{1}(\mathcal{Q}^{F}(\mathcal{G})) \ar[r]^{\partial \qquad} \ar[d]_{\nr} &
K_{0}(\Lambda^{\mathcal{O}}(\mathcal{G}),\mathcal{Q}^{F}(\mathcal{G})) \ar[r] \ar[d] & 0\\
& \zeta(\mathcal{Q}^{F}(\mathcal{G}))^{\times} \ar[r] &
\zeta(\mathcal{Q}^{F}(\mathcal{G}))^{\times} /\nr(K_{1}(\Lambda^{\mathcal{O}}(\mathcal{G})) \ar[r] & 1
}
\end{equation*}
with exact rows. 
Moreover, if $SK_{1}(\mathcal{Q}^{F}(\mathcal{G}))=0$ then the right vertical map is injective.
\end{lemma}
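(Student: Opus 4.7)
The plan is to recognise both rows as essentially tautological, define the right vertical arrow by the universal property of a quotient, and then deduce the injectivity statement from a short diagram chase using the hypothesis $SK_{1}(\mathcal{Q}^{F}(\mathcal{G}))=0$.

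First I would observe that the top row is nothing other than \eqref{eqn:Iwasawa-K-sequence} specialised to the coefficient ring $\mathcal{O}$, hence already known to be exact; and the bottom row is the tautological sequence
\[
\zeta(\mathcal{Q}^{F}(\mathcal{G}))^{\times} \longrightarrow \zeta(\mathcal{Q}^{F}(\mathcal{G}))^{\times}/\nr(K_{1}(\Lambda^{\mathcal{O}}(\mathcal{G}))) \longrightarrow 0,
\]
which is trivially exact. The triangle on the left commutes by the very definition of the composite map $\nr : K_{1}(\Lambda^{\mathcal{O}}(\mathcal{G})) \to \zeta(\mathcal{Q}^{F}(\mathcal{G}))^{\times}$ introduced immediately before the lemma statement.

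Next I would construct the right vertical arrow. Let $\iota$ denote the canonical map $K_{1}(\Lambda^{\mathcal{O}}(\mathcal{G})) \to K_{1}(\mathcal{Q}^{F}(\mathcal{G}))$. Exactness of the top row at $K_{0}$ (together with surjectivity of $\partial$) yields a canonical isomorphism
\[
K_{0}(\Lambda^{\mathcal{O}}(\mathcal{G}),\mathcal{Q}^{F}(\mathcal{G})) \;\cong\; K_{1}(\mathcal{Q}^{F}(\mathcal{G}))/\iota(K_{1}(\Lambda^{\mathcal{O}}(\mathcal{G}))).
\]
Since the reduced norm on $K_{1}(\mathcal{Q}^{F}(\mathcal{G}))$ carries $\iota(K_{1}(\Lambda^{\mathcal{O}}(\mathcal{G})))$ onto $\nr(K_{1}(\Lambda^{\mathcal{O}}(\mathcal{G})))$ by definition of the latter, $\nr$ descends through this isomorphism to a well-defined homomorphism of quotients, and the right-hand square then commutes by construction.

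Finally, for the injectivity clause, assume $SK_{1}(\mathcal{Q}^{F}(\mathcal{G}))=0$ and suppose $\xi \in K_{0}(\Lambda^{\mathcal{O}}(\mathcal{G}),\mathcal{Q}^{F}(\mathcal{G}))$ is sent to zero. Using surjectivity of $\partial$ choose $x \in K_{1}(\mathcal{Q}^{F}(\mathcal{G}))$ with $\partial(x)=\xi$. Then $\nr(x) \in \nr(K_{1}(\Lambda^{\mathcal{O}}(\mathcal{G})))$, so $\nr(x)=\nr(\iota(y))$ for some $y \in K_{1}(\Lambda^{\mathcal{O}}(\mathcal{G}))$. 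Hence $x\cdot\iota(y)^{-1}$ lies in the kernel of $\nr$, which equals $SK_{1}(\mathcal{Q}^{F}(\mathcal{G}))=0$ by assumption. Thus $x=\iota(y)$, and exactness of the top row forces $\xi=\partial(\iota(y))=0$. I do not anticipate any real obstacle here: the argument is a routine diagram chase once the arrows are correctly identified, and the role of the hypothesis $SK_{1}=0$ is precisely to allow the lift $x$ to be recognised as already coming from $\Lambda^{\mathcal{O}}(\mathcal{G})$.
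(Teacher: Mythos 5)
Your proposal is correct and matches the paper's proof in all essentials: both identify the top row with \eqref{eqn:Iwasawa-K-sequence}, observe that the triangle commutes by definition, construct the right vertical map by descending $\nr$ through the identification $K_{0}(\Lambda^{\mathcal{O}}(\mathcal{G}),\mathcal{Q}^{F}(\mathcal{G})) \cong K_{1}(\mathcal{Q}^{F}(\mathcal{G}))/\iota(K_{1}(\Lambda^{\mathcal{O}}(\mathcal{G})))$, and use $SK_{1}=0$ to kill the kernel. The only cosmetic difference is that you unfold the diagram chase by hand whereas the paper simply cites the snake lemma, which packages the same argument.
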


\begin{proof}
The triangle commutes by definition.
The top row is \eqref{eqn:Iwasawa-K-sequence} and the existence
of the right vertical map follows from the exactness of this row.
The second claim follows from the snake lemma. 
\end{proof}

\begin{lemma}\label{lem:extension-of-scalars-injective-for-tame-extensions}
Let $p$ be a prime and let $\mathcal{G}$ be an admissible one-dimensional $p$-adic Lie group. 
Let $F/\Q_{p}$ be a finite extension that is at most tamely ramified and let $\mathcal{O}$ 
be the ring of integers of $F$.  
Then the canonical map
\[
\zeta(\mathcal{Q}(\mathcal{G}))^{\times} / 
\nr(K_{1}(\Lambda(\mathcal{G}))) \longrightarrow
\zeta(\mathcal{Q}^{F}(\mathcal{G}))^{\times} / 
\nr(K_{1}(\Lambda^{\mathcal{O}}(\mathcal{G})))
\]
is injective.
If we further assume that $SK_{1}(\mathcal{Q}(\mathcal{G}))=0$,
then the extension of scalars map
\[
K_{0}(\Lambda(\mathcal{G}),\mathcal{Q}(\mathcal{G}))
\longrightarrow
K_{0}(\Lambda^{\mathcal{O}}(\mathcal{G}),\mathcal{Q}^{F}(\mathcal{G}))
\]
is also injective.
\end{lemma}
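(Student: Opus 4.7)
My plan is to prove the first injectivity statement via the Hom-description of the centre of $\mathcal{Q}^F(\mathcal{G})$ arising from diagram \eqref{eqn:Det_triangle}, and then to deduce the injectivity on $K_0$ from Lemma \ref{lem:hom-rel-K0-nr} by a diagram chase. The first step is to identify
\[
\zeta(\mathcal{Q}^F(\mathcal{G}))^{\times} \cong \Hom^{\ast}_{G_F}\bigl(R_{p}(\mathcal{G}), \mathcal{Q}^{c}(\overline{\Gamma})^{\times}\bigr),
\]
and similarly for $F = \Q_{p}$, via diagram \eqref{eqn:Det_triangle}. Under these identifications, the natural inclusion of centres corresponds to the inclusion of $G_{\Q_p}$-equivariant homomorphisms into $G_F$-equivariant ones, which is injective since $G_F \leq G_{\Q_p}$.

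The crucial step is then to describe $\nr(K_{1}(\Lambda^{\mathcal{O}}(\mathcal{G})))$ inside $\Hom^{\ast}_{G_F}$. Building on the integral logarithm theory of Oliver--Taylor as refined in \cite{MR2114937} and \cite{MR3091976}, this image is characterised by an integrality condition (values in $\Lambda^{c}(\overline{\Gamma})^{\times}$, where $\Lambda^{c}(\overline{\Gamma}) = \Z_{p}^{c} \otimes_{\Z_{p}} \Lambda(\overline{\Gamma})$) together with a system of trace-type congruences at characters of subquotients of $\mathcal{G}$. Both sets of conditions are imposed on the target $\mathcal{Q}^{c}(\overline{\Gamma})$ of $f$ and depend only on $\mathcal{G}$ together with its $\Q_{p}^{c}$-characters, not on the coefficient field $F$; exactly the same conditions therefore characterise $\nr(K_{1}(\Lambda(\mathcal{G})))$ inside $\Hom^{\ast}_{G_{\Q_{p}}}$. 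Granted this, the first claim is immediate: if $x \in \zeta(\mathcal{Q}(\mathcal{G}))^{\times}$ lies in $\nr(K_{1}(\Lambda^{\mathcal{O}}(\mathcal{G})))$, then the corresponding $f$ is $G_{\Q_{p}}$-equivariant and satisfies the $F$-independent integrality and congruence conditions, so $f \in \nr(K_{1}(\Lambda(\mathcal{G})))$ and hence $x \in \nr(K_{1}(\Lambda(\mathcal{G})))$.

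For the second claim, under the assumption $SK_{1}(\mathcal{Q}(\mathcal{G})) = 0$, Lemma \ref{lem:hom-rel-K0-nr} provides an injection $K_{0}(\Lambda(\mathcal{G}), \mathcal{Q}(\mathcal{G})) \hookrightarrow \zeta(\mathcal{Q}(\mathcal{G}))^{\times}/\nr(K_{1}(\Lambda(\mathcal{G})))$. If $\omega \in K_{0}(\Lambda(\mathcal{G}), \mathcal{Q}(\mathcal{G}))$ maps to zero in $K_{0}(\Lambda^{\mathcal{O}}(\mathcal{G}), \mathcal{Q}^{F}(\mathcal{G}))$, then naturality of the diagram in Lemma \ref{lem:hom-rel-K0-nr} forces its image in $\zeta(\mathcal{Q}^{F}(\mathcal{G}))^{\times}/\nr(K_{1}(\Lambda^{\mathcal{O}}(\mathcal{G})))$ to vanish, and the first claim then makes its image in $\zeta(\mathcal{Q}(\mathcal{G}))^{\times}/\nr(K_{1}(\Lambda(\mathcal{G})))$ vanish, whence $\omega = 0$. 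The main technical obstacle will be verifying that the characterisation of $\nr(K_{1}(\Lambda^{\mathcal{O}}(\mathcal{G})))$ by integrality and congruence conditions is genuinely independent of $\mathcal{O}$; this is precisely where the tameness hypothesis on $F/\Q_{p}$ is used, ensuring that the integral logarithm for $\Lambda^{\mathcal{O}}(\mathcal{G})$ is compatible with that for $\Lambda(\mathcal{G})$ under the canonical inclusion, with no additional obstruction coming from wild ramification of $\mathcal{O}/\Z_{p}$.
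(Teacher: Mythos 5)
Your reduction of the $K_0$-injectivity to the injectivity on the quotients $\zeta(\mathcal{Q}^{F}(\mathcal{G}))^{\times}/\nr(K_1(\Lambda^{\mathcal{O}}(\mathcal{G})))$ via Lemma \ref{lem:hom-rel-K0-nr} and naturality is exactly the second half of the paper's argument and is fine. The gap is in the first half. You propose to prove the injectivity on centre quotients by claiming that $\nr(K_1(\Lambda^{\mathcal{O}}(\mathcal{G})))$ is cut out inside $\Hom^{\ast}_{G_F}(R_p(\mathcal{G}), \mathcal{Q}^c(\overline{\Gamma})^{\times})$ by an integrality condition together with ``trace-type congruences'' that are independent of $F$, and you explicitly flag verifying this as ``the main technical obstacle.'' That obstacle is not a side remark: it is the entire content of the statement. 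For nonabelian $\mathcal{G}$ with $p \mid |\mathcal{G}'|$, a clean closed-form characterisation of $\nr(K_1(\Lambda^{\mathcal{O}}(\mathcal{G})))$ inside the Hom-group by Oliver--Taylor congruences is precisely what the Kakde/Ritter--Weiss machinery is built to attack, and is not something one can simply cite; moreover the congruences in that machinery \emph{do} depend on $\mathcal{O}$ (through the Frobenius and the integral logarithm), so the assertion that they are $F$-independent under tameness is itself a theorem that needs proof. As written, your argument establishes the conclusion only conditionally.

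The paper avoids this by a Galois descent argument: after enlarging $F$ to be Galois over $\Q_p$, it observes that
\[
\zeta(\mathcal{Q}(\mathcal{G}))^{\times} \cap \nr(K_1(\Lambda^{\mathcal{O}}(\mathcal{G})))
= \nr(K_1(\Lambda^{\mathcal{O}}(\mathcal{G})))^{\Gal(F/\Q_p)},
\]
and then invokes \cite[Theorem 2.12]{MR2905563}, which asserts $\nr(K_1(\Lambda^{\mathcal{O}}(\mathcal{G})))^{\Gal(F/\Q_p)} = \nr(K_1(\Lambda(\mathcal{G})))$ (with a remark that Nickel's proof goes through for tamely ramified $F$). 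This outsources the hard analytic input to a single citable theorem whose proof is compatible with tame ramification, rather than re-deriving a congruence characterisation from scratch. If you want to make your route precise, you would in effect have to reprove a version of Nickel's descent result; it is cleaner, and closer in spirit to what you are already doing, to replace your heuristic ``$F$-independent congruences'' step by the Galois-invariants identity above and a direct appeal to \cite[Theorem 2.12]{MR2905563}.
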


\begin{proof}
By enlarging $F$ if necessary, we can and do assume that $F/\Q_{p}$ is Galois. 
The first claim follows from the equalities
\[
\zeta(\mathcal{Q}(\mathcal{G}))^{\times} \cap 
\nr(K_{1}(\Lambda^{\mathcal{O}}(\mathcal{G}))) = 
\nr(K_{1}(\Lambda^{\mathcal{O}}(\mathcal{G})))^{\Gal(F/\Q_{p})}
= \nr(K_{1}(\Lambda(\mathcal{G}))),
\]
where the last equality is \cite[Theorem 2.12]{MR2905563}.
(We point out that the `notation as above' in the statement of loc.\ cit.\
refers to \cite[Theorem 2.11]{MR2905563} rather than the text between these
two results; the simplifying assumptions are to be understood as
`without loss of generality'. Indeed the proof of
\cite[Theorem 2.12]{MR2905563} remains valid unchanged for finite
tamely ramified extensions of $\mathbb{Q}_p$.)
We have a commutative diagram
\[ \xymatrix{
K_{0}(\Lambda(\mathcal{G}),\mathcal{Q}(\mathcal{G})) \ar[r] \ar[d]
& \zeta(\mathcal{Q}(\mathcal{G}))^{\times} / 
\nr(K_{1}(\Lambda(\mathcal{G}))) \ar[d]\\
K_{0}(\Lambda^{\mathcal{O}}(\mathcal{G}),\mathcal{Q}^{F}(\mathcal{G})) \ar[r]
& \zeta(\mathcal{Q}^{F}(\mathcal{G}))^{\times} / 
\nr(K_{1}(\Lambda^{\mathcal{O}}(\mathcal{G}))),
}
\]
where the existence of the horizontal maps follows from Lemma \ref{lem:hom-rel-K0-nr}.
If $SK_{1}(\mathcal{Q}(\mathcal{G}))=0$ then the top horizontal map is injective by
Lemma \ref{lem:hom-rel-K0-nr}, and so the second claim now follows from the commutativity of the diagram.
\end{proof}

\begin{lemma}\label{lem:extension-of-scalars-when-denom-units-of-centre-of-Iwasawa-algebra}
Let $p$ be a prime and let $\mathcal{G}$ be an admissible one-dimensional $p$-adic Lie group.
Let $F/\Q_{p}$ be a finite extension with ring of integers $\mathcal{O}$.
Then the canonical map
\[
\zeta(\mathcal{Q}(\mathcal{G}))^{\times} / 
\zeta(\Lambda(\mathcal{G}))^{\times} \longrightarrow
\zeta(\mathcal{Q}^{F}(\mathcal{G}))^{\times} / 
\zeta(\Lambda^{\mathcal{O}}(\mathcal{G}))^{\times}
\]
is injective. 
\end{lemma}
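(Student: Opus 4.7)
The strategy is to show that the kernel of the displayed quotient map, namely
\[
\bigl(\zeta(\mathcal{Q}(\mathcal{G}))^{\times} \cap \zeta(\Lambda^{\mathcal{O}}(\mathcal{G}))^{\times}\bigr)/\zeta(\Lambda(\mathcal{G}))^{\times},
\]
is trivial. Given $x$ in the intersection in the numerator, both $x$ and $x^{-1}$ lie in $\zeta(\mathcal{Q}(\mathcal{G})) \cap \zeta(\Lambda^{\mathcal{O}}(\mathcal{G}))$, so the whole statement reduces to proving the equality
\[
\zeta(\Lambda^{\mathcal{O}}(\mathcal{G})) \cap \zeta(\mathcal{Q}(\mathcal{G})) = \zeta(\Lambda(\mathcal{G}))
\]
of subsets of $\zeta(\mathcal{Q}^{F}(\mathcal{G}))$: once this holds, the containment $x, x^{-1} \in \zeta(\Lambda(\mathcal{G}))$ forces $x \in \zeta(\Lambda(\mathcal{G}))^{\times}$, as desired.

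To establish this equality, I would fix a $\Z_{p}$-basis $1 = e_{1}, e_{2}, \ldots, e_{n}$ of $\mathcal{O}$ over $\Z_{p}$, where $n = [F:\Q_{p}]$. Since $\mathcal{O}$ is central in $\Lambda^{\mathcal{O}}(\mathcal{G})$, this yields direct sum decompositions
\[
\Lambda^{\mathcal{O}}(\mathcal{G}) = \bigoplus_{i=1}^{n} e_{i}\Lambda(\mathcal{G}) \quad \text{and} \quad \mathcal{Q}^{F}(\mathcal{G}) = \bigoplus_{i=1}^{n} e_{i}\mathcal{Q}(\mathcal{G}),
\]
the latter using the identification $\mathcal{Q}^{F}(\mathcal{G}) = F \otimes_{\Q_{p}} \mathcal{Q}(\mathcal{G})$ from \cite[Lemma 1]{MR2114937}. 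The uniqueness of the expansion $\sum_{i} e_{i} \mu_{i}$ with $\mu_{i} \in \mathcal{Q}(\mathcal{G})$ of any element of $\mathcal{Q}^{F}(\mathcal{G})$ shows immediately that $\Lambda^{\mathcal{O}}(\mathcal{G}) \cap \mathcal{Q}(\mathcal{G}) = \Lambda(\mathcal{G})$.

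It then remains to observe that an element $\lambda \in \Lambda(\mathcal{G})$ is central in $\Lambda^{\mathcal{O}}(\mathcal{G})$ if and only if it is central in $\Lambda(\mathcal{G})$: any such $\lambda$ commutes automatically with the central subring $\mathcal{O}$, so it commutes with every element of $\bigoplus_{i} e_{i}\Lambda(\mathcal{G})$ as soon as it commutes with $\Lambda(\mathcal{G})$. Combining this with the containment $\zeta(\Lambda(\mathcal{G})) \subseteq \zeta(\mathcal{Q}(\mathcal{G}))$ yields the displayed equality, and hence the lemma. I do not anticipate any serious obstacle here: the entire argument is formal and relies only on the freeness of $\mathcal{O}$ as a $\Z_{p}$-module. (It is worth noting, however, that this simple approach circumvents the heavier reduced-norm and tame-descent machinery of \cite[Theorem 2.12]{MR2905563} invoked in Lemma \ref{lem:extension-of-scalars-injective-for-tame-extensions}; that machinery is indispensable there because one is working with $\nr(K_{1}(\Lambda^{\mathcal{O}}(\mathcal{G})))$ rather than with the full unit group $\zeta(\Lambda^{\mathcal{O}}(\mathcal{G}))^{\times}$.)
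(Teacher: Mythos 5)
Your proof is correct and is somewhat more elementary than the paper's, though both share the same overall shape. Both reduce to showing $\zeta(\mathcal{Q}(\mathcal{G}))^{\times} \cap \zeta(\Lambda^{\mathcal{O}}(\mathcal{G}))^{\times} = \zeta(\Lambda(\mathcal{G}))^{\times}$ inside $\zeta(\mathcal{Q}^{F}(\mathcal{G}))^{\times}$, and both ultimately rest on the ring-level identity $\zeta(\mathcal{Q}(\mathcal{G})) \cap \zeta(\Lambda^{\mathcal{O}}(\mathcal{G})) = \zeta(\Lambda(\mathcal{G}))$, which you make explicit via a $\Z_{p}$-basis of $\mathcal{O}$ and which the paper leaves implicit in the middle inclusion of its chain. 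The two diverge at the passage from $x \in \zeta(\Lambda(\mathcal{G}))$ to $x \in \zeta(\Lambda(\mathcal{G}))^{\times}$: the paper observes that the centres of both Iwasawa algebras are $R$-orders and hence integral over $R$ by \cite[Theorem 8.6]{MR1972204}, then invokes the general fact (\cite[Lemma 9.7]{MR703486}) that a unit of an integral ring extension which already lies in the base ring is a unit of the base ring, i.e.\ $\zeta(\Lambda(\mathcal{G})) \cap \zeta(\Lambda^{\mathcal{O}}(\mathcal{G}))^{\times} = \zeta(\Lambda(\mathcal{G}))^{\times}$. You avoid integrality altogether by applying the ring identity to both $x$ and $x^{-1}$, which immediately places both in $\zeta(\Lambda(\mathcal{G}))$ and hence $x$ in $\zeta(\Lambda(\mathcal{G}))^{\times}$. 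This is a genuine (if mild) simplification, trading two citations for the trivial observation that an element whose inverse also lies in a given subring is a unit of that subring. Your closing remark is also accurate: the shortcut works here precisely because $\zeta(\Lambda^{\mathcal{O}}(\mathcal{G}))^{\times}$ is the full unit group of a concretely described subring whose intersection with $\zeta(\mathcal{Q}(\mathcal{G}))$ is easy to identify, whereas $\nr(K_{1}(\Lambda^{\mathcal{O}}(\mathcal{G})))$ in Lemma \ref{lem:extension-of-scalars-injective-for-tame-extensions} admits no such description, forcing the appeal to the Galois descent result \cite[Theorem 2.12]{MR2905563}.
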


\begin{proof}
Write $\mathcal{G} = H \rtimes \Gamma$ where $H$ is finite and $\Gamma \simeq \Z_{p}$.
Let $\Gamma_{0}$ be an open subgroup of $\Gamma$ that is central in $\mathcal{G}$. Let $R=\Z_{p}\llbracket\Gamma_{0}\rrbracket$. Since $\zeta(\Lambda(\mathcal{G}))$ and $\zeta(\Lambda^{\mathcal{O}}(\mathcal{G}))$ are both $R$-orders, all of their elements are integral over
$R$ by \cite[Theorem 8.6]{MR1972204}.
Thus $\zeta(\Lambda^{\mathcal{O}}(\mathcal{G}))^{\times} \cap \zeta(\Lambda(\mathcal{G})) = \zeta(\Lambda(\mathcal{G}))^{\times}$ by \cite[Lemma 9.7]{MR703486}, for example.
Hence we have
\[
\zeta(\Lambda(\mathcal{G}))^{\times}
\subseteq \zeta(\mathcal{Q}(\mathcal{G}))^{\times} \cap \zeta(\Lambda^{\mathcal{O}}(\mathcal{G}))^{\times}
\subseteq \zeta(\Lambda(\mathcal{G})) \cap \zeta(\Lambda^{\mathcal{O}}(\mathcal{G}))^{\times}
=\zeta(\Lambda(\mathcal{G}))^{\times}.
\]
Therefore $\zeta(\mathcal{Q}(\mathcal{G}))^{\times} \cap \zeta(\Lambda^{\mathcal{O}}(\mathcal{G}))^{\times}
= \zeta(\Lambda(\mathcal{G}))^{\times}$, which gives the desired result.
\end{proof}

In the results that follow, the quotient and restriction maps on certain quotients of 
$\zeta(\mathcal{Q}^{F}(\mathcal{G}))^{\times}$
are induced by those defined in \S \ref{subsec:dets-and-nr}.

\begin{prop}\label{prop:injectivity-K-groups-direct-prod-delta}
Let $p$ be a prime and let $\mathcal{G} = \mathcal{H} \times \Delta$ 
where $\mathcal{H}$ is an admissible one-dimensional $p$-adic Lie group 
such that $p \nmid |\mathcal{H}'|$ and $\Delta$ is a finite group with $p \nmid |\Delta|$. 
Let $\mathcal{C}(\Delta)$ denote the collection of cyclic subquotients of $\Delta$. 
Then the products of maps
\begin{align*}
\zeta(\mathcal{Q}(\mathcal{G}))^{\times} / \zeta(\Lambda(\mathcal{G}))^{\times}
\xrightarrow{\prod \quot \, \circ \, \res}
&
\prod_{C \in \mathcal{C}(\Delta)}
\zeta(\mathcal{Q}(\mathcal{H} \times C))^{\times} / 
\zeta(\Lambda(\mathcal{H} \times C))^{\times},\\
\zeta(\mathcal{Q}(\mathcal{G}))^{\times} / \nr(K_{1}(\Lambda(\mathcal{G})))
\xrightarrow{\prod \quot \, \circ \, \res}
& \prod_{C \in \mathcal{C}(\Delta)}
\zeta(\mathcal{Q}(\mathcal{H} \times C))^{\times} / 
\nr(K_{1}(\Lambda(\mathcal{H} \times C))),
\end{align*}
and
\[	
K_{0}(\Lambda(\mathcal{G}),\mathcal{Q}(\mathcal{G})) 
\xrightarrow{\prod \quot \, \circ \, \res}
\prod_{C \in \mathcal{C}(\Delta)}
K_{0}(\Lambda(\mathcal{H} \times C),\mathcal{Q}(\mathcal{H} \times C)) 
\]
are all injective. 
\end{prop}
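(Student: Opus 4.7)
The plan is to reduce all three injectivity statements to the first one, pass to a splitting field for $\Delta$, decompose the relevant quotients via the resulting idempotent splitting of $\mathcal{O}[\Delta]$, and then appeal to Brauer's induction theorem. For the reduction, observe that $[\mathcal{G}, \mathcal{G}] = [\mathcal{H}, \mathcal{H}] \times [\Delta, \Delta]$ has order coprime to $p$ by the hypotheses, and that, since every $C \in \mathcal{C}(\Delta)$ is abelian, $[\mathcal{H} \times C, \mathcal{H} \times C] = [\mathcal{H}, \mathcal{H}]$ also has order coprime to $p$. Corollary \ref{cor:p-nmid-comm-subgroup} then shows that for each of these groups the reduced norm induces isomorphisms $K_{1}(\Lambda(-)) \cong \zeta(\Lambda(-))^{\times}$ and $K_{0}(\Lambda(-), \mathcal{Q}(-)) \cong \zeta(\mathcal{Q}(-))^{\times}/\zeta(\Lambda(-))^{\times}$. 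By the commutative diagrams \eqref{eqn:quotient-on-K-groups}, \eqref{eqn:restriction-on-K-groups} and \eqref{eqn:quot-res-maps-nr-diagram}, these identifications intertwine the three maps of the proposition, so it suffices to prove the first is injective.

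I would next pass to a finite unramified extension $F/\Q_{p}$, with ring of integers $\mathcal{O}$, that splits every $\Q_{p}^{c}$-valued irreducible character of $\Delta$; such an $F$ exists because $p \nmid |\Delta|$ forces all such character values to lie in unramified cyclotomic extensions of $\Q_{p}$. By Lemma \ref{lem:extension-of-scalars-when-denom-units-of-centre-of-Iwasawa-algebra}, the canonical map $\zeta(\mathcal{Q}(-))^{\times}/\zeta(\Lambda(-))^{\times} \to \zeta(\mathcal{Q}^{F}(-))^{\times}/\zeta(\Lambda^{\mathcal{O}}(-))^{\times}$ is injective and manifestly commutes with restriction and quotient, so injectivity of the first map reduces to the analogous statement over $F$. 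Since $\mathcal{O}[\Delta] = \prod_{\psi \in \Irr_{F}(\Delta)} M_{\psi(1)}(\mathcal{O})$, tensoring with $\Lambda^{\mathcal{O}}(\mathcal{H})$ gives $\Lambda^{\mathcal{O}}(\mathcal{G}) = \prod_{\psi} M_{\psi(1)}(\Lambda^{\mathcal{O}}(\mathcal{H}))$ and, taking centres,
\[
\zeta(\mathcal{Q}^{F}(\mathcal{G}))^{\times}/\zeta(\Lambda^{\mathcal{O}}(\mathcal{G}))^{\times} \;\cong\; \prod_{\psi \in \Irr_{F}(\Delta)} \zeta(\mathcal{Q}^{F}(\mathcal{H}))^{\times}/\zeta(\Lambda^{\mathcal{O}}(\mathcal{H}))^{\times},
\]
with a parallel decomposition indexed by one-dimensional $F$-characters $\phi$ of $C$ for each target factor $\zeta(\mathcal{Q}^{F}(\mathcal{H} \times C))^{\times}/\zeta(\Lambda^{\mathcal{O}}(\mathcal{H} \times C))^{\times}$.

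Using the character description of \S\ref{subsec:dets-and-nr}, for $x \in \zeta(\mathcal{Q}^{F}(\mathcal{G}))^{\times}$ with character function $f_{x}$, the $\psi$-component $x_{\psi}$ is characterised by $f_{x_{\psi}}(\chi) = f_{x}(\chi \otimes \psi)$ for $\chi \in R_{p}(\mathcal{H})$. Since $\mathcal{G} = \mathcal{H} \times \Delta$ is a direct product, induction from $\mathcal{H} \times C'$ to $\mathcal{G}$ satisfies $\ind(\chi \otimes \phi) = \chi \otimes \ind_{C'}^{\Delta}\phi$, and multiplicativity of $f_{x}$ in virtual characters then shows that the $\phi$-component of $\quot^{\mathcal{U}}_{\mathcal{U}/N} \circ \res^{\mathcal{G}}_{\mathcal{U}}(x)$, where $\mathcal{U} = \mathcal{H} \times C'$ and $C = C'/N$, equals $\prod_{\psi} x_{\psi}^{n_{\phi, \psi}}$ with $n_{\phi, \psi} = \langle \ind_{C'}^{\Delta} \infl_{C}^{C'} \phi, \psi \rangle$. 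By Brauer's induction theorem, each $\psi \in \Irr_{F}(\Delta)$ is a $\Z$-linear combination $\psi = \sum_{E, \lambda} a_{E, \lambda} \ind_{E}^{\Delta} \lambda$, where $E$ ranges over elementary subgroups of $\Delta$ and $\lambda$ over one-dimensional $F$-characters of $E$; each such $\lambda$ has cyclic image in $F^{\times}$, so factors as $\infl_{C_{E, \lambda}}^{E} \bar\lambda$ with $C_{E, \lambda} := E/\ker \lambda \in \mathcal{C}(\Delta)$ and $\bar\lambda \in \Irr_{F}(C_{E, \lambda})$. Multiplicativity of $f_{x}$ then yields $x_{\psi} = \prod_{E, \lambda} (\quot \circ \res(x))_{\bar\lambda}^{a_{E, \lambda}}$, so if every $\quot \circ \res(x)$ lies in $\zeta(\Lambda^{\mathcal{O}}(\mathcal{H} \times C))^{\times}$, then every $x_{\psi}$ lies in $\zeta(\Lambda^{\mathcal{O}}(\mathcal{H}))^{\times}$, giving the desired injectivity.

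The main obstacle, I expect, will be the bookkeeping in the last step: one has to track how the restriction-quotient map translates, under the reduced norm and the idempotent splitting of $\mathcal{O}[\Delta]$, into the displayed multiplicative combination of $\psi$-components, and verify the mutual compatibility of the Morita, extension-of-scalars, and character-theoretic identifications with these maps. Once that compatibility is in place, Brauer's theorem delivers the conclusion with essentially no further work.
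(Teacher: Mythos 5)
Your proposal is correct and follows essentially the same route as the paper's proof: reduce the second and third maps to the first via Corollary \ref{cor:p-nmid-comm-subgroup} (using $p \nmid |\mathcal{G}'|$), base-change to the unramified splitting field $F = \Q_p(\zeta_{|\Delta|})$ via Lemma \ref{lem:extension-of-scalars-when-denom-units-of-centre-of-Iwasawa-algebra}, decompose over the primitive central idempotents of $\mathcal{O}[\Delta]$, express the target components as the products $\prod_\psi f_\psi^{\langle \psi, \ind \, \infl\, \lambda\rangle}$, and conclude by Brauer induction. The only cosmetic difference is that you invoke the ``elementary subgroups with degree-one characters'' form of Brauer's theorem and then observe that each linear character factors through a cyclic quotient, whereas the paper states the theorem directly in the cyclic-subquotient form; these are of course the same statement.
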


\begin{proof}
The hypotheses imply that $p \nmid |\mathcal{G}'|$.
Hence Corollary \ref{cor:p-nmid-comm-subgroup} implies that 
injectivity of the second and third displayed maps follows from that of the first displayed map. 

Set $d:=|\Delta|$. Let $F=\Q_p(\zeta_{d})$ and let $\mathcal{O}$ be the ring of integers of $F$.
Then $F/\Q_{p}$ is a finite unramified extension over which every representation of every subgroup of
$\Delta$ can be realised.
Moreover, there is a canonical decomposition $\zeta(\mathcal{O}[\Delta]) \cong \prod_{\psi} \mathcal{O}$, where the sum runs over all $\psi \in \Irr_{\Q_{p}^{c}}(\Delta)$.
This decomposition induces an isomorphism
\begin{equation}\label{eq:decomp-quot-unit-groups}
\zeta(\mathcal{Q}^{F}(\mathcal{G}))^{\times} / 
\zeta(\Lambda^{\mathcal{O}}(\mathcal{G}))^{\times} \cong 
\prod_{\psi \in \Irr_{\Q_{p}^{c}}(\Delta)}
\zeta(\mathcal{Q}^{F}(\mathcal{H}))^{\times} / 
\zeta(\Lambda^{\mathcal{O}}(\mathcal{H}))^{\times}. 
\end{equation}
Analogous observations hold for the quotients 
$\zeta(\mathcal{Q}^{F}(\mathcal{H} \times C))^{\times} / \zeta(\Lambda^{\mathcal{O}}(\mathcal{H} \times C))^{\times}$ 
for each $C \in \mathcal{C}(\Delta)$.
Moreover, we have a commutative diagram
\[
\xymatrix{
\zeta(\mathcal{Q}(\mathcal{G}))^{\times} / 
\zeta(\Lambda(\mathcal{G}))^{\times}  \ar[rr]^{\prod \quot \, \circ \, \res \qquad \qquad \quad} \ar[d]  & &
\prod_{C \in \mathcal{C}(\Delta)}
\zeta(\mathcal{Q}(\mathcal{H} \times C))^{\times} / 
\zeta(\Lambda(\mathcal{H} \times C))^{\times} \ar[d] \\
\zeta(\mathcal{Q}^{F}(\mathcal{G}))^{\times} / 
\zeta(\Lambda^{\mathcal{O}}(\mathcal{G}))^{\times}  
\ar[rr]^{\prod \quot \, \circ \, \res \qquad \qquad \quad} & &
\prod_{C \in \mathcal{C}(\Delta)}
\zeta(\mathcal{Q}^{F}(\mathcal{H} \times C))^{\times} / 
\zeta(\Lambda^{\mathcal{O}}(\mathcal{H} \times C))^{\times},
}
\]
where the products run over all $C \in \mathcal{C}(\Delta)$ 
and the vertical extension of scalars maps are injective by 
Lemma \ref{lem:extension-of-scalars-when-denom-units-of-centre-of-Iwasawa-algebra}.
Thus it suffices to show that the bottom horizontal map is injective; we denote this map by $\iota$.
	
Now let $f$ be an arbitrary element in 
$\zeta(\mathcal{Q}^{F}(\mathcal{G}))^{\times} / \zeta(\Lambda^{\mathcal{O}}(\mathcal{G}))^{\times}$.
Using \eqref{eq:decomp-quot-unit-groups} we write 
$f = (f_{\psi})_{\psi \in \Irr_{\Q_{p}^{c}}(\Delta)}$ with
$f_{\psi} \in \zeta(\mathcal{Q}^{F}(\mathcal{H}))^{\times} / \zeta(\Lambda^{\mathcal{O}}(\mathcal{H}))^{\times}$.
Using the definition of $\iota$, we write $\iota(f)$ as $(f_{C})_{C \in \mathcal{C}(\Delta)}$.
Moreover, for each $C \in \mathcal{C}(\Delta)$ we write
$f_{C} = (f_{C, \lambda})_{\lambda \in \Irr_{\Q_{p}^{c}}(C)}$ with
$f_{C, \lambda} \in \zeta(\mathcal{Q}^{F}(\mathcal{H}))^{\times} / \zeta(\Lambda^{\mathcal{O}}(\mathcal{H}))^{\times}$.
If $C = U/N$ for a subgroup $U$ of $\Delta$ and a normal subgroup $N$ of $U$,
then explicitly we have
\[
f_{C, \lambda}
=
\textstyle{\prod_{\psi \in \Irr_{\Q_{p}^{c}}(\Delta)}} \, f_{\psi}^{\langle \psi, \ind_{U}^{\Delta} \infl_{C}^{U} \lambda \rangle},
\]
where $\langle - , - \rangle$ denotes the inner product of characters of $\Delta$. 
(To see this, one can either use the definitions of the quotient and restriction maps given in
\S \ref{subsec:dets-and-nr} or use an obvious variant of \cite[Lemma 2.4]{breuning-thesis}.)

Suppose that $f \in \ker(\iota)$.
Then for each
$C \in \mathcal{C}(\Delta)$, $\lambda \in \Irr_{\Q_{p}^{c}}(C)$ we have that $f_{C, \lambda} = 1 \in \zeta(\mathcal{Q}^{F}(\mathcal{H}))^{\times} /\zeta(\Lambda^{\mathcal{O}}(\mathcal{H}))^{\times}$.
Fix $\psi \in \Irr_{\Q_{p}^{c}}(\Delta)$.
By Brauer's induction theorem \cite[Theorem 15.9]{MR632548} we may write $\psi$ as a finite sum
$\psi = \sum_{j} z_{j} \ind_{U_{j}}^{\Delta} \infl_{C_{j}}^{U_{j}} \lambda_{j}$ where  
$C_{j} = U_{j}/N_{j}$ is a cyclic subquotient of $\Delta$, $\lambda_{j} \in  \Irr_{\Q_{p}^{c}}(C_{j})$
and $z_{j} \in \Z$.
We compute
\begin{eqnarray*}
f_{\psi} & = & 
\textstyle{\prod_{\psi' \in \Irr_{\Q_{p}^{c}}(\Delta)}} \, f_{\psi'}^{\langle \psi', \psi \rangle}
= 
\textstyle{\prod_{\psi' \in \Irr_{\Q_{p}^{c}}(\Delta)}} \, f_{\psi'}^{\langle \psi', \sum_{j} z_{j} \ind_{U_{j}}^{\Delta} \infl_{C_{j}}^{U_{j}} \lambda_{j} \rangle} \\
& = & 
\textstyle{\prod_{j}} \left(\textstyle{\prod_{\psi' \in \Irr_{\Q_{p}^{c}}(\Delta)}} \, f_{\psi'}^{\langle \psi', \ind_{U_{j}}^{\Delta} \infl_{C_{j}}^{U_{j}} \lambda_{j} \rangle}\right)^{z_{j}}
= \textstyle{\prod_{j}} \, f_{C_{j}, \lambda_{j}}^{z_{j}} 
= \textstyle{\prod_{j}} \, 1^{z_{j}} = 1.
\end{eqnarray*}
Thus $f$ is trivial, as desired.
\end{proof}

\begin{prop}\label{prop:F-elementary-reduction}
Let $p$ be an odd prime and let $\mathcal{G}$ be an admissible one-dimensional $p$-adic Lie group. 
Let $\Gamma_{0} \simeq \Z_{p}$ be an open central subgroup of $\mathcal{G}$,
let $G = \mathcal{G} / \Gamma_{0}$ and let $\pi : \mathcal{G} \rightarrow G$ denote the canonical projection.
Let $F/\Q_{p}$ be a finite extension with ring of integers $\mathcal{O}$.
Let
\[
\mathcal{E}_{F}(G) = \{ \pi^{-1}(E) \mid E \textrm{ is an $F$-elementary subgroup of }G \}.
\]
Then both products of maps 
\[
\zeta(\mathcal{Q}^{F}(\mathcal{G}))^{\times} / 
\nr(K_{1}(\Lambda^{\mathcal{O}}(\mathcal{G})))
\xrightarrow{\prod \res^{\mathcal{G}}_{\mathcal{H}}}
\prod_{\mathcal{H} \in \mathcal{E}_{F}(G)}
\zeta(\mathcal{Q}^{F}(\mathcal{H}))^{\times} / 
\nr(K_{1}(\Lambda^{\mathcal{O}}(\mathcal{H})))
\]
and
\[
K_{0}(\Lambda^{\mathcal{O}}(\mathcal{G}),\mathcal{Q}^{F}(\mathcal{G})) 
\xrightarrow{\prod \res^{\mathcal{G}}_{\mathcal{H}}}
\prod_{\mathcal{H} \in \mathcal{E}_{F}(G)}
K_{0}(\Lambda^{\mathcal{O}}(\mathcal{H}),\mathcal{Q}^{F}(\mathcal{H})) 
\]
are injective.
\end{prop}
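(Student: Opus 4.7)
The plan is to carry out a Brauer--Witt type reduction via the $\Det$/Hom description of \S\ref{subsec:dets-and-nr}. By the commutative triangle \eqref{eqn:Det_triangle}, $\zeta(\mathcal{Q}^F(\mathcal{G}))^{\times}$ identifies with $\Hom^{\ast}_{G_F}(R_p(\mathcal{G}), \mathcal{Q}^c(\overline{\Gamma})^{\times})$ (and likewise for each $\mathcal{H} \in \mathcal{E}_F(G)$), and under this identification $\res^{\mathcal{G}}_{\mathcal{H}}$ is precomposition with $\ind^{\mathcal{G}}_{\mathcal{H}}$. The main input will be a Witt--Berman induction theorem over $F$: every $\chi \in R_p(\mathcal{G})^{G_F}$ admits a decomposition $\chi = \sum_i n_i \ind^{\mathcal{G}}_{\mathcal{H}_i} \chi_i'$ with $\mathcal{H}_i \in \mathcal{E}_F(G)$, $\chi_i' \in R_p(\mathcal{H}_i)^{G_F}$ and $n_i \in \Z$. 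One obtains this by applying the classical Witt--Berman theorem to the finite quotient $\mathcal{G}/\Gamma_0^{p^m}$ for $m$ sufficiently large that $\chi$ factors through it, and then refining the output so that the $F$-elementary subgroups involved contain the central cyclic $p$-image of $\Gamma_0$.

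Granting this induction, the first injectivity follows directly: if $f \in \Hom^{\ast}_{G_F}(R_p(\mathcal{G}), \mathcal{Q}^c(\overline{\Gamma})^{\times})$ has $\res^{\mathcal{G}}_{\mathcal{H}}(f) \in \nr(K_1(\Lambda^{\mathcal{O}}(\mathcal{H})))$ for every $\mathcal{H} \in \mathcal{E}_F(G)$, then functoriality of $\nr$ under restriction (diagram \eqref{eqn:quot-res-maps-nr-diagram}) together with the induction decomposition forces $f \in \nr(K_1(\Lambda^{\mathcal{O}}(\mathcal{G})))$. For the $K_0$ version, I would form the commutative diagram of Lemma \ref{lem:hom-rel-K0-nr} alongside its analogues for each $\mathcal{H} \in \mathcal{E}_F(G)$, combined with the functoriality \eqref{eqn:restriction-on-K-groups}; a snake-lemma/diagram chase then reduces $K_0$-injectivity to the centre-quotient injectivity (just established) together with injectivity of the restriction map $SK_1(\mathcal{Q}^F(\mathcal{G})) \to \prod_{\mathcal{H}} SK_1(\mathcal{Q}^F(\mathcal{H}))$. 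The latter follows from the same Brauer--Witt reduction applied to $SK_1$, in the spirit of \cite[Corollary on p.\ 167]{MR2205173}, using the naturality of $SK_1$ under restriction.

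The principal obstacle will be the refinement of Witt--Berman required to ensure that the inductions appearing in the decomposition come from subgroups in $\mathcal{E}_F(G)$---that is, containing the fixed central $\Gamma_0$---rather than from arbitrary $F$-elementary subgroups of some finite quotient of $\mathcal{G}$. This requires a careful compatibility argument with the case distinction in the definition of $F$-$q$-elementary ($q = p$ versus $q \neq p$), exploiting the specific structure of admissible one-dimensional $p$-adic Lie groups, in particular that $\Gamma_0/\Gamma_0^{p^m}$ is a central cyclic $p$-subgroup of the finite quotient $\mathcal{G}/\Gamma_0^{p^m}$.
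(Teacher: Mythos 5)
Your plan is recognisably a Brauer--Witt reduction, but you have made the problem harder than it needs to be and in doing so have created a gap that you yourself flag as ``the principal obstacle'' without resolving it. The paper's proof does not decompose an arbitrary $\chi \in R_p(\mathcal{G})^{G_F}$; it decomposes only the \emph{trivial} character $\mathds{1}_{\mathcal{G}}$, which factors through the fixed finite quotient $G = \mathcal{G}/\Gamma_0$. Applying the classical Witt--Berman theorem to $G$ yields $F$-elementary subgroups $H_i \leq G$ and $\lambda_i \in R_{\Q_p^c}(H_i)$ with $\mathds{1}_G = \sum_i \ind_{H_i}^{G} \lambda_i$; inflating and using the compatibility of induction and inflation gives $\mathds{1}_{\mathcal{G}} = \sum_i \ind_{\mathcal{H}_i}^{\mathcal{G}} \xi_i$ with $\mathcal{H}_i = \pi^{-1}(H_i)$, and these $\mathcal{H}_i$ lie in $\mathcal{E}_F(G)$ \emph{by definition}. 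No refinement of Witt--Berman is needed, and there is no issue about whether the subgroups contain $\Gamma_0$ --- they do automatically. Your route, by attempting to decompose all characters (forcing you to pass to the deeper quotients $\mathcal{G}/\Gamma_0^{p^m}$), manufactures exactly the compatibility problem you then cannot dispatch.

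The second gap is that you gloss over why the decomposition $\mathds{1}_{\mathcal{G}} = \sum_i \ind_{\mathcal{H}_i}^{\mathcal{G}} \xi_i$ forces injectivity. What makes this work is that $\zeta(\mathcal{Q}^F(-))^{\times}/\nr(K_1(\Lambda^{\mathcal{O}}(-)))$ and $K_0(\Lambda^{\mathcal{O}}(-), \mathcal{Q}^F(-))$ are \emph{Frobenius modules} over the Frobenius functor $\mathcal{U} \mapsto R_F(\mathcal{U})$ (this requires verifying, via \cite[Lemma 7]{MR2205173}, that $K_1(\Lambda^{\mathcal{O}}(-))$, $K_1(\mathcal{Q}^F(-))$ and $\nr$ respect the Frobenius structure). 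Once that is established, the projection formula gives, for any $x$ killed by all $\res^{\mathcal{G}}_{\mathcal{H}_i}$, that $x = \mathds{1}_{\mathcal{G}} \cdot x = \sum_i \ind_{\mathcal{H}_i}^{\mathcal{G}}(\xi_i \cdot \res^{\mathcal{G}}_{\mathcal{H}_i} x) = 0$. Crucially, this handles $K_0$ directly; your proposed reduction of the $K_0$-injectivity to an $SK_1$-injectivity is both an extra step and one that does not obviously go through (a diagram chase from Lemma~\ref{lem:hom-rel-K0-nr} lands you with an element $\partial(s)$ for $s \in SK_1(\mathcal{Q}^F(\mathcal{G}))$, and you would still need $s$ to come from $K_1(\Lambda^{\mathcal{O}}(\mathcal{G}))$, which the vanishing of $\res^{\mathcal{G}}_{\mathcal{H}}(s)$ in $SK_1(\mathcal{Q}^F(\mathcal{H}))$ does not give you). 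The Frobenius-module argument avoids the issue entirely.
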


\begin{proof}
For an open subgroup $\mathcal{U}$ of $\mathcal{G}$, we denote by $R_{F}(\mathcal{U})$ the ring of all characters of finite dimensional $F$-representations of $\mathcal{U}$ with open kernel.
We view $R_{F}$ as a Frobenius functor of the open subgroups of $\mathcal{G}$
in the sense of \cite[\S 38A]{MR892316} (note that though the definitions of loc.\ cit.\ are only stated for finite groups, they easily extend to the present setting).
By \cite[Lemma 7]{MR2205173} the groups $K_{1}(\Lambda(-))$
and $K_{1}(\mathcal{Q}(-))$ are
Frobenius modules over the Frobenius
functor $\mathcal{U} \mapsto R_{\Q_p}(\mathcal{U})$ (note that the result for $K_{1}(\mathcal{Q}(-))$
is not explicitly stated, but the same proof works, and it is
actually used in the subsequent corollary).
The same argument shows
that $K_{1}(\Lambda^{\mathcal{O}}(-))$
and $K_{1}(\mathcal{Q}^{F}(-))$ are
Frobenius modules over the Frobenius
functor $\mathcal{U} \mapsto R_{F}(\mathcal{U})$.
The canonical map $K_{1}(\Lambda^{\mathcal{O}}(-)) \rightarrow
K_{1}(\mathcal{Q}^{F}(-))$ is a morphism of Frobenius modules
and thus its cokernel $K_{0}(\Lambda^{\mathcal{O}}(-),\mathcal{Q}^{F}(-))$
is also a Frobenius module over the Frobenius functor $\mathcal{U} \mapsto R_{F}(\mathcal{U})$. 

Now let $\overline{\Gamma} = \mathcal{G}/H$ where $H$ is as in \S \ref{subsec:Iwasawa-algebras}.
By \cite[Lemma 7]{MR2205173}
\[
\Det: K_{1}(\Lambda(-)) \longrightarrow \Hom^{\ast}_{G_{\Q_{p}}}(R_{p}(-), \mathcal{Q}^{c}(\overline{\Gamma})^{\times})
\]
is a morphism of 
Frobenius modules  and the same
argument works with $K_{1}(\Lambda(-))$ and $G_{\Q_{p}}$ 
replaced by $K_{1}(\Lambda^{\mathcal{O}}(-))$ and $G_{F}$, respectively.
Together with \eqref{eqn:Det_triangle}, this shows that
$
\nr: K_{1}(\Lambda^{\mathcal{O}}(-)) \rightarrow \zeta(\mathcal{Q}^{F}(-))^{\times}
$
is also a morphism of Frobenius modules.
Hence the cokernel
$\zeta(\mathcal{Q}^{F}(-))^{\times} / \nr(K_{1}(\Lambda^{\mathcal{O}}(-)))$ is a
Frobenius module over the Frobenius
functor $\mathcal{U} \mapsto R_{F}(\mathcal{U})$.

We now proceed as in the proof of \cite[Corollary, p.\ 167]{MR2205173}.
Let $\mathds{1}_{G}$ and $\mathds{1}_{\mathcal{G}}$ denote the trivial characters of $G$ and $\mathcal{G}$, respectively.
An application of the Witt--Bermann induction theorem \cite[Theorem 21.6]{MR632548} 
to the finite group $G$ shows that there are $F$\nobreakdash-elementary subgroups $H_{i} \leq G$
and $\lambda_{i} \in R_{\Q_{p}^{c}}(H_{i})$ such that 
$\mathds{1}_{G} = \sum_{i} \ind_{H_{i}}^{G} \lambda_{i}$.
Let $\mathcal{H}_{i} \leq \mathcal{G}$ denote the full preimage of $H_{i}$ and let $\xi_{i} = \infl_{H_{i}}^{\mathcal{H}_{i}} \lambda_{i}$. Then lifting gives 
$\mathds{1}_{\mathcal{G}} = \sum_{i} \ind_{\mathcal{H}_{i}}^{\mathcal{G}} \xi_{i}$ (finite sum).

Now let $x$ be either in $\zeta(\mathcal{Q}^{F}(\mathcal{G}))^{\times} / 
\nr(K_{1}(\Lambda^{\mathcal{O}}(\mathcal{G})))$ or in
$K_{0}(\Lambda^{\mathcal{O}}(\mathcal{G}),\mathcal{Q}^{F}(\mathcal{G}))$
and denote the trivial element of both of these groups by $0$.
Suppose that $x \in \ker(\prod_{i} \res^{\mathcal{G}}_{\mathcal{H}_{i}})$. 
Then by the defining properties of Frobenius modules over Frobenius functors we have
\[
\textstyle{x 
= \mathds{1}_{\mathcal{G}} \cdot x
= \sum_{i} (\ind_{\mathcal{H}_{i}}^{\mathcal{G}} \xi_{i}) \cdot x
= \sum_{i} \ind_{\mathcal{H}_{i}}^{\mathcal{G}} (\xi_{i} \cdot  \res^{\mathcal{G}}_{\mathcal{H}_{i}} x) = 0.}
\]
Hence the result now follows by the trivial observation that $\mathcal{H}_{i} \in \mathcal{E}_{F}(G)$
for each $i$.
\end{proof}

\begin{corollary}\label{cor:reduction-to-elementary-subgroups}
Let $p$ be an odd prime and let $\mathcal{G}$ be an admissible one-dimensional $p$-adic Lie group. 
Let $\Gamma_{0} \simeq \Z_{p}$ be an open central subgroup of $\mathcal{G}$,
let $G = \mathcal{G} / \Gamma_{0}$ and let
$\mathcal{E}(G) = \{ \pi^{-1}(E) \mid E \textrm{ is an elementary subgroup of }G \}$,
where $\pi : \mathcal{G} \rightarrow G$ is the canonical projection.
Then the product of maps
\[
\zeta(\mathcal{Q}(\mathcal{G}))^{\times} / \nr(K_{1}(\Lambda(\mathcal{G})))
\xrightarrow{\prod \res^{\mathcal{G}}_{\mathcal{H}}}
\prod_{\mathcal{H} \in \mathcal{E}(G)}
\zeta(\mathcal{Q}(\mathcal{H}))^{\times} / \nr(K_1(\Lambda(\mathcal{H})))
\]
is injective.
If we further assume that $SK_{1}(\mathcal{Q}(\mathcal{G}))=0$, then the product of maps 
\[
K_{0}(\Lambda(\mathcal{G}),\mathcal{Q}(\mathcal{G})) 
\xrightarrow{\prod \res^{\mathcal{G}}_{\mathcal{H}}}
\prod_{\mathcal{H} \in \mathcal{E}(G)}
K_{0}(\Lambda(\mathcal{H}),\mathcal{Q}(\mathcal{H})) 
\]
is also injective. 
\end{corollary}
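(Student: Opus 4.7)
The plan is to prove the corollary by induction on the order of the finite group $G = \mathcal{G}/\Gamma_{0}$, combining Proposition \ref{prop:F-elementary-reduction} applied over a carefully chosen base field with Lemma \ref{lem:extension-of-scalars-injective-for-tame-extensions} to descend back to $\Q_{p}$. The base case $|G|=1$ is immediate since $\mathcal{G} = \Gamma_{0}$ is itself (trivially) elementary and so lies in $\mathcal{E}(G)$. For the inductive step, one may assume $\mathcal{G}$ is not itself elementary. The key is to produce a finite tamely ramified extension $F/\Q_{p}$ (with ring of integers $\mathcal{O}$) such that $\mathcal{G}$ is not $F$-elementary; then every $\mathcal{H} \in \mathcal{E}_{F}(G)$ is a proper subgroup of $\mathcal{G}$, so the inductive hypothesis applies to each such $\mathcal{H}$.

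Such an $F$ always exists. If $\mathcal{G}$ is not $\Q_{p}$-elementary then already $F = \Q_{p}$ works. Otherwise $\mathcal{G}$ is $\Q_{p}$-$q$-elementary, say with $G \cong C_{n} \rtimes Q$ and with the $q$-group $Q$ acting non-trivially on $C_{n}$ (since $\mathcal{G}$ is not elementary). In the case $q = p$ (so $p \nmid n$) take $F = \Q_{p}(\zeta_{n})$, which is unramified over $\Q_{p}$; then $\Gal(F(\zeta_{n})/F) = 1$, so $F$-elementarity would force the action to be trivial. In the case $q \neq p$, write $n = p^{a}m$ with $\gcd(m,p)=1$ and take $F = \Q_{p}(\zeta_{pm})$, which is tamely ramified of degree $(p-1)[\Q_{p}(\zeta_{m}):\Q_{p}]$; a direct computation gives $\Gal(F(\zeta_{n})/F) \cong C_{p^{a-1}}$, a pro-$p$ group into which the $q$-group $Q$ has trivial image, while the action on the $m$-part of $C_{n}$ is trivialised since $\zeta_{m} \in F$; so again $F$-elementarity would force the whole action to be trivial.

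With this choice of $F$, let $x \in \zeta(\mathcal{Q}(\mathcal{G}))^{\times}/\nr(K_{1}(\Lambda(\mathcal{G})))$ satisfy $\res^{\mathcal{G}}_{\mathcal{H}'}(x) = 0$ for every $\mathcal{H}' \in \mathcal{E}(G)$. For each proper $\mathcal{H} \in \mathcal{E}_{F}(G)$, observing that $F$-elementary implies $\Q_{p}$-elementary and that elementary subgroups of $\mathcal{H}/\Gamma_{0}$ lie in $\mathcal{E}(G)$, the inductive hypothesis applied to $\mathcal{H}$ gives $\res^{\mathcal{G}}_{\mathcal{H}}(x) = 0$ in $\zeta(\mathcal{Q}(\mathcal{H}))^{\times}/\nr(K_{1}(\Lambda(\mathcal{H})))$. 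By functoriality, the image of $x$ in $\zeta(\mathcal{Q}^{F}(\mathcal{G}))^{\times}/\nr(K_{1}(\Lambda^{\mathcal{O}}(\mathcal{G})))$ then restricts trivially to every $\mathcal{H} \in \mathcal{E}_{F}(G)$, so Proposition \ref{prop:F-elementary-reduction} over $F$ shows this image vanishes, and Lemma \ref{lem:extension-of-scalars-injective-for-tame-extensions} (valid because $F/\Q_{p}$ is tame) finally forces $x = 0$. The $K_{0}$ statement under the hypothesis $SK_{1}(\mathcal{Q}(\mathcal{G})) = 0$ follows formally via Lemma \ref{lem:hom-rel-K0-nr}, which under that hypothesis provides an injection $K_{0}(\Lambda(\mathcal{G}),\mathcal{Q}(\mathcal{G})) \hookrightarrow \zeta(\mathcal{Q}(\mathcal{G}))^{\times}/\nr(K_{1}(\Lambda(\mathcal{G})))$ compatible with restriction maps.

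The main obstacle is the Galois-theoretic case analysis needed to construct a tame $F$ defeating $F$-elementarity when $\mathcal{G}$ is $\Q_{p}$-$q$-elementary: in the case $q \neq p$, adjoining $\zeta_{p}$ is essential to kill the part of $Q$'s action factoring through the tame intermediate extension $\Q_{p}(\zeta_{p})/\Q_{p}$ of $\Q_{p}(\zeta_{p^{a}})/\Q_{p}$, while at the same time $F/\Q_{p}$ must stay tamely ramified so that Lemma \ref{lem:extension-of-scalars-injective-for-tame-extensions} remains applicable.
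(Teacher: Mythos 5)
Your argument takes a genuinely different route from the paper's and is essentially sound, though it has two small gaps. The paper's proof is direct rather than inductive: writing $|H| = p^{t}k$ with $p \nmid k$ (here $H$ is the finite normal subgroup of $\mathcal{G}$), it sets $F := \Q_p(\zeta_{pk})$, which is tamely ramified, and then shows that \emph{every} $F$-elementary subgroup of every finite quotient of $\mathcal{G}$ is already plain elementary. Indeed, for an $F$-$q$-elementary quotient $C_n \rtimes Q$ with $n = p^s m$, $p \nmid m$, one has $m \mid k$, so $\zeta_p, \zeta_m \in F$ and $\Gal(F(\zeta_n)/F) \cong \Gal(\Q_p(\zeta_{p^s})/\Q_p(\zeta_p))$ has $p$-power order; a $q$-group $Q$ can only act through this group trivially. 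Hence $\mathcal{E}_F(G) \subseteq \mathcal{E}(G)$, and a single application of Proposition \ref{prop:F-elementary-reduction} followed by the tame descent Lemma \ref{lem:extension-of-scalars-injective-for-tame-extensions} finishes, with no induction. Your approach instead chooses $F$ to defeat the $F$-elementarity of $\mathcal{G}$ itself and then inducts on $|G|$; this works but is longer and requires more care.

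The two gaps are these. First, you move freely between ``$\mathcal{G}$ is elementary'' (the Lie-group notion, which quantifies over all choices of central open $\Z_p$-subgroup) and ``$G = \mathcal{G}/\Gamma_0$ is elementary as a finite group'' (with the fixed $\Gamma_0$ appearing in $\mathcal{E}(G)$). What the induction actually requires is that $G$ is not an elementary finite group, so that $\mathcal{G} \notin \mathcal{E}_F(G)$ and the members of $\mathcal{E}_F(G)$ are strictly smaller; the case analysis should therefore be phrased entirely in terms of the finite group $G$. Second, showing that your chosen $F$ rules out $F$-$q$-elementarity for the one prime $q$ arising in a chosen presentation $G \cong C_n \rtimes Q$ does not by itself rule out $G$ being $F$-$q'$-elementary for a different prime $q'$. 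This can be repaired: if $G$ were $q$-hyperelementary and $q'$-hyperelementary for distinct primes $q \neq q'$, then every Sylow subgroup of $G$ would be cyclic and normal, so $G$ would be cyclic and hence elementary, contrary to assumption; thus the prime $q$ is uniquely determined and your Galois computation suffices. With these repairs your proof is correct; the paper's single well-chosen $F$ simply sidesteps both the case split on $q$ and the induction.
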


\begin{proof}
Write $\mathcal{G} = H \rtimes \Gamma$ where $H$ is finite and $\Gamma \simeq \Z_{p}$.
Write $|H|=p^{t}k$ for integers $t$ and $k$ such that $t \geq 0$ and $p \nmid k$.
Then $F:=\Q_{p}(\zeta_{pk})$ is a finite tamely ramified extension of $\Q_{p}$.
We now repeat an argument given in the proof of \cite[Proposition 9]{MR1687551}
to show that every $F$-elementary subgroup of any finite quotient of $\mathcal{G}$
is in fact elementary.
Let $q$ be a prime and let $C_{n} \rtimes Q$ be an $F$-$q$-elementary finite quotient of $\mathcal{G}$.
Write $n = p^{s} m$ for integers $s$ and $m$ such that $s \geq 0$ and $p \nmid m$.
Note that $m$ must divide $k$.
Since both $\zeta_{p}$ and $\zeta_{m}$ lie in $F$,
the Galois group $\Gal(F(\zeta_{n})/F)$ has $p$-power order.
Thus if $p \neq q$ then any homomorphism $Q \rightarrow \Gal(F(\zeta_{n})/F)$ must be trivial.
If $p=q$ then $s=0$ and so the extension $F(\zeta_{n})/F$ is trivial, giving the same result.

Now Proposition \ref{prop:F-elementary-reduction} and Lemma \ref{lem:extension-of-scalars-injective-for-tame-extensions} imply the first claim. The second claim follows from Lemma \ref{lem:hom-rel-K0-nr}.
\end{proof}

\begin{proof}[Proof of Theorem \ref{thm:reduction-to-p-elementary-subquotients}]
By Corollary \ref{cor:reduction-to-elementary-subgroups} we can and do replace $\mathcal{G}$
by an elementary subgroup $\mathcal{H}_{1}$ containing $\Gamma_{0}$. 
We need only consider the case that $\mathcal{H}_{1}$ is $q$-elementary for some prime
$q \neq p$.
By Lemma \ref{lem:elementary-compatible-RW} we have 
$\mathcal{H}_{1} \simeq \Gamma \times C \times Q$, where $\Gamma \simeq \Z_{p}$, 
$C$ is finite cyclic of order coprime to $q$, and $Q$ is a finite $q$-group.
Moreover, we can and do choose $\Gamma$ such that $\Gamma_{0} \leq \Gamma$.
Hence we may apply Proposition \ref{prop:injectivity-K-groups-direct-prod-delta} with $\mathcal{G}=\mathcal{H}_{1}$, 
$\mathcal{H} = \Gamma \times C$ and $\Delta=Q$.
It only remains to observe that for all cyclic subquotients $H$ of $\Delta$, 
the finite groups $C \times H$ are cyclic and hence the groups $\mathcal{H} \times H = \Gamma \times C \times H$ are $p$-elementary (in fact, they are $\ell$-elementary for every prime $\ell$).
\end{proof}

\subsection{Application to the EIMC}\label{subsec:application-to-EIMC}

We give an easy reformulation of the EIMC without its uniqueness statement.

\begin{lemma}\label{lem:EIMC-reformulation}
Let $p$ be an odd prime and let $\mathcal{L}/K$ be an admissible
one-dimensional $p$-adic Lie extension of a totally real number field $K$.
Let $\mathcal{G}=\Gal(\mathcal{L}/K)$ and let $S$ be a finite set of places of $K$ containing
$S_{\ram}(\mathcal{L}/K) \cup S_{\infty}$.
Choose any $\zeta_{S} \in K_{1}(\mathcal{Q}(\mathcal{G}))$ such that 
$\partial(\zeta_{S}) = - [C_{S}^{\bullet}(\mathcal{L}/K)]$.
Then the EIMC holds for $\mathcal{L}/K$ if and only if
\[
\nr(\zeta_{S}) \equiv \Phi_{S} \bmod \nr(K_{1}(\Lambda(\mathcal{G}))).
\]
\end{lemma}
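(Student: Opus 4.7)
The plan is to reduce the statement to a direct consequence of the exact sequence \eqref{eqn:Iwasawa-K-sequence} together with the fact that $\nr$ is functorial with respect to the canonical map $K_{1}(\Lambda(\mathcal{G})) \to K_{1}(\mathcal{Q}(\mathcal{G}))$. The lemma is essentially observing that modifying a preimage of $-[C_{S}^{\bullet}(\mathcal{L}/K)]$ under $\partial$ by an element in the image of $K_{1}(\Lambda(\mathcal{G}))$ leaves $\partial(\zeta_{S})$ unchanged while altering $\nr(\zeta_{S})$ exactly by an element of $\nr(K_{1}(\Lambda(\mathcal{G})))$.

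For the forward implication, I would assume the EIMC holds and pick $\zeta_{S}' \in K_{1}(\mathcal{Q}(\mathcal{G}))$ satisfying $\partial(\zeta_{S}') = -[C_{S}^{\bullet}(\mathcal{L}/K)]$ and $\nr(\zeta_{S}') = \Phi_{S}$. Since $\partial(\zeta_{S} \cdot (\zeta_{S}')^{-1}) = 0$, exactness of \eqref{eqn:Iwasawa-K-sequence} implies that $\zeta_{S} \cdot (\zeta_{S}')^{-1}$ lies in the image of $K_{1}(\Lambda(\mathcal{G}))$ inside $K_{1}(\mathcal{Q}(\mathcal{G}))$. Applying $\nr$ then gives $\nr(\zeta_{S}) \equiv \nr(\zeta_{S}') = \Phi_{S}$ modulo $\nr(K_{1}(\Lambda(\mathcal{G})))$.

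For the converse, I would suppose $\nr(\zeta_{S}) = \Phi_{S} \cdot \nr(u)$ for some $u \in K_{1}(\Lambda(\mathcal{G}))$, regard $u$ as an element of $K_{1}(\mathcal{Q}(\mathcal{G}))$ via the canonical map (so that $\partial(u) = 0$ by exactness of \eqref{eqn:Iwasawa-K-sequence}), and set $\zeta_{S}' := \zeta_{S} \cdot u^{-1}$. Then $\partial(\zeta_{S}') = \partial(\zeta_{S}) = -[C_{S}^{\bullet}(\mathcal{L}/K)]$ and $\nr(\zeta_{S}') = \nr(\zeta_{S})/\nr(u) = \Phi_{S}$, so $\zeta_{S}'$ witnesses the EIMC.

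No step here is a real obstacle; the lemma is purely formal and follows at once from the exactness of \eqref{eqn:Iwasawa-K-sequence} and the compatibility of the reduced norm with the inclusion $\Lambda(\mathcal{G}) \hookrightarrow \mathcal{Q}(\mathcal{G})$.
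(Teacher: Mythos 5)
Your proof is correct and is essentially the same argument as the paper's, which simply cites the commutative diagram of Lemma \ref{lem:hom-rel-K0-nr} (itself built from the exact sequence \eqref{eqn:Iwasawa-K-sequence}); you have just unwound that citation into an explicit diagram chase. Both approaches rest on exactness of \eqref{eqn:Iwasawa-K-sequence} and compatibility of $\nr$ with the canonical map $K_{1}(\Lambda(\mathcal{G})) \to K_{1}(\mathcal{Q}(\mathcal{G}))$.
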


\begin{proof}
This is an easy consequence of Lemma \ref{lem:hom-rel-K0-nr} in the case $F = \Q_{p}$. 
\end{proof}

We are now ready to prove the main result of this section, a special case of which will allow us to deduce Corollary \ref{cor:EIMC-abelian-Sylow-p} from Theorem \ref{thm:EIMC-abelian-exts}.

\begin{theorem}\label{thm:EIMC-reduction}
Let $p$ be an odd prime and let $\mathcal{L}/K$ be an admissible
one-dimensional $p$-adic Lie extension of a totally real number field $K$.
Let $\mathcal{I}_{p}$ be the collection of all intermediate admissible extensions
with $p$-elementary Galois group.
Let $\mathcal{G}=\Gal(\mathcal{L}/K)$ and let $\mathcal{F}_{p}$ be the collection of all intermediate
extensions defined by the collection $\mathcal{E}_{p}$ of subquotients of $\mathcal{G}$ defined in Theorem \ref{thm:reduction-to-p-elementary-subquotients}.
The following statements are equivalent.
\begin{enumerate}
\item The EIMC holds for $\mathcal{L}/K$.
\item The EIMC holds for all subextensions in $\mathcal{I}_{p}$.
\item The EIMC holds for all subextensions in $\mathcal{F}_{p}$.
\end{enumerate}
\end{theorem}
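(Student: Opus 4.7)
The plan is to establish the chain $(i) \Rightarrow (ii) \Rightarrow (iii) \Rightarrow (i)$, with only the last implication requiring substantial work.

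The implication $(i) \Rightarrow (ii)$ is immediate from Lemma \ref{lem:EIMC-implies-EIMC-for-subextensions}, since each extension in $\mathcal{I}_{p}$ is an admissible subextension of $\mathcal{L}/K$. For $(ii) \Rightarrow (iii)$, each subquotient $\mathcal{U}/N \in \mathcal{E}_{p}$ is by construction a $p$-elementary admissible one-dimensional $p$-adic Lie group: the image of $\Gamma_{0}$ in $\mathcal{U}/N$ is an open central subgroup isomorphic to $\Z_{p}$, since $\Gamma_{0} \cap N$ is trivial as $\Gamma_{0} \cong \Z_{p}$ has no nontrivial finite subgroups; moreover the kernel $H$ of the projection $\mathcal{G} \twoheadrightarrow \Gamma_{K}$ contains $N$, so $\mathcal{L}^{N}/\mathcal{L}^{\mathcal{U}}$ contains the cyclotomic $\Z_{p}$-extension of $\mathcal{L}^{\mathcal{U}}$ and is admissible. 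Hence $\mathcal{F}_{p} \subseteq \mathcal{I}_{p}$ and the implication is formal.

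For $(iii) \Rightarrow (i)$, fix a finite set $S$ of places of $K$ containing $S_{\mathrm{ram}} \cup S_{\infty}$ and, by the surjectivity of $\partial$ in \eqref{eqn:Iwasawa-K-sequence}, choose any $\zeta_{S} \in K_{1}(\mathcal{Q}(\mathcal{G}))$ with $\partial(\zeta_{S}) = -[C_{S}^{\bullet}(\mathcal{L}/K)]$. By Lemma \ref{lem:EIMC-reformulation}, the EIMC for $\mathcal{L}/K$ is equivalent to the triviality of the class of $\nr(\zeta_{S}) \cdot \Phi_{S}(\mathcal{L}/K)^{-1}$ in $\zeta(\mathcal{Q}(\mathcal{G}))^{\times}/\nr(K_{1}(\Lambda(\mathcal{G})))$. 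By the first injectivity assertion of Theorem \ref{thm:reduction-to-p-elementary-subquotients}, this can be checked after applying $\quot^{\mathcal{U}}_{\mathcal{U}/N} \circ \res^{\mathcal{G}}_{\mathcal{U}}$ for each $\mathcal{U}/N \in \mathcal{E}_{p}$. Fix such $\mathcal{U}/N$ and set $K' := \mathcal{L}^{\mathcal{U}}$, $\mathcal{L}' := \mathcal{L}^{N}$, and let $S'$ denote the set of places of $K'$ above those in $S$. The compatibility of $\quot$ and $\res$ with $\partial$ and $\nr$ (diagrams \eqref{eqn:quotient-on-K-groups}, \eqref{eqn:restriction-on-K-groups}, and \eqref{eqn:quot-res-maps-nr-diagram}), with the canonical complex (Proposition \ref{prop:complex-res-quot}), and with the $p$-adic $L$-value (Proposition \ref{prop:phi-S-res-quot}) together show that $\zeta_{S'} := \quot^{\mathcal{U}}_{\mathcal{U}/N}(\res^{\mathcal{G}}_{\mathcal{U}}(\zeta_{S}))$ satisfies $\partial(\zeta_{S'}) = -[C_{S'}^{\bullet}(\mathcal{L}'/K')]$, while $\quot \circ \res$ sends $\Phi_{S}(\mathcal{L}/K)$ to $\Phi_{S'}(\mathcal{L}'/K')$ and $\nr(\zeta_{S})$ to $\nr(\zeta_{S'})$. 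Since $\mathcal{L}'/K' \in \mathcal{F}_{p}$, hypothesis $(iii)$ together with Lemma \ref{lem:EIMC-reformulation} applied to $\mathcal{L}'/K'$ yields $\nr(\zeta_{S'}) \equiv \Phi_{S'}(\mathcal{L}'/K') \bmod \nr(K_{1}(\Lambda(\mathcal{U}/N)))$, which is precisely the desired local triviality.

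The genuine obstacle in this strategy is the injectivity provided by Theorem \ref{thm:reduction-to-p-elementary-subquotients}, which has already been overcome; the role of the present argument is simply to combine that algebraic input with the functorial properties of $C_{S}^{\bullet}$ and $\Phi_{S}$ and the reformulation of the EIMC given in Lemma \ref{lem:EIMC-reformulation}.
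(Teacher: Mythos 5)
Your proposal is correct and follows essentially the same argument as the paper: it uses Lemma \ref{lem:EIMC-implies-EIMC-for-subextensions} for (i)$\Rightarrow$(ii), the trivial inclusion $\mathcal{F}_{p} \subseteq \mathcal{I}_{p}$ for (ii)$\Rightarrow$(iii), and for (iii)$\Rightarrow$(i) combines the reformulation in Lemma \ref{lem:EIMC-reformulation} with the injectivity of Theorem \ref{thm:reduction-to-p-elementary-subquotients} and the functorial compatibilities given by Propositions \ref{prop:complex-res-quot} and \ref{prop:phi-S-res-quot}. The paper states (iii)$\Rightarrow$(i) much more tersely, simply listing the ingredients, whereas you spell out the diagram chase and verify explicitly that each $\mathcal{U}/N \in \mathcal{E}_{p}$ corresponds to an admissible extension; both of these elaborations are correct and confirm details the paper leaves implicit.
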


\begin{remark}
Note that $\mathcal{F}_{p} \subseteq \mathcal{I}_{p}$ and $\mathcal{I}_{p}$ is infinite, but $\mathcal{F}_{p}$ is finite (see Remark \ref{rmk:prod-is-finite}).
\end{remark}

\begin{proof}[Proof of Theorem \ref{thm:EIMC-reduction}]
Lemma \ref{lem:EIMC-implies-EIMC-for-subextensions} shows that
(i) $\Rightarrow$ (ii). Since
$\mathcal{F}_{p} \subseteq \mathcal{I}_{p}$ it follows trivially that (ii) $\Rightarrow$ (iii).
Together, Propositions \ref{prop:complex-res-quot} and \ref{prop:phi-S-res-quot}, 
Lemma \ref{lem:hom-rel-K0-nr} in the case $F=\Q_{p}$,
Lemma \ref{lem:EIMC-reformulation} and
Theorem \ref{thm:reduction-to-p-elementary-subquotients}
show that (iii) $\Rightarrow$ (i).
\end{proof}

\begin{remark} \label{rem:reduction-mu}
If the extension $\mathcal{L}/K$ satisfies the $\mu = 0$ hypothesis, 
then \cite[Theorem~A]{MR2205173} shows that the equivalence of statements (i) and (ii) in 
Theorem \ref{thm:EIMC-reduction} recovers \cite[Theorem~C]{MR2205173}
(which itself assumes the $\mu = 0$ hypothesis).
\end{remark}

\begin{corollary}[Corollary \ref{cor:EIMC-abelian-Sylow-p}]\label{cor:EIMC-abelian-Sylow-p-proof}
Let $p$ be an odd prime and let $K$ be a totally real number field. 
Let $\mathcal{L}/K$ be an admissible one-dimensional $p$-adic Lie extension such that 
$\Gal(\mathcal{L}/K)$ has an abelian Sylow $p$-subgroup.
Then the EIMC with uniqueness holds for $\mathcal{L}/K$.
\end{corollary}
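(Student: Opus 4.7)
The plan is to apply Theorem \ref{thm:EIMC-reduction} to reduce the EIMC for $\mathcal{L}/K$ to the EIMC for the finitely many intermediate admissible extensions in $\mathcal{F}_p$, whose Galois groups comprise the collection $\mathcal{E}_p$ of $p$-elementary subquotients of $\mathcal{G} = \Gal(\mathcal{L}/K)$. I would then show that each such subquotient must in fact be abelian under our hypothesis, so that Theorem \ref{thm:EIMC-abelian-exts} applies to each element of $\mathcal{F}_p$; finally, uniqueness at the level of $\mathcal{L}/K$ will follow from the vanishing of $SK_1(\mathcal{Q}(\mathcal{G}))$ via Remark \ref{rmk:SK1}.

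The key group-theoretic step is as follows. Let $\mathcal{H} = \mathcal{U}/N \in \mathcal{E}_p$. Being a subquotient of $\mathcal{G}$, the group $\mathcal{H}$ inherits from $\mathcal{G}$ the property of having abelian Sylow $p$-subgroup, since Sylow $p$-subgroups of open subgroups sit inside Sylow $p$-subgroups of the ambient group and Sylow $p$-subgroups of quotients are images of Sylow $p$-subgroups. Writing $\mathcal{H} = H_{\mathcal{H}} \rtimes \Gamma_{\mathcal{H}}$ with $H_{\mathcal{H}}$ the finite torsion subgroup and $\Gamma_{\mathcal{H}} \simeq \Z_p$, and choosing an open central $\Gamma_0 \leq \Gamma_{\mathcal{H}}$ witnessing $p$-elementarity so that $\mathcal{H}/\Gamma_0 \simeq H_{\mathcal{H}} \rtimes (\Gamma_{\mathcal{H}}/\Gamma_0) \simeq C_n \times Q$ with $p \nmid n$ and $Q$ a $p$-group, the abelian Sylow $p$-subgroup hypothesis forces $Q$ (which is the Sylow $p$-subgroup of $\mathcal{H}/\Gamma_0$) to be abelian. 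Inspection of the direct product decomposition $C_n \times Q$ then shows that $H_{\mathcal{H}} \simeq C_n \times Q'$ for some abelian $p$-group $Q'$ and that $\Gamma_{\mathcal{H}}/\Gamma_0$ acts trivially both on $C_n$ (from the direct factor $C_n$) and on $Q'$ (since $Q$ is abelian); because $\Gamma_0$ is central in $\mathcal{H}$, the full group $\Gamma_{\mathcal{H}}$ acts trivially on $H_{\mathcal{H}}$, so $\mathcal{H} \simeq H_{\mathcal{H}} \times \Gamma_{\mathcal{H}}$ is abelian.

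With this in hand, Theorem \ref{thm:EIMC-abelian-exts} supplies the EIMC for each admissible subextension in $\mathcal{F}_p$, and Theorem \ref{thm:EIMC-reduction} delivers the EIMC for $\mathcal{L}/K$. Since $\mathcal{G}$ has abelian Sylow $p$-subgroup, Corollary \ref{cor:SK1-vanishes-when-Sylow-p-subgroup-is-abelian} yields $SK_1(\mathcal{Q}(\mathcal{G})) = 0$, and Remark \ref{rmk:SK1} then upgrades existence to uniqueness. I do not anticipate a genuine obstacle: all the heavy lifting has already been carried out in the proofs of Theorems \ref{thm:EIMC-abelian-exts} and \ref{thm:EIMC-reduction}, and the only non-formal input needed here is the short group-theoretic observation that a $p$-elementary one-dimensional admissible $p$-adic Lie group with abelian Sylow $p$-subgroup is abelian.
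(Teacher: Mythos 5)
Your proposal is correct and follows essentially the same route as the paper: reduce via Theorem \ref{thm:EIMC-reduction} to $p$-elementary subquotients of $\mathcal{G}$, observe these are abelian under the abelian Sylow $p$-subgroup hypothesis, apply Theorem \ref{thm:EIMC-abelian-exts}, and obtain uniqueness from Corollary \ref{cor:SK1-vanishes-when-Sylow-p-subgroup-is-abelian} via Remark \ref{rmk:SK1}. The only cosmetic difference is that you use the equivalence (i)~$\Leftrightarrow$~(iii) rather than (i)~$\Leftrightarrow$~(ii) in Theorem \ref{thm:EIMC-reduction} and you spell out the short group-theoretic step (that a $p$-elementary admissible one-dimensional $p$-adic Lie group with abelian Sylow $p$-subgroup is abelian) that the paper states without proof; your argument for that step is correct.
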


\begin{proof}
Let $\mathcal{G}=\Gal(\mathcal{L}/K)$.
Since $\mathcal{G}$ has an abelian Sylow $p$-subgroup, every $p$-elementary subquotient of 
$\mathcal{G}$ is abelian.
Hence the EIMC for $\mathcal{L}/K$ holds by
Theorem \ref{thm:EIMC-abelian-exts} and 
the equivalence of statements (i) and (ii) in Theorem \ref{thm:EIMC-reduction}. 
Moreover, $SK_{1}(\mathcal{Q}(\mathcal{G}))=0$ by Corollary \ref{cor:SK1-vanishes-when-Sylow-p-subgroup-is-abelian} and so we also have uniqueness (see Remark \ref{rmk:SK1}).
\end{proof}

\begin{remark}\label{rem:hybrid}
One may ask whether it is possible to deduce the EIMC for further non-abelian 
extensions by considering `hybrid' cases as in \cite{MR3749195}.
This is in fact not the case.
To see this, assume that the Iwasawa algebra $\Lambda(\mathcal{G})$
is `$N$-hybrid' for a finite normal subgroup $N$ of $\mathcal{G}$ 
in the sense of \cite[Definition 3.8]{MR3749195}, that is, 
(i) $p$ does not divide $|N|$ and
(ii) $\Lambda(\mathcal{G})(1-e_{N})$ 
is a maximal $R$-order, where $e_{N}$ is the central idempotent $|N|^{-1}\sum_{n \in N} n$.
Then (i) implies that each Sylow $p$-subgroup of $\mathcal{G}$ is mapped isomorphically
onto a Sylow $p$-subgroup of $\mathcal{G}/N$ under the canonical quotient map
$\mathcal{G} \rightarrow \mathcal{G}/N$.
Thus if $\mathcal{G}/N$ has an abelian Sylow $p$-subgroup then so does $\mathcal{G}$.
\end{remark}

\section{The ETNC and the Coates--Sinnott conjecture}\label{sec:ETNC-and-CS}

\subsection{Further results on Fitting ideals}\label{sec:further-Fitting-results}

We give two further results on Fitting ideals that we shall use in \S \ref{subsec:SCS}.

\begin{lemma}\label{lem:Fitting-four-term}
Let $p$ be a prime, let $G$ be a finite abelian group, and 
let $x \mapsto x^{\#}$ denote the involution on $\Z_{p}[G]$
induced by $g \mapsto g^{-1}$ for $g \in G$.
Let $e \in \Z_{p}[G]$ be an idempotent such that $e = e^{\#}$.
Let
\[
0 \longrightarrow M \longrightarrow C \longrightarrow C' \longrightarrow M' \longrightarrow 0
\]
be an exact sequence of finite $e \Z_p[G]$-modules and assume that
$C$ and $C'$ are of finite projective dimension.
Then we have an equality
\[
\Fitt_{e \Z_{p}[G]}(M^{\vee})^{\#} \cdot \Fitt_{e \Z_{p}[G]}(C') = 
\Fitt_{e \Z_{p}[G]}(C) \cdot \Fitt_{e \Z_p[G]}(M').
\]
\end{lemma}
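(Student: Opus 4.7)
The plan is to package the four-term exact sequence into a perfect two-term complex of $\Lambda := e\Z_{p}[G]$-modules and compute its Fitting ideal in two different ways. Let $\alpha\colon C \to C'$ denote the middle map and consider $K^{\bullet} := [C \xrightarrow{\alpha} C']$ with $C$ in degree $0$ and $C'$ in degree $1$. Since $C$ and $C'$ are finite (hence $\Z_{p}$-torsion) and of finite projective dimension over $\Lambda$, the complex $K^{\bullet}$ is an object of $\mathcal{D}^{\perf}\tor(\Lambda)$ with cohomology $H^{0}(K^{\bullet}) = M$ and $H^{1}(K^{\bullet}) = M'$.

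For the first computation I will apply the multiplicativity of Fitting ideals of complexes \eqref{eqn:fitt-of-sum-of-complexes-in-rel-K-zero} to the short exact sequence of complexes $0 \to C'[-1] \to K^{\bullet} \to C[0] \to 0$, where $C[0]$ denotes $C$ in degree $0$ and $C'[-1]$ denotes $C'$ in degree $1$. Combined with Remark \ref{rem:Fitt-complex-vs-module} applied to each constituent, this yields
\[
\Fitt_{e\Z_{p}[G]}(K^{\bullet}) \;=\; \Fitt_{e\Z_{p}[G]}(C) \cdot \Fitt_{e\Z_{p}[G]}(C')^{-1}.
\]

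For the second computation I will compare $K^{\bullet}$ with its shifted Pontryagin dual $\widetilde{K}^{\bullet} := \Hom_{\Z_{p}}(K^{\bullet}, \Q_{p}/\Z_{p})[1]$, equipped with the contragredient $\Lambda$-action so that it becomes a $\Lambda$-complex. Since $\Lambda = e\Z_{p}[G]$ is a Gorenstein $\Z_{p}$-order (as a direct factor of the self-dual algebra $\Z_{p}[G]$), the Pontryagin dual of a finite $\Lambda$-module of finite projective dimension is again of finite projective dimension; hence $\widetilde{K}^{\bullet}$ also lies in $\mathcal{D}^{\perf}\tor(\Lambda)$, and its cohomology is $(M')^{\vee}$ in degree $0$ and $M^{\vee}$ in degree $1$. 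The key algebraic input is the identity $\Fitt_{e\Z_{p}[G]}(N^{\vee}) = \Fitt_{e\Z_{p}[G]}(N)^{\#}$ for any finite $\Lambda$-module $N$ of finite projective dimension, which follows by choosing a quadratic presentation $\Lambda^{n} \xrightarrow{A} \Lambda^{n} \to N \to 0$ (available since $\Lambda$ is semilocal commutative, via an argument analogous to Lemma \ref{lemma:fitting-ideal-is-principal}) and observing that $N^{\vee}$ with contragredient action is presented by the matrix obtained from $A^{T}$ by applying $\#$ entrywise, so $\Fitt(N^{\vee}) = ((\det A)^{\#}) = \Fitt(N)^{\#}$. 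Applying this identity to $C$ and $C'$ and carrying out the first computation for $\widetilde{K}^{\bullet}$ produces
\[
\Fitt_{e\Z_{p}[G]}(K^{\bullet}) \;=\; \Fitt_{e\Z_{p}[G]}(M^{\vee})^{\#} \cdot \Fitt_{e\Z_{p}[G]}(M')^{-1}.
\]
Equating the two expressions for $\Fitt_{e\Z_{p}[G]}(K^{\bullet})$ and clearing denominators immediately gives the claim.

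The hard part will be the duality step: tracking Pontryagin duality, the contragredient $\Lambda$-action, the involution $\#$, and the degree shift simultaneously, in a way that yields the asymmetric cohomological expression $\Fitt(M^{\vee})^{\#}/\Fitt(M')$ rather than the naive $\Fitt(M)/\Fitt(M')$. This asymmetry is essential because $M$ and $M'$ are not assumed to have finite projective dimension individually, so the cohomological Fitting ideals cannot simply be manipulated via Euler characteristics, and establishing the correct form with the Pontryagin dual and involution is the main technical burden.
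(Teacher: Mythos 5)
Your first computation ($\Fitt(K^{\bullet}) = \Fitt(C)\Fitt(C')^{-1}$) and the identity $\Fitt_{e\Z_{p}[G]}(N^{\vee}) = \Fitt_{e\Z_{p}[G]}(N)^{\#}$ for finite $N$ of finite projective dimension are both correct; the latter is essentially Lemma~\ref{lem:Fitt-of-E^1(M)}(i) in the group-ring setting, and it is indeed the right technical ingredient. But the ``second computation'' is a gap, not a proof. Repeating the short-exact-sequence-of-complexes argument for $\widetilde{K}^{\bullet}$ only produces a formula in terms of the \emph{terms} of the complex, namely
\[
\Fitt_{e\Z_{p}[G]}(\widetilde{K}^{\bullet}) = \Fitt_{e\Z_{p}[G]}((C')^{\vee})\Fitt_{e\Z_{p}[G]}(C^{\vee})^{-1} = \Fitt_{e\Z_{p}[G]}(C')^{\#}\Fitt_{e\Z_{p}[G]}(C)^{-\#} = \bigl(\Fitt_{e\Z_{p}[G]}(K^{\bullet})^{-1}\bigr)^{\#},
\]
which is simply your first computation with $\#$ applied, and says nothing whatsoever about $M$ or $M'$.

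The framework of \S\ref{subsec:Fitting-ideals-of-complexes} provides no mechanism to pass from the Fitting ideal of a perfect torsion complex to the Fitting ideals of its cohomology modules when the cohomology does not have finite projective dimension: Remark~\ref{rem:Fitt-complex-vs-module} requires a single nonvanishing cohomology module of projective dimension at most one, and the cohomology $(M')^{\vee}$, $M^{\vee}$ of $\widetilde{K}^{\bullet}$ has exactly the same obstruction as $M$, $M'$ (Pontryagin duality over the Gorenstein order preserves the property of \emph{not} having finite projective dimension). The relation $\Fitt(K^{\bullet}) = \Fitt(M^{\vee})^{\#}\Fitt(M')^{-1}$ that you then write down is therefore never derived; it is equivalent to the lemma itself. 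You are right to flag this as ``the hard part'', but it is not a matter of tracking conventions --- it is the entire nontrivial content. The paper's own proof is simply a citation to \cite[Lemma 5]{MR2046598} (Burns--Greither) and \cite[Proposition 5.3]{MR2609173}. A self-contained argument would have to replace $K^{\bullet}$ by the mapping cone of a chain map between quadratic free resolutions of $C$ and $C'$ --- a three-term complex of finitely generated free $e\Z_{p}[G]$-modules whose cohomology in degrees $0$ and $1$ is $M$ and $M'$ --- and then work directly with determinants and minors of the resulting matrices and their transposed $\#$-conjugates, or set up the fibre-product/dimension-shift argument found in the cited references. Two minor points: with the paper's sign convention for shifts, the dual complex should be defined with $[-1]$ (not $[1]$) in order to place its cohomology in degrees $0$ and $1$; and one should record explicitly that over $e\Z_{p}[G]$ any finite module of finite projective dimension has projective dimension at most one (Auslander--Buchsbaum over the local factors), which is needed both for Remark~\ref{rem:Fitt-complex-vs-module} and for the quadratic presentations used in the duality identity.
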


\begin{proof}
This is a straightforward consequence  of \cite[Lemma 5]{MR2046598}.
See also \cite[Proposition 5.3]{MR2609173}.
\end{proof}

Let $p$ be a prime and let $\mathcal{G}$ be an admissible one-dimensional $p$-adic Lie group.
Let $\Gamma_{0}$ be an open subgroup of $\Gamma$ that is central in $\mathcal{G}$ and
let $R=\Z_{p}\llbracket\Gamma_{0}\rrbracket$.
Then $\Lambda(\mathcal{G})$ is an $R$-order in the separable $Quot(R)$-algebra 
$\mathcal{Q}(\mathcal{G})$.
Now let $e$ be any central idempotent element of $\Lambda(\mathcal{G})$ and define 
$\Lambda := e\Lambda(\mathcal{G})$ and $\mathcal{Q} := e\mathcal{Q}(\mathcal{G})$. 
For each (left) $\Lambda$-module $M$ we set
$E^{1}(M) := \Ext^{1}_{R}(M, R)$, which has a canonical 
right $\Lambda$-module structure.
Let $x \mapsto x^{\#}$ denote the anti-involution on $\Lambda(\mathcal{G})$
induced by $g \mapsto g^{-1}$ for $g \in \mathcal{G}$.
Set $\Lambda^{\#} := 
{\{\lambda^{\#} \mid \lambda \in \Lambda \}} = e^{\#} \Lambda(\mathcal{G})$
and likewise $\mathcal{Q}^{\#} = e^{\#}\mathcal{Q}(\mathcal{G})$.
Then $E^{1}(M)$ is a left $\Lambda^{\#}$-module, as
$\lambda^{\#} \in \Lambda^{\#}$ acts on $f \in E^{1}(M)$ by
$\lambda^{\#} f = f \lambda$.
For each $C^{\bullet} \in \mathcal{D}^{\perf}(\Lambda)$ we write
$(C^{\bullet})^{\ast}$ for the dual complex
$R\Hom_{R}(C^{\bullet}, R)$ in $\mathcal{D}^{\perf}(\Lambda^{\#})$.
Since $\Hom_R(-,R)$ is exact on finitely generated projective $\Lambda$-modules,
this induces a homomorphism of abelian groups 
\[
	(-)^{\ast}: K_{0}(\Lambda,\mathcal{Q}) \rightarrow K_{0}(\Lambda^{\#},\mathcal{Q}^{\#}).
\]

\begin{lemma}\label{lem:Fitt-of-E^1(M)} 
If $\mathcal{G}$ is abelian then $\Lambda$ is commutative and the following statements hold.
\begin{enumerate}
\item 
Let $M$ be a finitely generated $\Lambda$-module that is of projective dimension at most one
and that is also $R$-torsion.
Then $E^{1}(M)$ is a finitely generated $\Lambda^{\#}$-module
of projective dimension at most one and is $R$-torsion.
Moreover, we have
\[
\Fitt_{\Lambda^{\#}}(E^{1}(M)) = \Fitt_{\Lambda}(M)^{\#}.
\]
\item
For each $C^{\bullet} \in \mathcal{D}^{\perf}\tor(\Lambda)$ we have
\[
\Fitt_{\Lambda^{\#}}((C^{\bullet})^{\ast}[-1]) = \Fitt_{\Lambda}(C^{\bullet})^{\#}.
\]
\end{enumerate}
\end{lemma}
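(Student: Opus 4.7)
The plan is to establish (i) directly by combining the quadratic presentation of $M$ with the Frobenius algebra structure of $\Lambda(\mathcal{G})$, and then to deduce (ii) from (i) through a $K_{0}$-theoretic reduction to cyclic torsion modules of the form $\Lambda/a$.

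For (i), Lemma~\ref{lemma:fitting-ideal-is-principal} provides a quadratic presentation $0 \to \Lambda^{n} \xrightarrow{A} \Lambda^{n} \to M \to 0$. Since $\Lambda = e\Lambda(\mathcal{G})$ is a direct summand of the $R$-free module $\Lambda(\mathcal{G})$, each $\Lambda^{n}$ is $R$-projective, so $\Ext^{i}_{R}(\Lambda^{n},R) = 0$ for $i \geq 1$; combined with $\Hom_{R}(M,R) = 0$ (as $M$ is $R$-torsion), the long exact $\Ext$-sequence forces $\Ext^{i}_{R}(M,R) = 0$ for $i \neq 1$ and yields the short exact sequence
\[
0 \to \Hom_{R}(\Lambda,R)^{n} \xrightarrow{A^{*}} \Hom_{R}(\Lambda,R)^{n} \to E^{1}(M) \to 0.
\]
The crux is the identification $\Hom_{R}(\Lambda,R) \cong \Lambda^{\#}$ as left $\Lambda^{\#}$-modules, which is provided by the Frobenius structure on the group algebra $\Lambda(\mathcal{G})$ coming from the coefficient-of-identity trace form $\sigma$, together with its compatibility with the central idempotent decomposition induced by $e$. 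Under this identification $A^{*}$ corresponds to the matrix $(A^{t})^{\#}$ obtained by transposing $A$ and applying $\#$ to each entry, so $\det(A^{*}) = \det(A)^{\#}$ since $\Lambda$ is commutative and $\#: \Lambda \to \Lambda^{\#}$ is a ring isomorphism. This simultaneously establishes the structural claims on $E^{1}(M)$ and gives $\Fitt_{\Lambda^{\#}}(E^{1}(M)) = \det(A)^{\#} \Lambda^{\#} = \Fitt_{\Lambda}(M)^{\#}$.

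For (ii), both sides of the proposed identity define group homomorphisms $K_{0}(\Lambda,\mathcal{Q}) \to (\mathcal{Q}^{\#})^{\times}/(\Lambda^{\#})^{\times}$: the left-hand side is the composition of the additive assignment $[C^{\bullet}] \mapsto [(C^{\bullet})^{\ast}[-1]]$ (arising from the triangulated functor $R\Hom_{R}(-,R)$ together with a degree shift) with the isomorphism $\Fitt_{\Lambda^{\#}}: K_{0}(\Lambda^{\#},\mathcal{Q}^{\#}) \cong (\mathcal{Q}^{\#})^{\times}/(\Lambda^{\#})^{\times}$ provided by~\eqref{eqn:K-theory-SES-comm-Iwasawa-algebra}, while the right-hand side is the composition of the analogous isomorphism $\Fitt_{\Lambda}$ with the involution $\#$. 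The same exact sequence shows that $\partial: K_{1}(\mathcal{Q}) = \mathcal{Q}^{\times} \to K_{0}(\Lambda,\mathcal{Q})$ is surjective, and writing any $q \in \mathcal{Q}^{\times}$ as $a/b$ for nonzerodivisors $a, b \in \Lambda$ shows that every class in $K_{0}(\Lambda,\mathcal{Q})$ is a $\Z$-linear combination of classes $[\Lambda/a]$. It therefore suffices to verify the equality for $C^{\bullet} = \Lambda/a$, where the resolution $0 \to \Lambda \xrightarrow{a} \Lambda \to \Lambda/a \to 0$ gives $(\Lambda/a)^{\ast} \simeq E^{1}(\Lambda/a)[-1]$, so $(\Lambda/a)^{\ast}[-1] \simeq E^{1}(\Lambda/a)[-2]$ has the same class in $K_{0}(\Lambda^{\#},\mathcal{Q}^{\#})$ as $E^{1}(\Lambda/a)$ (the shift being even); applying~(i) then yields $\Fitt_{\Lambda^{\#}}((\Lambda/a)^{\ast}[-1]) = a^{\#}\Lambda^{\#} = \Fitt_{\Lambda}(\Lambda/a)^{\#}$, as required.

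The main obstacle is the Frobenius identification in~(i) and the corresponding matrix calculation $A^{*} \leftrightarrow (A^{t})^{\#}$; once this is in hand, the argument for~(ii) is a formal manipulation in $K_{0}$-theory applied to the single generating class $[\Lambda/a]$.
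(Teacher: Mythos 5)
Your proposal is correct and follows essentially the same strategy as the paper: for (i) both dualise the quadratic presentation from Lemma~\ref{lemma:fitting-ideal-is-principal} using the Frobenius self-duality $\Hom_R(\Lambda,R)\cong\Lambda^{\#}$, and for (ii) both use surjectivity of $\partial$ in \eqref{eqn:K-theory-SES-comm-Iwasawa-algebra} to reduce to a generating class and then invoke (i) via $M^{\ast}\simeq E^1(M)[-1]$. The only cosmetic difference is in (ii), where the paper verifies the relation $\partial(x)^{\ast}=-\partial(x^{T,\#})$ by reducing to a matrix $h$ and taking $M=\cok(h)$, while you reduce further to the $1\times 1$ case $\Lambda/a$ (using $K_1(\mathcal{Q})\cong\mathcal{Q}^{\times}$); both are valid and of comparable length.
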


\begin{proof}
By Lemma \ref{lemma:fitting-ideal-is-principal} we may choose a quadratic presentation of $M$ as in \eqref{eq:quad-pres}.
We apply the functor $\Hom_{R}(-, R)$ to this sequence.
Since $M$ is $R$-torsion and $\Lambda$ is a projective $R$-module, we have $\Hom_{R}(M,R) = E^{1}(\Lambda) = 0$.
We identify $\Hom_{R}(\Lambda,R)$ and $\Lambda^{\#}$ so that we obtain an exact sequence
\[
0 \longrightarrow (\Lambda^{\#})^{n} \xrightarrow{h^{T, \#}} (\Lambda^{\#})^{n} \longrightarrow E^{1}(M) \longrightarrow 0,
\]
where the second map is obtained from $h$ by applying the involution $^{\#}$ to its transpose. This proves (i).
Let $x \in K_1(\mathcal{Q})$ be arbitrary. We will show that $\partial(x)^{\ast} = -\partial(x^{T, \#})$. 
Since the connecting homomorphism $\partial$ in \eqref{eqn:K-theory-SES-comm-Iwasawa-algebra} is surjective, this implies (ii). 
Each $x\in K_1(\mathcal{Q})$ can we written as the class of $h g^{-1}$, where both $h$ and $g$ are matrices
in $M_n(\Lambda) \cap \GL_n(\mathcal{Q})$ for some $n$. 
Since $(-)^{T,\#}$ is multiplicative and $(-)^{\ast}$ is a homomorphism, we may therefore assume that $x$ is represented by $h$.
The $\Lambda$-module $M := \cok(h)$ is of projective dimension at most one and $R$-torsion.
Hence (ii) follows from (i) once we observe that $M^{\ast} \simeq E^1(M)[-1]$ in $\mathcal{D}(\Lambda^{\#})$.
\end{proof}

\subsection{The ETNC at negative integers}\label{subsec:ETNC}

The equivariant Tamagawa number conjecture (ETNC) has been
formulated by Burns and Flach \cite{MR1884523} in vast generality.
We will only consider the case of Tate motives. 
Let $L/K$ be a finite Galois extension of number fields, let $G=\Gal(L/K)$ and let $r \in \Z$.
We regard $h^{0}(\Spec(L))(r)$ as a motive defined over $K$ and with
coefficients in the semisimple algebra $\Q[G]$.
The ETNC for the pair $(h^{0}(\Spec(L))(r), \Z[G])$ simply asserts that
a certain canonical element $T\Omega(L/K, \Z[G], r) \in K_0(\Z[G], \R[G])$
vanishes.

Now we assume that $L/K$ is a CM extension and let $j \in G$
denote complex conjugation.
For each $r \in \Z$ we define a central idempotent 
$e_r := \frac{1- (-1)^{r} j}{2}$ in $\Z[\half][G]$. 
The ETNC for the pair $(h^0(\Spec(L))(r), e_{r}\Z[\half][G])$ then likewise
asserts that a certain canonical element 
$T\Omega(L/K, e_{r} \Z[\half][G], r)$ in $K_{0}(e_{r} \Z[\half][G], \R[G])$ vanishes. This corresponds to the plus
or minus part of the ETNC (away from $2$) if $r$ is odd or even, respectively.

If $r$ is a negative integer, then a result of Siegel
\cite{MR0285488} implies that $T\Omega(L/K, e_r \Z[\half][G], r)$ actually
belongs to the subgroup
\[
K_{0}(e_{r} \Z[\half][G], \Q[G]) \cong \bigoplus_{p\, \mathrm{ odd}} K_{0}(e_{r} \Z_{p}[G], \Q_{p}[G]).
\]
We say that the $p$-part of the ETNC for the pair 
$(h^0(\Spec(L))(r), e_r\Z[\half][G])$ holds if its image in
$K_{0}(e_{r} \Z_{p}[G], \Q_{p}[G])$ vanishes.

\begin{theorem}\label{thm:known-results-ETNC}
Let $p$ be an odd prime.
Let $L/K$ be a finite Galois CM extension of number fields and let $G=\Gal(L/K)$.
Then the following hold for every negative integer~$r$.
\begin{enumerate}
\item 
The element $T\Omega(L/K, e_{r} \Z[\half][G], r)$ belongs to
$K_{0}(e_{r} \Z[\half][G], \Q[G])_{\tors}$.
\item 
Assume that if $p$ divides $|G|$ then $L(\zeta_p)_{\infty}^+ / K$ satisfies the $\mu = 0$ hypothesis.
Then the $p$-part of the ETNC for the pair $(h^0(\Spec(L))(r), e_r\Z[\half][G])$ holds.	
\end{enumerate} 
\end{theorem}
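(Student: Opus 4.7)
The plan is to deduce both parts by assembling results already present in the literature. For part (i), the element $T\Omega(L/K, e_{r}\Z[\half][G], r)$ is \emph{a priori} defined in $K_{0}(\Z[G], \R[G])$ via the Beilinson regulator together with the leading term of the equivariant $L$-function. For negative $r$, I would first invoke Siegel's rationality theorem \cite{MR0285488} to control the values $L_{S}(r,\chi)$, and combine this with Borel's computation of the ranks of the higher $K$-groups of $\mathcal{O}_{L}$, noting that on the $e_{r}$-component the resulting regulator map is controlled by the characters of prescribed parity. This should force the descent of the ETNC element to $K_{0}(e_{r}\Z[\half][G], \Q[G])$ and then yield the torsion statement via a standard argument from \cite{MR1884523} and the preliminaries of \cite{MR3294653}.

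For part (ii), my plan is to combine three inputs. First, Theorem \ref{thm:EIMC-with-mu} applied to the admissible extension $L(\zeta_{p})_{\infty}^{+}/K$ yields the EIMC for this extension under the stated $\mu=0$ hypothesis. Second, either Burns' descent formalism \cite{MR3294653}, built on the Iwasawa-theoretic machinery of Burns--Venjakob \cite{MR2749572}, or the second author's approach \cite{MR3072281} via the Greither--Popescu formulation of the EIMC, converts this instance of the EIMC into the plus (resp.\ minus) $p$-part of the ETNC for $(h^{0}(\Spec(L))(r), \Z[G])$ when $r$ is a negative odd (resp.\ even) integer. Third, comparing the parity of $r$ with the explicit shape of $e_{r}$ gives precisely the statement of the theorem.

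The main obstacle is the descent step in part (ii): both the Burns and Nickel approaches invoke the $\mu=0$ hypothesis in an essential way. Burns' argument requires a certain Iwasawa-theoretic complex at infinite level to have $S$-torsion cohomology in the technical sense of \cite{MR2749572}, which in this setting is equivalent to $\mu=0$; and the second author's approach uses a version of the EIMC whose very formulation presupposes $\mu=0$. Consequently this theorem cannot be generalised simply by substituting the new unconditional EIMC results Theorem \ref{thm:EIMC-abelian-exts} and Corollary \ref{cor:EIMC-abelian-Sylow-p} into the descent machine as it stands; upgrading to Theorem \ref{thm:ETNC-at-negative-integers-intro} will require the separate and genuinely new descent argument developed later in the paper, which avoids the $\mu=0$ obstruction.
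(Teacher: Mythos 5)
Your treatment of part (ii) is essentially the paper's: you invoke Burns \cite{MR3294653} and the second author's independent argument in \cite{MR3072281}, which is exactly what the paper cites (specifically \cite[Corollary 2.10]{MR3294653} and \cite[Corollary 5.11]{MR3072281}). The additional exposition about descent formalism and the $\mu=0$ obstruction is accurate context but not part of the proof itself.

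Part (i), however, has a genuine gap. You argue that Siegel's rationality theorem plus Borel's rank computations force $T\Omega(L/K, e_r\Z[\half][G], r)$ to land in $K_0(e_r\Z[\half][G], \Q[G])$, which is correct (and is stated in the text preceding the theorem), but then you claim the \emph{torsion} statement follows ``via a standard argument from \cite{MR1884523} and the preliminaries of \cite{MR3294653}.'' There is no such standard argument: for each odd prime $p$ the group $K_0(e_r\Z_p[G], \Q_p[G])$ is not torsion in general, so rationality of the $L$-values and control of regulators do not by themselves bound the element in the torsion subgroup. The paper in fact deduces (i) \emph{from} (ii): having the $p$-part of the ETNC under the $\mu=0$ hypothesis (which always holds when $p\nmid |G|$ and in enough further cases), one applies the general induction argument of \cite[Proposition 6.1\,(iii)]{MR2801311} --- a Brauer-induction style functoriality in the group variable for elements of relative $K$-groups --- to conclude that the global element is torsion (with \cite[Corollary 6.2]{MR2801311} treating the odd $r$ case). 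This structural dependence of (i) on (ii) is the key point your proposal misses; without it the torsion claim does not follow from the inputs you list.
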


\begin{proof}
Part (ii) has been shown by Burns \cite[Corollary 2.10]{MR3294653}.
If the extension $L(\zeta_p)_{\infty}^+ / K$ 
satisfies the $\mu = 0$ hypothesis (whether or not $p$ divides $|G|$)
there is an independent proof due to the second author \cite[Corollary 5.11]{MR3072281}.
By a general induction argument \cite[Proposition 6.1 (iii)]{MR2801311}
(ii) implies (i) (if $r$ is odd see also \cite[Corollary 6.2]{MR2801311}).
\end{proof}

In the case that $G$ has an abelian Sylow $p$-subgroup, we now remove the $\mu=0$
hypothesis from Theorem \ref{thm:known-results-ETNC} (ii) and thus obtain 
Theorem \ref{thm:ETNC-at-negative-integers-intro} from the
introduction.

We first introduce some more notation.
If $v$ is a finite place of $K$, we denote the residue field of $K$ at $v$ by $K(v)$.
If $R$ is either $K(v)$ or $\mathcal{O}_{K,S}$ for a finite set $S$ of places of $K$
that contains $S_{\infty}$ and $\mathcal{F}$ is an \'etale (pro-)sheaf on
$\Spec(R)$, then we abbreviate the complex $R\Gamma_{\et}(\Spec(R), \mathcal{F})$
and in each degree $i$ the cohomology group $H^{i}_{\et}(\Spec(R), \mathcal{F})$
to $R\Gamma(R, \mathcal{F})$ and $H^{i}(R, \mathcal{F})$, respectively.

\begin{theorem}[Theorem \ref{thm:ETNC-at-negative-integers-intro}]\label{thm:ETNC-at-negative-integers}
Let $p$ be an odd prime.
Let $L/K$ be a finite Galois CM extension of number fields and let $G=\Gal(L/K)$.
Suppose that $G$ has an abelian Sylow $p$-subgroup.
Then for each negative integer $r$ the $p$-part of the ETNC for the pair
$(h^{0}(\Spec(L))(r), e_{r}\Z[\half][G])$ holds.
\end{theorem}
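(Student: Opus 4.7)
My plan is to deduce the theorem from our EIMC result (Corollary \ref{cor:EIMC-abelian-Sylow-p-proof}) via a descent argument that avoids the $\mu=0$ hypothesis used in \cite{MR3294653, MR3072281}. Since Theorem \ref{thm:known-results-ETNC}(ii) already handles the case $p \nmid |G|$, the interesting case is $p \mid |G|$.

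First, I would reduce to the case that $L/K$ is abelian. Writing $P$ for an (abelian) Sylow $p$-subgroup of $G$ and setting $F := L^P$, the functoriality of the ETNC with respect to restriction along subextensions whose degree is coprime to $p$ (a variant of \cite[Proposition 6.1]{MR2801311}) reduces the $p$-part of the ETNC for $L/K$ to the $p$-part of the ETNC for the abelian CM extension $L/F$.

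Second, suppose $L/K$ is abelian and set $\mathcal{G} = \Gal(L_{\infty}/K)$, where $L_{\infty}$ is the cyclotomic $\Z_p$-extension of $L(\zeta_p)$. Applying Theorem \ref{thm:EIMC-abelian-exts-proof} to the admissible totally real extension $L_{\infty}^{+}/K$ produces a canonical $\zeta_S \in K_1(\mathcal{Q}(\mathcal{G}^+))$ with $\nr(\zeta_S) = \Phi_S$ and $\partial(\zeta_S) = -[C_S^{\bullet}(L_{\infty}^{+}/K)]$. A Tate-twisted variant of the Artin--Verdier identification appearing in the proof of Proposition \ref{prop:complex-res-quot} yields a comparison between
\[
\Z_p[G] \otimes^{\mathbb{L}}_{\Lambda(\mathcal{G})} C_S^{\bullet}(L_{\infty}/K)(-r)
\]
and a complex built from $R\Gamma(\mathcal{O}_{L,S},\Z_p(1-r))$; combined with Proposition \ref{prop:PsiST-is-inverse-limit}, this expresses the $p$-part of $T\Omega(L/K, e_r\Z[\half][G], r)$ as the image of the EIMC-vanishing class under descent, twisted by $-r$ and cut by the sign-idempotent $e_r$.

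Third, to conclude vanishing of the descended class without invoking $\mu = 0$, I would localise at the height one primes of the commutative Iwasawa algebra as in Section \ref{sec:abelian-EIMC-localisation}. By Proposition \ref{prop:injectivity-rel-K-localization-in-abelian-case} it suffices to check vanishing after localising at each such prime. Outside $(p)$ the Iwasawa algebra becomes a maximal order and the result follows from Proposition \ref{prop:version-of classical MC}. At $(p)$, I would reformulate the problem as a Fitting ideal containment (in the spirit of Proposition \ref{prop:abelian-EIMC-equiv-local-at-(p)}) and derive it from a Tate-twisted version of Proposition \ref{prop:psiST-in-Fitt}, combined with the Fitting ideal identities of Section \ref{sec:further-Fitting-results}; in particular, Lemma \ref{lem:Fitt-of-E^1(M)} supplies the $\#$-anti-involution needed to pass between Kummer duals of ray class groups and the higher \'etale cohomology entering the ETNC, while Lemma \ref{lem:Fitting-four-term} handles the four-term exact sequences comparing $H^1$ and $H^2$ with and without $T$-modifications. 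The main obstacle will be carrying out this localised descent uniformly in $r$: once the case $r = 0$ is understood (where the argument essentially reduces to that behind Theorem \ref{thm:EIMC-abelian-exts-proof}), the general case should follow by tracking a Tate twist through the Fitting ideal calculus of Section \ref{sec:further-Fitting-results}.
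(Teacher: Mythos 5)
Your overall strategy is sound in outline — reduce to the abelian case, use the EIMC at the Iwasawa level, and descend via Artin--Verdier duality and Fitting-ideal calculus — and this is the route the paper takes (Theorem \ref{thm:ETNC-at-negative-integers} together with Lemma \ref{lem:strong-Coates-Sinnott}). However, there are two concrete problems.

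First, your reduction in step one is flawed. You propose restricting to $L/F$ with $F := L^P$ for $P$ a Sylow $p$-subgroup. Since $p$ is odd, $j \notin P$, so $L^P$ is a totally imaginary field. Hence $L/L^P$ is \emph{not} a CM extension in the sense required (the base is not totally real), and by the definition of $e_r$ in \S\ref{sec:ETNC-and-CS} the idempotent $e_r$ is zero for $L/L^P$, so the ETNC for the pair $(h^0(\Spec(L))(r), e_r\Z[\half][P])$ is vacuously true and yields no information about the $e_r$-part of $T\Omega(L/K, \Z[G], r)$. Moreover, since $j \notin P$, the idempotent decomposition of $K_{0}(\Z_p[G],\Q_p[G])$ induced by $j$ does not descend to $K_{0}(\Z_p[P],\Q_p[P])$, so you cannot cleanly track the $e_r$-part under this restriction. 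One fix is to restrict to $U := P \times \langle j \rangle$ (abelian, contains $j$, index coprime to $p$), so $L/L^U$ is an abelian CM extension; the paper instead invokes \cite[Proposition 9]{MR1687551} together with the torsion statement of Theorem \ref{thm:known-results-ETNC}(i), which reduces to the $F$-elementary subquotients (all of which are abelian CM under the hypothesis).

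Second, your step three is unnecessary and conflates proving the EIMC with deducing the ETNC from it. Once Theorem \ref{thm:EIMC-abelian-exts-proof} is in hand, the Fitting ideal of $C_{S}^{\bullet}(L_{\infty}/K)$ over $\Lambda(\mathcal{G})$ is already determined; there is no need to re-localise at height-one primes of the Iwasawa algebra or to re-derive the containment from a twisted version of Proposition \ref{prop:psiST-in-Fitt}. The mechanism that allows the paper's descent to bypass $\mu=0$ entirely is not localisation but rather the choice of the $T$-modified complex $R\Gamma_T(\mathcal{O}_{K,S}, e_r\Lambda(\mathcal{G})^{\#}(1-r))$: this is acyclic outside degree $2$ and its $H^2$ has projective dimension at most one at the Iwasawa level (by \cite[\S5.3.1, Proposition 5.5]{MR4092926}) and is cohomologically trivial at the finite level (\cite[Theorem 5.10]{MR3072281}), so the Fitting ideal simply base-changes via Lemma \ref{lemma:base-change-of-Fitt}. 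The $\mu=0$ issue in Burns' argument was specific to the Burns--Venjakob formalism, and the paper avoids it by working with Fitting ideals of this concentrated complex rather than by re-running the height-one-prime analysis. You touch on the relevant lemmas (\ref{lem:Fitt-of-E^1(M)}, \ref{lem:Fitting-four-term}) but do not make this enabling point explicit, and as written your argument would give a longer, less transparent proof that fails to actually \emph{use} the EIMC result you invoke.
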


\begin{proof}
Let $S$ and $T$ be two finite non-empty sets of places of $K$
such that $S$ contains
$S_{p} \cup S_{\ram}(L/K) \cup S_{\infty}$ and $S \cap T = \emptyset$.
We define a complex of $e_{r} \Z_{p}[G]$-modules
\begin{gather*}
R\Gamma_{T}(\mathcal{O}_{K,S}, e_{r} \Z_{p}[G]^{\#}(1-r)) := \\
\cone\Big(R\Gamma(\mathcal{O}_{K,S}, e_{r} \Z_p[G]^{\#}(1-r))
\longrightarrow \bigoplus_{v \in T}
R\Gamma(K(v), e_{r} \Z_{p}[G]^{\#}(1-r))\Big)[-1].
\end{gather*}
By \cite[Theorem 5.10]{MR3072281} this complex is acyclic outside
degree $2$ and the only non-vanishing cohomology
group, which we denote by $H^{2}_{T}(\mathcal{O}_{K,S}, e_r \Z_p[G]^{\#}(1-r))$,
is cohomologically trivial.
Moreover, the ETNC for the pair $(h^{0}(\Spec(L))(r), e_{r}\Z[\half][G])$
holds if and only if $\Theta_{S,T}(r)$ is a generator of the
(non-commutative) Fitting invariant of this $e_{r} \Z_{p}[G]$-module.
	
We now can either work with non-commutative Fitting invariants or we can apply \cite[Proposition 9]{MR1687551} in combination with
Theorem \ref{thm:known-results-ETNC} (i) to reduce to abelian extensions. 
We choose the latter option so that the result follows from Theorem \ref{thm:strong-Coates-Sinnott} below.
\end{proof}

\subsection{The strong Coates--Sinnott conjecture}\label{subsec:SCS}

The following result is a strengthening of the `strong Coates--Sinnott conjecture'
\cite[Conjecture 5.1]{MR3072281} in the case of abelian CM extensions.

\begin{theorem}\label{thm:strong-Coates-Sinnott}
Let $p$ be an odd prime.
Let $L/K$ be a finite abelian CM extension of number fields and let $G=\Gal(L/K)$.
Let $S$ and $T$ be two finite non-empty sets of places of $K$ such that $S$ contains
$S_{p} \cup S_{\ram}(L/K) \cup S_{\infty}$ and $S \cap T = \emptyset$.
Then for each negative integer $r$ we have
\[
\Fitt_{e_{r} \Z_p[G]}(H^2_T(\mathcal{O}_{K,S}, e_r \Z_p[G]^{\#}(1-r))) = 
\Theta_{S,T}(r) e_r \Z_{p}[G].
\]
\end{theorem}

\begin{proof}
We first observe that it suffices to show that $\Theta_{S,T}(r)$ is contained
in the Fitting ideal by \cite[Theorem 5.10]{MR3072281}.
Hence we can and do assume that $\zeta_{p} \in L$
by \cite[Proposition 5.5]{MR3072281}. 
Let $L_{\infty}$ and $K_{\infty}$ be the cyclotomic $\Z_p$-extensions of $L$ and $K$, respectively. 
Let $\mathcal{G} := \Gal(L_{\infty}/K)$. 
Then $\mathcal{G} = H \times \Gamma$ where 
$H = \Gal(L_{\infty}/K_{\infty})$ and $\Gamma \simeq \Z_p$.
Moreover, we have that $\Lambda(\mathcal{G}) = R[H]$ where $R := \Z_{p} \llbracket \Gamma \rrbracket$.

For each integer $n$, we now define a complex of $e_{n} \Lambda(\mathcal{G})$-modules
\begin{gather} 
R\Gamma_T(\mathcal{O}_{K,S}, e_n \Lambda(\mathcal{G})^{\#}(1-n))
:= \\
\cone\Big(R\Gamma(\mathcal{O}_{K,S}, e_n \Lambda(\mathcal{G})^{\#}(1-n))
 \label{eqn:definition-as-cone} \longrightarrow \bigoplus_{v \in T}
R\Gamma(K(v), e_n \Lambda(\mathcal{G})^{\#}(1-n))\Big)[-1]. \nonumber
\end{gather}
In the case $n=0$ this complex has been studied by Burns \cite[\S 5.3.1]{MR4092926}. 
It is acyclic outside degree $2$ and the second cohomology module is of projective dimension at most one
by \cite[Proposition 5.5]{MR4092926}.
We claim that for $v \in T$ the complexes $R\Gamma(K(v), e_n \Lambda(\mathcal{G})^{\#}(1-n)))$ 
are acyclic outside
degree $1$ and we have $e_{n} \Lambda(\mathcal{G})$-isomorphisms
\[
H^{1}(K(v), e_{n} \Lambda(\mathcal{G})^{\#}(1-n)) \simeq 
e_{n} \ind_{\mathcal{G}_{w_{\infty}}}^{\mathcal{G}} \Z_{p}(1-n).	
\]
Since $L_{\infty}$ contains all $p$-power roots of unity,
taking cohomology commutes with Tate twists, so it suffices to show this
for $n=0$. By Shapiro's lemma, we have isomorphisms
\[
H^{i}(K(v), e_n \Z_p[G]^{\#}(1)) \simeq e_n \ind_{G_w}^G H^{i}(L(w), \Z_p(1))
\]
for all $i \in \Z$, where $w$ denotes a place of $L$ above $v$.
It is well known that, since $L(w)$ is a finite field of characteristic not equal to $p$, 
the group $H^{i}(L(w), \Z_p(1))$ vanishes unless $i=1$ and that $H^{1}(L(w), \Z_p(1))$
identifies with $\Z_p \otimes_{\Z} L(w)^{\times}$.
The claim follows by taking inverse limits along the cyclotomic $\Z_p$-extension of $L$.

We have exact sequences of $\Lambda(\mathcal{G}_{w_{\infty}})$-modules
\[
	0 \longrightarrow \Lambda(\mathcal{G}_{w_{\infty}}) \longrightarrow \Lambda(\mathcal{G}_{w_{\infty}}) \longrightarrow \Z_{p}(1-n) \longrightarrow 0,
\]
where the injection is right multiplication by $1 - \chi_{\mathrm{cyc}}(\sigma_{w_{\infty}})^{n-1} \sigma_{w_{\infty}}$.
Therefore we have
\begin{equation} \label{eqn:Fitt-of-local-complex}
	\Fitt_{e_n \Lambda(\mathcal{G})}(H^1(K(v), e_n \Lambda(\mathcal{G})^{\#}(1-n))) = 
	t_{\mathrm{cyc}}^n(\xi_v^{\#})e_n \Lambda(\mathcal{G}).
\end{equation}
Recall that $(C^{\bullet})^{\ast}$ denotes the complex 
$R\Hom_{R}(C^{\bullet}, R)$.
By \eqref{eq:AV-duality} and Artin--Verdier duality (see \cite[Theorem 8.5.6]{MR2333680} or 
\cite[Theorem 4.5.1]{MR3084561}) we have an isomorphism
\[
R\Gamma(\mathcal{O}_{K,S}, e_1 \Lambda(\mathcal{G})^{\#}) \simeq
(C_{S}^{\bullet}(L_{\infty}/K))^{\ast}[-3].
\]
Hence
\[
	\Fitt_{e_1 \Lambda(\mathcal{G})}(R\Gamma(\mathcal{O}_{K,S}, e_1 \Lambda(\mathcal{G})^{\#})[1]) = 
	\Fitt_{e_1 \Lambda(\mathcal{G})}^{-1}(C_{S}^{\bullet}(L_{\infty}/K))^{\#}
	= \Phi_{S}^{\#} e_1 \Lambda(\mathcal{G}),
\]
where the two equalities follow from Lemma \ref{lem:Fitt-of-E^1(M)}
and Theorem \ref{thm:EIMC-abelian-exts}, respectively.
%
%
%
Taking the $(1-n)$-fold Tate twist, we obtain
\begin{equation} \label{eqn:Fitt-of-global-complex}
	\Fitt_{e_n \Lambda(\mathcal{G})}(R\Gamma(\mathcal{O}_{K,S}, e_n \Lambda(\mathcal{G})^{\#}(1-n))[1]) =
	t_{\mathrm{cyc}}^{n-1}(\Phi_{S}^{\#}) e_n \Lambda(\mathcal{G}).
\end{equation}
It follows from \eqref{eqn:fitt-of-sum-of-complexes-in-rel-K-zero}, \eqref{eqn:definition-as-cone}, \eqref{eqn:Fitt-of-local-complex}
and \eqref{eqn:Fitt-of-global-complex} that we have
\begin{eqnarray} 
		\Fitt_{e_n \Lambda(\mathcal{G})}(H^2_T(\mathcal{O}_{K,S}, e_n \Lambda(\mathcal{G})^{\#}(1-n))) & = & 
		t_{\mathrm{cyc}}^{n-1}(\Phi_{S}^{\#}) \prod_{v \in T}
		 t_{\mathrm{cyc}}^n(\xi_v^{\#}) e_n \Lambda(\mathcal{G}) \label{eqn:Fitt-of-H2T} \\
		 & = &	t_{\mathrm{cyc}}^n(\Psi_{S,T}^{\#}) e_n \Lambda(\mathcal{G}).
		 \nonumber
\end{eqnarray}

We now specialise to the case $n=r$.
By \cite[Proposition 1.6.5]{MR2276851}
we have canonical isomorphisms in $\mathcal{D}(\Z_p[G])$ of the form
\[
	\Z_p[G] \otimes^{\mathbb L}_{\Lambda(\mathcal{G})}
	R\Gamma(\mathcal{O}_{K,S}, \Lambda(\mathcal{G})^{\#}(1-r))
	\simeq 
	R\Gamma(\mathcal{O}_{K,S}, \Z_p[G]^{\#}(1-r)),
\]
and
\[
	\Z_p[G] \otimes^{\mathbb L}_{\Lambda(\mathcal{G})}
	R\Gamma(K(v), \Lambda(\mathcal{G})^{\#}(1-r))
	\simeq 
	R\Gamma(K(v), \Z_p[G]^{\#}(1-r)),
\]
for each $v \in T$.
Hence we have a canonical isomorphism in $\mathcal{D}(e_r \Z_p[G])$ of the form
\[
	e_r\Z_p[G] \otimes^{\mathbb L}_{e_r \Lambda(\mathcal{G})}
	R\Gamma_T(\mathcal{O}_{K,S}, e_r\Lambda(\mathcal{G})^{\#}(1-r))
	\simeq 
	R\Gamma_T(\mathcal{O}_{K,S}, e_r\Z_p[G]^{\#}(1-r)).
\]
However, both complexes in this formula are acyclic outside degree $2$ so that we
actually have an isomorphism of $e_r \Z_p[G]$-modules
\[
	H^{2}_{T}(\mathcal{O}_{K,S}, e_{r} \Lambda(\mathcal{G})^{\#}(1-r))_{\Gamma_{L}} \simeq
	H^{2}_{T}(\mathcal{O}_{K,S}, e_{r} \Z_{p}[G]^{\#}(1-r)),
\]
where $\Gamma_{L} := \Gal(L_{\infty}/L)$.
Let $\aug: \Lambda(\mathcal{G}) \rightarrow \Z_{p}[G]$ be the canonical projection map.
Then \eqref{eqn:Fitt-of-H2T} and the fact that Fitting ideals commute with base change 
(see for example \cite[Corollary 20.5]{MR1322960})
imply that $\aug(t_{\mathrm{cyc}}^r(\Psi_{S,T}^{\#}))$ 
generates the Fitting ideal of $H^{2}_{T}(\mathcal{O}_{K,S}, e_{r} \Z_p[G]^{\#}(1-r))$.
By Proposition~\ref{prop:PsiST-is-inverse-limit} we have
$\aug(t_{\mathrm{cyc}}^{r}(\Psi_{S,T}^{\#})) = \Theta_{S,T}(r)$, which completes the proof.
\end{proof}
	
Now let $L/K$ be an arbitrary finite abelian extension of number fields and let $G=\Gal(L/K)$. 
For each integer $r$, we define an idempotent in $\Z[\half][G]$ by
\[
	e_r := \left\{ \begin{array}{ll}
	\prod_{v \in S_{\infty}} \frac{1- (-1)^r j_v}{2} & \mbox{if } K
	\mbox{ is totally real;}\\
	0 & \mbox{otherwise},
	\end{array} \right.
\] 
where $j_{v}$ is the generator of the decomposition group $G_{v}$ for
each $v \in S_{\infty}$. Note that this is compatible with the 
above definition of $e_r$ in the case of CM extensions.

We obtain the following refinement of Theorem \ref{thm:Coates--Sinnott}.

\begin{corollary}\label{cor:Coates--Sinnott}
Let $L/K$ be a finite abelian extension of number fields and let $G=\Gal(L/K)$.
Then for every finite set $S$ of places of $K$ containing $S_{\ram}(L/K) \cup S_{\infty}$ we have
\begin{equation} \label{eqn:refined-CS}
\Ann_{\Z[\frac{1}{2}][G]}(\Z[\half] \otimes_{\Z} K_{1-2r}(\mathcal{O}_{L})_{\tors}) \Theta_S(r) = 
e_{r} \Fitt_{\Z[\frac{1}{2}][G]}(\Z[\half] \otimes_{\Z} K_{-2r}(\mathcal{O}_{L,S}))
\end{equation}
In particular, the Coates--Sinnott conjecture 
\ref{conj:Coates--Sinnott} holds away from $2$, that is,
\[
\Ann_{\Z[\frac{1}{2}][G]}(  \Z[\half] \otimes_{\Z} K_{1-2r}(\mathcal{O}_L)_{\tors}) \Theta_{S}(r)
\subseteq \Ann_{\Z[\frac{1}{2}][G]}(\Z[\half] \otimes_{\Z} K_{-2r}(\mathcal{O}_{L,S})).
\]	
\end{corollary}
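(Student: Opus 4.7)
The plan is to reduce the statement to an \'etale-cohomological form that can be extracted from Lemma \ref{lem:strong-Coates-Sinnott}. First, it suffices to prove the identity after tensoring with $\Z_p$ for each odd prime $p$. By the norm residue isomorphism theorem \cite{MR2529300} combined with Soul\'e's Chern class maps \eqref{eqn:Chern-class-maps}, I would identify $\Z_p \otimes K_{-2r}(\mathcal{O}_{L,S})$ with $H^2(\mathcal{O}_{L,S}, \Z_p(1-r))$ and the torsion subgroup of $\Z_p \otimes K_{1-2r}(\mathcal{O}_L)$ with $H^0(L, \Q_p/\Z_p(1-r))$. Combining with Shapiro's lemma translates the claim into a statement about $e_r\Z_p[G]$-modules coming from \'etale cohomology of $\mathcal{O}_{K,S}$ with coefficients in $e_r\Z_p[G]^{\#}(1-r)$. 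By the variant of \cite[\S 6]{MR3383600} mentioned in the introduction, one further reduces to the case where $L/K$ is a finite abelian CM extension.

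Assuming $L/K$ is abelian and CM, I would next introduce an auxiliary set $T$ of finite places of $K$ disjoint from $S$ and containing primes of two different residue characteristics, so that $\Hyp(S,T)$ is satisfied. Lemma \ref{lem:strong-Coates-Sinnott} then supplies the identity $\Fitt_{e_r\Z_p[G]}(H^2_T) = \Theta_{S,T}(r) \cdot e_r\Z_p[G]$. Since the local complexes $R\Gamma(K(v), e_r\Z_p[G]^{\#}(1-r))$ are concentrated in degree $1$ (as in the proof of Lemma \ref{lem:strong-Coates-Sinnott}), the distinguished triangle defining $R\Gamma_T$ produces the four-term exact sequence
\[
0 \longrightarrow A \longrightarrow \bigoplus_{v \in T} H^1(K(v), e_r\Z_p[G]^{\#}(1-r)) \longrightarrow H^2_T \longrightarrow H^2 \longrightarrow 0,
\]
where $A$ is a finite $e_r\Z_p[G]$-module, namely the image of $H^1$ in the direct sum. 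Applying Lemma \ref{lem:Fitting-four-term} (note $e_r^{\#}=e_r$), the local Fitting computation $\Fitt(\bigoplus_v H^1(K(v), \ldots)) = \delta_T(r) \cdot e_r\Z_p[G]$ from \eqref{eqn:Fitt-of-local-complex}, and the identity $\Theta_{S,T}(r) = \delta_T(r) \Theta_S(r)$ with $\delta_T(r)$ a nonzerodivisor, I would cancel to obtain
\[
\Fitt_{e_r\Z_p[G]}(A^{\vee})^{\#} \cdot \Theta_S(r) \cdot e_r\Z_p[G] = \Fitt_{e_r\Z_p[G]}(H^2).
\]

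It then remains to identify $A$ with $e_r \Z_p \otimes K_{1-2r}(\mathcal{O}_L)_{\tors}$. For suitable $T$, the kernel of $H^1 \to \bigoplus_v H^1(K(v), \ldots)$ should consist of exactly the torsion-free part of $H^1$, so that $A$ coincides with the image of $H^1_{\tors}$, namely $H^0(L, \Q_p/\Z_p(1-r))$. Being a finite cyclic abelian group, this module is cyclic over $\Z_p[G]$, so $\Fitt(A^{\vee})^{\#} = \Ann(A^{\vee})^{\#} = \Ann(A)$. Substituting yields the refined identity \eqref{eqn:refined-CS} after reassembling the $p$-parts, and the containment form of the Coates--Sinnott conjecture follows from $\Fitt \subseteq \Ann$.

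The main obstacle I expect is this last identification: pinning $A$ down as the image of $H^1_{\tors}$ requires arranging that the restriction to residue fields kills exactly the torsion-free part of $H^1$. This depends on the torsion-freeness of $E_L^T$ granted by $\Hyp(S,T)$, and will likely require varying $T$ (or intersecting over admissible $T$) and an Iwasawa-theoretic argument at an intermediate step, rather than a direct finite-level computation.
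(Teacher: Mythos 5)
Your overall strategy is the one the paper takes, and the Fitting-ideal manipulation via Lemma \ref{lem:Fitting-four-term} after cancelling $\delta_T(r)$ is right; but the ``main obstacle'' you flag at the end is a phantom, and resolving it cleanly is precisely what makes the proof close. The four-term sequence comes from the long exact cohomology sequence of the triangle defining $R\Gamma_T$, and the term preceding $H^1(\mathcal{O}_{K,S}, e_r\Z_p[G]^{\#}(1-r))$ in that sequence is $H^1_T(\mathcal{O}_{K,S}, e_r\Z_p[G]^{\#}(1-r))$, which \emph{vanishes}: by \cite[Theorem 5.10]{MR3072281} (already invoked inside Lemma \ref{lem:strong-Coates-Sinnott}) the complex $R\Gamma_T(\mathcal{O}_{K,S}, e_r\Z_p[G]^{\#}(1-r))$ is acyclic outside degree $2$. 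Hence the map $H^1 \to \bigoplus_{v\in T} H^1(K(v),\cdot)$ is \emph{injective}, so your $A$ is literally all of $H^1(\mathcal{O}_{K,S}, e_r\Z_p[G]^{\#}(1-r))$, and no argument about ``killing the torsion-free part'' is needed.

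The point you were implicitly worried about --- that $H^1(\mathcal{O}_{L,S},\Z_p(1-r))$ has a free part --- is handled not by the map to residue fields but by the idempotent: $e_r$ annihilates the free part, so $e_rH^1(\mathcal{O}_{L,S},\Z_p(1-r))$ is already finite and hence equals its own torsion subgroup, which is $H^0(L, \Q_p/\Z_p(1-r))$. Being a finite cyclic $\Z_p$-module, it is cyclic over $e_r\Z_p[G]$, whence $\Fitt((H^1)^\vee)^\# = \Ann((H^1)^\vee)^\# = e_r \Ann_{\Z_p[G]}(H^1(\mathcal{O}_{L,S},\Z_p(1-r))_{\tors})$, which is the left-hand side of \eqref{eqn:refined-CS} after Shapiro and Soul\'e's Chern class maps. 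There is consequently no need to vary $T$, intersect over admissible $T$, or introduce any further Iwasawa-theoretic interlude at this stage; the only role of $T$ here is auxiliary, and it is discharged entirely inside Lemma \ref{lem:strong-Coates-Sinnott} (whose proof already arranges the $\mathrm{Hyp}(S,T)$ condition needed for the strong Brumer--Stark input). With these corrections your proposal coincides with the paper's proof.
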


\begin{proof}
Fix an odd prime $p$.
In order to verify the $p$-part of \eqref{eqn:refined-CS}, we can and do assume that 
$S_{p} \subseteq S$ as the Euler factors at $v \in S_p$ are units in $\Z_p[G]$ 
by \cite[Lemma 6.13]{MR3383600}.
Moreover, since the $p$-adic Chern class maps \eqref{eqn:Chern-class-maps} are isomorphisms by the norm residue isomorphism theorem
\cite{MR2529300}, we may work with the \'{e}tale cohomological version of \eqref{eqn:refined-CS} as in \cite[\S 6]{MR3383600}
(where the corresponding claim is denoted by $\overline{CS}(L/K,S,p,1-r)$).
By \cite[Lemma 6.14]{MR3383600} we can and do assume that $K$ is totally real. 
Likewise, by \cite[Lemmas 6.15 and 6.16]{MR3383600} we can and do assume that $L$ is a 
CM extension of $K$.
We have an exact sequence of finite $e_r \Z_p[G]$-modules (this follows easily from the definitions; see \cite[(21)]{MR3072281})
\begin{multline*}
0 \longrightarrow H^{1}(\mathcal{O}_{K,S}, e_{r} \Z_{p}[G]^{\#}(1-r))
\longrightarrow \bigoplus_{v \in T} H^1(K(v), e_{r} \Z_{p}[G]^{\#}(1-r)) \\
\longrightarrow H^{2}_{T}(\mathcal{O}_{K,S}, e_{r} \Z_{p}[G]^{\#}(1-r))
\longrightarrow H^{2}(\mathcal{O}_{K,S}, e_{r} \Z_p[G]^{\#}(1-r))
\longrightarrow 0.
\end{multline*}
The middle two terms are finite cohomologically trivial $G$-modules and their Fitting ideals are generated by
$\delta_{T}(r)$ and $\Theta_{S,T}(r)$ by \cite[Lemma~5.4]{MR3072281} and Theorem \ref{thm:strong-Coates-Sinnott}, respectively.
Since $H^{1}(\mathcal{O}_{K,S}, e_{r} \Z_{p}[G]^{\#}(1-r))$ is finite cyclic, we have that
\begin{align*}	
& \Fitt_{e_{r} \Z_{p}[G]}(H^{1}(\mathcal{O}_{K,S}, e_{r} \Z_{p}[G]^{\#}(1-r))^{\vee})^{\#} \\
& = \Ann_{e_{r} \Z_p[G]}(H^{1}(\mathcal{O}_{K,S}, e_{r} \Z_{p}[G]^{\#}(1-r))^{\vee})^{\#}\\
& = e_{r} \Ann_{\Z_p[G]}(H^{1}(\mathcal{O}_{L,S}, \Z_{p}(1-r))_{\tors}),
\end{align*}
where we have used Shapiro's lemma for the last equality.
The result now follows from Lemma \ref{lem:Fitting-four-term}.
\end{proof}

\appendix

\section{Independence of the choice of complex}\label{app:independence-of-choice-of-complex}

The main goal of this appendix is to prove a purely algebraic result
(Theorem \ref{thm:no-dependence-on-choice-of-complex}) that justifies the claim in the introduction 
that the precise choice of complex used in the EIMC does not matter, provided that it is perfect and 
has the prescribed cohomology.

We  need several preliminary results.
Let $p$ be a prime and let $\mathcal{G}$ be an admissible one-dimensional $p$-adic Lie group.
Choose a central open subgroup $\Gamma_{0} \leq \mathcal{G}$ such that $\Gamma_{0} \simeq \Z_{p}$
and let $R=\Z_{p}\llbracket \Gamma_{0} \rrbracket$.
For a height one prime ideal $\mathfrak{p}$ of $R$, let $R_{\mathfrak{p}}$ be the localisation of $R$ at 
$\mathfrak{p}$ and let $\Lambda_{\mathfrak{p}}(\mathcal{G}) = R_{\mathfrak{p}} \otimes_{R} \Lambda(\mathcal{G})$.

\begin{lemma}\label{lem:commutative-diagrams-at-height-one-prime}
For each height one prime ideal $\mathfrak{p}$ of $R$ we have a commutative diagram
\[
\xymatrix{
K_{1}(\Lambda_{\mathfrak{p}}(\mathcal{G})) \ar[r] \ar[rd]_{\nr} & K_{1}(\mathcal{Q}(\mathcal{G})) \ar[r]^{\partial_{\Lambda_{\mathfrak{p}}(\mathcal{G})} \quad} \ar[d]_{\nr} &
K_{0}(\Lambda_{\mathfrak{p}}(\mathcal{G}),\mathcal{Q}(\mathcal{G})) \ar[r] \ar[d]_{\beta_{\mathfrak{p}}} & 0\\
& \zeta(\mathcal{Q}(\mathcal{G}))^{\times} \ar[r] &
\zeta(\mathcal{Q}(\mathcal{G}))^{\times} /\nr(K_{1}(\Lambda_{\mathfrak{p}}(\mathcal{G})) \ar[r] & 1
}
\]
with exact rows. 
Moreover, if $SK_{1}(\mathcal{Q}(\mathcal{G}))=0$ then $\beta_{\mathfrak{p}}$ is injective.
\end{lemma}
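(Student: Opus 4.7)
The plan is to closely mirror the proof of Lemma \ref{lem:hom-rel-K0-nr} in the main text, replacing the order $\Lambda^{\mathcal{O}}(\mathcal{G})$ by its localization $\Lambda_{\mathfrak{p}}(\mathcal{G})$ throughout. First I would establish the exactness of the top row, which is the localized analog of \eqref{eqn:Iwasawa-K-sequence} obtained from the standard long exact sequence of relative $K$-theory \eqref{eqn:long-exact-seq} associated to the inclusion $\Lambda_{\mathfrak{p}}(\mathcal{G}) \hookrightarrow \mathcal{Q}(\mathcal{G})$. The essential content is the surjectivity of $\partial_{\Lambda_{\mathfrak{p}}(\mathcal{G})}$, equivalently the injectivity of $K_0(\Lambda_{\mathfrak{p}}(\mathcal{G})) \to K_0(\mathcal{Q}(\mathcal{G}))$. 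In the commutative setting this is recorded as \eqref{eqn:local-Iwasawa-K-sequence-with-e} and follows from \cite[Corollary 2.14]{MR4098596}; the same argument applies in the present setting, since $\Lambda_{\mathfrak{p}}(\mathcal{G})$ is a module-finite order over the Noetherian local ring $R_{\mathfrak{p}}$ and is therefore semilocal.

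Granted this exact sequence, the triangle on the left of the diagram commutes by the very definition of $\nr : K_1(\Lambda_{\mathfrak{p}}(\mathcal{G})) \to \zeta(\mathcal{Q}(\mathcal{G}))^{\times}$ as the composition of the canonical map $K_1(\Lambda_{\mathfrak{p}}(\mathcal{G})) \to K_1(\mathcal{Q}(\mathcal{G}))$ with the reduced norm on $K_1(\mathcal{Q}(\mathcal{G}))$. The bottom row is tautologically exact, and the universal property of cokernels then produces a well-defined right vertical map $\beta_{\mathfrak{p}}$ making the right-hand square commute.

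For the injectivity of $\beta_{\mathfrak{p}}$ under the hypothesis $SK_1(\mathcal{Q}(\mathcal{G})) = 0$, I would apply the snake lemma. Writing $I$ for the image of $K_1(\Lambda_{\mathfrak{p}}(\mathcal{G})) \to K_1(\mathcal{Q}(\mathcal{G}))$, the top row breaks into a short exact sequence
\[
0 \longrightarrow I \longrightarrow K_1(\mathcal{Q}(\mathcal{G})) \longrightarrow K_0(\Lambda_{\mathfrak{p}}(\mathcal{G}), \mathcal{Q}(\mathcal{G})) \longrightarrow 0.
\]
The restriction of $\nr$ to $I$ surjects onto $\nr(K_1(\Lambda_{\mathfrak{p}}(\mathcal{G})))$ by construction, while the middle vertical $\nr$ has kernel $SK_1(\mathcal{Q}(\mathcal{G}))$, which vanishes by assumption. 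The snake lemma then yields an injection $\ker(\beta_{\mathfrak{p}}) \hookrightarrow \coker(\nr|_I) = 0$, giving the claim. The only step that is not entirely formal is the exactness of the top row in the non-commutative setting; I do not anticipate a genuine obstacle here, as the argument in the commutative case extends with only cosmetic changes.
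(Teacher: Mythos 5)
Your proposal follows essentially the same route as the paper's proof: exactness of the top row via the long exact sequence of relative $K$-theory together with the surjectivity of $\partial_{\Lambda_{\mathfrak{p}}(\mathcal{G})}$ from \cite[Corollary 2.14]{MR4098596}, commutativity of the triangle by definition of $\nr$, construction of $\beta_{\mathfrak{p}}$ by the universal property, and the snake lemma for the injectivity claim. The only difference is that you spell out the snake lemma argument in detail (splitting the top row at the image $I$ and observing $\coker(\nr|_I)=0$), whereas the paper leaves this to the reader; your elaboration is correct.
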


\begin{proof}
The existence and exactness of the top row follow from the long exact sequence of $K$-theory \eqref{eqn:long-exact-seq} and the surjectivity of $\partial_{\Lambda_{\mathfrak{p}}(\mathcal{G})}$
\cite[Corollary 2.14]{MR4098596}. 
The triangle commutes by definition.
The existence
of $\beta_{\mathfrak{p}}$ follows from the exactness of the top row.
The second claim follows from the snake lemma. 
\end{proof}

\begin{lemma}\label{lem:commutative-diagram-over-all-height-one-primes}
We have the following commutative diagram
\[
\xymatrix{
K_{1}(\Lambda(\mathcal{G})) \ar[r]^{\iota} \ar[rd]_{\nr} &
K_{1}(\mathcal{Q}(\mathcal{G})) \ar[r]^{\partial \qquad} \ar[d]_{\nr} &
K_{0}(\Lambda(\mathcal{G}),\mathcal{Q}(\mathcal{G})) \ar[r] \ar[d]_{\alpha} & 
\bigoplus_{\mathfrak{p}}
K_{0}(\Lambda_{\mathfrak{p}}(\mathcal{G}),\mathcal{Q}(\mathcal{G}))
\ar[d]_{\coprod_{\mathfrak{p}} \beta_{\mathfrak{p}}}
\\
& \zeta(\mathcal{Q}(\mathcal{G}))^{\times} \ar[r] &
\frac{\zeta(\mathcal{Q}(\mathcal{G}))^{\times}}{\nr(K_{1}(\Lambda(\mathcal{G}))} \ar[r] & 
\prod_{\mathfrak{p}}
\frac{\zeta(\mathcal{Q}(\mathcal{G}))^{\times}}{\nr(K_{1}(\Lambda_{\mathfrak{p}}(\mathcal{G})))}
,}
\] 
where the direct sum and product are over all height one prime ideals of $R$
(note that the rows are not exact). 
Moreover, if $SK_{1}(\mathcal{Q}(\mathcal{G}))=0$ then $\alpha$ and 
$\coprod_{\mathfrak{p}} \beta_{\mathfrak{p}}$
are injective.
\end{lemma}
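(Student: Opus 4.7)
The plan is to assemble the diagram by combining the global exact sequence \eqref{eqn:Iwasawa-K-sequence} with the local diagrams from Lemma \ref{lem:commutative-diagrams-at-height-one-prime} at each height one prime of $R$, connected by the canonical localization maps on $K$-theory. The first three terms of the top row and the map $\partial$ come directly from \eqref{eqn:Iwasawa-K-sequence}, while the rightmost horizontal map is assembled from the base change maps $K_0(\Lambda(\mathcal{G}), \mathcal{Q}(\mathcal{G})) \to K_0(\Lambda_{\mathfrak{p}}(\mathcal{G}), \mathcal{Q}(\mathcal{G}))$ induced by $\Lambda(\mathcal{G}) \hookrightarrow \Lambda_{\mathfrak{p}}(\mathcal{G})$. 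A brief check is required to see that its image really lies in the direct sum rather than the full product: any object of $\mathcal{D}^{\perf}\tor(\Lambda(\mathcal{G}))$ has $R$-torsion cohomology, hence support contained in a finite union of height one primes, so its localization at each of the remaining primes is acyclic and represents the trivial class.

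Next, I construct the vertical map $\alpha$ and verify commutativity. By the same snake-lemma argument used to obtain each $\beta_{\mathfrak{p}}$ in Lemma \ref{lem:commutative-diagrams-at-height-one-prime} (now applied to \eqref{eqn:Iwasawa-K-sequence} instead of its localized analogue), $\nr : K_{1}(\mathcal{Q}(\mathcal{G})) \to \zeta(\mathcal{Q}(\mathcal{G}))^{\times}$ descends through $\partial$ to give $\alpha$. Commutativity of the leftmost triangle and middle square is immediate from the definition of $\nr$. Commutativity of the rightmost square reduces by naturality to the observation that both $\alpha$ and $\beta_{\mathfrak{p}}$ are induced by the same ambient map $\nr$ on $K_{1}(\mathcal{Q}(\mathcal{G}))$, together with the fact that the localization maps commute with base change at every level.

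For the injectivity claims, assume $SK_{1}(\mathcal{Q}(\mathcal{G})) = 0$. The injectivity of $\alpha$ follows by the same snake-lemma reasoning used in Lemma \ref{lem:commutative-diagrams-at-height-one-prime} for each $\beta_{\mathfrak{p}}$: if $\alpha(y) = 0$, choose $z \in K_{1}(\mathcal{Q}(\mathcal{G}))$ with $\partial(z) = y$; the containment $\nr(z) \in \nr(K_{1}(\Lambda(\mathcal{G})))$ allows one to modify $z$ by an element of $\iota(K_{1}(\Lambda(\mathcal{G})))$ so that $\nr(z) = 1$, forcing $z \in SK_{1}(\mathcal{Q}(\mathcal{G})) = 0$ and hence $y = 0$. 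The injectivity of $\coprod_{\mathfrak{p}} \beta_{\mathfrak{p}}$ is then immediate from the corresponding statement for each $\beta_{\mathfrak{p}}$, since an element of the direct sum vanishes iff each of its finitely many nonzero components does. The argument is essentially formal bookkeeping; the only point requiring a little care is the finiteness-of-support argument ensuring that the top right horizontal map lands in the direct sum.
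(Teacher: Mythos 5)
Your proof is correct and follows essentially the same route as the paper, which simply cites Lemmas \ref{lem:hom-rel-K0-nr} and \ref{lem:commutative-diagrams-at-height-one-prime}; you spell out the details those lemmas encapsulate, including the useful (and correct) observation that the localisation map lands in the direct sum because each class is represented by a perfect complex with finitely generated $R$-torsion cohomology, hence with support in only finitely many height one primes.
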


\begin{proof}
This follows from Lemmas \ref{lem:hom-rel-K0-nr} and \ref{lem:commutative-diagrams-at-height-one-prime}.
\end{proof}

Let $\mathcal{M}(\mathcal{G})$ denote a maximal $R$-order such that
$\Lambda(\mathcal{G}) \subseteq \mathcal{M}(\mathcal{G}) \subseteq \mathcal{Q}(\mathcal{G})$.
Note that $\mathcal{M}(\mathcal{G})$ must exist by \cite[Corollary~10.4]{MR1972204}, though need
not be unique. However, \cite[Theorem 8.6]{MR1972204} implies that 
$\zeta(\mathcal{M}(\mathcal{G}))$ is the integral closure of 
$\zeta(\Lambda(\mathcal{G}))$ in $\zeta(\mathcal{Q}(\mathcal{G}))$ and thus is unique.
For a height one prime ideal $\mathfrak{p}$ of $R$, let $\mathcal{M}_{\mathfrak{p}}(\mathcal{G}) = R_{\mathfrak{p}} \otimes_{R} \mathcal{M}(\mathcal{G})$.

\begin{lemma}\label{lem:units-of-localisation-of-max-order}
We have
\[ 
\bigcap_{\mathfrak{p}}\zeta(\mathcal{M}_{\mathfrak{p}}(\mathcal{G}))^{\times}
= 
\zeta(\mathcal{M}(\mathcal{G}))^{\times},
\] 
where the intersection ranges over all height one prime ideals $\mathfrak{p}$ of $R$.
\end{lemma}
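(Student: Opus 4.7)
The inclusion $\zeta(\mathcal{M}(\mathcal{G}))^{\times} \subseteq \bigcap_{\mathfrak{p}}\zeta(\mathcal{M}_{\mathfrak{p}}(\mathcal{G}))^{\times}$ is immediate, so the task is the reverse inclusion. The plan mirrors the strategy used in the proof of Proposition \ref{prop:injectivity-rel-K-localization-in-abelian-case}: I shall establish the identity
\[
\bigcap_{\mathfrak{p}} \zeta(\mathcal{M}(\mathcal{G}))_{\mathfrak{p}} = \zeta(\mathcal{M}(\mathcal{G})),
\]
where $\mathfrak{p}$ ranges over the height one primes of $R$. Granting this, any $x$ in the intersection of unit groups on the left-hand side of the lemma, together with its inverse $x^{-1}$, lies in each $\zeta(\mathcal{M}(\mathcal{G}))_{\mathfrak{p}}$ and hence in $\zeta(\mathcal{M}(\mathcal{G}))$, forcing $x \in \zeta(\mathcal{M}(\mathcal{G}))^{\times}$.

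The first preliminary is to verify that localisation commutes with taking centres, i.e.\ that $\zeta(\mathcal{M}_{\mathfrak{p}}(\mathcal{G})) = \zeta(\mathcal{M}(\mathcal{G}))_{\mathfrak{p}}$ for every height one prime $\mathfrak{p}$ of $R$. The inclusion $\supseteq$ is clear. The reverse inclusion uses the fact that $\mathcal{M}(\mathcal{G})$ is finitely generated over $R$: any $a/s \in \zeta(\mathcal{M}_{\mathfrak{p}}(\mathcal{G}))$ need only commute with finitely many $R$-module generators $b_{1},\dots,b_{n}$ of $\mathcal{M}(\mathcal{G})$, yielding elements $u_{i} \in R \setminus \mathfrak{p}$ with $u_{i}(ab_{i} - b_{i}a) = 0$; setting $u := \prod_{i} u_{i}$ gives $ua \in \zeta(\mathcal{M}(\mathcal{G}))$, so that $a/s = (ua)/(us) \in \zeta(\mathcal{M}(\mathcal{G}))_{\mathfrak{p}}$.

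Next, since $\zeta(\mathcal{Q}(\mathcal{G}))$ is a separable commutative $Quot(R)$-algebra, it decomposes as a finite product of fields $\prod_{i} F_{i}$. As $\mathcal{M}(\mathcal{G})$ is a maximal $R$-order, $\zeta(\mathcal{M}(\mathcal{G}))$ coincides with the integral closure of $R$ in $\zeta(\mathcal{Q}(\mathcal{G}))$ (cf.\ \cite[Theorem 8.6]{MR1972204}), and therefore splits as $\prod_{i} S_{i}$, where $S_{i}$ is the integral closure of $R$ in $F_{i}$. Since $R \cong \Z_{p}\llbracket T \rrbracket$ is a two-dimensional regular local ring, and in particular a Krull domain, and each $S_{i}$ is a normal noetherian $R$-algebra that is a domain, each $S_{i}$ is itself a Krull domain.

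The identity above now decomposes componentwise and reduces to showing that $\bigcap_{\mathfrak{p}}(S_{i})_{\mathfrak{p}} = S_{i}$ for each $i$. By incomparability and going-down, which apply since $R \hookrightarrow S_{i}$ is integral with $R$ integrally closed, the height one primes of $S_{i}$ are exactly those lying above height one primes of $R$, and one has $(S_{i})_{\mathfrak{p}} = \bigcap_{\mathfrak{q} \mid \mathfrak{p}} (S_{i})_{\mathfrak{q}}$ over the finitely many primes $\mathfrak{q}$ of $S_{i}$ above $\mathfrak{p}$. Combining this with the defining property $S_{i} = \bigcap_{\mathfrak{q}} (S_{i})_{\mathfrak{q}}$ of a Krull domain (intersection over height one primes of $S_{i}$) gives the claim. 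The only potentially delicate point is the commutation of centre with localisation in the second paragraph; once that is granted, the argument reduces to assembling standard facts about Krull domains and integral extensions.
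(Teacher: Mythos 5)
Your argument is correct, but it takes a genuinely different route from the paper's. The paper invokes the fact that a maximal $R$-order is reflexive as an $R$-module (Reiner, Theorem 11.4) and then appeals directly to a general intersection lemma for reflexive modules over $R$ (Neukirch--Schmidt--Wingberg, Lemma 5.1.2(iii)) to conclude that $\bigcap_{\mathfrak{p}}\zeta(\mathcal{M}(\mathcal{G}))_{\mathfrak{p}}=\zeta(\mathcal{M}(\mathcal{G}))$; the passage to unit groups via $x$ and $x^{-1}$ is then the same as yours. You instead exploit the commutative structure directly: you identify $\zeta(\mathcal{M}(\mathcal{G}))$ with the integral closure of $R$ in $\zeta(\mathcal{Q}(\mathcal{G}))$, decompose this as a product of finitely generated normal noetherian domains $S_{i}$, hence Krull domains, and then assemble the intersection property from the Krull decomposition $S_{i}=\bigcap_{\mathfrak{q}}(S_{i})_{\mathfrak{q}}$ together with going-down and incomparability, which show that height-one primes of $S_{i}$ lie exactly over height-one primes of $R$ and that $(S_{i})_{\mathfrak{p}}$ is the intersection of the $(S_{i})_{\mathfrak{q}}$ over the $\mathfrak{q}\mid\mathfrak{p}$. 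Both are valid; the paper's version is shorter via citations and works for any reflexive $R$-module, whereas yours is more self-contained, relying only on standard commutative algebra, and has the virtue of making explicit (in your second paragraph) that localisation commutes with taking centres --- a point the paper uses silently.
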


\begin{proof}
Since $\zeta(\mathcal{M}(\mathcal{G}))$ is a maximal $R$-order
in $\zeta(\mathcal{Q}(\mathcal{G}))$
it is reflexive by \cite[Theorem~11.4]{MR1972204}. 
Hence  we have that
$
\bigcap_{\mathfrak{p}}\zeta(\mathcal{M}_{\mathfrak{p}}(\mathcal{G}))
= 
\zeta(\mathcal{M}(\mathcal{G}))
$ by \cite[Lemma 5.1.2(iii)]{MR2392026}.
Moreover, if $x \in \bigcap_{\mathfrak{p}}\zeta(\mathcal{M}_{\mathfrak{p}}(\mathcal{G}))^{\times}$
then also $x^{-1} \in \bigcap_{\mathfrak{p}}\zeta(\mathcal{M}_{\mathfrak{p}}(\mathcal{G}))^{\times}$,
and hence $x \in \zeta(\mathcal{M}(\mathcal{G}))^{\times}$.
This gives one inclusion and the reverse inclusion is clear.
\end{proof}

\begin{theorem}[Ritter--Weiss]\label{thm:nr-of-K1}
We have 
\[
\nr(K_{1}(\Lambda_{(p)}(\mathcal{G}))) \cap \zeta(\mathcal{M}(\mathcal{G}))^{\times} 
\subseteq \nr(K_{1}(\Lambda(\mathcal{G}))).
\] 
\end{theorem}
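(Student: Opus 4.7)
The plan is to translate the statement via the Det-formalism of \S \ref{subsec:dets-and-nr} and then apply a Ritter--Weiss-style local-to-global analysis at the height one primes of $R$. Writing $y := \nr(x)$ for a given $x \in K_{1}(\Lambda_{(p)}(\mathcal{G}))$ with $y \in \zeta(\mathcal{M}(\mathcal{G}))^{\times}$, I aim to show that $y \in \nr(K_{1}(\Lambda(\mathcal{G})))$ by exhibiting both the integrality and the congruence conditions that characterise $\nr(K_{1}(\Lambda(\mathcal{G})))$ as a subgroup of $\zeta(\mathcal{Q}(\mathcal{G}))^{\times}$.

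First, using the isomorphism $\zeta(\mathcal{Q}(\mathcal{G}))^{\times} \cong \Hom^{\ast}_{G_{\Q_{p}}}(R_{p}(\mathcal{G}), \mathcal{Q}^{c}(\overline{\Gamma})^{\times})$ and the commutative triangle \eqref{eqn:Det_triangle}, I would translate the hypothesis $y \in \zeta(\mathcal{M}(\mathcal{G}))^{\times}$ into the integrality statement that $\Det(x)$ lies in $\Hom^{\ast}_{G_{\Q_{p}}}(R_{p}(\mathcal{G}), \Lambda^{c}(\overline{\Gamma})^{\times})$, where $\Lambda^{c}(\overline{\Gamma}) := \Z_{p}^{c} \otimes_{\Z_{p}} \Lambda(\overline{\Gamma})$; this equivalence is precisely \cite[Remark H]{MR2114937} and is already invoked in the proof of Proposition \ref{prop:version-of classical MC}. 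By \cite[Theorem 8]{MR2114937}, the subgroup $\Det(K_{1}(\Lambda(\mathcal{G})))$ inside this integral module is cut out by an explicit family of $p$-adic congruence conditions (the ``$M_{2}$''-conditions of loc.\ cit.), obtained via a logarithmic pairing on $\Lambda(\mathcal{G})$.

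The key claim to establish is the equality
\[
\Det(K_{1}(\Lambda(\mathcal{G}))) \;=\; \Det(K_{1}(\Lambda_{(p)}(\mathcal{G}))) \;\cap\; \Hom^{\ast}_{G_{\Q_{p}}}(R_{p}(\mathcal{G}), \Lambda^{c}(\overline{\Gamma})^{\times}),
\]
from which the theorem follows immediately. To prove this, I would note that the $M_{2}$-conditions are $p$-adic in nature: they consist of congruences modulo powers of $p$ among $\Det$-values evaluated on certain virtual characters, and the logarithmic pairing defining them factors through the prime $(p)$ of $R$. Hence the analogous description of $\Det(K_{1}(\Lambda_{(p)}(\mathcal{G})))$ inside the same integral module involves precisely the same $M_{2}$-conditions, which gives the claimed equality.

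The main obstacle is the rigorous verification that the $M_{2}$-descriptions for $\Lambda(\mathcal{G})$ and $\Lambda_{(p)}(\mathcal{G})$ agree once restricted to the integral subgroup. To set this up, I would further observe that for every height one prime $\mathfrak{p} \neq (p)$, the order $|H|$ of the finite part of $\mathcal{G}$ is invertible in $R_{\mathfrak{p}}$, so that $\Lambda_{\mathfrak{p}}(\mathcal{G}) = \mathcal{M}_{\mathfrak{p}}(\mathcal{G})$ is a maximal $R_{\mathfrak{p}}$-order; by a Morita-theoretic argument the reduced norm then satisfies $\nr(K_{1}(\Lambda_{\mathfrak{p}}(\mathcal{G}))) = \zeta(\mathcal{M}_{\mathfrak{p}}(\mathcal{G}))^{\times}$. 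Therefore the only non-trivial local obstruction to lifting $y$ into $\nr(K_{1}(\Lambda(\mathcal{G})))$ is concentrated at $(p)$, and combined with Lemma \ref{lem:units-of-localisation-of-max-order} the hypothesis $y \in \nr(K_{1}(\Lambda_{(p)}(\mathcal{G})))$ together with the global $\mathcal{M}(\mathcal{G})$-integrality yield $y \in \nr(K_{1}(\Lambda(\mathcal{G})))$, as desired.
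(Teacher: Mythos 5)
You correctly translate the hypothesis via the Det-formalism and \cite[Remark H]{MR2114937}, which is indeed the first step of the paper's argument. However, your ``key claim''---that $\Det(K_{1}(\Lambda(\mathcal{G})))$ equals $\Det(K_{1}(\Lambda_{(p)}(\mathcal{G}))) \cap \Hom^{\ast}_{G_{\Q_{p}}}(R_{p}(\mathcal{G}), \Lambda^{c}(\overline{\Gamma})^{\times})$---is essentially a restatement of the theorem itself, and your argument does not establish it. The paper disposes of this in one sentence by observing that $\Lambda_{(p)}(\mathcal{G})$ may be replaced by its $p$-adic completion (this can only enlarge the left-hand side of the inclusion) and then invoking \cite[Theorem B]{MR2205173}, which is precisely the deep Ritter--Weiss result your key claim encodes; that is why the theorem carries their names. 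Your justification of the key claim, that the ``$M_{2}$-conditions are $p$-adic in nature'' and therefore the descriptions of the $\Det$-images over $\Lambda(\mathcal{G})$ and $\Lambda_{(p)}(\mathcal{G})$ coincide, is a heuristic gesture that assumes what must be proved: the logarithmic congruence descriptions in the literature are established for $\Lambda(\mathcal{G})$ (and its $p$-adic completion), and their compatibility with localisation at $(p)$ is the substance of Ritter--Weiss's theorem, not an observable consequence of the congruences being ``modulo powers of $p$''. You also misattribute this congruence criterion to \cite[Theorem 8]{MR2114937}, which in fact gives only the Hom-description isomorphism $\zeta(\mathcal{Q}^{F}(\mathcal{G}))^{\times} \cong \Hom^{\ast}_{G_{F}}(R_{p}(\mathcal{G}), \mathcal{Q}^{c}(\overline{\Gamma})^{\times})$.

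Your final paragraph is circular. You assert that ``the only non-trivial local obstruction to lifting $y$ into $\nr(K_{1}(\Lambda(\mathcal{G})))$ is concentrated at $(p)$'' and that this, together with $\mathcal{M}(\mathcal{G})$-integrality and $y \in \nr(K_{1}(\Lambda_{(p)}(\mathcal{G})))$, ``yields'' the result---but that is \emph{exactly} the statement of the theorem, and nothing in your argument supplies the local-to-global step. The observations that $\Lambda_{\mathfrak{p}}(\mathcal{G}) = \mathcal{M}_{\mathfrak{p}}(\mathcal{G})$ for $\mathfrak{p} \neq (p)$ and that $\bigcap_{\mathfrak{p}}\zeta(\mathcal{M}_{\mathfrak{p}}(\mathcal{G}))^{\times} = \zeta(\mathcal{M}(\mathcal{G}))^{\times}$ (Lemma~\ref{lem:units-of-localisation-of-max-order}) are correct and are in fact what the paper uses to \emph{deduce} Corollary~\ref{cor:canonical-map-injective} \emph{from} Theorem~\ref{thm:nr-of-K1}; running that chain in reverse, as you try to do, only yields $\nr(K_{1}(\Lambda_{(p)}(\mathcal{G}))) \cap \zeta(\mathcal{M}(\mathcal{G}))^{\times} \subseteq \bigcap_{\mathfrak{p}}\nr(K_{1}(\Lambda_{\mathfrak{p}}(\mathcal{G})))$, and the passage from there to $\nr(K_{1}(\Lambda(\mathcal{G})))$ still requires the Ritter--Weiss theorem. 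To fix the proof, you should simply cite \cite[Theorem B]{MR2205173} after your (correct) translation to the $\Hom^{\ast}$-picture.
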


\begin{proof}
This can be deduced from \cite[Theorem B]{MR2205173} as follows.
Denote the integral closure of $\Z_{p}$ in $\Q_{p}^{c}$ by $\Z_{p}^c$.
Then $\zeta(\mathcal{M}(\mathcal{G}))^{\times}$ corresponds to
$\Hom^{\ast}_{G_{\Q_{p}}}(R_{p}(\mathcal{G}), (\Z_{p}^{c} \otimes_{\Z_{p}} \Lambda(\overline{\Gamma}))^{\times})$ under the identification
in diagram \eqref{eqn:Det_triangle} by \cite[Remark H]{MR2114937}.
We may replace $\Lambda_{(p)}(\mathcal{G})$ by its $p$-adic completion
in the statement of the theorem
as this can only enlarge the left hand side of the inclusion. 
Moreover, the image of $\Det$ is always contained in the 
$\mathrm{HOM}^{\ast}$-group used in \cite{MR2205173} (see \cite[\S 1]{MR2205173}).
\end{proof}

\begin{corollary}\label{cor:canonical-map-injective}
The canonical map
\[
\frac{\zeta(\mathcal{Q}(\mathcal{G}))^{\times}}{\nr(K_{1}(\Lambda(\mathcal{G})))}
\longrightarrow \prod_{\mathfrak{p}}
\frac{\zeta(\mathcal{Q}(\mathcal{G}))^{\times}}{\nr(K_{1}(\Lambda_{\mathfrak{p}}(\mathcal{G})))}
\] 
is injective, where the direct product is taken over all height one prime ideals of $R$.
\end{corollary}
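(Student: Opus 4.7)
The plan is to take an element $x \in \zeta(\mathcal{Q}(\mathcal{G}))^{\times}$ whose class lies in the kernel of the canonical map, and to show that $x \in \nr(K_{1}(\Lambda(\mathcal{G})))$. By hypothesis, for every height one prime ideal $\mathfrak{p}$ of $R$, there exists some class in $K_{1}(\Lambda_{\mathfrak{p}}(\mathcal{G}))$ whose reduced norm equals $x$.

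The first step would be to show that $x \in \zeta(\mathcal{M}_{\mathfrak{p}}(\mathcal{G}))^{\times}$ for every height one prime $\mathfrak{p}$. For $\mathfrak{p} \neq (p)$ this is essentially immediate, as the localisation $\Lambda_{\mathfrak{p}}(\mathcal{G})$ is already a maximal $R_{\mathfrak{p}}$-order (as used in the proof of Corollary \ref{cor:vanishing-omega-ht-1-not-p}), so $\Lambda_{\mathfrak{p}}(\mathcal{G}) = \mathcal{M}_{\mathfrak{p}}(\mathcal{G})$ and the reduced norm lands in the units of the centre. For $\mathfrak{p} = (p)$, one uses the functoriality of the reduced norm under the inclusion $\Lambda_{(p)}(\mathcal{G}) \hookrightarrow \mathcal{M}_{(p)}(\mathcal{G})$: the image of $x$ under this map in $K_{1}(\mathcal{M}_{(p)}(\mathcal{G}))$ still has reduced norm equal to $x$, and the reduced norm of a unit in the maximal order is always integral, hence in $\zeta(\mathcal{M}_{(p)}(\mathcal{G}))^{\times}$.

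The second step is to apply Lemma \ref{lem:units-of-localisation-of-max-order} to conclude that
\[
x \in \bigcap_{\mathfrak{p}} \zeta(\mathcal{M}_{\mathfrak{p}}(\mathcal{G}))^{\times} = \zeta(\mathcal{M}(\mathcal{G}))^{\times}.
\]
Finally, combining this with the hypothesis that $x \in \nr(K_{1}(\Lambda_{(p)}(\mathcal{G})))$, one applies Theorem \ref{thm:nr-of-K1} directly, which gives
\[
x \in \nr(K_{1}(\Lambda_{(p)}(\mathcal{G}))) \cap \zeta(\mathcal{M}(\mathcal{G}))^{\times} \subseteq \nr(K_{1}(\Lambda(\mathcal{G}))),
\]
completing the proof.

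The only real subtlety lies in the case $\mathfrak{p} = (p)$: it is not obvious a priori why an element in the image of the reduced norm from the non-maximal order $\Lambda_{(p)}(\mathcal{G})$ should be integral at $(p)$, but this is automatic because the reduced norm factors through the maximal order after extension of scalars. Apart from that, the argument is a direct assembly of Lemma \ref{lem:units-of-localisation-of-max-order} (a reflexivity statement coming from \cite[Lemma 5.1.2(iii)]{MR2392026}) and Theorem \ref{thm:nr-of-K1} (the Ritter--Weiss integrality criterion); the key conceptual point is that these two ingredients are precisely what is needed to promote local integrality (away from $(p)$) plus global membership (at $(p)$) into global membership in $\nr(K_{1}(\Lambda(\mathcal{G})))$.
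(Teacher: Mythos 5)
Your proposal is correct and follows essentially the same route as the paper: where the paper invokes \cite[Theorem 10.1]{MR1972204} uniformly across all height one primes to get $\nr(K_{1}(\Lambda_{\mathfrak{p}}(\mathcal{G}))) \subseteq \zeta(\mathcal{M}_{\mathfrak{p}}(\mathcal{G}))^{\times}$, you split into the cases $\mathfrak{p}=(p)$ and $\mathfrak{p}\neq(p)$, but both reductions then conclude via Lemma \ref{lem:units-of-localisation-of-max-order} and Theorem \ref{thm:nr-of-K1} exactly as in the text. One small wording slip: in the case $\mathfrak{p}=(p)$ you speak of ``the image of $x$ under this map'' in $K_{1}(\mathcal{M}_{(p)}(\mathcal{G}))$, but $x$ lives in $\zeta(\mathcal{Q}(\mathcal{G}))^{\times}$, so you should push forward the chosen preimage $y_{(p)}\in K_{1}(\Lambda_{(p)}(\mathcal{G}))$ of $x$ rather than $x$ itself.
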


\begin{proof}
It follows from \cite[Theorem 10.1]{MR1972204} that
$\nr(K_{1}(\Lambda_{\mathfrak{p}}(\mathcal{G})))
\subseteq \zeta(\mathcal{M}_{\mathfrak{p}}(\mathcal{G}))^{\times}$ for every height one ideal $\mathfrak{p}$
of $R$.
Thus it suffices to show that 
\[
\nr(K_{1}(\Lambda_{(p)}(\mathcal{G}))) \cap
\bigcap_{\mathfrak{p} \neq(p)}\zeta(\mathcal{M}_{\mathfrak{p}}(\mathcal{G}))^{\times}
\subseteq \nr(K_{1}(\Lambda(\mathcal{G}))).
\] 
The desired result now follows from 
Lemma \ref{lem:units-of-localisation-of-max-order} and Theorem \ref{thm:nr-of-K1}.
\end{proof}

\begin{corollary}\label{cor:injectivity-rel-K-localization}
If $SK_{1}(\mathcal{Q}(\mathcal{G}))=0$ then the canonical map
\[
K_{0}(\Lambda(\mathcal{G}),\mathcal{Q}(\mathcal{G}))
\longrightarrow
\bigoplus_{\mathfrak{p}}
K_{0}(\Lambda_{\mathfrak{p}}(\mathcal{G}),\mathcal{Q}(\mathcal{G}))
\] 
is injective,
where the direct sum is taken over all height one prime ideals of $R$.
\end{corollary}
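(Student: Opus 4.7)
The plan is a short diagram chase based on Lemma \ref{lem:commutative-diagram-over-all-height-one-primes} and Corollary \ref{cor:canonical-map-injective}. Write $\ell$ for the canonical localisation map
\[
	\ell\colon K_{0}(\Lambda(\mathcal{G}),\mathcal{Q}(\mathcal{G}))
	\longrightarrow
	\bigoplus_{\mathfrak{p}}
	K_{0}(\Lambda_{\mathfrak{p}}(\mathcal{G}),\mathcal{Q}(\mathcal{G}))
\]
whose injectivity we wish to establish. Under the hypothesis $SK_{1}(\mathcal{Q}(\mathcal{G}))=0$, Lemma \ref{lem:commutative-diagram-over-all-height-one-primes} guarantees that the vertical map
$\alpha\colon K_{0}(\Lambda(\mathcal{G}),\mathcal{Q}(\mathcal{G})) \to \zeta(\mathcal{Q}(\mathcal{G}))^{\times}/\nr(K_{1}(\Lambda(\mathcal{G})))$ is injective, and likewise that the map $\coprod_{\mathfrak{p}}\beta_{\mathfrak{p}}$ on the right is injective. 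At the same time, Corollary \ref{cor:canonical-map-injective} supplies the injectivity of the canonical map
\[
	\mathrm{can}\colon
	\frac{\zeta(\mathcal{Q}(\mathcal{G}))^{\times}}{\nr(K_{1}(\Lambda(\mathcal{G})))}
	\longrightarrow
	\prod_{\mathfrak{p}}
	\frac{\zeta(\mathcal{Q}(\mathcal{G}))^{\times}}{\nr(K_{1}(\Lambda_{\mathfrak{p}}(\mathcal{G})))}.
\]

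Next I would read off from the commutativity of the diagram in Lemma \ref{lem:commutative-diagram-over-all-height-one-primes} the identity
$(\coprod_{\mathfrak{p}}\beta_{\mathfrak{p}}) \circ \ell = \mathrm{can} \circ \alpha$.
The right-hand side is a composition of two injections and is therefore itself injective, and this forces the left-hand side to be injective. Consequently $\ell$ itself is injective, as required.

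There is no real obstacle here: the corollary is essentially the image of Corollary \ref{cor:canonical-map-injective} transported back from the quotients of unit groups to the relative $K_{0}$-groups via Lemma \ref{lem:commutative-diagram-over-all-height-one-primes}. The substantive inputs that have already been secured are the theorem of Ritter and Weiss (Theorem \ref{thm:nr-of-K1}) underlying Corollary \ref{cor:canonical-map-injective}, together with the snake-lemma argument that upgrades the $SK_{1}$-vanishing hypothesis into the injectivity of $\alpha$ and of the maps $\beta_{\mathfrak{p}}$.
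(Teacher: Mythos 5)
Your argument is exactly the paper's: the paper's proof reads ``This follows immediately from Corollary \ref{cor:canonical-map-injective} and Lemma \ref{lem:commutative-diagram-over-all-height-one-primes},'' and you have simply spelled out the straightforward diagram chase in the rightmost square. (Note that you do not in fact need the injectivity of $\coprod_{\mathfrak{p}}\beta_{\mathfrak{p}}$: the injectivity of the composite $(\coprod_{\mathfrak{p}}\beta_{\mathfrak{p}})\circ\ell=\mathrm{can}\circ\alpha$ already forces $\ell$ to be injective.)
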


\begin{proof}
This follows immediately from Corollary \ref{cor:canonical-map-injective} and
Lemma \ref{lem:commutative-diagram-over-all-height-one-primes}. 
\end{proof}

\begin{remark}
In the case $SK_{1}(\mathcal{Q}(\mathcal{G}))=0$,
Corollary \ref{cor:injectivity-rel-K-localization} implies
\cite[Proposition 4]{MR1935024}, which says that the class of any module of finite cardinality 
is trivial in $K_{0}(\Lambda(\mathcal{G}),\mathcal{Q}(\mathcal{G}))$.
This is because such modules vanish after localisation at height one prime ideals of $R$.
\end{remark}

An immediate consequence of the following purely algebraic result is that 
the choice of complex used in the EIMC (Conjecture \ref{conj:EIMC}) does not matter, 
as long as it is perfect and has the cohomology specified in \eqref{eq:cohomology-of-FK-complex}. 

\begin{theorem}\label{thm:no-dependence-on-choice-of-complex}
Let $\mathcal{G}$ be an admissible one-dimensional $p$-adic Lie group
and let\linebreak $\Phi \in \zeta(\mathcal{Q}(\mathcal{G}))^{\times}$. Let $k \in \Z$.
Let $C^{\bullet}, D^{\bullet} \in \mathcal{D}\tor^{\perf}(\Lambda(\mathcal{G}))$
such that
\begin{enumerate}
\item 
$H^{i}(C^{\bullet}) \simeq H^{i}(D^{\bullet})$ as $\Lambda$-modules for all $i \in \Z$;
\item 
$H^{i}(C^{\bullet})$ and $H^{i}(D^{\bullet})$ are finitely generated over $\Z_{p}$
for all $i \in \Z - \{k\}$; and
\item 
there exists $x \in K_{1}(\mathcal{Q}(\mathcal{G}))$ such that $\partial(x) = [C^{\bullet}]$
and $\nr(x) = \Phi$.
\end{enumerate}
Then there exists $y \in K_{1}(\mathcal{Q}(\mathcal{G}))$ such that $\partial(y) = [D^{\bullet}]$ and $\nr(y) = \Phi$. 
\end{theorem}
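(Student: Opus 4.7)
The plan is to reduce, via Corollary \ref{cor:canonical-map-injective}, to proving that $[C^{\bullet}]$ and $[D^{\bullet}]$ have equal images in $K_{0}(\Lambda_{\mathfrak{p}}(\mathcal{G}), \mathcal{Q}(\mathcal{G}))$ for every height one prime $\mathfrak{p}$ of $R = \Z_{p}\llbracket\Gamma_{0}\rrbracket$. Crucially, this reduction does \emph{not} require the hypothesis $SK_{1}(\mathcal{Q}(\mathcal{G})) = 0$ needed for the stronger statement of Corollary \ref{cor:injectivity-rel-K-localization}. To set this up, I would first use the surjectivity of $\partial$ in \eqref{eqn:Iwasawa-K-sequence} to choose some $y_{0} \in K_{1}(\mathcal{Q}(\mathcal{G}))$ with $\partial(y_{0}) = [D^{\bullet}]$; the desired $y$ will then be constructed in the form $y = y_{0} \cdot \iota(u)$ for $\iota: K_{1}(\Lambda(\mathcal{G})) \to K_{1}(\mathcal{Q}(\mathcal{G}))$ and some $u \in K_{1}(\Lambda(\mathcal{G}))$. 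Since $\partial \circ \iota = 0$, the condition $\partial(y) = [D^{\bullet}]$ is then automatic, and $\nr(y) = \Phi$ is equivalent to $\nr(u) = \Phi \cdot \nr(y_{0})^{-1}$; existence of such a $u$ amounts to the assertion $\Phi \cdot \nr(y_{0})^{-1} \in \nr(K_{1}(\Lambda(\mathcal{G})))$.

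By Corollary \ref{cor:canonical-map-injective}, membership in $\nr(K_{1}(\Lambda(\mathcal{G})))$ may be tested locally at each height one prime $\mathfrak{p}$ of $R$. Writing $\Phi = \nr(x)$ via hypothesis (iii), one has $\Phi \cdot \nr(y_{0})^{-1} = \nr(xy_{0}^{-1})$; if $\partial_{\Lambda_{\mathfrak{p}}(\mathcal{G})}(xy_{0}^{-1}) = 0$ in $K_{0}(\Lambda_{\mathfrak{p}}(\mathcal{G}), \mathcal{Q}(\mathcal{G}))$, then the exact top row of Lemma \ref{lem:commutative-diagrams-at-height-one-prime} yields $xy_{0}^{-1} \in \iota(K_{1}(\Lambda_{\mathfrak{p}}(\mathcal{G})))$ and hence $\nr(xy_{0}^{-1}) \in \nr(K_{1}(\Lambda_{\mathfrak{p}}(\mathcal{G})))$. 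Since $\partial_{\Lambda_{\mathfrak{p}}(\mathcal{G})}$ factors as $\partial$ followed by the localisation $\mathrm{loc}_{\mathfrak{p}}$, the task ultimately reduces to the core equality
\[
\mathrm{loc}_{\mathfrak{p}}([C^{\bullet}]) = \mathrm{loc}_{\mathfrak{p}}([D^{\bullet}])
\]
in $K_{0}(\Lambda_{\mathfrak{p}}(\mathcal{G}), \mathcal{Q}(\mathcal{G}))$ for every height one prime $\mathfrak{p}$ of $R$.

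I would prove this equality by the same case analysis as in the proof of Corollary \ref{cor:complex-in-rel-K0-in-abelian-case}. For $\mathfrak{p} \neq (p)$, the integer $|H|$ is a unit in $R_{\mathfrak{p}}$, and a standard conductor argument (used without comment in the proof of Corollary \ref{cor:vanishing-omega-ht-1-not-p} in the abelian setting) then gives $\Lambda_{\mathfrak{p}}(\mathcal{G}) = \mathcal{M}_{\mathfrak{p}}(\mathcal{G})$; hence $\Lambda_{\mathfrak{p}}(\mathcal{G})$ is hereditary by \cite[Theorem 18.1]{MR1972204}, every finitely generated module has finite projective dimension, and Proposition \ref{prop:perfect-cohomology-fpd} together with hypothesis (ii) gives $\mathrm{loc}_{\mathfrak{p}}([C^{\bullet}]) = \sum_{i} (-1)^{i} [H^{i}(C^{\bullet})_{\mathfrak{p}}] = \mathrm{loc}_{\mathfrak{p}}([D^{\bullet}])$. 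For $\mathfrak{p} = (p)$, hypothesis (i) forces $H^{i}(C^{\bullet})_{(p)} = 0 = H^{i}(D^{\bullet})_{(p)}$ for all $i \neq k$, since any finitely generated $\Z_{p}$-module carrying a compatible $R$-action is annihilated by the monic characteristic polynomial of the $T$-action, which lies in $R \setminus (p)$; Proposition \ref{prop:perfect-cohomology-acyclic-outside-one-degree} combined with (ii) then yields $\mathrm{loc}_{(p)}([C^{\bullet}]) = (-1)^{k} [H^{k}(C^{\bullet})_{(p)}] = \mathrm{loc}_{(p)}([D^{\bullet}])$. The main technical point meriting care in the non-abelian setting is the maximality of $\Lambda_{\mathfrak{p}}(\mathcal{G})$ at $\mathfrak{p} \neq (p)$; everything else is bookkeeping using the diagrams and exact sequences of Section \ref{sec:K-theory-Iwasawa-algebras} and the appendix's preliminary lemmas.
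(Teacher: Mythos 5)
Your proposal is correct and follows essentially the same route as the paper's own proof: choose $y_{0}$ with $\partial(y_{0})=[D^{\bullet}]$, reduce to showing $\nr(xy_{0}^{-1})\in\nr(K_{1}(\Lambda(\mathcal{G})))$, test this locally at height one primes using Corollary \ref{cor:canonical-map-injective} (precisely because this avoids the $SK_{1}$-vanishing hypothesis), and establish the local triviality of $[C^{\bullet}]-[D^{\bullet}]$ via Propositions \ref{prop:perfect-cohomology-fpd} and \ref{prop:perfect-cohomology-acyclic-outside-one-degree} at $\mathfrak{p}\neq(p)$ and $\mathfrak{p}=(p)$ respectively. The only cosmetic difference is that you quote the maximality of $\Lambda_{\mathfrak{p}}(\mathcal{G})$ at $\mathfrak{p}\neq(p)$ as the primary justification for finite projective dimension, whereas the paper cites \cite[Corollary 3.5]{MR4098596} and offers the maximal-order argument only as an aside; both work.
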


\begin{proof}
By (ii) we have 
$\Lambda_{(p)}(\mathcal{G}) \otimes_{\Lambda(\mathcal{G})} H^{i}(C^{\bullet})
=\Lambda_{(p)}(\mathcal{G}) \otimes_{\Lambda(\mathcal{G})} H^{i}(D^{\bullet})=0$ 
for all $i \in \Z - \{ k \}$.
Thus by (i), Proposition \ref{prop:perfect-cohomology-acyclic-outside-one-degree} and the fact that localization 
is an exact functor, we have
\begin{align*}
[\Lambda_{(p)}(\mathcal{G}) \otimes_{\Lambda(\mathcal{G})}^{\mathbb{L}} C^{\bullet}]
&= (-1)^{k}[\Lambda_{(p)}(\mathcal{G}) \otimes_{\Lambda(\mathcal{G})} H^{k}(C^{\bullet})] \\
&= (-1)^{k}[\Lambda_{(p)}(\mathcal{G}) \otimes_{\Lambda(\mathcal{G})} H^{k}(D^{\bullet})]
= [\Lambda_{(p)}(\mathcal{G}) \otimes_{\Lambda(\mathcal{G})}^{\mathbb{L}} D^{\bullet}]
\end{align*}
in $K_{0}(\Lambda_{(p)}(\mathcal{G}),\mathcal{Q}(\mathcal{G}))$.
Now fix a height one prime ideal $\mathfrak{p} \neq (p)$ of $R$.
Then every finitely generated $\Lambda_{\mathfrak{p}}(\mathcal{G})$-module 
has projective dimension at most one by \cite[Corollary 3.5]{MR4098596}.
(Alternatively, note that $\Lambda_{\mathfrak{p}}(\mathcal{G})$
is a maximal order over the discrete valuation ring
$R_{\mathfrak{p}}$, and thus is hereditary by \cite[Theorem 18.1]{MR1972204}.)
Hence (i) and Proposition \ref{prop:perfect-cohomology-fpd} imply that
\[
[\Lambda_{\mathfrak{p}}(\mathcal{G})  \otimes_{\Lambda(\mathcal{G})}^{\mathbb{L}} C^{\bullet}]
=[\Lambda_{\mathfrak{p}}(\mathcal{G})  \otimes_{\Lambda(\mathcal{G})}^{\mathbb{L}} D^{\bullet}]
\text{ in } K_{0}(\Lambda_{\mathfrak{p}}(\mathcal{G}) ,\mathcal{Q}(\mathcal{G})).
\]
Now recall the commutative diagram of Lemma \ref{lem:commutative-diagram-over-all-height-one-primes}.
By Corollary \ref{cor:canonical-map-injective} and an easy diagram
chase in the right commutative square, we have $\alpha([C^{\bullet}])=\alpha([D^{\bullet}])$.
Choose any $y_{0} \in K_{1}(\mathcal{Q}(\mathcal{G}))$ such that $\partial(y_{0}) = [D^{\bullet}]$.
Then by (iii) and an easy diagram
chase in the left commutative square, we have
 $\nr(xy_{0}^{-1}) \in \nr(K_{1}(\Lambda(\mathcal{G})))$.
In other words, there exists $z \in K_{1}(\Lambda(\mathcal{G}))$ such that
$\nr(z) = \nr(xy_{0}^{-1})$. 
Now set $y := y_{0} \iota(z)$.
Since $\iota(K_{1}(\Lambda(\mathcal{G})))=\ker(\partial)$ we have
$\partial(y) = \partial(y_{0}) = [D^{\bullet}]$ and $\nr(y) = \nr(x) = \Phi$.
\end{proof}

\begin{corollary}\label{cor:complex-in-rel-K0-when-SK1-vanishes}
Let $\mathcal{G}$ be an admissible one-dimensional $p$-adic Lie group
such that $SK_{1}(\mathcal{Q}(\mathcal{G}))=0$. Let $k \in \Z$.
Let $C^{\bullet}, D^{\bullet} \in \mathcal{D}\tor^{\perf}(\Lambda(\mathcal{G}))$
such that
\begin{enumerate}
\item 
$H^{i}(C^{\bullet}) \simeq H^{i}(D^{\bullet})$ as $\Lambda$-modules for all $i \in \Z$; and
\item 
$H^{i}(C^{\bullet})$ and $H^{i}(D^{\bullet})$ are finitely generated over $\Z_{p}$
for all $i \in \Z - \{k\}$.
\end{enumerate}
Then $[C^{\bullet}]=[D^{\bullet}]$ in $K_{0}(\Lambda(\mathcal{G}),\mathcal{Q}(\mathcal{G}))$.
\end{corollary}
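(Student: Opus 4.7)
The plan is to mimic the proof of Corollary \ref{cor:complex-in-rel-K0-in-abelian-case}, using the non-abelian injectivity statement Corollary \ref{cor:injectivity-rel-K-localization} in place of Proposition \ref{prop:injectivity-rel-K-localization-in-abelian-case}. Since we assume $SK_{1}(\mathcal{Q}(\mathcal{G}))=0$, Corollary \ref{cor:injectivity-rel-K-localization} tells us that the canonical map
\[
K_{0}(\Lambda(\mathcal{G}),\mathcal{Q}(\mathcal{G}))
\longrightarrow
\bigoplus_{\mathfrak{p}}
K_{0}(\Lambda_{\mathfrak{p}}(\mathcal{G}),\mathcal{Q}(\mathcal{G}))
\]
is injective, where the sum ranges over all height one prime ideals of $R$. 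It therefore suffices to check that $[C^{\bullet}]$ and $[D^{\bullet}]$ agree after localisation at each such $\mathfrak{p}$.

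First I would dispose of the case $\mathfrak{p}\neq (p)$. Here $R_{\mathfrak{p}}$ is a discrete valuation ring and $\Lambda_{\mathfrak{p}}(\mathcal{G})$ is a maximal $R_{\mathfrak{p}}$-order in $\mathcal{Q}(\mathcal{G})$; by \cite[Theorem 18.1]{MR1972204} such orders are hereditary, so every finitely generated $\Lambda_{\mathfrak{p}}(\mathcal{G})$-module has finite projective dimension. Since localisation is exact, $H^{i}(\Lambda_{\mathfrak{p}}(\mathcal{G})\otimes^{\mathbb{L}}_{\Lambda(\mathcal{G})}C^{\bullet})\cong H^{i}(C^{\bullet})_{\mathfrak{p}}$, and similarly for $D^{\bullet}$. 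Applying Proposition \ref{prop:perfect-cohomology-fpd} to each localised complex yields
\[
[\Lambda_{\mathfrak{p}}(\mathcal{G})\otimes^{\mathbb{L}}_{\Lambda(\mathcal{G})}C^{\bullet}]
=\sum_{i\in\Z}(-1)^{i}[H^{i}(C^{\bullet})_{\mathfrak{p}}],
\]
and the analogous identity for $D^{\bullet}$. Hypothesis (ii) then gives the desired equality at $\mathfrak{p}$.

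For the remaining case $\mathfrak{p}=(p)$, I would use hypothesis (i): for $i\neq k$, $H^{i}(C^{\bullet})$ is finitely generated as a $\Z_{p}$-module, hence killed by localisation at $(p)$, and likewise for $D^{\bullet}$. Thus $\Lambda_{(p)}(\mathcal{G})\otimes^{\mathbb{L}}_{\Lambda(\mathcal{G})}C^{\bullet}$ and $\Lambda_{(p)}(\mathcal{G})\otimes^{\mathbb{L}}_{\Lambda(\mathcal{G})}D^{\bullet}$ are both acyclic outside degree $k$, and Proposition \ref{prop:perfect-cohomology-acyclic-outside-one-degree} gives
\[
[\Lambda_{(p)}(\mathcal{G})\otimes^{\mathbb{L}}_{\Lambda(\mathcal{G})}C^{\bullet}]
=(-1)^{k}[H^{k}(C^{\bullet})_{(p)}]
=(-1)^{k}[H^{k}(D^{\bullet})_{(p)}]
=[\Lambda_{(p)}(\mathcal{G})\otimes^{\mathbb{L}}_{\Lambda(\mathcal{G})}D^{\bullet}]
\]
in $K_{0}(\Lambda_{(p)}(\mathcal{G}),\mathcal{Q}(\mathcal{G}))$, using the isomorphism from hypothesis (ii) in degree $k$. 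Combining the two cases with the injectivity statement completes the proof.

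There is really no hard step here: all the heavy lifting has already been done in establishing Corollary \ref{cor:injectivity-rel-K-localization} (which in turn rests on the Ritter--Weiss result Theorem \ref{thm:nr-of-K1}) and in the two structural propositions on perfect torsion complexes. The only subtlety is making sure that the hypothesis $SK_{1}(\mathcal{Q}(\mathcal{G}))=0$ is invoked at precisely the right place, namely via Corollary \ref{cor:injectivity-rel-K-localization}; the arguments at each individual height one prime do not require it.
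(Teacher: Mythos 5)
Your proof is correct and follows the same localization strategy the paper uses for the abelian Corollary~\ref{cor:complex-in-rel-K0-in-abelian-case} and which is already carried out inside the proof of Theorem~\ref{thm:no-dependence-on-choice-of-complex}: reduce to height one primes of $R$ via the injectivity result, treat $\mathfrak{p}\neq(p)$ by hereditarity of the maximal order and Proposition~\ref{prop:perfect-cohomology-fpd}, and treat $\mathfrak{p}=(p)$ by the vanishing of the localised cohomology outside degree $k$ and Proposition~\ref{prop:perfect-cohomology-acyclic-outside-one-degree}. The only difference worth noting is that you could avoid repeating the local computations by deducing the corollary directly from Theorem~\ref{thm:no-dependence-on-choice-of-complex}: since $\partial$ is surjective by \eqref{eqn:Iwasawa-K-sequence}, choose $x$ with $\partial(x)=[C^{\bullet}]$ and set $\Phi:=\nr(x)$; the theorem produces $y$ with $\partial(y)=[D^{\bullet}]$ and $\nr(y)=\Phi$; as $SK_{1}(\mathcal{Q}(\mathcal{G}))=0$ the map $\nr$ is injective, so $x=y$ and hence $[C^{\bullet}]=[D^{\bullet}]$. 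Both arguments rest on the same inputs (Corollary~\ref{cor:canonical-map-injective}, which underlies Corollary~\ref{cor:injectivity-rel-K-localization}, together with Propositions~\ref{prop:perfect-cohomology-fpd} and~\ref{prop:perfect-cohomology-acyclic-outside-one-degree}), so they are equivalent in substance; yours is self-contained, the alternative simply delegates the local work to the theorem.
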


\bibliography{Abelian-MC-Bib}{}
\bibliographystyle{amsalpha}

\end{document}